\newtheorem{theorem}{Theorem}[section]
\newtheorem{cor}[theorem]{Corollary}
\newtheorem{lem}[theorem]{Lemma}
\newtheorem{prop}[theorem]{Proposition}
\newtheorem{thm}[theorem]{Theorem}
\newtheorem{conj}[theorem]{Conjecture}
\newtheorem{example}[theorem]{Example}
\newtheorem*{question}{Question}
\newenvironment{lemma*}
 {\pushQED{\qed}\lem}
 {\popQED\endlem}
\newenvironment{thm*}
 {\pushQED{\qed}\thm}
 {\popQED\endthm}
\theoremstyle{definition}
\newtheorem{defn}[theorem]{Definition}
\theoremstyle{remark}
\newtheorem{rem}[theorem]{Remark}
\numberwithin{equation}{section}
\def\l@subsection{\@tocline{2}{0pt}{1.5pc}{4pc}{}}
\def\thm@space@setup{%
  \thm@preskip=5.25pt plus 1pt minus 1pt
  \thm@postskip=3.25pt plus 1pt minus 1pt % or whatever, if you don't want them to be equal
}
\setlist[itemize]{topsep = 0pt}
\DeclareMathOperator{\Mod}{Mod}
\DeclareMathOperator{\res}{res}
\DeclareMathOperator{\topp}{top}
\DeclareMathOperator{\rad}{rad}
\DeclareMathOperator{\Ima}{Im}
\begin{document}

\newcommand{\nc}{\newcommand}
\nc{\g}{\mathfrak{g}}
\nc{\OOs}{\mathcal{O}^{sh}_{\g}}
\nc{\uqmu}[2]{U_q^{#1}({#2})}
\nc{\C}{\mathbb{C}}
\nc{\arr}{\rightarrow}
\nc{\OO}{\mathcal{O}}
\nc{\Z}{\mathbb{Z}}
\nc{\SLt}{\mathfrak{sl}_3}
\nc{\SL}{\mathfrak{sl}_2}
\nc{\bi}{\bibitem}
\nc{\qbin}[3]{\genfrac{[}{]}{0pt}{}{#1}{#2}_{#3}}

\title[Inflations for representations of shifted quantum affine algebras]{Inflations for representations of \\ shifted quantum affine algebras}

\author{\vspace*{-5mm}Théo Pinet\textsuperscript{\,\lowercase{a,b}}}
\address{\vspace*{-2mm}\scriptsize{\textsuperscript{\normalfont a}\,Universit\'e Paris Cité and Sorbonne Universit\'e, CNRS, IMJ-PRG, F-75006, Paris, France.\\ \textsuperscript{\normalfont b}\,Universit{\'e} de Montr{\'e}al, DMS, Montr{\'e}al, Qu{\'e}bec, Canada, H3C 3J7.}}
\email{tpinet@imj-prg.fr}
\begin{abstract} %Fix a finite-dimensional simple Lie algebra $\g$ and let $\g_J\subseteq \g$ be a non-trivial~Lie subalgebra coming from an inclusion of Dynkin diagrams. Then, the restriction functor from $\g$ to $\g_J$ is not essentially surjective on finite-dimensional simple $\g_J$--modules.\par
%In this article, we study the \textit{shifted quantum affine algebra} $\uqmu{\mu}{\g}$ of \cite{ft1} and the category $\OO^{\mu}$ associated to this algebra by \cite{hshift} (for $\mu\in \Lambda^{\vee}$ a coweight of $\g$). More precisely, we consider the restriction functors from $\uqmu{\mu}{\g}$ to natural subalgebras $\uqmu{\nu}{\g_J}$ (with $\nu$ the projection of $\mu$ on the coweight lattice $\Lambda^{\vee}_J$ of $\g_J$) and use this data to obtain a functor from $\mathcal{O}^{sh}=\bigoplus_{\mu\in\Lambda^{\vee}} \OO^{\mu}$ to $\bigoplus_{\nu\in \Lambda^{\vee}_J} (\uqmu{\nu}{\g_J}\text{--Mod})$.\,We then establish that the resulting~functor~is~essentially~surjective on finite-dimensional simple objects by constructing notable preimages\,for\,it,\,called\,\textit{inflations}.\par 
%We conjecture that all simple, possibly infinite-dimensional, modules in $\mathcal{O}^{sh}_J$ (which is the equivalent of $\OO^{sh}$ for the subalgebras $\uqmu{\nu}{\g_J}$) admit some inflation and prove this conjecture if either $\g$ is of type A--B or if $\g_J$ is isomorphic (as a Lie algebra) to a direct sum of copies~of~$\SL$ and $\SLt$. We also give applications of our results to the study of \textit{``compatible cluster structures"} over Grothendieck rings and to the computation of particular \textit{$R$-matrices} in $\OO^{sh}$.  
\vspace*{0mm}
Fix a finite-dimensional simple Lie algebra $\mathfrak{g}$ and let $\mathfrak{g}_J\subseteq\mathfrak{g}$ be a Lie subalgebra coming from a Dynkin diagram inclusion. Then, the corresponding restriction functor is not essentially surjective on finite-dimensional simple $\mathfrak{g}_J$--modules. \par
In this article, we study Finkelberg--Tsymbaliuk's shifted quantum affine algebras $U_q^{\mu}(\mathfrak{g})$ and the associated categories $\mathcal{O}^{\mu}$ (defined by Hernandez). In particular, we introduce natural subalgebras $U_q^{\nu}(\mathfrak{g}_J)\,{\subseteq}\,U_q^{\mu}(\mathfrak{g})$ and obtain a functor $\mathcal{R}_J$ from $\mathcal{O}^{sh}\,{=}\bigoplus_{\mu}\mathcal{O}^{\mu}$ to $\bigoplus_{\nu}(U_q^{\nu}(\mathfrak{g}_J)\text{-Mod})$ using the canonical restriction functors. We then establish that $\mathcal{R}_J$ is essentially surjective on finite-dimensional simple objects by constructing notable preimages that we call \textit{inflations}. \par 
We conjecture that all simple objects in $\mathcal{O}^{sh}_J$ (which is the analog of $\mathcal{O}^{sh}$ for the subalgebras $U_q^{\nu}(\mathfrak{g}_J)$) admit some inflation and prove this for $\mathfrak{g}$ of type A--B or $\mathfrak{g}_J$ a direct sum of copies of $\mathfrak{sl}_2$ and $\mathfrak{sl}_3$. We finally apply our results to deduce certain \textit{$R$-matrices} and examples~of~\textit{cluster structures over Grothendieck rings}.\medskip\par\noindent
\textbf{Keywords:}
Shifted quantum affine algebras, category $\mathcal{O}$, $R$-matrices, cluster algebras.
\end{abstract}
\maketitle
\thispagestyle{empty}
\vspace*{-3.25mm}
{\scriptsize
\setlength{\parskip}{0in}
\tableofcontents}\vspace*{-7mm}
\section{Introduction}\label{sec:Intro}
It is well known that the only finite-dimensional complex simple Lie algebra having a simple representation of dimension 2 is $\SL$ (up to isomorphism). In a similar way, an untwisted affine Lie algebra $\hat{\g}$ (resp.~quantum group $\uqmu{}{\g}$, untwisted quantum loop algebra $\uqmu{}{\hat{\g}}$) associated to a finite-dimensional simple Lie algebra $\g$ can have $2$-dimensional irreducible representations only if the underlying Lie algebra $\g$ is isomorphic to $\mathfrak{sl}_2$ (see Proposition \ref{prop:AffNoInfl}). The restriction functors constructed using the inclusions of $\SL$ in $\g$ (or of $\widehat{\mathfrak{sl}}_2$ in $\hat{\g}$, etc.) are thus not essentially surjective on finite-dimensional simple modules when $\g\not\simeq \SL$. \par
Shifted quantum affine algebras $\uqmu{\mu}{\g}$ are infinite-dimensional algebras parametrized by a Lie algebra $\g$ like above and a coweight $\mu$ of $\g$. These algebras are variations of the untwisted quantum loop algebras $\uqmu{}{\hat{\g}}$ (in \textit{Drinfeld's presentation}) which are in turn algebras of critical importance in the study of quantum integrable systems \cite{fh1,fjmm,hTsyst,mrv1,mrv2}, of Nakajima quiver varieties \cite{fuj,mo,n1,n2,n3} and of cluster algebras \cite{bi,hl1,hl2,kkop}. The algebras $\uqmu{\mu}{\g}$ were introduced by \cite{ft1} in the context of $K$-theoretic Coulomb branches of $3d$ $\mathcal{N}=4$ SUSY quiver gauge theories and have since then played a primordial role in many areas of research \cite{fi,n4} (see also \cite{bfn,fkprw} for the related case of shifted Yangians).
% Je vais citer GHL plus tard, pour l'instant c'est ok
 \par
Denote by $\uqmu{\mu}{\g}$--Mod the category of all (potentially infinite-dimensional) representations of the shifted quantum affine algebra $\uqmu{\mu}{\g}$. Then, $\uqmu{\mu}{\g}$--Mod admits a notable subcategory $\OO^{\mu}$ which was introduced in \cite{hshift} as an adaptation of the standard BGG category $\mathcal{O}$ of usual Lie theory. This category $\OO^{\mu}$ contains all finite-dimensional $\uqmu{\mu}{\g}$-modules as well as remarkable infinite-dimensional ones. For instance, the category $\OO^{\mu}$ with $\mu$ the opposite of a fundamental coweight contains infinite-dimensional objects, called \textit{negative\,prefundamental\,representations}, which were introduced in \cite{hj} (for the Borel subalgebra $U_q(\hat{\mathfrak{b}})$ of $\uqmu{}{\hat{\g}}$% of the quantum loop algebra $\uqmu{}{\mathfrak{\hat{g}}}$
) and were proven to~be deeply related to the famous Baxter $Q$-operators of integrable systems in \cite{fh1}.\par
Consider the category $\OO^{sh} = \bigoplus_{\mu} \OO^{\mu}$ where the sum runs over the coweight lattice $\Lambda^{\vee}$ of $\g$. By~\cite{hAff,hfus,hshift}, this category is endowed with a distinguished operation $\star$, the \textit{fusion~product}, which gives to the Grothendieck group $K_0(\OO^{sh})$ the structure of a ring. Moreover, \cite{hshift} proves that \textit{Frenkel--Reshitikhin's $q$-character theory} \cite{fr}, which is one of the main tools used in the representation theory of quantum loop algebras, can be adjusted to the study of $\OO^{sh}$ in such a way that
%\begin{center}
\textit{the $q$-character of a fusion product of simple objects is precisely the product of the $q$-characters of the underlying simple objects} (in some ring of formal series, see Section \ref{sec:Fusion}). 
%\end{center}
This allowed \cite{hshift} to show that the Grothendieck ring $K_0(\OO^{sh})$ is in fact commutative (by the same proof as for the case of Grothendieck rings linked to untwisted quantum loop algebras).
% Je laisse distinguished operation sans spécifier qu'elles ne prend que les modules de plus haut-poids... Je ne trouve plus ça si pire.
\par
Now, fix a subalgebra $\g_J\subseteq \g$ coming from an inclusion of Dynkin diagrams $J\subseteq I$ (with $I$ the diagram of $\g$) and choose a coweight $\mu\in \Lambda^{\vee}$. Use the generating set 
$$\{x_{i,r}^{\pm},\phi_{i,r}^{\pm}\,|\,i\in I,r\in \Z\}\subseteq \uqmu{\mu}{\g}$$
 of Section \ref{sec:ShiftedqAff} and consider the subalgebra generated by $\{x_{j,r}^{\pm},\phi_{j,r}^{\pm}\,|\,j\in J,r\in \Z\}$ (inside $\uqmu{\mu}{\g}$). Then, this subalgebra only depends on the projection $\nu=\res_J(\mu)$ of $\mu$ on the coweight lattice $\Lambda_J^{\vee}$ of $\g_J$ and will be denoted $\uqmu{\nu}{\g_J}$.
% À nouveau, il y a quelque chose de caché ici maintenant, mais je crois que c'est quand même ok.
  We will also note $\iota_J^{\mu}$ the inclusion $\uqmu{\nu}{\g_J}\subseteq \uqmu{\mu}{\g}$ and
$$\res_J^{\mu}=(\iota_J^{\mu})^*: \uqmu{\mu}{\g}\text{--Mod}\arr \uqmu{\nu}{\g_J}\text{--Mod}$$ the associated restriction functor. The family $\{\res_J^{\mu}\}_{\mu\in \Lambda^{\vee}}$ induces a canonical functor
$$ \textstyle \mathscr{R}_J : \OO^{sh} \arr \bigoplus_{\nu\in \Lambda^{\vee}_J} \uqmu{\nu}{\g_J}\text{--Mod}$$
which is the main object of study of this paper. A first question to ask about this functor is:%\vspace*{-0.5mm}
\begin{question} Is $\mathscr{R}_J$ essentially surjective on finite-dimensional simple modules?
\end{question}%\vspace*{-0.75mm}
In other terms, given some coweight $\nu$ of $\g_J$ and a finite-dimensional simple $\uqmu{\nu}{\g_J}$-module $W$, the above question asks if it is always possible to find a pair $(\mu,V)$ with 
\begin{itemize}
\setlength{\itemsep}{1.5pt}
\item[(1)] $\mu$ a coweight of $\g$ with $\res_J(\mu)=\nu$ and 
\item[(2)] $V$ a $\uqmu{\mu}{\g}$-module for which $\res_J^{\mu}(V)\simeq W$ (as $\uqmu{\nu}{\g_J}$-modules).
\end{itemize}
A heuristic justification for the plausibility of this question comes from the (partly conjectural) relationship between shifted quantum affine algebras and Coulomb branches (see Section \ref{sec:Unicity}). However, our initial motivation for tackling this problem is more naive and comes simply from the fact that, unlike the Lie algebra $\g$ (or $\hat{\g}$, $\uqmu{}{\g}$ or $\uqmu{}{\hat{\g}}$), the shifted quantum affine algebra $\uqmu{\mu}{\g}$ can have $2$-dimensional irreducible representations even if $\g\not\simeq \SL$ by\,\cite[Example 6.6]{hshift}. Furthermore, the $2$-dimensional irreducible objects $V$ of $\OO^{sh}$ described in \cite{hshift} verify %the condition
$x_{i,r}^{\pm}V = 0$ for every $r\in\Z$ and all but one $i\in I$. This brings to the following definition (where we denote by $\OO^{\nu}_J$ the category defined analogously to $\OO^{\nu}$, but for the subalgebra $\uqmu{\nu}{\g_J}$).
\begin{defn}[Definition \ref{def:Infl}]\label{def:InflIntro}
Fix $\nu\in \Lambda^{\vee}_J$ with $W$ an object of $\OO^{\nu}_J$. Take moreover $\mu\in \Lambda^{\vee}$ satisfying $\res_J(\mu)=\nu$ with $V$ in $\OO^{\mu}$. Then, $V$ is a \textit{$J$-inflation of $W$ to $\g$ with coweight $\mu$} if 
\begin{itemize}
\setlength{\itemsep}{1.5pt}
\item[(i)] $\res_J^{\mu}(V)\simeq W$ in $\OO^{\nu}_J$ and
\item[(ii)] $x_{i,r}^{\pm}V = 0$ for all $i\not\in J$ and $r\in \Z$.
\end{itemize}
\end{defn}
Our first main result in this article is:
\begin{thm}[Theorem \ref{cor:ABFin}, part (i)]\label{thm:main1} Take $\nu\in \Lambda^{\vee}_J$ with $W$ a finite-dimensional irreducible $\uqmu{\nu}{\g_J}$-module. Then, $W$ admits a $J$-inflation to $\g$. 
\end{thm}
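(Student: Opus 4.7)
The plan is to construct the inflation $V$ as an explicit simple object of $\OO^{\mu}$, obtained by extending the highest $\ell$-weight of $W$ along $I\setminus J$ in a controlled way. Using the classification of simple objects of $\OO^{\nu}_J$ by highest $\ell$-weights (the shifted analog of the Drinfeld--Chari--Pressley classification, available via the $q$-character theory of \cite{hshift}), I would write $W\simeq L_J(\boldsymbol{\Psi}_J)$ for some $\boldsymbol{\Psi}_J=(\Psi_j(z))_{j\in J}$; finite-dimensionality of $W$ encodes into a polynomial-type constraint on the $\Psi_j$. I would then extend to an $I$-tuple $\boldsymbol{\Psi}=(\Psi_i(z))_{i\in I}$ by supplementing each node $i\notin J$ with a suitable ``trivial-looking'' $\Psi_i$ (for instance $\Psi_i(z)=1$, or another canonical rational $\ell$-weight forced by the condition that the shift at $i$ make sense in $\OO^\mu$), which in turn pins down a coweight $\mu\in\Lambda^{\vee}$ with $\res_J(\mu)=\nu$. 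Define $V:=L(\boldsymbol{\Psi})\in\OO^{\mu}$.

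Verification of the inflation axioms would proceed in the order (ii)$\Rightarrow$(i). For (ii), the aim is to establish that no weight of $V$ differs from its highest weight by a nonzero positive multiple of $\alpha_i$ for $i\notin J$, which prevents both $x_{i,r}^+$ and $x_{i,r}^-$ from acting non-trivially. This should follow from a control of the $q$-character $\chi_q(V)$: thanks to the multiplicativity of $\chi_q$ with respect to the fusion product $\star$ proved in \cite{hshift} and to the triviality of $\Psi_i$ for $i\notin J$, all monomials of $\chi_q(V)$ should lie in the subring generated by $\{Y_{j,a}^{\pm 1}\mid j\in J\}$. Condition (i) then follows almost immediately from (ii): once the extra generators act as zero, the subalgebra $\uqmu{\nu}{\g_J}$ acts on $V$ as a simple highest $\ell$-weight module with highest $\ell$-weight $\boldsymbol{\Psi}_J$, so $\res_J^{\mu}(V)\simeq W$ by the classification of simples recalled at the outset.

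The principal obstacle is the control of $\chi_q(V)$ underlying (ii), since simple objects of $\OO^{\mu}$ can a priori have involved $q$-characters and the naive extension to $I\setminus J$ need not confine the weights of $V$ to the $J$-sublattice. A natural strategy to bypass this is a reduction to generators: first prove Theorem~\ref{thm:main1} for a multiplicatively generating family of finite-dimensional simples of $\uqmu{\nu}{\g_J}$ (for example, shifted analogs of Kirillov--Reshetikhin modules), then bootstrap to arbitrary $W$ by fusion products. Indeed condition (ii) is manifestly stable under $\star$, and the multiplicativity of $\chi_q$ implies that a fusion of inflations of such generators yields an inflation of the corresponding product class in $K_0(\OO^{sh})$. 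The theorem would then reduce to the explicit construction of inflations for a finite list of ``building-block'' modules, one per node of $J$, which should be tractable by realizing the base cases as suitable evaluation or truncated positive-prefundamental modules for $\uqmu{\mu}{\g}$.
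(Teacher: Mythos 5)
Your proposal has a genuine gap, and the route you sketch is not the one the paper takes.

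The first extension you suggest — taking $\boldsymbol{\Psi}$ with $\Psi_i(z)=1$ for $i\notin J$ and defining $V:=L(\boldsymbol{\Psi})$ — does not produce an inflation. Your heuristic that the monomials of $\chi_q(V)$ then stay in the $J$-subring is false: already for $\g=\SLt$, $J=\{1\}$ and $W=L^J(Y_{1,1})$, the module $L(Y_{1,1})$ is the $3$-dimensional fundamental representation, whose $q$-character contains $Y_{2,q^3}^{-1}$, so $x_2^{\pm}(z)$ does not act by zero and $\res_J^0(L(Y_{1,1}))\not\simeq W$. More generally, by Theorem~\ref{thm:qCharA} the monomials of $\overline{\chi_q}(L(\boldsymbol{\Psi}))$ lie in $1+\mathbb{N}[[A_{i,a}^{-1}]]_{i\in I}$, and nothing prevents variables $A_{i,a}^{-1}$ with $i\notin J$ from appearing: Theorem~\ref{thm:EquivalentDefInf} precisely shows that the inflation condition is \emph{equivalent} to their absence, so it cannot be taken for granted.

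Your fallback — reduce to building blocks and bootstrap by fusion — is closer to a correct global strategy (it corresponds to Corollary~\ref{cor:FusInfl} and Theorem~\ref{thm:InflIfPrefond} in the paper), but it is incomplete on two counts. First, a finite-dimensional simple $W$ is generally only the \emph{head} of a fusion product of its building blocks, not the fusion product itself; you therefore need the compatibility of inflations with radicals and heads (Proposition~\ref{prop:InfHighest}), which you do not mention, and without which the bootstrap produces an inflation of a non-simple module rather than of $W$. Second, the base cases are left vague: realising them as ``evaluation or truncated positive-prefundamental modules'' is not a construction. The paper's actual mechanism is different and essential: starting from the simple $\uqmu{\nu}{\g}$-module $V'$ with the \emph{same} highest $\ell$-weight $\Psi$ as $W$ (extended by $1$ outside $J$), one identifies $W$ with the cyclic $\uqmu{\nu}{\g_J}$-submodule $\langle v\rangle_J\subseteq V'$ (Lemma~\ref{lem:FacRestSimple}), and then twists by a carefully chosen $1$-dimensional module $L(\Psi_p)$ with $\Psi_p$ a product of positive prefundamentals $\Psi_{i,b}^{n_{i,b}}$ over $i\notin J$ and $b$ in the finite ``pole set'' $\mathcal{P}_{i,J,V'}$. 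The exponents $n_{i,b}$ are calibrated so that, by Young's annihilation Lemma~\ref{lem:FJMM}, the factor $(\Psi_p)_i(z)$ kills $x_i^-(z)$ on all the relevant $\ell$-weight spaces; this makes $\langle v\rangle_J\otimes L(\Psi_p)$ a $\uqmu{\mu}{\g}$-submodule of $V'\otimes L(\Psi_p)$ (via the Drinfeld coproduct specialised at $u=1$) on which $x_i^{\pm}(z)$ acts by $0$ for $i\notin J$, yielding the desired inflation (Theorem~\ref{thm:ExistenceTech}). Finite-dimensionality of $W$ makes $V'$ finite-dimensional, which guarantees the needed finiteness hypotheses for free; this is why Theorem~\ref{cor:ABFin}(i) follows directly, with no reduction to generators. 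That twist-by-$\Psi_p$ step, together with Lemma~\ref{lem:FJMM}, is the core idea, and your proposal does not contain it.
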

Note that any $J$-inflation of $W$ is in particular a preimage of $W$ for the functor $\mathscr{R}_J$. Hence, Theorem \ref{thm:main1} gives a positive answer to the previous question. We can however extend this first question by asking whether $\mathscr{R}_J$ is essentially surjective on all (potentially infinite-dimensional) simple objects of the category $\OO^{sh}_J = \bigoplus_{\nu\in \Lambda^{\vee}_J}\OO^{\nu}_J$. \par 
This follow-up question is partially answered by our second main result, which is:
\begin{thm}[Corollary \ref{cor:ExistenceSL} and Theorem \ref{cor:ABFin}, part (ii)]\label{thm:main2} Take $W$ a simple module in $\OO^{sh}_J$. Then, $W$ admits a $J$-inflation to $\g$ if either 
\begin{itemize}
\setlength{\itemsep}{1.5pt}
\item[(i)] $\g$ is of type A--B or if
\item[(ii)] $\g_J$ is isomorphic (as a Lie algebra) to a direct sum of copies of $\SL$ and $\SLt$.
\end{itemize}
\end{thm}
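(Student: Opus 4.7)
The plan is to prove Theorem~\ref{thm:main2} by reducing to basic building blocks via the fusion product structure, and then constructing inflations of each building block separately under the two hypotheses.

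By the classification of simples in $\OO^{sh}_J$ in terms of highest rational $\ell$-weights (following \cite{hshift}) and the multiplicativity of $q$-characters under the fusion product $\star$ recalled above, every simple $W$ in $\OO^{sh}_J$ can be obtained as the unique simple quotient of a fusion product $L_1\star\cdots\star L_n$, where each factor $L_k$ is either a finite-dimensional simple $\uqmu{\nu_k}{\g_J}$-module or a (positive or negative) prefundamental representation $L(\boldsymbol{\psi}_{j,a}^{\pm})$ for some $j\in J$ and $a\in\C^{\times}$. I would first establish that the class of simples admitting a $J$-inflation is stable under fusion products: given inflations $V_k$ of the $L_k$, the fusion product $V_1\star\cdots\star V_n$ still satisfies the annihilation condition (ii) (because each $V_k$ does and this condition passes through the Drinfeld-type coproduct used to define $\star$), and its unique simple quotient restricts to $W$ by matching $q$-characters on the $J$-part. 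Combined with Theorem~\ref{thm:main1} for the finite-dimensional factors, this reduces Theorem~\ref{thm:main2} to constructing a $J$-inflation of each prefundamental $L(\boldsymbol{\psi}_{j,a}^{\pm})$ with $j\in J$.

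For a prefundamental $L(\boldsymbol{\psi}_{j,a}^{\pm})$ with $j\in J$, the natural inflation candidate is $V = L(\tilde{\boldsymbol{\psi}})$, the simple object of $\OO^{sh}$ whose highest $\ell$-weight coincides with $\psi_{j,a}^{\pm}$ at node $j$ and is trivial at the nodes $i\in I\setminus\{j\}$. This $V$ sits in $\OO^{\mu}$ for the natural lift $\mu\in\Lambda^{\vee}$ of the coweight of $\g_J$ attached to $L(\boldsymbol{\psi}_{j,a}^{\pm})$. Once the annihilation condition (ii) is established for $V$, condition (i) follows almost automatically: $V$ is then generated from its highest $\ell$-weight vector by $\uqmu{\nu}{\g_J}$ alone, so $\res_J^{\mu}(V)$ is a highest $\ell$-weight module with the same highest $\ell$-weight as $L(\boldsymbol{\psi}_{j,a}^{\pm})$, forcing the isomorphism $\res_J^{\mu}(V)\simeq L(\boldsymbol{\psi}_{j,a}^{\pm})$.

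The heart of the proof, and its main obstacle, is therefore the verification of (ii) for $V = L(\tilde{\boldsymbol{\psi}})$: this is a delicate statement about the composition factors of $V$ and about how the non-$J$ root generators act. Under hypothesis (i), with $\g$ of type A--B, I would exploit the linear structure of the Dynkin diagram and the explicit combinatorics of $q$-characters in those types (for instance through asymptotic constructions of prefundamentals in the vein of \cite{hj}, transposed from the Borel setting to the shifted-algebra framework) to control the weight spaces of $V$ and conclude that $x_{i,r}^{\pm}$ with $i\notin J$ acts trivially. Under hypothesis (ii), with $\g_J$ a direct sum of copies of $\SL$ and $\SLt$, I would treat each simple summand of $\g_J$ separately; for each such summand, the relevant prefundamental admits an explicit realization — a $q$-oscillator-type module in the $\SL$ case, and a direct construction in the $\SLt$ case — enabling a hands-on construction of the inflation in the spirit of the $2$-dimensional example \cite[Example~6.6]{hshift}, now adapted to the infinite-dimensional prefundamental setting. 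Outside of these two regimes, neither strategy is expected to produce the required annihilation in full generality, which is consistent with the fact that the full statement remains only conjectural in the paper.
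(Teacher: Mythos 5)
Your reduction is on the right track and matches the paper's Theorem~\ref{thm:InflIfPrefond}: every simple object of $\OO^{sh}_J$ is the head of a fusion product of an invertible representation with positive and negative prefundamentals (Theorem~\ref{thm:PrefundMonGen}), fusion products of inflations are inflations of fusion products (Corollary~\ref{cor:FusInfl}), and heads of inflations are inflations of heads (Proposition~\ref{prop:InfHighest}); so it indeed suffices to produce a $J$-inflation of each negative prefundamental $L^J(\Psi_{j,a}^{-1})$. (Positive prefundamentals and invertibles are handled by Example~\ref{ex:InvertiblePositivePrefond}, where the trivial extension does work because the module is one-dimensional.)

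The gap is in your proposed candidate for the negative prefundamental. You take $V = L(\tilde\Psi)$ with $\tilde\Psi$ equal to $\Psi_{j,a}^{-1}$ at node $j$ and \emph{trivial} at every $i\notin J$. This module does \emph{not} satisfy condition (ii) of Definition~\ref{def:Infl} in general, so there is nothing to ``verify'' about it: it is not an inflation. Already for $\g=\SLt$ and $J=\{1\}$, the normalized $q$-character of $L(\Psi_{1,a}^{-1})$ involves monomials containing $A_{2,b}^{-1}$, so $x_2^{\pm}(z)$ acts nontrivially; and by Proposition~\ref{prop:IfVInflqCar} any actual inflation of a simple module must have $\overline{\chi_q}$ supported on monomials in $\{A_{j,b}^{-1}\}_{j\in J}$ only. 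The paper's Theorem~\ref{thm:ExistenceTech} avoids this by not using the simple module of trivially extended highest $\ell$-weight at all: it sets $V=\langle v\rangle_J\otimes L(\Psi_p)$ inside $V'\otimes L(\Psi_p)$, where $\Psi_p=\prod_{i\notin J}\prod_{b}\Psi_{i,b}^{\,n_{i,b}}$ is a product of \emph{positive} prefundamental $\ell$-weights at the nodes $i\notin J$, with multiplicities $n_{i,b}$ chosen large enough (via Lemma~\ref{lem:FJMM} and the bounds of Lemmas~\ref{lem:FinitePPrefond} and~\ref{lem:multPsiboundedPrefondAB}) so that the polynomial $(\Psi_p)_i(z)$ annihilates $x_i^-(z)$ on the relevant $\ell$-weight spaces through the Drinfeld coproduct. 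Likewise, the explicit $\SL$- and $\SLt$-inflations of Examples~\ref{ex:InflPrefondSL} and~\ref{ex:InflPrefondSLt} carry nontrivial polynomial components $\Psi_{i,c}$ at the nodes adjacent to $J$ for exactly this reason. This attachment of positive prefundamental ``dressing'' factors at the non-$J$ nodes is the central mechanism of the proof, and it is absent from your proposal.
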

The only obstruction preventing the generalization of Theorem \ref{thm:main2} to every Lie algebra~$\g$~is the
lack of proof, for $\g$ not of type A--B, for a technical result on the multiplicities appearing~in the $q$-characters~of negative prefundamental representations (see Lemma \ref{lem:multPsiboundedPrefondAB} and the remarks made after Theorem \ref{cor:ABFin}). \par It is thus natural to make the following conjecture (see Conjecture \ref{conj:multPsiboundedPrefond} and Corollary \ref{thm:ExistenceIfConj}):
\begin{conj}\label{conj:main}
All simple objects of $\OO^{sh}_J$ admit $J$-inflations to $\g$.
\end{conj}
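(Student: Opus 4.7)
By Corollary~\ref{thm:ExistenceIfConj} mentioned in the excerpt, Conjecture~\ref{conj:main} reduces to the multiplicity statement of Conjecture~\ref{conj:multPsiboundedPrefond} on the $q$-characters of negative prefundamental representations. My plan is therefore to first explain why this reduction is natural and then to outline strategies attacking the multiplicity conjecture in full generality.

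The reduction itself rests on two observations that I would treat as lemmas before the main obstacle. First, every simple object $W$ of $\OO^{sh}_J$ factors (up to fusion) as a $\star$-product $W_+ \star L^-$ with $W_+$ a finite-dimensional simple and $L^-$ a fusion product of negative prefundamental modules $L^-_{j,a}$ with $j\in J$, by the classification of simples in $\OO^{sh}_J$ inherited from~\cite{hshift}. Second, $J$-inflations are compatible with $\star$: if $V_i\in\OO^{\mu_i}$ is a $J$-inflation of $W_i$ for $i=1,2$, then $V_1\star V_2\in\OO^{\mu_1+\mu_2}$ is a $J$-inflation of $W_1\star W_2$. Compatibility of $\star$ with $\res_J^{\mu}$ should follow from the Drinfeld-style construction of $\star$ and of the inclusion $\uqmu{\nu}{\g_J}\subseteq\uqmu{\mu}{\g}$, while condition~(ii) of Definition~\ref{def:InflIntro} is preserved because the generators $x_{i,r}^{\pm}$ with $i\notin J$ annihilate each factor. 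Finite-dimensional building blocks are handled by Theorem~\ref{thm:main1}, so the remaining task is to inflate a single prefundamental $L^-_{j,a}$ with $j\in J$.

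For such an inflation, the natural candidate is a simple subquotient of the negative prefundamental $L^-_{j,a}$ of $\g$, chosen so that its $q$-character only involves $\ell$-weights supported on the sub-diagram $J$. Showing that this candidate is a well-defined module in $\OO^{-\varpi_j^{\vee}}$ for $\g$, and that its restriction to $\uqmu{-\varpi_j^{\vee}}{\g_J}$ coincides with the prefundamental $L^-_{j,a}$ of $\g_J$, translates via $q$-character theory into a multiplicity bound for the ``$J$-supported'' $\ell$-weights of the $\g$-prefundamental matching those of the $\g_J$-prefundamental. This is precisely Conjecture~\ref{conj:multPsiboundedPrefond}.

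The hard part is therefore to establish this multiplicity bound uniformly in the type of $\g$. Two strategies look promising. The first is geometric: negative prefundamental representations admit realizations via Nakajima quiver varieties (or as limits of Kirillov--Reshetikhin modules) whose equivariant $K$-theory computes their $q$-characters, so one could hope to prove the bound by an inductive analysis on the coweight $\nu$ using the interplay between the inclusion $\g_J\subseteq\g$ and the corresponding restriction of varieties. The second, more combinatorial, is to exploit the conjectural cluster structure on $K_0(\OO^{sh})$, where prefundamentals play the role of frozen variables; $T$-system-style recursions could then propagate the bound from low-rank data to arbitrary $\g$. The main difficulty in both approaches, and the reason the current proof is restricted to types A--B, is the absence of an explicit combinatorial formula for the $q$-characters of these prefundamentals in the exceptional types.
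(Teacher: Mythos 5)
This statement is labeled a conjecture, and the paper does not actually prove it in full generality; what the paper provides is the conditional reduction in Corollary~\ref{thm:ExistenceIfConj} (itself resting on Theorem~\ref{thm:ExistenceTech}, Lemma~\ref{lem:FinitePPrefond}, and Theorem~\ref{thm:InflIfPrefond}) plus proofs in the special cases of Theorem~\ref{cor:ABFin} and Corollary~\ref{cor:ExistenceSL}. You correctly locate that reduction and correctly identify the negative prefundamental representations as the remaining obstacle. However, two of the steps in your outline do not match how the reduction actually works, and would not go through as you state them.

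First, your claim that every simple object of $\OO^{sh}_J$ ``factors as a $\star$-product $W_+ \star L^-$ with $W_+$ finite-dimensional simple and $L^-$ a fusion of negative prefundamentals'' is not what the paper establishes, and is false in general. The correct statement, Theorem~\ref{thm:PrefundMonGen}, is that a simple $L(\Psi)$ is isomorphic to the \emph{head} of a fusion product of an invertible with (positive and negative) prefundamentals; the fusion product itself is typically not simple, and the finite-dimensional factor there is a one-dimensional invertible, not an arbitrary finite-dimensional simple. Because of this, compatibility with fusion products (Corollary~\ref{cor:FusInfl}) alone is not enough; one also needs the compatibility of inflations with radicals and heads, which is Proposition~\ref{prop:InfHighest}. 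Without that head-compatibility step your reduction stalls at the fusion product rather than at the simple module.

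Second, your proposed mechanism for inflating a single negative prefundamental, namely ``a simple subquotient of the $\g$-prefundamental whose $q$-character only involves $J$-supported $\ell$-weights,'' cannot work as stated: $L(\Psi_{j,1}^{-1})$ over $\g$ is irreducible, so it has no nontrivial subquotients. The paper's actual construction (Theorem~\ref{thm:ExistenceTech}) takes the $\uqmu{\nu}{\g_J}$-submodule $\langle v\rangle_J$ generated by the highest $\ell$-weight vector inside the $\g$-simple of the same highest $\ell$-weight, and then tensors with a carefully chosen one-dimensional module $L(\Psi_p)$ built from positive prefundamentals $\Psi_{i,b}$ with $i\notin J$. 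The whole point is that the polynomial multiplying $x_i^-(z)$ in $\Delta^{(u)}$ lets one kill the unwanted lowering operators via the annihilation Lemma~\ref{lem:FJMM}, provided the $\ell$-weight-space dimensions are uniformly bounded. That boundedness is exactly Conjecture~\ref{conj:multPsiboundedPrefond}: it controls the exponents $n_{i,b}$ in $\Psi_p$, rather than being a ``matching of $q$-characters'' between the $\g$- and $\g_J$-prefundamentals. Your final paragraph on geometric and cluster-theoretic strategies for closing the conjecture is speculation beyond what appears in the paper; since the paper itself leaves the general case open, that part is not a proof and cannot be evaluated against one.
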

One of the main ingredients of the proofs of Theorem \ref{thm:main1} and Theorem \ref{thm:main2} is the compatibility between Definition \ref{def:InflIntro} and the fusion product $\star$ of $\OO^{sh}$ (and $\OO^{sh}_J$). More precisely, given simple objects $W_1,\dots,W_r$ of $\OO^{sh}_J$ and respective inflations $V_1,\dots,V_r$ to $\g$, then Corollary \ref{cor:FusInfl} shows that $V_1\star \dots \star V_r$ is a $J$-inflation to $\g$ of $W_1\star \dots\star W_r$. In addition, given an inflation $V$
for a \textit{highest $\ell$-weight object} $W$ of $\OO^{sh}_J$ (cf.~Definition \ref{def:HighestlWeight} or let $W=W_1\star \dots \star W_r$ as above), then Proposition \ref{prop:InfHighest} shows that the radical $\rad V$ and the head $\topp V$ are respectively $J$-inflations of $\rad W$ and $\topp W$. This (and various results of \cite{hshift}) in turn implies that it suffices to prove
Theorem \ref{thm:main2} for $W$ an arbitrary negative prefundamental representation of $\OO^{sh}_J$. This is done explicitly in Example \ref{ex:InflPrefondSL}, Example \ref{ex:InflPrefondSLt} and Example \ref{rem:InflPrefondSS} assuming that $\g_J$ is isomorphic, as a Lie algebra, to a direct sum of (potentially many) copies of $\SL$ and $\SLt$. For the more subtle case of $\g$ of type A--B, the proof relies on a detailed study of $q$-characters and follows roughly the reasoning used in the proof of \cite[Proposition 6.9]{fjmm}. Our proof in type A--B~moreover shows Theorem \ref{thm:main1} for $\g$ of arbitrary type (and Conjecture \ref{conj:main} up to~some~technical~unproven lemma in type C--D--E--F--G). \par
Let us now make more precise the connection between our results and the work of \cite{fjmm} (see also the end of Section \ref{sec:Inflqchar} for a detailed explanation). This convoluted connection comes from one of our results, Theorem \ref{thm:EquivalentDefInf}, which gives an equivalent definition of inflations ---~for highest $\ell$-weight or irreducible modules in $\OO^{sh}_J$ --- that relies on $q$-characters and resembles~the definition of some representations described in \cite[Proposition 6.9]{fjmm}~(for~the~Borel~subalgebra~$\uqmu{}{\hat{\mathfrak{b}}}$ of the quantum loop algebra $\uqmu{}{\hat{\mathfrak{g}}}$).
We stress that the analogy~between~inflations and the representations of \cite{fjmm} is far from being perfect --- even after considering the link between the representation theories of Borel quantum loop algebras and of shifted quantum affine algebras (see Section \ref{sec:Inflqchar} where we discuss results of \cite{hshift}) --- but nevertheless enables~to draw inspiration from \cite[Proposition 6.9]{fjmm} for the proofs of Theorems \ref{thm:main1}--\ref{thm:main2} (after some non-trivial adaptations; see Section \ref{sec:InflPrefund}).  \par 
The first and foremost benefit of using $J$-inflations is their ability to reduce problems about objects and morphisms~of $\OO^{sh}$ to (typically easier) problems about objects and morphisms of $\OO^{sh}_J$. Inflations thus generate a framework within the category $\OO^{sh}$ in which a lot of interesting objects or quantities (i.e.~$q$-characters, fusion products, etc.) can be described as for $\OO^{sh}_J$.~This framework can then be used to deduce new information about $\OO^{sh}$ or to shed a new light over already studied \textit{parts} of this category\,(by viewing these as \textit{inflations} of corresponding\,\textit{parts}\,for the category $\OO^{sh}_J$). Examples of application of this last idea arise in the study of ``\textit{compatible cluster structures}" on subrings of $K_0(\OO^{sh})$ and of \textit{particular $R$-matrices} in $\OO^{sh}$ (see Section~\ref{sec:applications} for details about these applications and terminology). Another, maybe more straightforward, potential application of inflations (which is connected to the above study of cluster structures) is the deduction of relations for the Grothendieck ring $K_0(\OO^{sh})$. We study this application in more details in Section \ref{sec:Inflqchar} (see also Proposition \ref{prop:InfHighest} and Theorem \ref{thm:InfHighestDual}) and show notably\,(in Examples \ref{ex:QQ}--\ref{ex:QQ2}) that the $Q\widetilde{Q}$ and $QQ^{*}$-systems of \cite{fh2,hl1} may be interpreted as\,being ``\textit{inflations}".\,We also prove an ``\textit{inflated version}" of the $T$-system of type A${}_1$ in Example~\ref{ex:newT} (which seems to be a new relation for the ring $K_0(\OO^{sh})$).
\par
The article is divided as follows. In Section \ref{sec:ShiftedqAff}, we recall the definition of the shifted quantum affine algebra $\uqmu{\mu}{\g}$ and describe important tools for the study of the associated category $\OO^{\mu}$ (such as the fusion product and the $q$-character map of $\OO^{sh}$).~We also initiate the study~of~the subalgebras $\uqmu{\nu}{\g_J}$ (with $\nu$ a coweight of $\g_J\subseteq \g$) and of the associated restriction functors. Then, in Section \ref{sec:Infl}, we define inflations and study their properties relatively to fusion products, radicals (of modules) and $q$-characters. We also prove our main results and give many explicit examples with, in particular, our \textit{inflated version} of the $T$-system in type A${}_1$. Finally, we~end the paper in Section \ref{sec:Concl} with results about \textit{strongly/weakly minimal inflations} and a description of possible applications of inflations to the study of ``\textit{compatible cluster~structures}"~on~subrings of $K_0(\OO^{sh})$ and to the construction of \textit{particular $R$-matrices} in $\OO^{sh}$.%\vspace*{-3.25mm}
\addtocontents{toc}{\setcounter{tocdepth}{-10}}
\subsection*{Acknowledgements} We are deeply grateful to David Hernandez, whose guidance and generosity were instrumental in bringing this work to life. Our most sincere thanks also go to Joel Kamnitzer for his input on the possible connection between inflations and Coulomb branches, and to Vyjayanthi Chari for her kind invitation to Riverside, where most of the early research leading to this paper was made. We finally extend our thanks to both Alexis Leroux-Lapierre and Yvan Saint-Aubin for many useful discussions. This work was supported by a scholarship from the Fonds de recherche du Québec--Nature et Technologie and has received funding from the European Union’s Horizon 2020 research and innovation programme \includegraphics[scale=0.07]{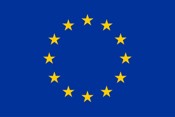} under the Marie Skłodowska-Curie grant agreement No 945332. This support is gratefully acknowledged.
\addtocontents{toc}{\setcounter{tocdepth}{2}}
\newpage
\section{Representation theory of shifted quantum affine algebras}\label{sec:ShiftedqAff}
Fix a finite-dimensional simple Lie algebra $\g$ and set $I=\{1,\dots,n\}$ with $n$ the rank~of~$\g$.~Let also $C=(C_{i,j})_{i,j\in I}$ be the indecomposable Cartan matrix of finite type associated to $\g$. Then, there exists a single diagonal matrix $D$ with relatively prime diagonal entries $d_1,\dots,d_n\in \mathbb{N}_{>0}$ for which the matrix $DC$ is symmetric. Denote by $\{\alpha_i\}_{i\in I}$, $\{\alpha_i^{\vee}\}_{i\in I}$, $\{\omega_i\}_{i\in I}$ and $\{\omega_i^{\vee}\}_{i\in I}$ the simple roots, simples coroots, fundamental weights and fundamental coweights of $\g$ (respectively). Write moreover $Q = \bigoplus_{i\in I} \Z \alpha_i$, $Q^+ = \bigoplus_{i\in I} \mathbb{N} \alpha_i$, $\Lambda=\bigoplus_{i\in I}\Z\omega_i$ and $\Lambda^{\vee} = \bigoplus_{i\in I}\Z\omega_i^{\vee}$. Then, $\Lambda_{\mathbb{Q}} = \mathbb{Q}\otimes_{\Z}\Lambda$ can be partially ordered under the rule $\omega\leq \omega'$ if and only if $\omega'-\omega\in Q^+$. We denote by $\Lambda^{\vee}_+=\bigoplus_{i\in I}\mathbb{N}\omega_i^{\vee}$ the set of \textit{dominant coweights} of $\g$. \par
Fix now a parameter $q\in \mathbb{C}^{\times}$ which is not a root of unity and let $q_i=q^{d_i}$ for all $i\in I$. For $x\in \C^{\times}$ not a root of unity and $m,p\in \mathbb{N}$ with $m\geq p$, we will use the notation
$$ [m]_x = \frac{x^m-x^{-m}}{x-x^{-1}},\, [m]_x! = \prod_{r=1}^m [r]_x\text{ and } \qbin{m}{p}{x} = \frac{[m]_x!}{[p]_x![m-p]_x!}$$
for the usual $q$-numbers, $q$-factorials and $q$-binomial coefficients (respectively).\par
This section introduces important objects and recalls notable results relatively to the representation theory of shifted quantum affine algebras. We refer the interested reader to \cite{ft1,hshift} for details and to \cite{fh1,fjmm,ghl,hbourb,her,mo} for applications of the study~of~shifted~and unshifted quantum affine algebras. In this article, $\mathbb{N}=\mathbb{Z}_{\geq 0}$ and all vector spaces, algebras~and tensor products are defined over $\C$ (unless otherwise specified).
\vspace*{-3mm}
\subsection{Shifted quantum affine algebras}\label{sec:Def} We start with the definition of the shifted quantum affine algebra $\uqmu{\mu}{\g}$ following \cite{ft1} and their study of \textit{quantized K-theoretic Coulomb branches of $3d$ $\mathcal{N}=4$ SUSY quiver gauge theories}.
\begin{defn}[{\cite[Section 5]{ft1}}, see also {\cite[Section 3.1]{hshift}}]\label{def:Uqmu} For $\mu\in \Lambda^{\vee}$, the shifted quantum affine algebra $\uqmu{\mu}{\g}$ is the unital algebra generated by $\{x_{i,r}^{\pm},\phi_{i,r}^{\pm}\,|\,i\in I\text{ and }r\in \Z\}$ with both $\phi_{i,0}^+$ and $\phi_{i,\alpha_i(\mu)}^-$ invertible for $i\in I$ and with the relations (for $i,j\in I$, $r,s\in \Z$ and $m\in\Z\backslash\{0\}$)
% Peut-être enlever le premier for $i\in I$
\begin{gather}
[\phi_{i,r}^{\pm},\phi_{j,s}^{\pm}]=[\phi_{i,r}^{\pm},\phi_{j,s}^{\mp}]=0, \label{eq:CommPhi} \\
\phi_{i,0}^+x_{j,r}^{\pm} = q_i^{\pm C_{i,j}}x_{j,r}^{\pm}\phi_{i,0}^+ \text{ and } \phi_{i,\alpha_i(\mu)}^-x_{j,r}^{\pm} = q_i^{\mp C_{i,j}}x_{j,r}^{\pm}\phi_{i,\alpha_i(\mu)}^-,\label{eq:PhiTX}\\
[h_{i,m},x_{j,r}^{\pm}] = {\textstyle \pm\frac{1}{m}}[mC_{i,j}]_{q_i}x_{j,m+r}^{\pm},\label{eq:Relhx}\\
(q_i-q_i^{-1})[x_{i,r}^+,x_{j,s}^-]=\delta_{i,j}(\phi_{i,r+s}^+-\phi_{j,r+s}^-),\label{eq:Relxpxmphi}\\
x_{i,r+1}^{\pm}x_{j,s}^{\pm}-q_i^{\pm C_{i,j}}x_{j,s}^{\pm}x_{i,r+1}^{\pm}=q_i^{\pm C_{i,j}}x_{i,r}^{\pm}x_{j,s+1}^{\pm}-x_{j,s+1}^{\pm}x_{i,r}^{\pm}\label{eq:xpmRelSupp}
\end{gather}
as well as, for $p=1-C_{i,j}$, $i\neq j$ and $r',r_1,\dots,r_p\in \Z$, 
\begin{equation}\label{eq:qSerre}
\sum_{\pi\in \Sigma_p}\sum_{0\leq \ell\leq p}(-1)^{\ell}\qbin{p}{\ell}{q_i}x_{i,r_{\pi(1)}}^{\pm}\dots x_{i,r_{\pi(\ell)}}^{\pm}x_{j,r'}^{\pm}x_{i,r_{\pi(\ell+1)}}^{\pm}\dots x_{i,r_{\pi(p)}}^{\pm}=0
\end{equation}
(with $\Sigma_p$ the symmetric group on $p$ letters). The elements $h_{i,m}\in\uqmu{\mu}{\g}$ (for $i\in I$, $m\in \Z\backslash\{0\}$) appearing in \eqref{eq:Relhx} are defined implicitly through the equalities of formal series (in $z$)
\begin{equation}\label{eq:RelhPhi}
\sum_{r\in \Z}\phi_{i,\pm r}^{\pm}z^{\pm r} = \phi_{i}^{\pm}(z) = z^{\mu(i,\pm)}\phi_{i,\mu(i,\pm)}^{\pm}\exp\Bigg(\pm(q_i-q_i^{-1})\sum_{m>0}h_{i,\pm m}z^{\pm m}\Bigg)
\end{equation}
where $\mu(i,+) = 0$ and $\mu(i,-)=\alpha_i(\mu)$.
\end{defn}
\begin{rem} An equivalent presentation of $\uqmu{\mu}{\g}$ is given in \cite[Section 3.1]{hshift} using the formal Laurent power series (in $z$) or \textit{currents} $$\textstyle x_i^{\pm}(z) = \sum_{r\in \Z}x_{i,r}^{\pm}z^r\text{ and }\phi_i^{\pm}(z)=\sum_{r\in \Z}\phi_{i,\pm r}^{\pm}z^{\pm r}$$ with $i\in I$. Note that, by \eqref{eq:RelhPhi}, $\phi_{i,r}^{+} = 0$ if $r<0$ and $\phi_{i,r}^- = 0$ if $r>\alpha_i(\mu)$ for all $i\in I$ so that $$\textstyle \phi_i^{+}(z) = \sum_{r\geq 0}\phi_{i,r}^+z^r\text{ and }z^{-\alpha_i(\mu)}\phi^{-}_i(z) = \sum_{r\geq 0}\phi_{i,\alpha_i(\mu)-r}^{-}z^{-r}$$ 
are actually proper power series in $z$ and $z^{-1}$ (respectively).
\end{rem}
The algebras $\uqmu{\mu}{\g}$ have a $\Z$-grading given via $\deg(x_{i,r}^{\pm}) = \deg(\phi_{i,r}^{\pm}) = r$ and $\deg(h_{i,m}) = m$ for $i\in I$, $r\in \Z$ and $m\in\Z\backslash\{0\}$. Using this grading, one defines, for each $a\in \C^{\times}$, an algebra automorphism $\tau_a$ of $\uqmu{\mu}{\g}$ by $\tau_a(x) = a^k x$
whenever $x\in \uqmu{\mu}{\g}$ is homogeneous of degree $k$. This allows to deform any representation $V$ of $\uqmu{\mu}{\g}$ into a $1$-parameter family $\{V(a)\}_{a\in \C^{\times}}$ of $\uqmu{\mu}{\g}$-modules (where $V(a)$ is the pullback of $V$ by the automorphism $\tau_a$).\par
Another important fact about $\uqmu{\mu}{\g}$ is that multiplication induces a vector space isomorphism (the so-called \textit{triangular decomposition}) \cite[Proposition 5.1(a)]{ft1} (see also \cite{hAff})
\begin{equation}\label{eq:TriangularDec}
\uqmu{\mu}{\g}\simeq \uqmu{\mu,-}{\g}\otimes \uqmu{\mu,0}{\g}\otimes \uqmu{\mu,+}{\g}
\end{equation}
with $\uqmu{\mu,\pm}{\g}$ and $\uqmu{\mu,0}{\g}$ respectively the subalgebras of $\uqmu{\mu}{\g}$ generated by $$\{x_{i,r}^{\pm}\,|\,i\in I,r\in \Z\}\text{ and }\{\phi_{i,r}^+,\phi_{i,r}^-,(\phi_{i,0}^+)^{-1},(\phi_{i,\alpha_i(\mu)}^-)^{-1}\,|\,i\in I,r\in \Z\}.$$ 
Note that the subalgebras $\uqmu{\mu,\pm}{\g}$ can be presented using their generating set above with~the defining relations \eqref{eq:xpmRelSupp}--\eqref{eq:qSerre}. In the same way, the subalgebra $\uqmu{\mu,0}{\g}$, called \textit{Cartan--Drinfeld subalgebra} of $\uqmu{\mu}{\g}$, is isomorphic to the commutative algebra with generators as above \cite{hAff}.
\begin{rem}\label{rem:borQaff} The shifted quantum affine algebra $\uqmu{0}{\g}$ with shift $\mu = 0$ is a central extension of the usual \textit{quantum loop algebra} $\uqmu{}{\hat{\g}}$  \cite[Remark 5.4(b)]{ft1}. It is well known (see, e.g.,\,\cite{CP2}) that the latter algebra $\uqmu{}{\hat{\g}}$ is generated by a finite set $\{e_i,f_i,k_i^{\pm 1}\}_{i\in I\cup\{0\}}$ of \textit{Drinfeld--Jimbo generators} and that this set allows one to endow $\uqmu{}{\hat{\g}}$ with the structure of a Hopf algebra. However, no coproduct has yet been defined for general shifted quantum affine algebras $\uqmu{\mu}{\g}$ (except if $\g=\mathfrak{sl}_{n+1}$ where \cite[Section 10]{ft1} gives such a definition). An alternative operation, called \textit{fusion product}, nevertheless enables the recovery of part of the advantages of having a Hopf structure \cite[Section 5.3]{hshift}. More informations about this operation are given in Section \ref{sec:Fusion}. In the meantime, let us emphasize, for further purposes, that the shifted algebra $\uqmu{\mu}{\g}$ for $\mu\in-\Lambda_+^{\vee}$ contains, by \cite[Proposition 3.4]{hshift}, a copy of the \textit{Borel quantum loop subalgebra} $\uqmu{}{\hat{\mathfrak{b}}}$ of $\uqmu{}{\hat{\g}}$. There is hence a natural connection between the representations of Borel~quantum loop algebras and antidominantly shifted quantum affine algebras. This connection moreover partially remains in general as $\uqmu{}{\hat{\mathfrak{b}}}$ still contains, for all $\mu\in\Lambda^{\vee}$, a copy of the subalgebra of $\uqmu{\mu}{\g}$ generated by 
$\{x_{i,r}^+,x_{i,s}^-,\phi_{i,r}^+,(\phi_{i,0}^+)^{-1}\,|\,i\in I,r\in\Z, s>\max(0,\alpha_i(\mu))\}$ \cite[Section 3.4]{hshift}. 
\end{rem}
For $J\subseteq I$, denote by $\g_J$ the \textit{semisimple} (but typically \textit{non-simple}) Lie algebra with Cartan matrix $(C_{i,j})_{i,j \in J}$.~Set $$\textstyle Q_J = \bigoplus_{j\in J}\Z\alpha_j\text{, }Q_J^{+} = \bigoplus_{j\in J} \mathbb{N}\alpha_j\text{, }\Lambda_J = \bigoplus_{j\in J}\Z\omega_j\text{ and }\Lambda_J^{\vee} = \bigoplus_{j\in J}\Z\omega_j^{\vee}.$$ 
Fix $\mu\in \Lambda^{\vee}$ and denote by $\nu=\res_J(\mu)$ the projection of $\mu$ onto $\Lambda_{J}^{\vee}$. Denote also by $\uqmu{\nu}{\g_J}$~the algebra having generators $\{x_{j,r}^{\pm},\phi_{j,r}^{\pm}\,|\,j\in J,r\in \Z\}$ and defining relations \eqref{eq:CommPhi}--\eqref{eq:RelhPhi} (where~we replace $I$ by $J$ everywhere and with $\phi_{j,0}^+$ and $\phi_{j,\alpha_j(\mu)}^{-}$ invertible for each $j\in J$). We understand $\uqmu{\nu}{\g_J}$ as being a shifted quantum affine algebra with underlying \textit{semisimple} Lie algebra $\g_J$. %This is a well-defined algebra.
\par By the reasoning used in \cite[Sections 3--5]{hAff}, there is a triangular decomposition
\begin{equation}\label{eq:TriangularDecJ}
\uqmu{\nu}{\g_J} \simeq \uqmu{\nu,-}{\g_J}\otimes \uqmu{\nu,0}{\g_J}\otimes\uqmu{\nu,+}{\g_J}
\end{equation} 
where $\uqmu{\nu,\pm}{\g_J}$ and $\uqmu{\nu,0}{\g_J}$ are respectively the subalgebras of $\uqmu{\nu}{\g_J}$ generated by
$$ \{x_{j,r}^{\pm}\,|\,j\in J,r\in\Z\}\text{ and }\{\phi_{j,r}^+,\phi_{j,r}^-,(\phi_{j,0}^+)^{-1},(\phi_{j,\alpha_j(\mu)}^-)^{-1}\,|\,j\in J,r\in\Z\}.$$ \vspace*{-3.95mm}\\
Again, $\uqmu{\nu,\pm}{\g_J}$ can be presented using the generators above and defining relations \eqref{eq:xpmRelSupp}--\eqref{eq:qSerre} (with $J$ instead of $I$) whereas $\uqmu{\nu,0}{\g_J}$ is simply the commutative algebra with generating~set
$$\{\phi_{j,r}^+,\phi_{j,r}^-,(\phi_{j,0}^+)^{-1},(\phi_{j,\alpha_j(\mu)}^-)^{-1}\,|\,j\in J,r\in\Z\}.$$ 
There is a canonical algebra morphism $\iota_J^{\mu}:\uqmu{\nu}{\g_J}\arr \uqmu{\mu}{\g}$ given by identifying generators. 
\begin{prop} The morphism $\iota_J^{\mu}$ is injective. 
\end{prop}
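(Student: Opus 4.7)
My plan is to factor the question through the triangular decompositions \eqref{eq:TriangularDec} and \eqref{eq:TriangularDecJ}. The morphism $\iota_J^{\mu}$ respects these decompositions because it sends generators of $\uqmu{\nu,\epsilon}{\g_J}$ into $\uqmu{\mu,\epsilon}{\g}$ for every $\epsilon\in\{+,0,-\}$, inducing morphisms $\iota_J^{\mu,\epsilon}:\uqmu{\nu,\epsilon}{\g_J}\arr\uqmu{\mu,\epsilon}{\g}$. Well-definedness uses that $\nu=\res_J(\mu)$ implies $\alpha_j(\mu)=\alpha_j(\nu)$ for all $j\in J$, so the invertibility conditions on $\phi_{j,0}^+$ and $\phi_{j,\alpha_j(\nu)}^-$ inside $\uqmu{\nu}{\g_J}$ match those already holding in $\uqmu{\mu}{\g}$.

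Next, I would prove the injectivity of each $\iota_J^{\mu,\epsilon}$ separately. The Cartan--Drinfeld case is straightforward: both $\uqmu{\mu,0}{\g}$ and $\uqmu{\nu,0}{\g_J}$ are commutative algebras freely generated (modulo the vanishing conditions $\phi_{i,r}^+=0$ for $r<0$ and $\phi_{i,r}^-=0$ for $r>\alpha_i(\mu)$) by the listed generators with $\phi_{j,0}^+$ and $\phi_{j,\alpha_j(\mu)}^-$ inverted, and $\iota_J^{\mu,0}$ is visibly the inclusion of the subring obtained by restricting the index set from $I$ to $J$. For the $\pm$ parts, the key observation is that the defining relations \eqref{eq:xpmRelSupp}--\eqref{eq:qSerre} do not involve the shift, so $\uqmu{\mu,\pm}{\g}$ and $\uqmu{\nu,\pm}{\g_J}$ are canonically isomorphic to the respective $\pm$ halves of the unshifted quantum loop algebras $\uqmu{}{\hat{\g}}$ and $\uqmu{}{\hat{\g}_J}$ (cf.~Remark \ref{rem:borQaff}). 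Injectivity of the induced map under the Dynkin subdiagram inclusion $J\subseteq I$ then follows from the PBW theorem for quantum loop algebras (Beck's basis), which produces a basis of the $\pm$ half of $\uqmu{}{\hat{\g}}$ whose subset indexed by the positive roots of $\hat{\g}_J$ is a basis of the $\pm$ half of $\uqmu{}{\hat{\g}_J}$.

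Combining these three injective morphisms via the triangular decompositions completes the argument: picking bases $B^{\epsilon}$ of $\uqmu{\nu,\epsilon}{\g_J}$, the decomposition \eqref{eq:TriangularDecJ} yields a basis $\{b^-b^0b^+\,:\,b^{\epsilon}\in B^{\epsilon}\}$ of $\uqmu{\nu}{\g_J}$ whose image under $\iota_J^{\mu}$ consists of ordered products that remain linearly independent in $\uqmu{\mu}{\g}$ by \eqref{eq:TriangularDec} and the injectivity on each triangular piece. The central difficulty is the injectivity of $\iota_J^{\mu,\pm}$: while morally clear from the matching presentations, the rigorous argument relies on invoking the PBW theorem for quantum loop algebras and its compatibility with Dynkin subdiagram inclusions; the Cartan--Drinfeld injectivity and the final gluing step are, by contrast, essentially formal.
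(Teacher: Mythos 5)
Your overall strategy — reduce to the three factors of the triangular decompositions \eqref{eq:TriangularDec}, \eqref{eq:TriangularDecJ} and glue — is exactly the one the paper follows, and the Cartan and gluing steps match. The real divergence is in the treatment of the nilpotent halves $\iota_J^{\mu,\pm}$. The paper avoids any PBW machinery: it simply observes that the assignment $x_{i,r}^{\pm}\mapsto x_{i,r}^{\pm}$ for $i\in J$ and $x_{i,r}^{\pm}\mapsto 0$ for $i\notin J$ respects the defining relations \eqref{eq:xpmRelSupp}--\eqref{eq:qSerre} (every relation mixing $J$ and $I\setminus J$ is killed termwise), giving a retraction $\pi_{\pm}:\uqmu{\mu,\pm}{\g}\to\uqmu{\nu,\pm}{\g_J}$ with $\pi_{\pm}\circ\iota_J^{\mu,\pm}=\mathrm{id}$, which immediately forces injectivity. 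Your alternative — appealing to Beck's PBW basis and its "compatibility with Dynkin subdiagram inclusions" — is morally sound, but you understate how much work that compatibility hides: Beck's basis is built from braid-group root vectors adapted to a reduced word (or convex/biconvex order) on affine roots, and the Drinfeld $\pm$ halves correspond to a non-standard slice of the real roots; one must show the relevant order and root vectors can be chosen so as to restrict to $\hat{\g}_J$, and moreover $\hat{\g}_J$ is not a single affine algebra when $J$ is disconnected, which adds a further reduction (cf.\ the Lemma following the proposition, \ref{lem:UqJnonconnected}). None of this is an off-the-shelf citation in the form you need. So your argument is salvageable but buys you nothing over the paper's one-line splitting: the retraction $\pi_{\pm}$ is strictly more elementary, entirely self-contained, and works uniformly regardless of connectedness of $J$. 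If you keep your route, the PBW-compatibility claim needs to be either proved or replaced by precisely the kind of retraction the paper uses.
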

%\begin{rem} Observe that the above result is not as trivial as it may seem since a presentation of the subalgebra of $\uqmu{\mu}{\g}$ with generators $\{x_{j,r}^{\pm},\phi_{j,r}^{\pm},(\phi_{j,0}^+)^{-1},(\phi_{j,\alpha_j(\mu)}^-)^{-1}\,|\,j\in J,r\in\Z\}$ could potentially involve (hidden) relations that are not explicitly given in Definition \ref{def:Uqmu}.
%\end{rem}
\vspace*{-2mm}
\begin{proof}
Consider the maps $\pi_{\pm}:\uqmu{\mu,\pm}{\g}\arr\uqmu{\nu,\pm}{\g_J}$ given, for all $r\in\Z$, by $$\pi_{\pm}(x_{j,r}^{\pm}) =\left\{\begin{array}{ll} x_{j,r}^{\pm} & \text{if }j\in J,\\
0& \text{else}.
\end{array}\right.$$ 
These are well-defined surjective algebra morphisms by \eqref{eq:xpmRelSupp}--\eqref{eq:qSerre}. Also, $ \pi\circ \iota_J^{\mu}(x_{j,r}^{\pm}) = x_{j,r}^{\pm}$~for all $(j,r)\in J\times \Z$ and it follows that $\iota_J^{\mu}$ restricts to injective algebra morphisms from $\uqmu{\nu,+}{\g_J}$~to $\uqmu{\mu,+}{\g}$ and from $\uqmu{\nu,-}{\g_J}$~to $\uqmu{\mu,-}{\g}$. In addition, $\iota_J^{\mu}$ clearly gives an injective~morphism~of commutative algebras from $\uqmu{\nu,0}{\g_J}$ to $\uqmu{\mu,0}{\g}$ and must hence be injective (as a $\C$-linear~map from $\uqmu{\nu}{\g_J}$ to $\uqmu{\mu}{\g}$) since it is injective on each factor of the $\C$-linear isomorphism \eqref{eq:TriangularDecJ}.
\end{proof}\vspace*{-1mm}
We associate from now on $\uqmu{\nu}{\g_J}$ with the subalgebra $\Ima \iota_J^{\mu}\subseteq\uqmu{\mu}{\g}$ and denote~the~corresponding restriction functor by 
$$ \res_J^{\mu} = (\iota_J^{\mu})^* : \uqmu{\mu}{\g}\text{--Mod}\arr\uqmu{\nu}{\g_J}\text{--Mod}$$
with $\uqmu{\mu}{\g}\text{--Mod}$ the category of all left modules over $\uqmu{\mu}{\g}$ (and similarly for $\uqmu{\nu}{\g_J}$--Mod). Clearly, $\uqmu{\nu}{\g_J}$ inherits the $\Z$-grading of $\uqmu{\mu}{\g}$ and the automorphisms $\{\tau_a\}_{a\in \C^{\times}}$. Moreover, one can always see $\uqmu{\nu}{\g_J}$ as a tensor product of shifted quantum affine algebras with simple underlying Lie algebras using the following easily proven lemma.
\begin{lemma*}\label{lem:UqJnonconnected} Take $J\subseteq I$ and fix $J'\subseteq J$ such that $C_{j,j'} = 0$ for all $j\in J\backslash J'$ and $j'\in J'$. Let $\mu\in \Lambda^{\vee}$. Then multiplication gives an algebra automorphism 
$$ \uqmu{\res_J(\mu)}{\g_J}\simeq \uqmu{\res_{J'}(\mu)}{\g_{J'}}\otimes \uqmu{\res_{J\backslash J'}(\mu)}{\g_{J\backslash J'}}$$
with $\res_J(\mu)$, $\res_{J'}(\mu)$ and $\res_{J\backslash J'}(\mu)$
the respective projections of $\mu$ on $\Lambda_J^{\vee}$, $\Lambda_{J'}^{\vee}$ and $\Lambda_{J\backslash J'}^{\vee}$.
\end{lemma*}
This article originated from the following question: 
\begin{question} Fix $\nu\in \Lambda_J^{\vee}$ with a $\uqmu{\nu}{\g_J}$-module $W$. Can we extend the $\uqmu{\nu}{\g_J}$-action on $W$ to a well-defined $\uqmu{\mu}{\g}$-action for some $\mu\in \Lambda^{\vee}$ with $\res_J(\mu)=\nu$? In other terms, can we find $\mu\in \Lambda^{\vee}$ satisfying $\res_J(\mu)=\nu$ so that there exists a preimage of $W$ under the functor $\res_{J}^{\mu}$?
\end{question}\vspace*{-0.35mm}
The above question is clearly too general to be answered positively. Indeed, almost nothing is known about the (gigantic) category $\uqmu{\mu}{\g}$--Mod and even the corresponding question, but restricted to the more accessible subcategory $\OO^{\mu}\subseteq\uqmu{\mu}{\g}$--Mod of \cite[Section 4]{hshift}, seems too complicated to answer for general objects $W$. A more reasonable question to ask is: \vspace*{-0.15mm}
\begin{question} Take $\nu\in \Lambda_J^{\vee}$ with a simple module $W$ of the category $\mathcal{O}^{\nu}_{J}$ (see Section \ref{sec:O}). Can we find $\mu\in \Lambda^{\vee}$ with $\res_J(\mu)=\nu$ and $V$ in $\mathcal{O}^{\mu}=\OO^{\mu}_I$ such that $\res_{J}^{\mu}(V)\simeq W$ as $\uqmu{\nu}{\g_J}$-modules?
\end{question}\newpage
Answering partially to this last question is the main goal of Section \ref{sec:Infl}. Meanwhile, the next subsection recalls the definition of the category $\OO^{\mu}$ as well as some important related results.
\begin{rem}\label{rem:qAffRestnotdense} As stated in Section \ref{sec:Intro}, a simple finite-dimensional Lie algebra has 2-dimensional irreducible representations if and only if it is isomorphic to $\SL$. Similarly, an untwisted affine Lie algebra $\hat{\g}$ (quantum group $\uqmu{}{\g}$, untwisted quantum loop algebra $\uqmu{}{\hat{\g}}$) with $\g$ simple~has 2-dimensional simple representations if only if it is isomorphic to $\widehat{\mathfrak{sl}}_2$ (or $\uqmu{}{\SL}$, $\uqmu{}{\widehat{\mathfrak{sl}}_2}$, resp.) (see Proposition \ref{prop:AffNoInfl}). The second question above has hence a negative answer in the setting of finite-dimensional simple Lie algebras (or untwisted affine Lie algebras, etc.).
\end{rem}\vspace*{-3mm}
\subsection{Representations and the category $\OO^{sh}$}\label{sec:O}
Take $\mu\in \Lambda^{\vee}$ with $V$ a $\uqmu{\mu}{\g}$-module. Then, for $\gamma=(\gamma_i)_{i\in I}\in \mathfrak{t}^{\times}$ where $\mathfrak{t}^{\times} = (\C^{\times})^I$, define the \textit{weight space}
$$ V_{\gamma} = V_{\gamma}^+= \{v\in V\,|\,\phi_{i,0}^{+}v=\gamma_{i}v \text{ for all } i\in I\}$$
and the set of \textit{weights} $P(V)=\{\gamma\in\mathfrak{t}^{\times}\,|\,V_{\gamma}\neq 0\}$. A \textit{weight vector} $v$ is a non-zero element~of~a weight space $V_{\gamma}$ for some $\gamma\in \mathfrak{t}^{\times}$. We will also need
$$ V_{\gamma}^- = \{v\in V\,|\,\phi_{i,\alpha_i(\mu)}^{-}v=\gamma_{i}^{-1}v \text{ for all } i\in I\}\text{ and } P^-(V)=\{\gamma\in\mathfrak{t}^{\times}\,|\,V_{\gamma}^-\neq 0\}.$$\vspace*{-4.75mm}\\
There is an injective group morphism $[-]:\Lambda_{\mathbb{Q}}\arr\mathfrak{t}^{\times}$ where $[\omega_i] = (q_i^{\delta_{i,j}})_{j\in I}$ on the fundamental weights. We use this morphism to carry the order of $\Lambda_{\mathbb{Q}}$ to $\mathfrak{t}^{\times}$. Clearly, by \eqref{eq:CommPhi} and \eqref{eq:PhiTX},
\begin{equation}\label{eq:xWt}
\uqmu{\mu,0}{\g}\cdot V_{\gamma}^{\pm}\subseteq V_{\gamma}^{\pm}\text{ with }x_{i,r}^{+}V_{\gamma}^{\pm} \subseteq V_{\gamma[\alpha_i]}^{\pm} \text{ and } x_{i,r}^{-}V_{\gamma}^{\pm} \subseteq V_{\gamma[-\alpha_i]}^{\pm}
\end{equation}
for all $i\in I$, $r\in \Z$ and $\gamma\in \mathfrak{t}^{\times}$. We now have the following important definition.
\begin{defn}[{\cite[Definition 4.8]{hshift}}]\label{def:O} Let $\mu\in \Lambda^{\vee}$. Then, the category $\mathcal{O}^{\mu}$ is the full subcategory of the category of all $\uqmu{\mu}{\g}$-modules whose objects are the modules $V$ satisfying
\begin{itemize}
\setlength{\itemsep}{1.5pt}
\item[(i)] $V = \bigoplus_{\gamma\in\mathfrak{t}^{\times}}V_{\gamma} = \bigoplus_{\gamma\in\mathfrak{t}^{\times}}V_{\gamma}^-$,
\item[(ii)] $\dim V_{\gamma}^{\pm}<\infty$ for all $\gamma\in \mathfrak{t}^{\times}$ and
\item[(iii)] $P(V)\cup P^-(V)\subseteq \bigcup_{i=1}^s D(\lambda_i)$ for some $\lambda_1,\dots,\lambda_s\in \mathfrak{t}^{\times}$ where $D(\lambda) = \{\gamma\in\mathfrak{t}^{\times}\,|\,\gamma\leq \lambda\}$.
\end{itemize}
\end{defn}\vspace*{-0.5mm}
Let $\mathfrak{r}$ be the group (under pointwise multiplication of rational functions)
$$ \mathfrak{r} = \{\Psi=(\Psi_{i}(z))_{i\in I}\in(\C(z))^I\,|\,\Psi_i(z)\text{ is regular at }z=0 \text{ and }\Psi_i(0)\neq 0 \text{ for all }i\in I\}$$
and define $\mathfrak{r}_{\mu}=\{\Psi=(\Psi_{i}(z))_{i\in I}\in \mathfrak{r}\,|\,\deg\Psi_i(z) = \alpha_i(\mu)\text{ for all }i\in I\}$.
Then, for fixed $\Psi\in \mathfrak{r}_{\mu}$ and $V$ in $\OO^{\mu}$, we call \textit{$\ell$-weight space of $V$} the generalized simultaneous eigenspace
$$V_{\Psi} = \{v\in V\,|\,\text{ there is } p\in \mathbb{N}\text{ such that } (\phi_{i,r}^{+}-\Psi_{i,r}^+)^pv=0 \text{ for all } i\in I\text{ and } r\in \mathbb{N}\} $$
where we used the expansion $\Psi_i(z) = \sum_{r\geq 0}\Psi_{i,r}^+z^r$ for $i\in I$. By \cite[Section 4.4]{hshift}, every $V$ in $\OO^{\mu}$ decomposes as the sum of its $\ell$-weight spaces, that is
$ V = \bigoplus_{\Psi\in\mathfrak{r}_{\mu}} V_{\Psi}$. Also, for all $\Psi\in \mathfrak{r}$,\vspace*{-0.2mm}
$$ V_{\Psi} \subseteq V_{\varpi(\Psi)}$$\vspace*{-4.45mm}\\
where $\varpi:\mathfrak{r}\arr \mathfrak{t}^{\times}$ is the group homomorphism mapping $\Psi = (\Psi_i(z))_{i\in I}$ to $\varpi(\Psi) = (\Psi_i(0))_{i\in I}$. In particular, the $\ell$-weight spaces of objects of $\OO^{\mu}$ are finite-dimensional. We will call \textit{$\ell$-weights} the elements of $\{\Psi\in\mathfrak{r}\,|\,V_{\Psi}\neq 0\}$ and \textit{$\ell$-weight vectors} the non-zero elements of $\bigcup_{\Psi\in\mathfrak{r}}V_{\Psi}$. 
\begin{defn}[{\cite[Section 4.4]{hshift}}]\label{def:HighestlWeight} Fix $V$ in $\OO^{\mu}$ and $\Psi \in \mathfrak{r}$. Then, $V$ is of highest $\ell$-weight $\Psi$ if there is a non-zero vector $v\in V$ verifying $V=\uqmu{\mu}{\g}\cdot v$ with $x_i^{+}(z)v = 0$ and $\phi_i^{\pm}(z)v=\Psi_{i}(z)v$ for all $i\in I$. In this case, $\phi_{i,r}^{\pm}v = \Psi_{i,r}^{\pm}v$ for all $i\in I$ and $r\in \Z$ where 
$$\textstyle \Psi_i(z) = \sum_{r\geq 0}\Psi_{i,r}^+z^r=\sum_{r\geq -\alpha_i(\mu)} \Psi_{i,-r}^-z^{-r}$$ 
are respectively expansions of the rational function $\Psi_i(z)$ in $z$ and $z^{-1}$. We call the generating vector $v$ above a \textit{highest $\ell$-weight vector} of $V$. It is determined uniquely up to a scalar.
\end{defn}\vspace*{-0.5mm}
The highest $\ell$-weight $\Psi$ of a highest $\ell$-weight module $V$ of $\OO^{\mu}$ is also uniquely determined. 
\begin{theorem}[{\cite[Theorem 4.12]{hshift}}]\label{thm:ClassSimp} Every simple module in $\OO^{\mu}$ is of highest $\ell$-weight $\Psi$ for some $\Psi\in \mathfrak{r}_{\mu}$. Moreover, the application sending a given simple module in $\OO^{\mu}$ to the associated highest $\ell$-weight in $\mathfrak{r}_{\mu}$ is a bijection (up to isomorphism).
\end{theorem}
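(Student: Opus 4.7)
The strategy is the standard one used for the classification of simples in BGG-type categories and for finite-dimensional representations of quantum loop algebras (as in \cite{CP2}), adapted to the triangular decomposition \eqref{eq:TriangularDec} and the shift conditions \eqref{eq:RelhPhi}. The argument splits into (a) producing a highest $\ell$-weight vector in every simple object of $\OO^{\mu}$, (b) showing the associated $\ell$-weight lies in $\mathfrak{r}_{\mu}$, and (c) establishing bijectivity via a Verma-type construction.

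For (a), let $V$ be a simple object of $\OO^{\mu}$. Condition (iii) of Definition \ref{def:O} ensures that $P(V)$ has a maximal element $\gamma$ for the order on $\mathfrak{t}^{\times}$, and condition (ii) gives $\dim V_{\gamma}<\infty$. By \eqref{eq:xWt}, one has $x_{i,r}^{+}V_{\gamma}\subseteq V_{\gamma[\alpha_i]}=0$ for every $i,r$, so $V_{\gamma}$ is stable under the Cartan--Drinfeld subalgebra $\uqmu{\mu,0}{\g}$. Since this subalgebra is commutative and $\C$ is algebraically closed, any minimal $\uqmu{\mu,0}{\g}$-stable subspace of $V_{\gamma}$ is one-dimensional, supplying a simultaneous eigenvector $v$ for all $\phi_{i,r}^{\pm}$ with $x_{i,r}^{+}v=0$. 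The submodule $\uqmu{\mu}{\g}\cdot v$ is non-zero, hence equal to $V$ by simplicity, so $V$ is generated by a highest $\ell$-weight vector with eigenvalues $(\Psi_{i,r}^{\pm})_{i,r}$.

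For (b), I would argue that the scalars $\Psi_{i,r}^{\pm}$ are Laurent coefficients at $0$ and $\infty$ of a common rational function of the correct degree. On $v$, equation \eqref{eq:RelhPhi} forces $\phi_i^{+}(z)v$ to be a power series in $z$ with non-zero constant term (since $\phi_{i,0}^{+}$ acts invertibly) and $\phi_i^{-}(z)v$ to be $z^{\alpha_i(\mu)}$ times a power series in $z^{-1}$ with non-zero leading coefficient (since $\phi_{i,\alpha_i(\mu)}^{-}$ acts invertibly). Both expansions share the same exponential factor $\exp\bigl(\pm(q_i-q_i^{-1})\sum_{m>0}\nu_{i,\pm m}z^{\pm m}\bigr)$ where $h_{i,\pm m}v=\nu_{i,\pm m}v$. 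Rationality then follows from the standard Frenkel--Reshetikhin/Chari--Pressley type argument: using the commutator \eqref{eq:Relxpxmphi}, one transfers the finite-dimensionality of the weight spaces $V_{\gamma[-k\alpha_i]}$ into a finite-rank condition on the Hankel matrix of the coefficients $\nu_{i,\pm m}$, yielding a common rational function $\Psi_i(z)\in\C(z)$ whose expansions at $0$ and $\infty$ match the given formal series. The degree condition $\deg\Psi_i(z)=\alpha_i(\mu)$ is then read off the leading behavior at $\infty$, placing $\Psi\in\mathfrak{r}_{\mu}$.

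For (c), given any $\Psi\in\mathfrak{r}_{\mu}$, form the Verma-type module $M(\Psi)=\uqmu{\mu}{\g}\otimes_{\uqmu{\mu,\geq 0}{\g}}\C_{\Psi}$ where $\uqmu{\mu,\geq 0}{\g}=\uqmu{\mu,0}{\g}\cdot\uqmu{\mu,+}{\g}$ acts on $\C_{\Psi}$ by the character $\phi_{i,r}^{\pm}\mapsto\Psi_{i,r}^{\pm}$ and $x_{i,r}^{+}\mapsto 0$. The triangular decomposition \eqref{eq:TriangularDec} identifies $M(\Psi)$ with $\uqmu{\mu,-}{\g}$ as a vector space; weights then lie in $[\Psi]\cdot[\,-Q^{+}\,]$ and each weight space is a subquotient of a finite-dimensional piece of $\uqmu{\mu,-}{\g}$, so $M(\Psi)\in\OO^{\mu}$. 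The sum of all proper submodules of $M(\Psi)$ avoids the generating line (weight considerations), producing a unique maximal proper submodule and hence a unique simple quotient $L(\Psi)$. Any simple module with highest $\ell$-weight $\Psi$ is a quotient of $M(\Psi)$, hence isomorphic to $L(\Psi)$, giving both injectivity and surjectivity of the classifying map.

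The main obstacle is step (b): while existence of a highest $\ell$-weight vector is essentially formal from the order and finiteness conditions defining $\OO^{\mu}$, extracting a genuine rational function of prescribed degree from the formal eigenvalues of $\phi_i^{\pm}(z)$ is the delicate point and is where the shift $\mu$ really enters the argument.
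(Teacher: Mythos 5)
The paper does not reprove this result; it cites it directly from Hernandez's shifted quantum affine paper, so I am comparing against what that argument has to do. Your steps (a) and (b) are sound in shape. In particular, (a) is the standard maximal-weight argument: for $\gamma$ maximal in $P(V)$ and $V_\gamma$ finite-dimensional, commutativity of the Cartan--Drinfeld subalgebra gives a genuine simultaneous eigenvector there, and maximality kills the $x_{i,r}^+$. For (b), the use of the relations \eqref{eq:Relxpxmphi} together with the dual finiteness of $V_\gamma^+$ and $V_\gamma^-$ to force rationality (Kronecker/Hankel-rank reasoning) is indeed the right idea; in Hernandez's framework this point is in fact absorbed into the prior $\ell$-weight decomposition result (recalled in Section 2.2 of the present paper), which establishes rationality for \emph{all} $\ell$-weights of any object of $\OO^\mu$, so your (b) is re-deriving a special case of that. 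One minor inaccuracy in (b): the $+$ and $-$ expansions do not ``share the same exponential factor,'' since one involves the $h_{i,m}$ with $m>0$ and the other the $h_{i,-m}$; the point is rather that both expansions are forced to agree with a single rational function, which is a conclusion, not a premise.

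Step (c) has a genuine gap. The assertion that $M(\Psi)\in\OO^\mu$ is false. By the triangular decomposition, the weight space $M(\Psi)_{\varpi(\Psi)[-\alpha_i]}$ is linearly isomorphic to the $Q$-degree $-\alpha_i$ component of $\uqmu{\mu,-}{\g}$, which is the span of the whole infinite family $\{x_{i,r}^-\}_{r\in\Z}$; these are linearly independent, so this weight space is infinite-dimensional and condition (ii) of Definition \ref{def:O} fails. Your phrase ``each weight space is a subquotient of a finite-dimensional piece of $\uqmu{\mu,-}{\g}$'' is where this goes wrong: the $Q$-graded pieces of $\uqmu{\mu,-}{\g}$ are \emph{not} finite-dimensional, because the Drinfeld generators carry an unconstrained loop index $r\in\Z$. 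The Verma construction and the uniqueness-of-maximal-submodule argument do correctly produce a simple $\uqmu{\mu}{\g}$-module $L(\Psi)$ and establish injectivity of the classifying map, but the surjectivity statement --- i.e.\ that $L(\Psi)$ actually lands in $\OO^\mu$ for every $\Psi\in\mathfrak{r}_\mu$ --- is precisely the hard part, and it is not a consequence of putting $M(\Psi)$ in $\OO^\mu$. In Hernandez's proof this requires a separate analysis of $L(\Psi)$ itself, exploiting the rationality of $\Psi$ to bound the dimensions of its weight and $\ell$-weight spaces; without this, one only knows $L(\Psi)$ exists as an abstract simple module, not that it is an object of $\OO^\mu$.
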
\vspace*{-0.5mm}
There is hence, for every $\Psi\in \mathfrak{r}_{\mu}$, a unique (up to isomorphism) simple module $L(\Psi)$ of $\OO^{\mu}$ with highest $\ell$-weight $\Psi$. We include a description of three remarkable families of these~simple modules. We will need, for $i\in I$ and $a\in \C^{\times}$, the $\ell$-weights $Y_{i,a}\in\mathfrak{r}_0$ and $\Psi_{i,a}^{\pm 1}\in \mathfrak{r}_{\pm \omega_i^{\vee}}$~given~by 
$$ (Y_{i,a})_j(z) = \left\{\begin{array}{ll} q_i\frac{1-azq_i^{-1}}{1-azq_i} & \text{if }i=j,\\
1 & \text{else}\end{array}\right.\quad \text{and}\quad (\Psi_{i,a}^{\pm 1})_j(z) = \left\{\begin{array}{ll} (1-az)^{\pm 1} & \text{if }i=j,\\
1 & \text{else.}\end{array}\right.$$
\begin{example}[see, e.g., {\cite[Example 3.12]{fh1}}]\label{ex:KR}
Fix $i\in I$, $a\in \C^{\times}$ and $k\in \mathbb{N}_{>0}$. Then $\OO^0$ contains the fundamental representation $L(Y_{i,a})$ and the Kirillov-Reshitikhin module $$W_{k,a}^{(i)} = L(Y_{i,a}Y_{i,aq_i^{2}}\dots Y_{i,aq_i^{2(k-1)}})$$
(which are also representations of the quantum loop algebra $\uqmu{}{\hat{\g}}$). A realization of $W_{k,aq^{1-2k}}^{(1)}$ for $\g=\SL$ is given on the space with basis $\{v_m\}_{m=0}^k$ via
$$x_{1,r}^+v_m = a^rq^{2r(1-m)}v_{m-1}\text{ and }x_{1,r}^-v_m = a^rq^{-2mr}[m+1]_q[k-m]_qv_{m+1}$$ for $r\in\Z$ with $v_{-1}=v_{k+1} = 0$ and
$$ \phi_1^{\pm}(z)v_m = q^{k-2m}\frac{(1-azq^{-2k})(1-azq^2)}{(1-azq^{2(1-m)})(1-azq^{-2m})}v_m.$$
\end{example}
\begin{example}\label{ex:Invertibles} Fix $\gamma=(\gamma_i)_{i\in I}\in \mathfrak{t}^{\times}$ and view it as the $\ell$-weight $(\gamma_i(z))_{i\in I}\in\mathfrak{r}_0$ consisting of the constant rational functions $\gamma_i(z)=\gamma_i$ for $i\in I$. Then, the associated simple module $L(\gamma)$ of $\mathcal{O}^0$ is $1$-dimensional and is said to be an invertible\footnote{The terminology comes from the fact that, as modules over the Hopf subalgebra $\uqmu{}{\hat{\mathfrak{b}}}$ of $\uqmu{}{\hat{\g}}$ (which is also a subalgebra of $\uqmu{0}{\g}$ by Remark \ref{rem:borQaff}), $L(\gamma)\otimes L(\gamma^{-1})\simeq L(\mathbbm{1})$ where $\mathbbm{1}\in\mathfrak{t}^{\times}$ is defined by $\mathbbm{1} = (1)_{i\in I}$.} representation of $\uqmu{0}{\g}$.
\end{example}
\begin{example}[{\cite[Example 4.13]{hshift}}]\label{ex:Prefund} Fix $i\in I$ and $a\in \C^{\times}$. Then, $\OO^{\omega_i^{\vee}}$ contains the positive prefundamental representation $L(\Psi_{i,a})$ which has dimension 1 and verifies, if $v\in L(\Psi_{i,a})$,
$$x_{j}^{\pm}(z)v = 0 \text{ and } \phi_j^{\pm}(z)v = (\Psi_{i,a})_j(z)v = (1-az\delta_{i,j})v$$
for all $j\in I$. Conversely, $\OO^{-\omega_i^{\vee}}$ contains the negative prefundamental representation $L(\Psi_{i,a}^{-1})$ which is infinite-dimensional and of particular importance in the study of Baxter's $Q$-operators and quantum integrable systems (see, e.g., \cite{fh1,hshift,hj}). A realization of $L(\Psi_{1,a}^{-1})$ for $\g=\SL$ is given on the space with basis $\{v_m\}_{m\geq 0}$ via
$$ x_{1,r}^+v_m = a^rq^{2r(1-m)}v_{m-1}\text{ and }(q-q^{-1})x_{1,r}^-v_m = a^rq^{-(2r+1)m}[m+1]_qv_{m+1} $$
for $r\in \Z$ with $v_{-1}=0$ and
$$ \phi_1^{\pm}(z)v_m = q^{-2m}\frac{1-azq^2}{(1-azq^{2(1-m)})(1-azq^{-2m})}v_m.$$
In general, the structure of $\uqmu{-\omega_i^{\vee}}{\g}$-module of $L(\Psi_{i,a}^{-1})$ extends the structure of $\uqmu{}{\hat{\mathfrak{b}}}$-module introduced in \cite[Section 4]{hj} using asymptotic limits of Kirillov--Reshitikhin modules.
\end{example}
\begin{rem}\label{rem:prefundMonGen} The importance of the prefundamental representations $L(\Psi_{i,a}^{\pm 1})$ with respect to the representation theory of shifted quantum affine algebras lies in the fact that every simple $\uqmu{\mu}{\g}$-module (for any $\mu\in \Lambda^{\vee}$) can be realized as a subquotient of some \textit{fusion product} of an invertible representation $L(\gamma)$ of $\uqmu{0}{\g}$ with (typically many) prefundamental representations \cite[Section 5]{hshift}. More details about this fact will be given in the following subsection. 
\end{rem}
Define now the category $\OO^{sh}$ as the direct sum (of abelian categories)
$$ \textstyle \OO^{sh}=\bigoplus_{\mu\in \Lambda^{\vee}} \OO^{\mu}.$$
By Theorem \ref{thm:ClassSimp}, irreducible objects in this category are parametrized by $\mathfrak{r}$ up to isomorphism. We also have the following characterization of finite-dimensional simple modules.
\begin{thm}[{\cite[Theorem 6.4]{hshift}}]\label{thm:ClassSimpDimFin} The finite-dimensional irreducible objects of the category $\OO^{sh}$ are the modules $L(\Psi)$ with $\Psi$ a monomial in the following $\ell$-weights : 
\begin{itemize}
\setlength{\itemsep}{1.5pt}
\item[(i)] the $Y_{i,a}$'s for $i\in I$ and $a\in\C^{\times}$,
\item[(ii)] the $\Psi_{i,a}$'s for $i\in I$ and $a\in\C^{\times}$,
\item[(iii)] the $\gamma$'s for $\gamma\in\mathfrak{t}^{\times}$.
\end{itemize}
\end{thm}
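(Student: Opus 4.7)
The strategy is to prove each implication separately. For ($\Leftarrow$), I would use that each of the three listed families consists of finite-dimensional simples --- namely the fundamental representations $L(Y_{i,a})$ (finite-dimensional via the link with $\uqmu{}{\hat{\g}}$ recalled in Example \ref{ex:KR}), the one-dimensional positive prefundamentals $L(\Psi_{i,a})$ of Example \ref{ex:Prefund}, and the one-dimensional invertibles $L(\gamma)$ of Example \ref{ex:Invertibles} --- and combine them via the fusion product. Given a monomial $\Psi = \gamma_0 \prod_{k=1}^{m} Y_{i_k,a_k} \prod_{\ell=1}^{n} \Psi_{j_\ell,b_\ell}$, I would form the fusion product
\[ F = L(\gamma_0) \star L(Y_{i_1,a_1}) \star \cdots \star L(Y_{i_m,a_m}) \star L(\Psi_{j_1,b_1}) \star \cdots \star L(\Psi_{j_n,b_n}), \]
whose $q$-character factors as the product of the finite $q$-characters of its factors (by the multiplicativity of $q$-characters under $\star$ recalled in the introduction) and is therefore itself finite. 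Since the highest $\ell$-weight of $F$ is $\Psi$, the module $L(\Psi)$ appears as the simple head of $F$ and inherits finite-dimensionality.

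For ($\Rightarrow$), let $L(\Psi)$ be a finite-dimensional simple with $\Psi \in \mathfrak{r}_\mu$. I would factor each component as
\[ \Psi_i(z) = c_i \cdot \frac{P_i(z)}{Q_i(z)} \]
with $c_i \in \C^\times$ and $P_i, Q_i \in \C[z]$ coprime monic polynomials of constant term $1$. The central claim is the \emph{pole--zero pairing}: every root $1/(aq_i)$ of $Q_i(z)$ is accompanied by a root of $P_i(z)$ at $q_i/a$, so that $\Psi_i(z)$ contains the rational ratio $(1-azq_i^{-1})/(1-azq_i)$ as the nontrivial component of a $Y_{i,a}$. After extracting all such $Y_{i,a}$'s, the residual component $\Psi_i^{\mathrm{res}}(z)$ has trivial denominator and factors as $c_i' \prod_b (1 - b z)$, giving a product of $\Psi_{i,b}$ terms times a constant contribution absorbed into a $\gamma \in \mathfrak{t}^\times$.

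The core obstacle is establishing the pole--zero pairing claim. My plan is to imitate the Drinfeld--Chari--Pressley analysis: starting from a highest $\ell$-weight vector $v$ of $L(\Psi)$, repeatedly applying combinations of the $x_{i,r}^-$ generates chains of $\ell$-weight vectors whose eigenvalues are determined through the commutation relation \eqref{eq:Relxpxmphi} and the expansion \eqref{eq:RelhPhi}. These eigenvalues encode the roots of $P_i$ and $Q_i$; finite-dimensionality forces each chain to terminate after finitely many steps, and writing out the terminating condition yields precisely the pairing between poles of $Q_i$ and matching zeros of $P_i$. The "unpaired" zeros of $P_i$ correspond to $\Psi_{i,b}$-type factors whose associated generators act as scalars on $v$ and do not create fresh weight spaces, hence do not enter the termination argument. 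The passage from the $\mu = 0$ setting (which recovers the classical Drinfeld polynomial classification for $\uqmu{}{\hat{\g}}$) to arbitrary $\mu$ is then immediate from the triangular decomposition \eqref{eq:TriangularDec}, whose positive and negative parts are structurally independent of the shift.
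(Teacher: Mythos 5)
The theorem is quoted verbatim from Hernandez's paper on shifted quantum affine algebras and your paper contains no internal proof to compare against, so I will assess your argument on its own. Your ($\Leftarrow$) direction is fine: every factor in $F$ is a finite-dimensional highest $\ell$-weight module, the multiplicativity $\chi_q(V_1\star V_2)=\chi_q(V_1)\chi_q(V_2)$ from Theorem \ref{thm:HighestWeightFus} makes $\chi_q(F)$ a finite sum, so $F$ and hence its simple head $L(\Psi)$ are finite-dimensional.

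The ($\Rightarrow$) direction breaks at the central ``pole--zero pairing'' claim, which is false as stated. You write $\Psi_i(z) = c_i P_i(z)/Q_i(z)$ with $P_i, Q_i$ coprime and posit that each root of $Q_i$ at $1/(aq_i)$ forces a root of $P_i$ at $q_i/a$, so that $Y_{i,a}$-factors can be stripped off one pole at a time. Take $\g = \SL$ and the Kirillov--Reshitikhin module $W_{k,aq^{1-2k}}^{(1)} = L(Y_{1,aq^{1-2k}}\cdots Y_{1,aq^{-1}})$ with $k\ge 2$: by Example \ref{ex:KR}, its highest $\ell$-weight in reduced form is $\Psi_1(z) = q^k(1-azq^{-2k})/(1-az)$, whose only pole sits at $z=1/a$ and whose only zero sits at $z=q^{2k}/a$, \emph{not} at $q^2/a$. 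The telescoping inside a product of $k$ consecutive $Y_{1,b}$'s cancels the intermediate poles and zeros, collapsing the $q$-string to its two endpoints and destroying the one-step pairing. What the termination of the chain $v,\, x_{i,0}^-v,\, (x_{i,0}^-)^2 v,\ldots$ actually forces, in the Chari--Pressley style analysis you invoke, is that $\Psi_i(z)$ equals a constant times $q_i^{\deg D_i}\, D_i(zq_i^{-1})/D_i(zq_i)$ times a polynomial with constant term $1$, for a Drinfeld-type polynomial $D_i$; in pole--zero language this is a global $q_i$-string matching condition on the multisets of poles and zeros of $\Psi_i$, not a one-step pairing, and extracting precisely this structure from the termination argument is the step your sketch omits.
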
\vspace*{-3mm}
\subsection{Fusion product and $q$-characters}\label{sec:Fusion} Let $\mathcal{E}_{\ell}$ be the ring of functions $f:\mathfrak{r}\arr \Z$ for which
\begin{itemize}
\item[(i)] $\{\varpi(\Psi)\,|\,\Psi\in\mathfrak{r}\text{ and }f(\Psi)\neq 0\}\subseteq \bigcup_{i=1}^s D(\lambda_i)$ for some $\lambda_1,\dots,\lambda_s\in \mathfrak{t}^{\times}$ and
\item[(ii)] $\{\Psi\in\mathfrak{r}\,|\,f(\Psi)\neq 0\text{ and }\varpi(\Psi)=\gamma\}$ is a finite set for any $\gamma\in\mathfrak{t}^{\times}$
\end{itemize}
with the sets $D(\lambda)$ as in Definition \ref{def:O}. The ring multiplication on $\mathcal{E}_{\ell}$ is convolution. Fix $V$ in $\mathcal{O}^{sh}$. Then the \textit{$q$-character} $\chi_q(V)$ is the element of $\mathcal{E}_{\ell}$ defined as (see \cite{fr,hshift})
$$ \textstyle 
\chi_q(V) = \sum_{\Psi\in\mathfrak{r}}\dim V_{\Psi}[\Psi]$$
where, for $\Psi\in\mathfrak{r}$, we denoted by $[\Psi]$ the map in $\mathcal{E}_{\ell}$ given by $[\Psi](\Psi')=\delta_{\Psi,\Psi'}$ on all $\Psi'\in \mathfrak{r}$.\par
We also have the (usual) \textit{character} $\chi(V)$ given by 
$$\textstyle 
\chi(V) =% \varpi(\chi_q(V))=
\sum_{\gamma\in\mathfrak{t}^{\times}}\dim V_{\gamma}[\gamma]$$
where $\mathfrak{t}^{\times}$ is seen inside $\mathfrak{r}$ as in Example \ref{ex:Invertibles}. Lastly, for $V$ in $\OO^{sh}$ having a unique $\ell$-weight $\Psi$ whose weight $\varpi(\Psi)$ is maximal in $P(V)$ (for the order induced from that of $\mathfrak{t}^{\times}$), we define~the \textit{normalized $q$-character} $\overline{\chi_q}(V)$ and the \textit{normalized character} $\overline{\chi}(V)$ as 
$$\overline{\chi_q}(V) = [\Psi^{-1}]\chi_q(V) \text{ and }\overline{\chi}(V)=[(\varpi(\Psi))^{-1}]\chi(V).$$
Now, for $i\in I$ and $a\in \C^{\times}$, let $A_{i,a}\in\mathfrak{r}_0$ be given by the formula
$$A_{i,a}=Y_{i,aq_i^{-1}}Y_{i,aq_i}\Bigg(\prod_{\{j\in I| C_{j,i}=-1\}} Y_{j,a}\prod_{\{j\in I|C_{j,i}=-2\}} Y_{j,aq^{-1}}Y_{j,aq}\prod_{\{j\in I|C_{j,i}=-3\}} Y_{j,aq^{-2}}Y_{j,a}Y_{j,aq^2}\Bigg)^{-1}$$
with the $\ell$-weights $\{Y_{j,b}\}_{j\in I,b\in \C^{\times}}$ as in Section \ref{sec:O}. Clearly, $\varpi(A_{i,a})=[\alpha_i]$ with $\varpi(Y_{i,a}) = [\omega_i]$ so that $A_{i,a}$ (resp.~$Y_{i,a}$) can be seen as an analog, inside $\mathfrak{r}_0$, of the simple root $\alpha_i$ (resp.~of the fundamental weight $\omega_i$). We will follow for this article the convention of \cite{fh1,hj} and forget the brackets $[\cdot]$ when writing a monomial in the $Y_{j,b}$'s (with $j\in I$, $b\in\C^{\times}$) in the $q$-character of an object of $\OO^{sh}$. This applies in particular to the $\ell$-weights $A_{i,a}$ defined above.
\par The next remark is well known (see, e.g., the proof of \cite[Proposition 2.14]{her}).
\begin{rem}\label{rem:AAlgFree} The $A_{i,a}$'s with $i\in I$ and $a\in\C^{\times}$ are free in the multiplicative group $\mathfrak{r}$.
\end{rem} 
This remark and the theorem below partially explain the importance of the $\ell$-weights $A_{i,a}$.
\begin{thm}[{\cite[Theorem 5.11]{hshift}}, see also \cite{fm}]\label{thm:qCharA} Fix $\Psi\in \mathfrak{r}$. Then, 
$$ \overline{\chi_q}(L(\Psi)) \in 1+\mathbb{N}[[A_{i,a}^{-1}]]_{i\in I,a\in \C^{\times}}.$$
\end{thm}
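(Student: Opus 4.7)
The plan is to reduce the statement to a claim about a few \emph{building block} simple modules and then exploit the multiplicativity of the $q$-character under the fusion product. Concretely, by Remark \ref{rem:prefundMonGen}, any $\Psi\in \mathfrak{r}$ admits a factorisation $\Psi=\gamma\prod_j \Psi_{i_j,a_j}^{\epsilon_j}$ with $\gamma\in \mathfrak{t}^{\times}$, $i_j\in I$, $a_j\in \C^{\times}$ and $\epsilon_j\in\{\pm 1\}$, and the fusion product
$$F=L(\gamma)\star L(\Psi_{i_1,a_1}^{\epsilon_1})\star \cdots \star L(\Psi_{i_k,a_k}^{\epsilon_k})$$
can be arranged (following the construction in \cite[Section 5]{hshift}) to have highest $\ell$-weight $\Psi$, with $L(\Psi)\simeq \topp F$. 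I would then first check the conclusion for each factor of $F$, next deduce it for $F$ via the multiplicative behaviour of $\chi_q$, and finally transfer the result from $F$ to its simple quotient $L(\Psi)$.

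\textbf{Building blocks.} The invertibles $L(\gamma)$ and positive prefundamentals $L(\Psi_{i,a})$ are one-dimensional by Examples \ref{ex:Invertibles}--\ref{ex:Prefund}, so their normalized $q$-characters equal $1\in 1+\mathbb{N}[[A_{j,b}^{-1}]]$ trivially. The remaining case, that of the negative prefundamental $L(\Psi_{i,a}^{-1})$, is the substantial step. For $\g=\SL$ it can be read off directly from the explicit realization at the end of Example \ref{ex:Prefund}, whose formulae show that the $m$-th $\ell$-weight is obtained from the highest one by successively applying $A_{1,\ast}^{-1}$ factors. In general, one uses the realization (from \cite{hj}) of $L(\Psi_{i,a}^{-1})$ as an asymptotic limit of the Kirillov--Reshetikhin modules $W^{(i)}_{k,aq_i^{2-2k}}$ from Example \ref{ex:KR}; invoking the classical Frenkel--Mukhin theorem \cite{fm} giving $\overline{\chi_q}(W^{(i)}_{k,b})\in 1+\mathbb{N}[A_{j,c}^{-1}]$ for all $k$ and $b$, then passing to the limit, yields the desired $\overline{\chi_q}(L(\Psi_{i,a}^{-1}))\in 1+\mathbb{N}[[A_{j,c}^{-1}]]$.

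\textbf{Assembly.} Using that $\chi_q$ is multiplicative under fusion product (recalled in Section \ref{sec:Fusion}), one has
$$ \overline{\chi_q}(F)=\overline{\chi_q}(L(\gamma))\cdot \prod_j \overline{\chi_q}(L(\Psi_{i_j,a_j}^{\epsilon_j}))\in 1+\mathbb{N}[[A_{j,b}^{-1}]],$$
where the last inclusion relies on the subset $1+\mathbb{N}[[A_{j,b}^{-1}]]$ being stable under the convolution product of $\mathcal{E}_{\ell}$ (unambiguously defined because the $A_{j,b}$ are multiplicatively free in $\mathfrak{r}$ by Remark \ref{rem:AAlgFree}). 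From the short exact sequence $0\arr K\arr F\arr L(\Psi)\arr 0$ we deduce $\chi_q(F)=\chi_q(K)+\chi_q(L(\Psi))$, both summands having nonnegative integer coefficients. Hence every $\ell$-weight of $L(\Psi)$ is one of $F$, and is thereby of the form $\Psi\cdot\prod A_{j,b}^{-n_{j,b}}$ with $n_{j,b}\in\mathbb{N}$; coupled with the $1$-dimensionality of the highest $\ell$-weight space of $L(\Psi)$, this gives $\overline{\chi_q}(L(\Psi))\in 1+\mathbb{N}[[A_{j,b}^{-1}]]$ and completes the argument.

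\textbf{Main obstacle.} The genuinely hard step is controlling $\overline{\chi_q}(L(\Psi_{i,a}^{-1}))$ in arbitrary type; everything else is essentially formal once one has the multiplicativity of $\chi_q$ under fusion and the existence of $F$ with head $L(\Psi)$. This step is precisely what forces the reference to the Frenkel--Mukhin theorem \cite{fm} in the statement and is the reason for the careful passage through KR modules and their asymptotic limits.
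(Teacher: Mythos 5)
The paper imports this statement from \cite[Theorem 5.11]{hshift} (with the attribution ``see also \cite{fm}'') and does not supply a proof of its own, so there is no internal proof to compare against. Your reconstruction is nonetheless correct and is exactly the argument one would reassemble from the surrounding material of Section~\ref{sec:Fusion}: the invertible and positive prefundamental factors contribute trivially; the negative prefundamentals are handled by Proposition~\ref{prop:PrefondKR} together with the Frenkel--Mukhin bound $\overline{\chi_q}(W^{(i)}_{k,b})\in 1+\mathbb{N}[A_{j,c}^{-1}]$ for Kirillov--Reshetikhin modules and the monotone passage to the limit; multiplicativity of $\chi_q$ under $\star$ (Theorem~\ref{thm:HighestWeightFus}) gives the claim for the fusion product $F$ of Theorem~\ref{thm:PrefundMonGen}; and the transfer to $L(\Psi)=\topp F$ via additivity of $\chi_q$ on short exact sequences, nonnegativity of $\ell$-weight multiplicities, and one-dimensionality of the top $\ell$-weight space is sound, as is your observation that $1+\mathbb{N}[[A_{i,a}^{-1}]]$ is closed under the convolution product of $\mathcal{E}_\ell$ thanks to Remark~\ref{rem:AAlgFree}. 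One cosmetic slip: in the paper's normalization (Example~\ref{ex:KR}, Proposition~\ref{prop:PrefondKR}) the relevant shift is $aq_i^{1-2k}$, not $aq_i^{2-2k}$; this has no bearing on the validity of the argument since the Frenkel--Mukhin positivity holds for every spectral parameter.
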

\begin{example}\label{ex:qCharEX} Fix $i\in I$, $a\in \C^{\times}$ and $\gamma\in \mathfrak{t}^{\times}$. Then $\chi_q(L(\Psi_{i,a})) = [\Psi_{i,a}]$ and $\chi_q(L(\gamma))=[\gamma]$. In addition, for $\g = \SL$ and $k\in\mathbb{N}_{>0}$, by Example \ref{ex:KR} and Example \ref{ex:Prefund},
$$\textstyle \overline{\chi_q}(W_{k,aq^{1-2k}}^{(1)})=1+A_{1,a}^{-1}(1+\sum_{s=1}^{k-1}A_{1,aq^{-2}}^{-1}\dots A_{1,aq^{-2s}}^{-1})$$\vspace*{-3.25mm}\\
with $\overline{\chi_q}(L(\Psi_{1,a}^{-1}))=\lim_{k\arr \infty}\overline{\chi_q}(W_{k,aq^{1-2k}}^{(1)})$.
\end{example}
The last sentence of the above example foreshadows the following proposition. %(which is an aftermath of the 
%\textit{asymptotic limit construction} of \cite{hj} for negative prefundamental representations of the Borel subalgebra $\uqmu{}{\hat{\mathfrak{b}}}$ of $\uqmu{}{\hat{\g}}$).
\begin{prop}[{\cite{hj}}]\label{prop:PrefondKR} Fix $i\in I$ with $a\in \C^{\times}$. Let $V_k=W_{k,aq_i^{1-2k}}^{(i)}$ for all $k\in\mathbb{N}_{>0}$. Then, \vspace*{-2.1mm}
$$\textstyle \overline{\chi_q}(L(\Psi_{i,a}^{-1}))=\lim_{k\arr \infty}\overline{\chi_q}(V_k) $$\vspace*{-2.55mm}\\ as formal power series in $1+A_{i,a}^{-1}\mathbb{N}[[A_{j,b}^{-1}]]_{j\in I,b\in \C^{\times}}$. Also, for every $k\in \mathbb{N}_{>0}$ and $\Psi\in\mathfrak{r}$,
$$ \dim (V_{k})_{m_k\Psi}\leq \dim(V_{k+1})_{m_{k+1}\Psi} \leq \dim (L(\Psi_{i,a}^{-1}))_{\Psi_{i,a}^{-1}\Psi} $$\vspace*{-3.15mm}\\
where $m_k$ (resp.~$m_{k+1}$) is the highest $\ell$-weight of $V_k$ (resp.~$V_{k+1}$).
\end{prop}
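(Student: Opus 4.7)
The plan is to realize $L(\Psi_{i,a}^{-1})$ as an asymptotic limit of the Kirillov--Reshetikhin modules $V_k$ following the method of \cite{hj}; both conclusions will then fall out of this construction. The key combinatorial input is that $m_{k+1} = m_k \cdot Y_{i,aq_i^{-1-2k}}$, so the extra highest-$\ell$-weight factor introduced at each step lies at a spectral parameter drifting to infinity, and by Theorem \ref{thm:qCharA} each $\overline{\chi_q}(V_k)$ is a polynomial in $\{A_{j,b}^{-1}\}$ whose coefficients we must show stabilize in $k$.

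First I would renormalize the Cartan--Drinfeld currents on $V_k$ by setting $\widetilde\phi_i^\pm(z) = m_k(z)^{-1}\phi_i^\pm(z)$, so that a fixed highest $\ell$-weight vector $w_k \in V_k$ becomes trivial with respect to the new currents, and so that the renormalized $\ell$-weights of $V_k$ are exactly the monomials appearing in $\overline{\chi_q}(V_k)$. Next, for each depth $M \geq 0$, I would construct weight-preserving inclusions $V_k^{\leq M} \hookrightarrow V_{k+1}^{\leq M}$ of the subspaces spanned by weight spaces of weight at least $m_k[\omega_i^\vee] - M[\alpha_i]$, compatibly with the renormalized actions of all $x_{j,r}^\pm$ and $\widetilde\phi_j^\pm(z)$ once $k$ is large enough. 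Such compatibility would follow from the Frenkel--Mukhin-type combinatorial control on the $\ell$-weights of $V_k$, which ensures that the monomials appearing in $\overline{\chi_q}(V_{k+1})$ but not in $\overline{\chi_q}(V_k)$ all lie at weight-depth strictly beyond $M$ for $k$ large; the specific choice of spectral parameter $aq_i^{1-2k}$ is what makes this alignment work.

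Taking the double inductive limit in $k$ and $M$ yields a vector space $V_\infty$ carrying a well-defined action of the Borel subalgebra of Remark \ref{rem:borQaff}, which extends naturally to a $\uqmu{-\omega_i^\vee}{\g}$-action because the renormalization has turned $\widetilde\phi_i^-(z)$ into a power series of leading degree $-1$ at node $i$, as required by Definition \ref{def:Uqmu}. The module $V_\infty$ lies in $\OO^{-\omega_i^\vee}$ and has highest $\ell$-weight $\Psi_{i,a}^{-1}$; its simplicity follows because any proper nontrivial submodule would meet some $V_k^{\leq M}$ in a proper nontrivial subspace, contradicting the irreducibility of the KR module $V_k$. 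By Theorem \ref{thm:ClassSimp} we conclude $V_\infty \simeq L(\Psi_{i,a}^{-1})$.

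The dimension inequalities $\dim (V_k)_{m_k\Psi} \leq \dim (V_{k+1})_{m_{k+1}\Psi} \leq \dim L(\Psi_{i,a}^{-1})_{\Psi_{i,a}^{-1}\Psi}$ now read off directly from the inclusions $V_k^{\leq M} \hookrightarrow V_{k+1}^{\leq M} \hookrightarrow V_\infty$ (taking $M$ large enough to contain the $\Psi$-depth), and termwise comparison of the (finite-dimensional, by condition (ii) of Definition \ref{def:O}) $\ell$-weight spaces gives the convergence $\overline{\chi_q}(L(\Psi_{i,a}^{-1})) = \lim_k \overline{\chi_q}(V_k)$ in $1 + A_{i,a}^{-1}\mathbb{N}[[A_{j,b}^{-1}]]_{j\in I, b\in \C^{\times}}$. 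The hard part will be the construction of the stabilizing inclusions in the second step: in the $\SL$ case (Example \ref{ex:KR}--Example \ref{ex:Prefund}) these are transparent on the explicit bases $\{v_m\}$, but for general $\g$ they demand either geometric input from Nakajima quiver varieties or a careful combinatorial argument controlling the weight-depth of the monomials introduced by the Frenkel--Mukhin algorithm when passing from $V_k$ to $V_{k+1}$.
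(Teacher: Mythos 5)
The paper does not prove this proposition; it is quoted from Hernandez--Jimbo \cite{hj}, and your outline does correctly identify the asymptotic-limit construction of \cite{hj} as the right framework. However, the proof you sketch has two genuine gaps.

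First, the construction of the stabilizing inclusions $V_k^{\leq M}\hookrightarrow V_{k+1}^{\leq M}$ --- which you rightly flag as the crux --- is not actually carried out, and the justification you offer for it is circular. You propose to deduce the inclusions from the ``Frenkel--Mukhin-type combinatorial control'' that the monomials of $\overline{\chi_q}(V_{k+1})$ not in $\overline{\chi_q}(V_k)$ lie beyond depth $M$ once $k$ is large. But this is precisely (the hard half of) assertion (1)--(2) of the proposition you are trying to prove, so it cannot be used as an ingredient. Moreover, even if the $q$-character stabilization were granted, it would only furnish matching multiplicities, not linear maps: two modules with the same (normalized) $q$-character up to depth $M$ need not admit an operator-compatible embedding on that range. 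What \cite{hj} actually supplies at this point is an explicit polynomiality/convergence statement for the normalized matrix coefficients of the lowering operators $x_{j,r}^-$ as functions of $k$ (obtained by tracking the KR module action in a suitable basis), and it is this analytic input, not a $q$-character comparison, that produces the linear system with a well-defined limit. Without some version of that argument, the passage to $V_\infty$ and both conclusions of the proposition are unjustified.

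Second, the simplicity argument is unsound as phrased. You say a proper nontrivial submodule of $V_\infty$ would ``meet some $V_k^{\leq M}$ in a proper nontrivial subspace, contradicting the irreducibility of the KR module $V_k$.'' But $V_k^{\leq M}$ is merely a weight truncation and is not a $\uqmu{0}{\g}$-submodule of $V_k$ (the operators $x_{j,r}^-$ leave it), so a proper nontrivial subspace of $V_k^{\leq M}$ carries no contradiction with irreducibility of $V_k$. The repair is the usual singular-vector argument: a nonzero proper submodule $N\subseteq V_\infty$ would contain, by the weight-finiteness of Definition~\ref{def:O}, a weight vector $w$ of maximal weight in $N$, hence with $x_j^+(z)w=0$ for all $j$; transporting $w$ back into some $V_k^{\leq M}$ (which \emph{is} compatible with $x^+$-actions, as they raise weight) would exhibit a second singular vector in the irreducible KR module $V_k$, which is impossible. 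This is the contradiction you want, not the one you wrote. Your renormalization step and the identification of the highest $\ell$-weight of $V_\infty$ as $\Psi_{i,a}^{-1}$ are fine, as is the computation $m_{k+1}=m_kY_{i,aq_i^{-1-2k}}$.
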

Consider now, for $\mu,\nu\in\Lambda^{\vee}$ and some variable $u$, the \textit{Drinfeld coproduct} \cite[Section 5.2]{hshift} $$\Delta_{\mu,\nu}^{(u)}:\uqmu{\mu+\nu}{\g}\arr (\uqmu{\mu}{\g}\otimes\uqmu{\nu}{\g})((u))$$ 
which is defined implicitly, for $i\in I$, by $\phi_i^{\pm}(z)\mapsto\phi_i^{\pm}(z)\otimes \phi_i^{\pm}(zu)$ with 
\begin{equation}\label{eq:Dcop}
x_i^{+}(z)\mapsto x_i^+(z)\otimes 1+\phi^-_i(z)\otimes x_i^+(zu)\text{ and } x_i^{-}(z)\mapsto 1\otimes x_i^-(zu)+x_i^-(z)\otimes\phi^+_i(zu).
\end{equation}
The specialization of $\Delta_{\mu,\nu}^{(u)}$ at $u=1$ gives a well-defined algebra morphism from $\uqmu{\mu+\nu}{\g}$ to a completion of $\uqmu{\mu}{\g}\otimes\uqmu{\nu}{\g}$ (cf.~\cite[Remark 5.3]{hshift}).~The resulting map however~does~not~give directly a well-defined action of $\uqmu{\mu+\nu}{\g}$ on general tensor products of objects of $\OO^{\mu}$~and~$\OO^{\nu}$. The \textit{fusion product} of \cite{hAff,hfus,hshift} is an operation designed to resolve this technical issue.\par
To recall the definition of this operation, fix $V_1$ and $V_2$ two highest $\ell$-weight modules in $\OO^{\mu}$ and $\OO^{\nu}$ respectively. Then, the Drinfeld coproduct above gives a well-defined $\uqmu{\mu+\nu}{\g}$-action on the space $(V_1\otimes V_2)((u))$ of formal Laurent power series with coefficients inside $V_1\otimes V_2$. In addition, the subspace of rational Laurent power series $(V_1\otimes V_2)(u)\subseteq (V_1\otimes V_2)((u))$ (that is formal Laurent power series having a rational expression) is stable under this $\uqmu{\mu+\nu}{\g}$-action (cf.~\cite[Section 5.3]{hshift}). Define 
$$\mathscr{A} = \{f(u)\in \C(u)\,|\,f(u)\text{ is regular at }u=1\}$$
and fix highest $\ell$-weight vectors $v_1\in V_1$ with $v_2\in V_2$. Define also $X_{\mathscr{A}}$ as the $(\mathscr{A}\otimes \uqmu{\mu+\nu}{\g})$-submodule of $(V_1\otimes V_2)(u)$ generated by $v_1\otimes v_2$. 
\begin{defn}[{\cite[Section 5.3]{hshift}}] The \textit{fusion product} of $V_1$ and $V_2$ is the $\uqmu{\mu+\nu}{\g}$-module $$ V_1\star V_2 = X_{\mathscr{A}}/((u-1)X_{\mathscr{A}}).$$
\end{defn}
\begin{thm}[{\cite[Theorem 5.4]{hshift}}]\label{thm:HighestWeightFus} Note $\Psi_1$ (resp.~$\Psi_2$) the highest $\ell$-weight of $V_1$ (resp.~$V_2$). Then, $V_1\star V_2$ is a well-defined highest $\ell$-weight %$\uqmu{\mu+\nu}{\g}$-module and hence a well-defined 
object of $\OO^{\mu+\nu}$ with highest $\ell$-weight $\Psi_1\Psi_2$.~Also,
$$ \chi_q(V_1\star V_2) = \chi_q(V_1)\star \chi_q(V_2) $$
and taking $q$-characters gives a well-defined injective ring morphism $\chi_q : K_0(\OO^{sh})\arr \mathcal{E}_{\ell} $~(where the multiplication of two classes of simple objects, say $[V_1]$ and $[V_2]$, in the Grothendieck group $K_0(\mathcal{O}^{sh})$ is given by the class $[V_1\star V_2]$ of the corresponding fusion product).
\end{thm}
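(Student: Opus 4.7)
The approach is to propagate structural information from the class $\bar v$ of $v_1\otimes v_2$ in $V_1\star V_2$. First I would verify that $\bar v$ is a highest $\ell$-weight vector with $\ell$-weight $\Psi_1\Psi_2$: formula \eqref{eq:Dcop} gives $x_i^+(z)(v_1\otimes v_2)=(x_i^+(z)v_1)\otimes v_2+\phi_i^-(z)v_1\otimes(x_i^+(zu)v_2)=0$, since each summand contains a vanishing factor, while $\phi_i^\pm(z)\mapsto\phi_i^\pm(z)\otimes\phi_i^\pm(zu)$ applied to $v_1\otimes v_2$ yields eigenvalue $\Psi_{1,i}(z)\Psi_{2,i}(zu)$, which specializes at $u=1$ to $\Psi_{1,i}(z)\Psi_{2,i}(z)$. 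Because $X_{\mathscr{A}}$ is by definition generated by $v_1\otimes v_2$, its quotient $V_1\star V_2$ is generated over $\uqmu{\mu+\nu}{\g}$ by $\bar v$, placing the highest $\ell$-weight in $\mathfrak{r}_{\mu+\nu}$.

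Next I would show $V_1\star V_2\in\OO^{\mu+\nu}$. Property (iii) of Definition \ref{def:O} is automatic from the highest $\ell$-weight property: all $+$- and $-$-weights sit below $[\varpi(\Psi_1\Psi_2)]$. For (i), any pure tensor $w_1\otimes w_2$ with $w_i\in(V_i)_{\gamma_i}$ reduces modulo $(u-1)$ to a weight vector of weight $\gamma_1\gamma_2$, since the constant term in $z$ of $\phi_i^+(z)\otimes\phi_i^+(zu)$ is $\phi_{i,0}^+\otimes\phi_{i,0}^+$. The finite dimensionality (ii) I would deduce from the triangular decomposition \eqref{eq:TriangularDec} applied to $\bar v$: the negative subalgebra $\uqmu{\mu+\nu,-}{\g}$ sends $\bar v$ into $Q^+$-graded pieces of bounded dimension, and rationality in $\mathscr{A}$ keeps the $u$-denominators produced by iterated applications of \eqref{eq:Dcop} under control, so that evaluation at $u=1$ does not enlarge the weight spaces.

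For the $q$-character identity, the coproduct action of $\phi_{i,r}^+$ on a tensor of generalized $\ell$-eigenvectors $w_1\in(V_1)_{\Psi_1'}$, $w_2\in(V_2)_{\Psi_2'}$ has generalized eigenvalue $\Psi_{1,i}'(z)\Psi_{2,i}'(z)$ at $u=1$, so summing dimensions over all factorizations $\Psi=\Psi_1'\Psi_2'$ in $\mathfrak{r}$ yields $\chi_q(V_1\star V_2)=\chi_q(V_1)\star\chi_q(V_2)$ in $\mathcal{E}_\ell$. Since every simple object in $\OO^{sh}$ is highest $\ell$-weight (Theorem \ref{thm:ClassSimp}), I would then endow $K_0(\OO^{sh})$ with a product by setting $[L(\Psi_1)]\cdot[L(\Psi_2)]=[L(\Psi_1)\star L(\Psi_2)]$ on the basis $\{[L(\Psi)]\}_{\Psi\in\mathfrak{r}}$ and extending bilinearly; multiplicativity of $\chi_q$ then follows from the $q$-character identity. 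Injectivity comes from Theorem \ref{thm:qCharA}: the normalized $q$-characters $\overline{\chi_q}(L(\Psi))\in 1+\mathbb{N}[[A_{i,a}^{-1}]]$ give a unitriangular change of basis between $\{[L(\Psi)]\}$ and its image in $\mathcal{E}_\ell$, and associativity/commutativity of the Grothendieck ring then descend from $\mathcal{E}_\ell$.

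The main obstacle is the passage from the rational-power-series module $X_{\mathscr{A}}\subseteq(V_1\otimes V_2)(u)$ to its $u=1$ specialization while staying in $\OO^{\mu+\nu}$. Concretely, verifying that $V_1\star V_2$ has finite-dimensional weight spaces requires careful control of the $u$-denominators appearing when iterating \eqref{eq:Dcop} along a word in $\uqmu{\mu+\nu,-}{\g}$, together with a proof that $\mathscr{A}$ is rich enough for the resulting expressions to specialize at $u=1$ without enlarging the weight spaces beyond what the triangular decomposition of $\uqmu{\mu+\nu}{\g}$ forces; this is the technical heart of \cite[Theorem 5.4]{hshift}.
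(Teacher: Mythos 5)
The paper does not prove this theorem; it is stated as a recalled result with the explicit citation to \cite[Theorem~5.4]{hshift}, so there is no ``paper's own proof'' for your proposal to match. With that said, your sketch captures the right skeleton of Hernandez's argument (specialize $\Delta^{(u)}$ at $u=1$ via the $\mathscr{A}$-lattice $X_{\mathscr{A}}\subseteq (V_1\otimes V_2)(u)$, check the highest-$\ell$-weight property of $\overline{v_1\otimes v_2}$ directly from \eqref{eq:Dcop}, deduce multiplicativity of $\chi_q$ from the coproduct action on $\ell$-weight spaces, and get injectivity/commutativity of $K_0(\mathcal{O}^{sh})$ from unitriangularity of $\overline{\chi_q}(L(\Psi))$ via Theorem~\ref{thm:qCharA}).

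However, what you flag as ``the technical heart'' is indeed a genuine gap that your write-up does not close. Two things are needed and neither is argued: (a)~that the weight spaces of $X_{\mathscr{A}}$ are finitely generated $\mathscr{A}$-modules (torsion-freeness is automatic since $X_{\mathscr{A}}\subseteq(V_1\otimes V_2)(u)$, but finite generation requires bounding the denominators produced by iterated applications of \eqref{eq:Dcop}, using that for fixed $\gamma$ there are only finitely many factorizations $\gamma=\gamma_1\gamma_2$ with $\gamma_i\le\varpi(\Psi_i)$); and (b)~that specialization at $u=1$ produces \emph{exactly} $\chi_q(V_1)\chi_q(V_2)$ rather than something strictly smaller, which requires identifying the $\mathscr{A}$-rank of each $\ell$-weight space of $X_{\mathscr{A}}$ with the corresponding coefficient of $\chi_q(V_1)\chi_q(V_2)$ — this is where the actual content of \cite[Theorem 5.4]{hshift} lies, building on results of \cite{hAff,hfus}. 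You also check only the $V_\gamma^+$ decomposition in Definition~\ref{def:O}(i); the $V_\gamma^-$ decomposition (eigenspaces for the $\phi_{i,\alpha_i(\mu+\nu)}^-$) needs a parallel check. Without these, the proof is a correct outline but not complete.
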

\begin{rem} Using the fact that the fusion product of two highest $\ell$-weight modules is again of highest $\ell$-weight, we can define iteratively the fusion product $ V_1\star \dots \star V_m$ (where $V_1,\dots,V_m$ are highest $\ell$-weight modules in $\OO^{sh}$) by the convention
$$ V_1\star \dots \star V_m = (V_1\star \dots \star V_{m-1})\star V_m.$$
Note that the fusion product of two general objects in $\OO^{sh}$ (not necessarily of highest $\ell$-weight) is not defined. Hence, the category $\OO^{sh}$ is not (known to be) monoidal even if its~Grothendieck group has a ring structure. It is also not clear whether the fusion product is coassociative.
\end{rem}
\begin{rem}\label{rem:Dcoproduct} Fix $V_1$ a highest $\ell$-weight $\uqmu{\mu}{\g}$-module and suppose that $V_2$ is a $1$-dimensional $\uqmu{\nu}{\g}$-module. Then $x_i^{\pm}(z)V_2 = 0$ by \eqref{eq:xWt} and specializing $\Delta_{\mu,\nu}^{(u)}$ at $u=1$ gives a well-defined $\uqmu{\mu+\nu}{\g}$-action on the space 
$V_1\otimes V_2$. Denoting by $V$ the resulting $\uqmu{\mu+\nu}{\g}$-module, we get
$$\chi_q(V) = \chi_q(V_1)\chi_q(V_2) = \chi_q(V_1\star V_2),$$
but $V$ is not always a highest $\ell$-weight module and can thus be non-isomorphic to $V_1\star V_2$. 
\end{rem}
As fusion product produces only highest $\ell$-weight modules, $L(\Psi\Psi')$ is always isomorphic to the head of $L(\Psi)\star L(\Psi')$. This suffices to show the next result (alluded to in Remark \ref{rem:prefundMonGen}).
\begin{thm}[{\cite[Corollary 5.6]{hshift}}]\label{thm:PrefundMonGen} Fix $\Psi\in \mathfrak{r}$. Then, the simple module $L(\Psi)$ is isomorphic to the head of a fusion product of an invertible representation of $\uqmu{0}{\g}$ with (typically many) prefundamental representations (positive and negative).
\end{thm}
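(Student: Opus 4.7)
The plan is to combine two easy ingredients: a factorization of any $\ell$-weight $\Psi\in\mathfrak{r}$ into ``elementary'' pieces (one invertible factor and finitely many positive/negative prefundamental factors), and the observation---already recorded in the line preceding the statement and itself an immediate consequence of Theorem \ref{thm:HighestWeightFus}---that the head of the fusion product of two highest $\ell$-weight modules $V_1,V_2$ with highest $\ell$-weights $\Psi_1,\Psi_2$ is $L(\Psi_1\Psi_2)$.

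First I would establish the factorization. Fix $\Psi=(\Psi_i(z))_{i\in I}\in\mathfrak{r}$. By hypothesis each $\Psi_i(z)$ is a rational function, regular at $z=0$ with $\Psi_i(0)\neq 0$, so we may write
\begin{equation*}
\Psi_i(z)=\Psi_i(0)\,\frac{\prod_k(1-a_{i,k}z)}{\prod_\ell(1-b_{i,\ell}z)}
\end{equation*}
for suitable scalars $a_{i,k},b_{i,\ell}\in\C^{\times}$. Setting $\gamma=(\Psi_i(0))_{i\in I}\in\mathfrak{t}^{\times}$ (viewed inside $\mathfrak{r}$ as in Example \ref{ex:Invertibles}) and comparing with the definitions of the $\ell$-weights $\Psi_{i,a}^{\pm 1}\in\mathfrak{r}_{\pm\omega_i^{\vee}}$ from Section \ref{sec:O}, this reads
\begin{equation*}
\Psi\;=\;\gamma\cdot\prod_{i\in I}\prod_k \Psi_{i,a_{i,k}}\cdot\prod_{i\in I}\prod_\ell \Psi_{i,b_{i,\ell}}^{-1}.
\end{equation*}
Enumerating the factors on the right as $\Psi^{(1)},\dots,\Psi^{(m)}$ (with $\Psi^{(1)}=\gamma$), each $\Psi^{(k)}$ is the highest $\ell$-weight of either an invertible $1$-dimensional $\uqmu{0}{\g}$-module $L(\gamma)$ or a prefundamental representation $L(\Psi_{i,a}^{\pm 1})$.

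Next I would iterate the ``head equals $L$ of the product'' statement. Using Theorem \ref{thm:HighestWeightFus}, set $V_1=L(\Psi^{(1)})$ and recursively $V_k=V_{k-1}\star L(\Psi^{(k)})$ for $k=2,\dots,m$; each $V_k$ is well-defined and of highest $\ell$-weight $\Psi^{(1)}\cdots\Psi^{(k)}$, hence admits $L(\Psi^{(1)}\cdots\Psi^{(k)})$ as its unique simple quotient (i.e.~head). In particular $V_m$ is highest $\ell$-weight $\Psi$ and has head $L(\Psi)$, which gives the desired realization.

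Almost no obstacle is expected: the factorization step is just rational-function bookkeeping, and the inductive step is a formal consequence of Theorem \ref{thm:HighestWeightFus} together with the general fact that any highest $\ell$-weight module has a unique maximal proper submodule (the sum of all submodules missing the highest $\ell$-weight line). The only mild subtlety is checking that the iterated fusion product $V_1\star\cdots\star V_m$ is unambiguously defined; this is precisely what the convention $V_1\star\dots\star V_m=(V_1\star\dots\star V_{m-1})\star V_m$ from the remark following Theorem \ref{thm:HighestWeightFus} handles, using that each intermediate $V_k$ is of highest $\ell$-weight (so the next fusion product is legal).
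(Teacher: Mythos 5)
Your proposal is correct and takes essentially the same route as the paper: the paper's entire argument is the sentence immediately preceding the statement (that the fusion product of highest $\ell$-weight modules is highest $\ell$-weight with head $L(\Psi_1\Psi_2)$), combined with the tacit factorization of any $\Psi\in\mathfrak{r}$ into an invertible times prefundamental $\ell$-weights. You have simply made both ingredients explicit.
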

Another interesting result about prefundamental representations is the following.
\begin{theorem}[{\cite[Theorem 5.5]{hshift}}, see also \cite{fh1}]\label{thm:PrefundSimpFus} A fusion product involving only invertible representations and positive (resp.~negative) prefundamental representations is simple.
\end{theorem}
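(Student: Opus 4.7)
My plan is to split the argument into the two cases of the statement, according to whether the prefundamental factors are all positive or all negative.

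\textbf{Case 1 (invertibles and positive prefundamentals).} Both $L(\gamma)$ and $L(\Psi_{i,a})$ are one-dimensional, and their $q$-characters are the single monomials $[\gamma]$ and $[\Psi_{i,a}]$ respectively (Example~\ref{ex:Invertibles} and Example~\ref{ex:qCharEX}). Let $V = V_1 \star \cdots \star V_m$ be a fusion product of such factors, with respective highest $\ell$-weights $\Psi_k$. By Theorem~\ref{thm:HighestWeightFus}, $V$ is of highest $\ell$-weight $\Psi = \Psi_1\cdots\Psi_m$ and $\chi_q(V) = \prod_k \chi_q(V_k) = [\Psi]$, again a single monomial. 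Since any object of $\OO^{sh}$ decomposes as the direct sum of its (finite-dimensional) $\ell$-weight spaces (see Section~\ref{sec:O}), this forces $\dim V = 1$, so $V \simeq L(\Psi)$ is trivially simple.

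\textbf{Case 2 (invertibles and negative prefundamentals).} The factors $L(\Psi_{i_k,a_k}^{-1})$ are now infinite-dimensional, so direct dimension counting fails. My first move would be to absorb the invertibles: fusing with a one-dimensional $L(\gamma)$ collapses the Drinfeld coproduct~\eqref{eq:Dcop} (since $x_i^{\pm}(z)$ annihilates its vector by~\eqref{eq:xWt}) so that this operation amounts, up to a twist of the $\uqmu{\mu,0}{\g}$-action, to the identity on the other factor and hence preserves simplicity. It therefore suffices to establish simplicity of
$$V = L(\Psi_{i_1,a_1}^{-1}) \star \cdots \star L(\Psi_{i_m,a_m}^{-1}),$$
which lives in $\OO^{\mu}$ for the antidominant coweight $\mu = -\sum_k \omega_{i_k}^{\vee} \in -\Lambda^{\vee}_+$. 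By Theorem~\ref{thm:HighestWeightFus}, $V$ is of highest $\ell$-weight $\Psi = \prod_k \Psi_{i_k,a_k}^{-1}$, with a unique simple quotient $L(\Psi)$, and $\chi_q(V) = \prod_k \chi_q(L(\Psi_{i_k,a_k}^{-1}))$.

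\textbf{Upgrading the surjection $V \twoheadrightarrow L(\Psi)$.} My plan is to approximate each factor via Proposition~\ref{prop:PrefondKR}: for each $k$, the multiplicities $\dim L(\Psi_{i_k,a_k}^{-1})_{\Psi'}$ are the monotone non-decreasing limits, as $N \to \infty$, of $\dim (W_{N, a_k q_{i_k}^{1-2N}}^{(i_k)})_{m_N \Psi'}$. Fusing together sufficiently large KR modules and exploiting the known simplicity of such fusion products at suitably generic parameters yields a chain of multiplicity inequalities; passing to the limit then forces $\chi_q(V) = \chi_q(L(\Psi))$ term-by-term, hence $V \simeq L(\Psi)$. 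Alternatively, because $\mu \in -\Lambda^{\vee}_+$, the embedding $\uqmu{}{\hat{\mathfrak{b}}} \hookrightarrow \uqmu{\mu}{\g}$ recalled in Remark~\ref{rem:borQaff} lets one transfer the known simplicity of tensor products of Hernandez--Jimbo negative prefundamentals for the Borel quantum loop algebra (see~\cite{fh1}).

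\textbf{Main obstacle.} The central difficulty is entirely in Case 2: controlling the $\ell$-weight multiplicities of an infinite-dimensional fusion product and matching them with those of $L(\Psi)$. The KR approximation route requires justifying simplicity of the approximating KR fusion products (not automatic for arbitrary parameter configurations) together with a careful limit argument compatible with the asymptotic $\ell$-weight multiplicities of Proposition~\ref{prop:PrefondKR}. The Borel route is conceptually cleaner but requires comparing the shifted-algebra fusion product with the genuine $\uqmu{}{\hat{\mathfrak{b}}}$-tensor product, a comparison made subtle by the absence of a coproduct on $\uqmu{\mu}{\g}$ in general.
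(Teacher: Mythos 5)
The paper does not prove this theorem: it is quoted from \cite[Theorem~5.5]{hshift} (with a pointer to \cite{fh1}), so there is no in-text argument to compare your attempt against. Assessing your proposal on its own merits: Case~1 is correct and complete. Since each factor is one-dimensional with $q$-character a single delta function $[\Psi_k]$, the multiplicativity of $\chi_q$ under $\star$ (Theorem~\ref{thm:HighestWeightFus}) forces the fusion product to have one-dimensional total $\ell$-weight multiplicity, hence to be one-dimensional, hence simple. That part needs no further work.

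Case~2, however, contains a genuine gap and you have correctly identified where it lies. Your KR-approximation route is not just a matter of filling in details: the fusion product of two Kirillov--Reshetikhin modules with \emph{prescribed} spectral parameters $aq_i^{1-2N}$ and $bq_j^{1-2M}$ is not automatically simple, and the parameters you are forced into by Proposition~\ref{prop:PrefondKR} are not ``generic'' in any sense that would let you invoke a generic-irreducibility statement. For example, in type $A_1$, $W_{1,a}^{(1)}\star W_{1,aq^2}^{(1)}$ is reducible by the $T$-system, so the assertion that the approximating fusion products are simple requires a precise and nontrivial parameter analysis that your sketch does not supply. Even granting the approximating simplicity, the passage ``a chain of multiplicity inequalities $\Rightarrow$ $\chi_q(V) = \chi_q(L(\Psi))$ term-by-term'' is not established: Proposition~\ref{prop:PrefondKR} gives $\ell$-weight-wise convergence for a \emph{single} prefundamental, and one would need a compatible statement at the level of the $m$-fold fusion, together with continuity of the fusion operation in the limit, neither of which you prove. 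Your second route (pulling back along $\uqmu{}{\hat{\mathfrak{b}}}\hookrightarrow \uqmu{\mu}{\g}$ and invoking \cite{fh1}) is the one actually used in \cite{hshift}, but as you say yourself the comparison of $V_1\star V_2$ with the Borel tensor product $V_1\otimes V_2$ is precisely the nontrivial content; simply noting the embedding exists does not yet transport irreducibility. In short, Case~1 is done, but Case~2 remains an honest sketch rather than a proof, and the obstacles you flag are real, not cosmetic.
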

\begin{example}\label{ex:qWronsk} Take $\g=\mathfrak{sl}_2$. Then the quantum Wronskian relation of \cite[Example 5.9(i)]{hshift} is the relation of $K_0(\OO^{sh})$ given, for $a\in \C^{\times}$, by
$$ [L(\Psi_{1,a})][L(\Psi_{1,a}^{-1})] = 1+[-2\omega_1][L(\Psi_{1,aq^2})]L(\Psi_{1,aq^{-2}}^{-1})] $$
where $[-2\omega_1]$ stands for the isoclass in $K_0(\OO^{sh})$ of the representation 
$L([-2\omega_1])$ and where~the summand $1$ corresponds to the trivial representation $L(\mathbbm{1})$ (with $\mathbbm{1}=(1)$ here). There are\,hence non-simple fusion products of (positive and negative) prefundamental representations in $\OO^{sh}$.
\end{example}
We now end this section by extending the above results to the subalgebras $\uqmu{\nu}{\g_J}\subseteq \uqmu{\mu}{\g}$ of Section \ref{sec:Def} for $J\subseteq I$ fixed with $\nu=\res_J(\mu)$ the projection of $\mu\in\Lambda^{\vee}$ onto $\Lambda_J^{\vee}$. For this,~note that replacing $\mathfrak{t}^{\times}$ by $\mathfrak{t}^{\times}_J = (\C^{\times})^J$ (which is ordered by $\gamma\leq \gamma'$ if and only if $\gamma'\gamma^{-1}\in [\res_J(Q_J^+)]$) enables the definition a category $\OO^{\nu}_J\subseteq \uqmu{\nu}{\g_J}$--Mod as in Definition \ref{def:O}. Also, letting
$$\mathfrak{r}^J=\{\Psi=(\Psi_{j}(z))_{j\in J}\in(\C(z))^J\,|\,\Psi_j(z)\text{ is regular at }z=0\text{ and }\Psi_j(0)\neq 0\text{ for all }j\in J\}$$
and $\mathfrak{r}_{\nu}^J = \{\Psi=(\Psi_j(z))_{j\in J}\in \mathfrak{r}^J\,|\,\deg\Psi_j(z)=\alpha_j(\nu)=\alpha_j(\mu) \text{ for all }j\in J\}$ allows us to define \textit{$\ell$-weight spaces} and \textit{highest $\ell$-weight modules} for this category $\mathcal{O}^{\nu}_J$. 
\begin{rem} The functor $\res_J^{\mu}$ of Section \ref{sec:Def} typically sends objects of $\OO^{\mu}$ to $\uqmu{\nu}{\g_J}$-modules that do not belong to $\OO^{\nu}_J$. Indeed, fix $\g = \mathfrak{sl}_4$ and consider the sum of invertible representations $$\textstyle V = \bigoplus_{m\geq 0}L([-m\alpha_1]).$$
Then $V$ lies in $\OO_{0}$, but $\res_{\{3\}}^0(V) %= \sum_{m\geq 0}L(1)
$ contains an infinite-dimensional weight space. 
\end{rem}\vspace*{-0.25mm}
Denote by $\res_J:\mathfrak{r}\arr\mathfrak{r}^J$ the group epimorphism sending $\Psi=(\Psi_i(z))_{i\in I}\in\mathfrak{r}$ to its \textit{$J$-part} 
$$\res_J(\Psi)=(\Psi_j(z))_{j\in J}\in\mathfrak{r}^J.$$ \newpage
The repetitive use of the notation $\res_J$ is justified by the following lemma\footnote{Note that special cases of this lemma (with similar proofs) for untwisted quantum loop algebras can easily be found in the litterature (see, e.g., \cite[Proposition 6.4]{hKRgen} and the references therein).}.
\begin{lem}\label{lem:FacRestSimple} Fix $\Psi\in\mathfrak{r}_{\mu}$ and let $V=L(\Psi)$. Fix $v\in V$ a highest $\ell$-weight vector. Then the $\uqmu{\nu}{\g_J}$-submodule $\langle v\rangle_J$ of $\res_J^{\mu}(V)$ generated by $v$ is simple and of highest $\ell$-weight $\res_J(\Psi)$. %In particular, this simple $\uqmu{\nu}{\g_J}$-submodule belongs to $\OO^{\nu}_J$.
\end{lem}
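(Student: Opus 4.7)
The plan is to establish two claims: (1) $v$ is a highest $\ell$-weight vector of $\langle v\rangle_J$ of highest $\ell$-weight $\res_J(\Psi)$, and (2) $\langle v\rangle_J$ is simple as a $\uqmu{\nu}{\g_J}$-module. Claim (1) is immediate from the hypothesis on $v$: for $j\in J$ we have $x_j^+(z)v=0$ and $\phi_j^\pm(z)v=\Psi_j(z)v=\res_J(\Psi)_j(z)v$, and the triangular decomposition \eqref{eq:TriangularDecJ} further gives $\langle v\rangle_J=\uqmu{\nu,-}{\g_J}\cdot v$.

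For claim (2), I will show that any non-zero $\uqmu{\nu}{\g_J}$-submodule $M\subseteq\langle v\rangle_J$ contains $v$. A preliminary observation is that the $Q_J^+$-grading
$$\textstyle \langle v\rangle_J=\bigoplus_{\beta\in Q_J^+}\uqmu{\nu,-}{\g_J}_{-\beta}\cdot v$$
coincides with the $\mathfrak{t}_J^{\times}$-weight decomposition of $\langle v\rangle_J$ (the composition $Q_J\arr\mathfrak{t}_J^{\times}$, $\beta\mapsto[-\beta]$, being injective by the invertibility of the Cartan submatrix $(C_{j,k})_{j,k\in J}$), and each graded piece sits inside a single $\mathfrak{t}^{\times}$-weight space of $V$ (because every $x_{j,r}^-$ with $j\in J$ shifts the $\mathfrak{t}^{\times}$-weight by $[-\alpha_j]$). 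Decomposing $M$ by $\mathfrak{t}_J^{\times}$-weights thus produces a non-zero $w\in M$ which is simultaneously a $\mathfrak{t}^{\times}$-weight vector of $V$, of weight $\varpi(\Psi)[-\beta]$ for some $\beta\in Q_J^+$.

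Because $V=L(\Psi)$ is simple, some $a\in\uqmu{\mu}{\g}$ satisfies $a\cdot w=v$; projecting onto the weight-$\beta$ summand (the only weight component of $a$ that can move $w$ into $V_{\varpi(\Psi)}$), I may assume $a$ is homogeneous of weight $\beta$. Using the triangular decomposition \eqref{eq:TriangularDec}, I write $a=\sum_k a_-^{(k)}a_0^{(k)}a_+^{(k)}$ with $a_\pm^{(k)}\in\uqmu{\mu,\pm}{\g}$ homogeneous of weights $\mp\gamma_\pm^{(k)}$ satisfying $\gamma_+^{(k)}-\gamma_-^{(k)}=\beta$. For $a_+^{(k)}\cdot w$ to be non-zero, its weight $\varpi(\Psi)[\gamma_+^{(k)}-\beta]$ must lie in $P(V)\subseteq\varpi(\Psi)[-Q^+]$, forcing $\gamma_+^{(k)}\leq\beta$ componentwise; combined with $\gamma_+^{(k)}=\beta+\gamma_-^{(k)}\geq\beta$, this pins down $\gamma_+^{(k)}=\beta$ and $\gamma_-^{(k)}=0$. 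Consequently, $a_-^{(k)}\in\uqmu{\mu,-}{\g}_0=\C$ is a scalar $c_k$, and $a_+^{(k)}$ lies in $\uqmu{\mu,+}{\g}_\beta=\uqmu{\nu,+}{\g_J}_\beta$ (weight-$\beta$ monomials in the $x^+$-generators with $\beta\in Q_J^+$ can only involve indices in $J$).

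To conclude, $a_+^{(k)}\cdot w$ belongs to the one-dimensional top weight space $V_{\varpi(\Psi)}=\C v$ of the simple module $L(\Psi)$; write $a_+^{(k)}\cdot w=\lambda_k v$ for some $\lambda_k\in\C$, and let $\kappa_k\in\C$ be the scalar by which $a_0^{(k)}$ acts on $v$. The element
$$\textstyle a':=\sum_k c_k\kappa_k a_+^{(k)}\in\uqmu{\nu,+}{\g_J}\subseteq\uqmu{\nu}{\g_J}$$
then satisfies $a'\cdot w=\sum_k c_k\kappa_k\lambda_k v=a\cdot w=v$. Hence $v\in M$ and $M=\langle v\rangle_J$, proving simplicity. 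The main obstacle is this last weight-grading argument, which replaces the a priori unrestricted $a\in\uqmu{\mu}{\g}$ with an element $a'$ of the subalgebra $\uqmu{\nu}{\g_J}$; it hinges decisively on the one-dimensionality of the top weight space of $L(\Psi)$ and on the restriction $\beta\in Q_J^+$.
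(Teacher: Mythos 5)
Your proof is correct, and it takes a genuinely different route from the paper's. The paper argues by contradiction: given a putative proper nonzero $\uqmu{\nu}{\g_J}$-submodule $W\subsetneq\langle v\rangle_J$, it chooses a $w\in W$ of \emph{maximal} weight, shows $x_{j,r}^+w=0$ for $j\in J$ by maximality and $x_{i,r}^+w=0$ for $i\notin J$ by the weight-cone constraint $\alpha\in Q_J^+$ (so $\alpha-\alpha_i\notin Q^+$), and concludes that $\langle w\rangle_{\uqmu{\mu}{\g}}=\uqmu{\mu,-}{\g}\cdot w$ is a proper nonzero $\uqmu{\mu}{\g}$-submodule of $L(\Psi)$, contradicting simplicity of $V$. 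You instead work directly: you take an \emph{arbitrary} weight vector $w$ in a nonzero submodule $M$, invoke simplicity of $V$ to produce $a\in\uqmu{\mu}{\g}$ with $aw=v$, and then use the $Q$-grading together with the triangular decomposition \eqref{eq:TriangularDec} to trim $a$ to an element $a'\in\uqmu{\nu,+}{\g_J}$ with $a'w=v$; the key observation that $\uqmu{\mu,+}{\g}_\beta=\uqmu{\nu,+}{\g_J}_\beta$ for $\beta\in Q_J^+$ plays the structural role that the geometric annihilation argument for $i\notin J$ plays in the paper. Both proofs hinge on $\beta\in Q_J^+$ and on simplicity of $V$; the paper's maximality device makes the argument a bit shorter since it avoids the explicit analysis of the triangular components of $a$, whereas your version is more constructive (it exhibits $a'\in\uqmu{\nu}{\g_J}$ sending $w$ to $v$) and does not require selecting a weight-maximal vector in the submodule. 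Minor typo only: the weights of $a_\pm^{(k)}$ should read $\pm\gamma_\pm^{(k)}$ rather than $\mp\gamma_\pm^{(k)}$, as your subsequent computations correctly assume.
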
\vspace*{-2mm}
\begin{proof} Since the $\uqmu{\nu}{\g_J}$-module $\langle v\rangle_J$ is clearly of highest $\ell$-weight $\res_J(\Psi)$, it suffices to show that it contains no non-trivial proper $\uqmu{\nu}{\g_J}$-submodule. Assume the contrary and take such a proper submodule $W\neq 0$. Then, the set of weights $P(W)\subseteq \mathfrak{t}^{\times}_J$ contains a maximal element $\gamma$ as $W$ lies in $\OO^{\nu}_J$. Fixing $0\neq w\in W_{\gamma}$ and $r\in \Z$, we obtain that, by \eqref{eq:xWt} and maximality~of the weight $\gamma$, $$x_{j,r}^+w\in W_{\gamma[\alpha_j]}=0$$
for all $j\in J$. Also, by \eqref{eq:TriangularDecJ}, $w\in W\subseteq \langle v\rangle_J$ has the form $w=xv$ for a $x\in \uqmu{\nu,-}{\g_J}\subseteq \uqmu{\mu,-}{\g}$. In particular, since $0\neq w\in W_{\gamma}$, \eqref{eq:xWt} easily implies that $w$ actually remains a weight vector when seen inside $V$ (i.e.~it is also an eigenvector of the $\phi_{i,0}^+$'s with $i\not\in J$). Moreover, its weight in $V$ must be of the form $\varpi(\Psi)[-\alpha]$ for some $\alpha\in Q_J^+\subseteq Q^+$ non-zero (recall that $v\neq w$ since $W\neq\langle v\rangle_J$). Thus, for $r\in \Z$ and $i\not\in J$, as $P(V)\subseteq D(\varpi(\Psi))$ with $\alpha-\alpha_i\not\in Q^+$, 
$$x_{i,r}^+w\in V_{\varpi(\Psi)[\alpha_i-\alpha]} =0$$ 
and it follows that $\uqmu{\mu,+}{\g}\cdot w = 0$. In particular, the weights of the $\uqmu{\mu}{\g}$-submodule $\langle w\rangle \subseteq V$ generated by $w$ are contained in the cone $D(\varpi(\Psi)[-\alpha])$ by \eqref{eq:TriangularDec}--\eqref{eq:xWt}, but this cone cannot contain the weight $\varpi(\Psi)$ of the highest $\ell$-weight vector $v$ since $\alpha\neq 0$. This shows that $\langle w\rangle$ is a non-zero proper $\uqmu{\mu}{\g}$-submodule of $V$, which is impossible as $V=L(\Psi)$ by hypothesis.
\end{proof}\vspace*{-1.5mm}
A first relevant result about the category $\OO^{\nu}_J$ is the following adapted version of Theorem~\ref{thm:ClassSimp} (which can be proven exactly like \cite[Theorem 4.12]{hshift}).
\begin{thm*}\label{thm:ClassSimpJ} 
The simple objects of $\OO^{\nu}_J$ are all of highest $\ell$-weight $\Psi$ for some $\Psi\in\mathfrak{r}_{\nu}^J$~and are parametrized (up to isomorphism) by their highest $\ell$-weight. 
%All simple objects of $\OO^{\nu}_J$ are of highest $\ell$-weight $\Psi$ for some $\Psi\in\mathfrak{r}_{\nu}^J$. Also,~the map sending a simple object to its highest $\ell$-weight is a bijection (up to isomorphism).
\end{thm*}
For $\Psi\in\mathfrak{r}^J_{\nu}$, we note $L^J(\Psi)$ the simple object of $\OO^{\nu}_J$ with highest $\ell$-weight $\Psi$. By Lemma~\ref{lem:FacRestSimple}, this module can be constructed as a $\uqmu{\nu}{\g_J}$-submodule of the restriction of the simple~object of $\OO^{\nu}$ with highest $\ell$-weight $\Psi$ (with $\Psi$ seen in $\mathfrak{r}_{\nu}$ through the obvious inclusion $\mathfrak{r}^J_{\nu}\subseteq \mathfrak{r}_{\nu}$). Let $$\textstyle \OO^{sh}_J = \bigoplus_{\nu\in\Lambda_J^{\vee}}\OO^{\nu}_J.$$
Then, isoclasses of simple modules in $\OO_{J}^{sh}$ are parametrized by $\mathfrak{r}^J$. Also, $\OO^{sh}_J$ contains prefundamental modules $L^J(\Psi_{j,a}^{\pm 1})$ (with $j\in J$, $a\in \C^{\times}$), Kirillov--Reshitikhin modules and~invertible representations $L^J(\gamma)$ (where $\gamma\in \mathfrak{t}^{\times}_J$). There is furthermore a $q$-character map $$ \chi_q : K_0(\OO^{sh}_J)\arr \mathcal{E}_{\ell,J}$$
with $\mathcal{E}_{\ell,J}$ an obvious modification of $\mathcal{E}_{\ell}$. Clearly, if $V$ is in $\OO^{\mu}$, $\Psi\in \mathfrak{r}$ and $0\neq v\in V_{\Psi}$, then $v$ becomes a $\ell$-weight vector of $\ell$-weight $\res_J(\Psi)\in\mathfrak{r}^J$ when seen inside $\res_J^{\mu}(V)$. Hence, defining $\res_J[\Psi]=[\res_J(\Psi)]$ for all $\Psi\in\mathfrak{r}$ and extending, we get 
\begin{equation}\label{eq:chiRes}
\res_J(\chi_q(V)) = \chi_q(\res_J^{\mu}(V))
\end{equation}
for each object $V$ in $\OO^{\mu}$ such that $\res_J^{\mu}(V)$ belongs to $\OO^{\nu}_J$. The result below is shown exactly~like \cite[Theorem 5.11]{hshift}. We also have obvious analogues for $\OO^{sh}_J$ of Theorems \ref{thm:HighestWeightFus}, \ref{thm:PrefundMonGen} and~\ref{thm:PrefundSimpFus}. % Suffisant?
\begin{thm*}\label{thm:qCharAJ} Fix $\Psi\in \mathfrak{r}_{\nu}^J$. Then, $\overline{\chi}_q(L^J(\Psi)) \in 1+ \mathbb{N}[[\res_J(A_{j,a}^{-1})]]_{j\in J,a\in\C^{\times}}$.
\end{thm*}
\section{Inflations for representations of shifted quantum affine algebras}\label{sec:Infl}
This section introduces the notion of \textit{inflation} for representations of shifted quantum affine algebras as remarkable preimages for the functors $\res_J^{\mu}$ of Section \ref{sec:Def}. The section starts with various examples (Subsection \ref{sec:Prop}) and a precise investigation of the possible $q$-characters that a given inflation can have (Subsection \ref{sec:Inflqchar}). The section then continues with results regarding the compatibility between inflations and fusion products (Subsection \ref{sec:Existence}) and with the proof of Corollary \ref{cor:ExistenceSL} which asserts the existence of inflations for simple objects of $\OO^{sh}_J$ when~$\g_J$~is isomorphic, as a Lie algebra, to a direct sum of copies of $\SL$ and $\SLt$. Finally, the section ends with a general existence theorem for inflations of finite-dimensional simple objects in $\mathcal{O}_{J}^{sh}$ and with a conjecture stating that the finite-dimensionality hypothesis above is in fact not needed (i.e.~that every simple object of $\OO^{sh}_J$, finite-dimensional of not, admits an inflation to $\g$). This conjecture is proven under the technical assumption that $\g$ is of type A--B.
\vspace*{-2mm}
\subsection{Definition and first examples}\label{sec:Prop} Let us start with the definition of inflation.
\begin{defn}\label{def:Infl} Fix $J\subseteq I$ with $\nu$ a coweight of $\g_J$ and let $W$ be an object of $\mathcal{O}_{J}^{\nu}$. Take also $\mu$ a coweight of $\g$ with $\res_J(\mu)=\nu$ and let $V$ be an object of $\OO^{\mu}$. Then, $V$ is a \textit{$J$-inflation of $W$ to $\g$ with coweight $\mu$} if the following two conditions are satisfied : 
\begin{itemize}
\item[(i)] $\res_J^{\mu}(V)\simeq W$ as $\uqmu{\nu}{\g_J}$-modules and
\item[(ii)] $x_i^{\pm}(z)V = 0$ for all $i\not\in J$.
\end{itemize}
We will sometimes call $V$ a \textit{$J$-inflation of $W$ to $\g$} or, simply, an \textit{inflation of $W$}.
\end{defn}
A $J$-inflation of $W$ to $\g$ with coweight $\mu$ is thus a special preimage for the restriction functor $\res_J^{\mu}(V)$ on which the subalgebra of $\uqmu{\mu}{\g}$ generated by $\{x_{i,r}^{\pm}\,|\,i\not\in J,\,r\in\Z\}$ acts trivially. \par\vspace*{-0.2mm}
The following two results will be of use. (Observe that the first one follows easily from the exactness of the restriction functor $\res_J^{\mu}$.)
\begin{lemma*}\label{lem:InflSimpleSimple} Fix $J\subseteq I$ and let $W$ be a simple object of $\OO^{sh}_J$. Let also $V$ be an object of $\OO^{sh}$. Suppose that $V$ is a $J$-inflation of $W$ to $\g$. Then, $V$ is irreducible in $\OO^{sh}$.
\end{lemma*}
\begin{lem}\label{lem:HighestWeightRest} Fix $J\subseteq I$ and let $\mu$ be a coweight of $\g$. Let also $V$ be a $\uqmu{\mu}{\g}$-module for which $x_i^{\pm}(z)V = 0$ whenever $i\not\in J$. Fix finally 
$v\in V$. Then, $v$ is a highest $\ell$-weight vector for $V$ if and only if it is a highest $\ell$-weight vector when seen inside $\res_J^{\mu}(V)$. Hence, $J$-inflations to $\g$ of highest $\ell$-weight modules of $\OO^{sh}_J$ are highest $\ell$-weight modules of $\OO^{sh}$.
\end{lem}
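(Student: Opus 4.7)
My plan is to prove the two implications separately using the triangular decompositions \eqref{eq:TriangularDec} and \eqref{eq:TriangularDecJ}, and then to deduce the final assertion directly. The forward direction is largely bookkeeping, while the backward direction carries the weight of the argument: it requires extracting genuine rational $\phi_{i,r}^{\pm}$-eigenvalues of $v$ for $i\notin J$, which is the main obstacle.

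For the forward direction, I assume $v$ is highest $\ell$-weight in $V$ with $\ell$-weight $\Psi$. The restrictions of $x_i^+(z)v=0$ and $\phi_i^{\pm}(z)v=\Psi_i(z)v$ to $i\in J$ are immediate, so only $V=\uqmu{\nu}{\g_J}\cdot v$ needs work. Using \eqref{eq:TriangularDec} together with the fact that $\uqmu{\mu,+}{\g}\cdot v=\uqmu{\mu,0}{\g}\cdot v=\C v$ (the first by $x_i^+(z)v=0$, the second by $v$ being a joint eigenvector of the $\phi$'s and their inverses), I reduce to showing $V=\uqmu{\mu,-}{\g}\cdot v=\uqmu{\nu,-}{\g_J}\cdot v$. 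But any monomial $x_{i_1,r_1}^-\cdots x_{i_k,r_k}^-$ in $\uqmu{\mu,-}{\g}$ involving an index $i_a\notin J$ annihilates $v$: taking the rightmost such factor, one sees that it acts on the vector $x_{i_{a+1},r_{a+1}}^-\cdots x_{i_k,r_k}^-\cdot v\in V$ and yields $0$ by the hypothesis $x_i^-(z)V=0$. Thus $V=\uqmu{\nu,-}{\g_J}\cdot v\subseteq\uqmu{\nu}{\g_J}\cdot v$, completing this direction.

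For the backward direction, I assume $v$ is highest $\ell$-weight in $\res_J^{\mu}(V)$ with $\ell$-weight $\Psi'\in\mathfrak{r}_{\nu}^J$. The conditions $V=\uqmu{\mu}{\g}\cdot v$ (from $\uqmu{\nu}{\g_J}\subseteq\uqmu{\mu}{\g}$) and $x_i^+(z)v=0$ for all $i\in I$ (combining the hypothesis $x_i^{\pm}(z)V=0$ for $i\notin J$ with the given vanishing for $j\in J$) are immediate, and the crux is to extend the eigenvalue assignment to $\phi_{i,r}^{\pm}$ for $i\notin J$. By \eqref{eq:TriangularDecJ} together with the conditions on $v$, $V=\uqmu{\nu,-}{\g_J}\cdot v$, and via \eqref{eq:PhiTX} and the injectivity of $[\,\cdot\,]$ on $Q_J$, the natural $(-Q_J^+)$-grading on this module coincides with the $\mathfrak{t}_J^{\times}$-weight-space decomposition of $V$; in particular $V_{\varpi(\Psi')}=\C v$. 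Since \eqref{eq:CommPhi} makes $\phi_{i,r}^{\pm}$ commute with every $\phi_{j,0}^+$ ($j\in J$), the vector $\phi_{i,r}^{\pm}v$ retains weight $\varpi(\Psi')$ and hence equals $\lambda_{i,r}^{\pm}v$ for some scalar $\lambda_{i,r}^{\pm}$. Feeding $x_{i,r}^{\pm}v=0$ into \eqref{eq:Relxpxmphi} (with $i=j\notin J$) then forces $\lambda_{i,t}^+=\lambda_{i,t}^-$ for every $t\in\Z$; combined with the support constraints ($\phi_{i,r}^+=0$ for $r<0$, $\phi_{i,r}^-=0$ for $r>\alpha_i(\mu)$), these eigenvalues are the coefficients of a single polynomial $\Psi_i(z)$ of degree exactly $\alpha_i(\mu)$ with $\Psi_i(0)\neq 0$, the boundary non-vanishings following from the invertibility of $\phi_{i,0}^+$ and $\phi_{i,\alpha_i(\mu)}^-$ in $\uqmu{\mu}{\g}$. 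This produces the required $\Psi\in\mathfrak{r}_{\mu}$.

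The concluding assertion about $J$-inflations is then immediate: for a highest $\ell$-weight $W$ in $\OO^{sh}_J$ with generator $w$ and a $J$-inflation $V$ of $W$ to $\g$, the vector $v\in V$ matching $w$ under the isomorphism $\res_J^{\mu}(V)\simeq W$ satisfies the backward hypotheses above (the condition $x_i^{\pm}(z)V=0$ for $i\notin J$ being built into the definition of inflation), so $v$ is a highest $\ell$-weight vector in $V$ and $V$ is itself a highest $\ell$-weight module in $\OO^{sh}$.
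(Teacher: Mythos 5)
Your proof is correct and follows essentially the same route as the paper's: the forward direction collapses $\uqmu{\mu}{\g}\cdot v$ to $\uqmu{\nu,-}{\g_J}\cdot v$ via the triangular decompositions \eqref{eq:TriangularDec}--\eqref{eq:TriangularDecJ}, and the backward direction reduces to showing $v$ lies in a $1$-dimensional eigenspace so that the $\phi_{i,r}^{\pm}$ for $i\notin J$ act by scalars. Your version merely makes the backward step more explicit than the paper's (which simply asserts $v$ sits in a $1$-dimensional $\ell$-weight space): you use the top weight space in place of the top $\ell$-weight space and spell out, via \eqref{eq:Relxpxmphi} and the support constraints on the $\phi_{i,r}^{\pm}$, how the scalar eigenvalues assemble into a polynomial of degree exactly $\alpha_i(\mu)$, hence a genuine $\ell$-weight in $\mathfrak{r}_{\mu}$.
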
\vspace*{-2mm}
\begin{proof}
Assume that $v$ is a highest $\ell$-weight vector for $V$. Then, using the decompositions~\eqref{eq:TriangularDec} and \eqref{eq:TriangularDecJ} with the fact that $x_i^{\pm}(z)V=0$ for $i\not\in J$, we get (for $\nu=\res_J(\mu)$) \vspace*{-0.5mm}
$$ V = \uqmu{\mu}{\g}\cdot v = \uqmu{\mu,-}{\g}\cdot v = \uqmu{\nu,-}{\g_J}\cdot v \subseteq \uqmu{\nu}{\g_J}\cdot v \subseteq V$$
so that $v$ is easily seen to be a highest $\ell$-weight vector for $\res_J^{\mu}(V)$. Reciprocally, suppose that $v$ is a highest $\ell$-weight vector when seen inside the $\uqmu{\nu}{\g_J}$-module $\res_J^{\mu}(V)$. Then, $v$ trivially generates $V$ as a $\uqmu{\mu}{\g}$-module and is such that $x_j^{\pm}(z)v = 0$ whenever $j\in J$. Our hypothesis hence gives $x_i^{\pm}(z)v = 0$ for all $i\in I$ and it suffices to prove that $v$ lives inside a $1$-dimensional $\ell$-weight space of $V$. This is however clear as the $\ell$-weight space of $\res_J^{\mu}(V)$ containing $v$ (for the action of the $\phi_{j,r}^+$'s with $j\in J$ and $r\in \Z$) is $1$-dimensional. This ends the proof.
\end{proof}\vspace*{-1.5mm}
\begin{example}\label{ex:InvertiblePositivePrefond} Fix $J\subseteq I$ with $\gamma\in \mathfrak{t}^{\times}_J$, $a\in \C^{\times}$ and $j\in J$. Then the invertible representation $L(\gamma)$ and the positive prefundamental representation $L(\Psi_{j,a})$ are respectively $J$-inflations of the invertible representation $L^J(\gamma)$ and the positive prefundamental representation $L^J(\Psi_{j,a})$.
\end{example}
\begin{example}\label{ex:InflPrefondSL} Fix $j\in I$ and $a\in \C^{\times}$. Consider $ \Psi_{j,a}^* = Y_{j,aq_j^{-1}}\prod_{i\in I,\,C_{i,j}<0} \Psi_{i,aq_i^{-C_{i,j}}}$ and 
$$ \textstyle \widetilde{\Psi}_{j,a}=\Psi_{j,a}^{-1}\left(\prod_{i\in I\, C_{j,i}=-1}\Psi_{i,aq_j}\right)\left(\prod_{i\in I\, C_{j,i}=-2}\Psi_{i,a}\Psi_{i,aq_j^2}\right)\left(\prod_{i\in I\, C_{j,i}=-3}\Psi_{i,aq_j^{-1}}\Psi_{i,aq_j}\Psi_{i,aq_{j}^3}\right).$$
Then, one easily deduces from \cite[Example 5.2(iv) and Example 6.6]{hshift} with Examples \ref{ex:KR} and \ref{ex:Prefund} that the objects $L(\Psi^*_{i,a})$ and $L(\widetilde{\Psi}_{i,a})$ of $\OO^{sh}$ are respectively inflations of the $2$-dimensional fundamental representation $L^J(Y_{j,a})$ of $\uqmu{0}{\g_J}\simeq U_{q_j}^0(\SL)$ and of the negative prefundamental representation $L^J(\Psi_{j,a}^{-1})$ of $\uqmu{\nu}{\g_J}\simeq U_{q_j}^{\nu}(\SL)$ (where $\nu = \res_J(-\omega_j^{\vee}) = -\omega_j^{\vee}$).
\end{example}
\begin{rem} The objects $L(\Psi_{j,a}^*)$ and $L(\widetilde{\Psi}_{j,a})$ of the above example also arise in two important systems of relations for the Grothendieck ring $K_0(\OO^{sh})$: the $QQ^*$-system and the $Q\tilde{Q}$-system (see, e.g., \cite[Example 5.9(ii) and Example 6.6]{hshift}). We will return to these systems and recall briefly their connection with quantum integrable systems in Section \ref{sec:Inflqchar}.
\end{rem}
Recall from Section \ref{sec:Def} the definition of the spectral shift automorphism $\tau_a$ of $\uqmu{\mu}{\g}$ (and $\uqmu{\nu}{\g_J}$). The lemma below follows directly from Definition \ref{def:Infl}.
\begin{lemma*}\label{lem:InflShift} Fix $J\subseteq I$ and $W$ an object of $\OO^{sh}_{J}$. Suppose that $V$ is a $J$-inflation of $W$ to $\g$. Then $V(a)$ is a $J$-inflation of $W(a)$ to $\g$ for all $a\in \C^{\times}$.
\end{lemma*}
\begin{example}\label{ex:InflPrefondSLt} Fix $\g$ and $j_1,j_2\in I$ with $\g_J \simeq \SLt$ for $J=\{j_1,j_2\}\subseteq I$. Let $\mu = \omega_{j_2}^{\vee}-\alpha_{j_1}^{\vee}-\alpha_{j_2}^{\vee}$ and denote by $\Psi_p$ the following $\ell$-weight\footnote{Remark that $C_{i,j_1} \leq -2$ for all $i\not\in J$ since $\g$ cannot be of type $G_2$ (as $\SLt\simeq \g_J$).} of $\mathfrak{r}$\vspace*{0.5mm}
{\small $$\Bigg(\prod_{i\not\in J,\, C_{j_1,i}=-1} \Psi_{i,q_{j_1}}\Bigg)\Bigg(\prod_{i\not\in J,\, C_{j_2,i}=-1} \Psi_{i,q_{j_1}^2}\Bigg)\Bigg(\prod_{i\not\in J,\, C_{j_1,i}=-2} \Psi_{i,1}\Psi_{i,q_{j_1}^2}\Bigg)\Bigg(\prod_{i\not\in J,\, C_{j_2,i}=-2} \Psi_{i,q_{j_1}}\Psi_{i,q_{j_1}^3}\Bigg).$$}\vspace*{-1.25mm}\\
For $n\geq m\geq 0$, let also $\Psi_{n,m}=\Psi_{j_1,1}^{-1} A_{j_1,1}^{-1}\dots A_{j_1,q_{j_1}^{2(1-n)}}^{-1}A_{j_2,q_{j_1}}^{-1}\dots A_{j_2,q_{j_1}^{3-2m}}^{-1}$. Then, if $i\not\in J$,
$$ (\Psi_p\Psi_{n,m})_i(z) = \left\{\begin{array}{ll}
q_{j_1}^n(1-zq_{j_1}^{1-2n}) & \text{if } C_{j_1,i} =-1 \text { and } C_{j_2,i}=0,\smallskip\\
q_{j_1}^m(1-zq_{j_1}^{2(1-m)}) & \text{if } C_{j_1,i} =-1\text { and } C_{j_2,i}=0,\smallskip\\
q_{j_1}^{2n}(1-zq_{j_1}^{2(1-n)})(1-zq_{j_1}^{-2n}) & \text{if } C_{j_1,i} =-2\text { and } C_{j_2,i}=0,\smallskip\\
q_{j_1}^{2m}(1-zq_{j_1}^{3-2m})(1-zq_{j_1}^{1-2m}) & \text{if } C_{j_2,i} =-2\text { and } C_{j_1,i}=0,\smallskip\\
1& \text{else}
\end{array}
\right.$$
with $(\Psi_p\Psi_{n,m})_j(z)=\Psi^{(n,m)}_j(z)$ for $j\in J$. Define a $\uqmu{\mu}{\g}$-action on the space $V$ with $\C$-basis $\{v_{n,m}\}_{n\geq m\geq 0}$ by $x_{i}^{\pm}(z)V = 0$ for $i\not\in J$ as well as
\begin{center}
$x_{j_1,r}^+v_{n,m} = q_{j_1}^{2r(1-n)}[n-m]_{q_{j_1}}v_{n-1,m},\quad x_{j_1,r}^-v_{n,m} = (q_{j_1}-q_{j_1}^{-1})^{-1}q_{j_1}^{-n(2r+1)}v_{n+1,m}$,\\
$x_{j_2,r}^+v_{n,m} = q_{j_1}^{r(3-2m)}v_{n,m-1}, \quad x_{j_2,r}^-v_{n,m} = q_{j_1}^{r(1-2m)}[m+1]_{q_{j_1}}[n-m]_{q_{j_1}}v_{n,m+1}$
\end{center}
and $\phi_s^{\pm}(z)v_{n,m} = (\Psi_p\Psi_{n,m})_s(z)v_{n,m}$ for $r\in \Z$ and $s\in I$. By \cite[Section 4.1]{hj},
$$\res_J^{\mu}(V)\simeq L^J(\Psi_{j_1,1}^{-1})$$ as modules over $\uqmu{\nu}{\g_J}\simeq U_{q_{j_1}}^{\nu}(\SLt)$ with $\nu = \res_J(\mu)=-\omega_{j_1}^{\vee}$. To ensure that the above indeed gives a well-defined $\uqmu{\mu}{\g}$-action, it hence suffices to check the 
relations \eqref{eq:CommPhi}--\eqref{eq:RelhPhi} involving generators of the form $x_{i,r}^{\pm}$ or $\phi_{i,r}^{\pm}$ with $i\not\in J$. This is an easy (but tedious) computation that we choose to omit for brevity. Now, by the previous remarks, $V$ is a $J$-inflation of $W = L^J(\Psi_{j_1,1}^{-1})$ to $\g$ with coweight $\mu$. In addition, by interchanging $j_1$ and $j_2$ everywhere above, one can easily construct a $J$-inflation to $\g$ for the prefundamental representation $L^J(\Psi_{j_2,1}^{-1})$ of $\OO^{sh}_J$. Thus, by Lemma \ref{lem:InflShift}, all negative prefundamental representations in $\OO_{J}^{sh}$ admit inflations in this case.
\end{example}
\begin{rem}\label{rem:InflJnotconnected} Take $J\subseteq I$ and $J'\subseteq J$ with $C_{j,j'}=0$ for all $j\in J\backslash J'$ and $j'\in J'$. Take~moreover an object $W$ in $\OO^{\nu}_{J'}$ for some $\nu\in\Lambda^{\vee}_{J'}$. Then, the $\uqmu{\nu}{\g_{J'}}$-action on $W$ induces a $\uqmu{\nu}{\g_J}$-action by setting $x_j^{\pm}(z)W=(\phi_j^{\pm}(z)-1)W=0$ for each $j\in J\backslash J'$ (see Definition \ref{def:Uqmu}). Let us call $W_J$ the resulting $\uqmu{\nu}{\g_J}$-module and $\res_{J,J'}^{\nu}$ the restriction functor induced from the inclusion of $\uqmu{\nu}{\g_{J'}}$ in $\uqmu{\nu}{\g_J}$. Then $\res_{J,J'}^{\nu}(W_J) = W$ by construction and \vspace*{-0.5mm}
$$W_J\simeq L^J(\Psi)$$ if $W\simeq L^{J'}(\Psi)$ for some $\Psi\in\mathfrak{r}_{J'}$ (by exactness of $\res_{J,J'}^{\nu}$ and definition of $W_J$).
\end{rem}
\begin{example}\label{rem:InflPrefondSS} We apply the results of the above remark to a particular case. Let $J'\subseteq J\subseteq I$ be as in this remark and suppose that $\g_{J'}\simeq \SL$ or $\g_{J'}\simeq \SLt$. Fix also $j\in J'$ with $$W=L^{J'}(\Psi_{j,1}^{-1})$$ and let $V$ be the $J'$-inflation of $W$ to $\g$ studied in Example \ref{ex:InflPrefondSL} if $\g_{J'}\simeq \SL$ or in Example \ref{ex:InflPrefondSLt} if $\g_{J'}\simeq \SLt$. We note $\mu\in \Lambda^{\vee}$ the coweight associated to this inflation $V$. Then, looking~at~the $\uqmu{\mu}{\g}$-action on $V$ (given in \cite[Example 5.2(iv)]{hshift} if $\g_{J'}\simeq \SL$ or in Example \ref{ex:InflPrefondSLt} if $\g_J'\simeq \SLt$) allows us to easily deduce with Remark \ref{rem:InflJnotconnected} that $$\res_{J}^{\mu}(V) \simeq W_J \simeq L^J(\Psi_{j,1}^{-1})$$
in $\OO^{sh}_J$. The module $V$ is thus also a $J$-inflation of $L^J(\Psi_{j,1}^{-1})$ to $\g$.
\end{example}
We now turn to the main part of this paper which is the proof of various existence theorems for inflations of irreducible modules in $\OO^{sh}_J$. We start by the proof of the following proposition (mentioned in Section \ref{sec:Intro} and Remark \ref{rem:qAffRestnotdense}) which shows that the restriction functors appearing in the study of untwisted quantum loop algebras are not essentially surjective on simple finite-dimensional modules. The proof is similar (but slightly simpler) for the case of the restriction functors appearing in usual Lie theory and in the study of affine Lie algebras/quantum~groups of finite type.~As before, we assume that $\g$ is a simple finite-dimensional Lie algebra.
\begin{prop}\label{prop:AffNoInfl} Suppose that the untwisted quantum loop algebra $\uqmu{}{\hat{\g}}$ has $2$-dimensional irreducible representations. Then $\g\simeq \SL$.
\end{prop}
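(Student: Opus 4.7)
The plan is to reduce the problem to the classical fact that a finite-dimensional irreducible $\uqmu{}{\g}$-module has dimension at most $2$ only if its underlying dominant integral weight vanishes or $\g\simeq\SL$. First, view any $2$-dimensional simple $\uqmu{}{\hat{\g}}$-module $V$ as a simple object of the category $\OO^0$ through the central extension $\uqmu{0}{\g}\twoheadrightarrow\uqmu{}{\hat{\g}}$ of Remark~\ref{rem:borQaff}. By Theorem~\ref{thm:ClassSimpDimFin}, $V\simeq L(\Psi)$ for some monomial $\Psi\in\mathfrak{r}_0$, and since the prefundamentals $\Psi_{i,a}^{\pm 1}$ lie in $\mathfrak{r}_{\pm\omega_i^\vee}\not\subseteq\mathfrak{r}_0$, we obtain $\Psi=\gamma\prod_{i\in I,\,a\in\C^{\times}}Y_{i,a}^{n_{i,a}}$ for some $\gamma\in\mathfrak{t}^{\times}$ and non-negative integers $n_{i,a}$.

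Next, I would identify a copy of $\uqmu{}{\g}$ inside $\uqmu{}{\hat{\g}}$. Specializing the Drinfeld relations \eqref{eq:CommPhi}--\eqref{eq:qSerre} to $r=s=0$, the zero-mode generators $\{x_{i,0}^{\pm},\phi_{i,0}^{\pm}\,|\,i\in I\}$ of $\uqmu{}{\hat{\g}}$ satisfy exactly the Drinfeld--Jimbo defining relations of $\uqmu{}{\g}$ via the identifications $e_i\leftrightarrow x_{i,0}^+$, $f_i\leftrightarrow x_{i,0}^-$ and $k_i\leftrightarrow \phi_{i,0}^+$ (using that $\phi_{i,0}^-=(\phi_{i,0}^+)^{-1}$ already holds in $\uqmu{}{\hat{\g}}$). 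Restricting $V$ along this embedding yields a $2$-dimensional $\uqmu{}{\g}$-module.

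Finally, fix a highest $\ell$-weight vector $v\in V$. Then $x_{i,0}^+v=0$ and $\phi_{i,0}^+v=\gamma_iq_i^{n_i}v$ with $n_i=\sum_an_{i,a}$, so the cyclic $\uqmu{}{\g}$-submodule $\uqmu{}{\g}\cdot v\subseteq V$ is a highest weight $\uqmu{}{\g}$-module of dimension at most $2$. Its unique simple quotient is therefore a finite-dimensional irreducible $\uqmu{}{\g}$-module of dimension at most $2$, which by the Chari--Pressley classification of such modules is a sign-character twist of the simple module of highest weight $\lambda=\sum_{i\in I}n_i\omega_i$. If $\lambda=0$, then all $n_{i,a}$ vanish, so $\Psi=\gamma$ and $V\simeq L(\gamma)$ is one-dimensional --- a contradiction. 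Hence this simple $\uqmu{}{\g}$-module has dimension exactly $2$, which by the classical representation theory of simple Lie algebras forces $\g\simeq\SL$. The main technical point is the identification of the zero-mode subalgebra with $\uqmu{}{\g}$: this is a well-known consequence of Beck's isomorphism but requires verifying that the $r_1=\dots=r_p=0$ specialization of the quantum Serre relations \eqref{eq:qSerre} reduces to the classical quantum Serre relations of $\uqmu{}{\g}$ --- a routine but slightly lengthy computation.
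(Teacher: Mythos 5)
Your proof takes a genuinely different route from the paper's. The paper argues directly with the Drinfeld generators: it picks a spanning vector $w=x_{i,r}^-v$, notes $x_{j,0}^\pm V=0$ for any $j\neq i$ by weight considerations, applies relation \eqref{eq:Relxpxmphi} to $v$ and to $w$, and concludes $\gamma_j^2=1$ together with $q_j^{2C_{j,i}}=1$, a contradiction when $C_{j,i}\neq 0$ and $q$ is not a root of unity. You instead reduce to the finite-type case by restricting to the zero-mode copy of $\uqmu{}{\g}\subseteq\uqmu{}{\hat{\g}}$ and invoking Theorem~\ref{thm:ClassSimpDimFin} together with the Chari--Pressley classification. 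That is a legitimate and arguably more conceptual strategy, but as written it has a gap.

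When you assert that the simple quotient of $\uqmu{}{\g}\cdot v$ is a sign-character twist of the simple $\uqmu{}{\g}$-module of highest weight $\lambda=\sum_i n_i\omega_i$, you are tacitly using that the $k_i$-eigenvalue $\gamma_iq_i^{n_i}$ of $v$ has the form $\pm q_i^{n_i}$, i.e.\ that $\gamma_i\in\{\pm 1\}$. Theorem~\ref{thm:ClassSimpDimFin} only gives $\gamma\in\mathfrak{t}^{\times}$ arbitrary, and with arbitrary $\gamma$ your crucial step ``$\lambda=0\Rightarrow$ all $n_{i,a}$ vanish'' fails: taking $\gamma_i=q_i^{-n_i}$ makes the $k_i$-eigenvalue of $v$ equal to $1$ no matter what the $n_{i,a}$ are. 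To close the gap you must use that $V$ is a $\uqmu{}{\hat{\g}}$-module and not merely a $\uqmu{0}{\g}$-module: the central element $\phi_{i,0}^-\phi_{i,0}^+$ acts by $1$, and since $\deg\Psi_i=0$ gives $\phi_{i,0}^-v=\Psi_i(\infty)v$, one finds $\Psi_i(0)\Psi_i(\infty)=\gamma_i^2=1$. With $\gamma_i=\pm 1$ in hand, the rest of your argument goes through.
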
\vspace*{-2.75mm}
\begin{proof}
Fix a $2$-dimensional irreducible representation $V$ of $\uqmu{}{\hat{\g}}$ and use \cite[Remark 5.4(b)]{ft1} to view it as a $2$-dimensional irreducible $\uqmu{0}{\g}$-module on which $\phi_{i,0}^- -(\phi_{i,0}^+)^{-1}$ acts trivially~for all $i\in I$. Then $V$ belongs to $\OO^{0}$ (as $\OO^{0}$ contains all finite-dimensional $\uqmu{0}{\g}$-modules) and~thus has a highest $\ell$-weight vector $v\in V$ by Theorem \ref{thm:ClassSimp}. Furthermore, by \eqref{eq:TriangularDec},
$$ \dim(\uqmu{0,-}{\g}\cdot v) = \dim V = 2 $$ 
and it follows that we can fix $i\in I$ and $r\in \Z$ such that $w=x_{i,r}^-v\neq 0$. Note $\gamma=(\gamma_i)_{i\in I}\in\mathfrak{t}^{\times}$ the weight of $v$ and suppose $\g\not\simeq \SL$. Then, as $\g$ is simple, there exists $j\in I\backslash\{i\}$ with $C_{j,i}\neq 0$. Moreover, $w=x_{i,r}^-v$ has weight $\gamma[-\alpha_i]\neq \gamma$ by \eqref{eq:xWt} and $\{v,w\}$ is in particular a basis of $V$. Hence, again by \eqref{eq:xWt},
$$x_{j,0}^{\pm}V = x_{j,0}^{\pm}(V_{\gamma}\oplus V_{\gamma[-\alpha_i]}) \subseteq V_{\gamma[\pm \alpha_j]}\oplus V_{\gamma[-\alpha_i\pm \alpha_j]} = 0$$ 
so that $\gamma_j^2 = 1$ and $q_j^{2C_{j,i}}=1$ since, by \eqref{eq:Relxpxmphi},
$$ 0 = (q_j-q_j^{-1})[x_{j,0}^+,x_{j,0}^-]v = (\phi_{j,0}^+-\phi_{j,0}^-)v = (\gamma_j-\gamma_j^{-1})v$$
and
$$ 0 = (q_j-q_j^{-1})[x_{j,0}^+,x_{j,0}^-]w = (\phi_{j,0}^+-\phi_{j,0}^-)w = (\gamma_jq_j^{-C_{j,i}}-\gamma_j^{-1}q_j^{C_{j,i}})w.\vspace*{1mm}$$
This however contradicts $C_{j,i}\neq 0$ (since $q$ is not a root of unity) and thus ends the proof.
\end{proof}\vspace*{-1.5mm}
Note that the above proof actually shows that the restriction functor $\res_J^{0}$ is in general not essentially surjective on finite-dimensional simple objects of $\OO^0_J$. Nevertheless, as we will~show in the next sections, this surjectivity issue can be resolved by considering instead the functor
$$\textstyle \mathscr{R}_J: \OO^{sh}\arr \bigoplus_{\nu\in\Lambda^{\vee}_J} (\uqmu{\nu}{\g_J}\text{--Mod})$$
which is given, for $V$ in $\OO^{\mu}$, by 
$$\mathscr{R}_J(V) = \res_J^{\mu}(V)$$ 
and is essentially surjective on finite-dimensional simple objects.\vspace*{-2mm}
\subsection{$q$-characters and connections with other research}\label{sec:Inflqchar} Take $J\subseteq I$  and recall the map $\res_J:\mathfrak{r}\arr \mathfrak{r}_J$ of Section \ref{sec:Fusion}. Consider in addition the injective ring morphism
$$ \iota_J:\Z[[\res_J(A_{j,a}^{-1})]]_{j\in J,a\in \C^{\times}}\arr\Z[[A_{i,a}^{-1}]]_{i\in I,a\in \C^{\times}} $$ 
given by $\res_J(A_{j,a}^{-1})\mapsto A_{j,a}^{-1}$ for $j\in J$ and $a\in\C^{\times}$. This map induces another ring morphism 
$$\varpi(\iota_J): \Z[[\,\res_J([-\alpha_j])\,]]_{j\in J}\arr \Z[[\,[-\alpha_i]\,]]_{i\in I}$$
such that $\res_J([-\alpha_j]) \mapsto [-\alpha_j]$ for all $j\in J$. This second morphism is also injective. \par
Take now $\nu\in \Lambda_J^{\vee}$ and $\Psi^J\in\mathfrak{r}^J$ with $W$ a $\uqmu{\nu}{\g_J}$-module of highest $\ell$-weight $\Psi^J$. We have the following important result about the structure of $q$-characters for $J$-inflations of $W$ to $\g$.
\begin{prop}\label{prop:IfVInflqCar} Fix $\mu\in\Lambda^{\vee}$ such that $\res_J(\mu)=\nu$ and suppose that an object $V$ of $\OO^{\mu}$ is a $J$-inflation of $W$ to $\g$. Then, 
 \begin{itemize}
 \item[(i)] $V$ has highest $\ell$-weight $\Psi = \Psi^{J}\Psi_{p}$ with $\Psi_p\in\mathfrak{r}_{\mu-\nu}$ a product of various $\Psi_{i,a}$ and $[a\omega_i]$ for $i\not\in J$ and $a\in \C^{\times}$ (and with $\Psi^{J}$ seen inside $\mathfrak{r}_{\mu}$ by the obvious inclusion $\mathfrak{r}_{\nu}^{J}\subseteq \mathfrak{r}_{\mu}$).
\item[(ii)] The normalized characters $\overline{\chi}(V)$ and $\overline{\chi}(W)$ satisfy $$\overline{\chi}(V)=\varpi(\iota_J)(\overline{\chi}(W)).$$
\end{itemize}
Moreover, if $W$ is simple, then (ii) can be lifted to an equality $\overline{\chi_q}(V) = \iota_J(\overline{\chi_q}(W))$.
\end{prop}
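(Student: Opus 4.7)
First, I would produce a highest $\ell$-weight vector for $V$ and read off its highest $\ell$-weight $\Psi$. Take a highest $\ell$-weight vector $w \in W$ and let $v \in V$ be its pre-image under the isomorphism $\res_J^{\mu}(V) \simeq W$; Lemma \ref{lem:HighestWeightRest} then shows that $v$ is a highest $\ell$-weight vector of $V$, with some $\ell$-weight $\Psi \in \mathfrak{r}_{\mu}$ whose $J$-components already satisfy $\res_J(\Psi) = \Psi^J$. To pin down the $i \notin J$ components, I would combine the inflation hypothesis $x_i^{\pm}(z) V = 0$ with relation \eqref{eq:Relxpxmphi} taken with $i = j \notin J$: both sides vanish on $V$, so $\phi_{i,k}^+ = \phi_{i,k}^-$ as operators on $V$ for every $k \in \Z$. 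Since in $\uqmu{\mu}{\g}$ we have $\phi_{i,k}^+ = 0$ for $k < 0$ and $\phi_{i,k}^- = 0$ for $k > \alpha_i(\mu)$, this forces $\phi_{i,k}^{\pm}$ to vanish on $V$ outside $0 \leq k \leq \alpha_i(\mu)$. Hence $\Psi_i(z)$ is a polynomial, and the constraints $\deg \Psi_i(z) = \alpha_i(\mu)$ and $\Psi_i(0) \neq 0$ yield a factorization $\Psi_i(z) = c_i \prod_k (1 - a_{i,k} z)$ with $c_i, a_{i,k} \in \C^{\times}$. Each linear factor identifies with the $i$-th component of some $\Psi_{i, a_{i,k}}$ and the constant $c_i$ with the $i$-th component of an invertible $[c_i \omega_i]$, which assembles (i) upon taking $\Psi_p$ to be the product of all these factors over $i \notin J$; the membership $\Psi_p \in \mathfrak{r}_{\mu - \nu}$ is automatic.

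For (ii), I would exploit the triangular decomposition \eqref{eq:TriangularDec}. Because $x_i^-(z) V = 0$ for $i \notin J$, the generation $V = \uqmu{\mu,-}{\g} \cdot v$ reduces to $V = \uqmu{\nu,-}{\g_J} \cdot v$, and every weight of $V$ is then of the form $\gamma[-\alpha]$ with $\gamma = \varpi(\Psi)$ and $\alpha \in Q_J^+$. Moreover, the $i$-th coordinate (for $i \notin J$) of such a weight is entirely determined by $\alpha$, which means $\gamma[-\alpha] \in \mathfrak{t}^{\times}$ is uniquely recovered from its $J$-restriction $\gamma^J[-\alpha]_J \in \mathfrak{t}^{\times}_J$. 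The isomorphism $\res_J^{\mu}(V) \simeq W$ of $\uqmu{\nu}{\g_J}$-modules then gives $\dim V_{\gamma[-\alpha]} = \dim W_{\gamma^J[-\alpha]_J}$ for each $\alpha \in Q_J^+$, and normalizing both characters while using $\varpi(\iota_J)([-\alpha_j]) = [-\alpha_j]$ yields $\overline{\chi}(V) = \varpi(\iota_J)(\overline{\chi}(W))$.

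The upgrade to $q$-characters when $W$ is simple follows the same scheme but now invokes Theorem \ref{thm:qCharA}. Lemma \ref{lem:InflSimpleSimple} first ensures that $V$ is simple too, so every $\ell$-weight of $V$ writes as $\Psi' = \Psi \prod_{i,a} A_{i,a}^{-n_{i,a}}$ with $n_{i,a} \in \mathbb{N}$. Applying $\varpi$ and using the previous paragraph, one gets $\sum_a n_{i,a} = 0$ whenever $i \notin J$ (since $\varpi(A_{i,a}^{-1}) = [-\alpha_i]$ and the weights of $V$ are constrained to $Q_J^+$-shifts), so only factors $A_{j,a}^{-1}$ with $j \in J$ can appear. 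The elements $\res_J(A_{j,a})$ with $j \in J$ coincide with the $A_{j,a}$ attached to the semisimple $\g_J$ and thus form a free family in $\mathfrak{r}^J$ by Remark \ref{rem:AAlgFree} applied to each simple summand of $\g_J$. Consequently $\res_J$ is injective on the $\ell$-weight support of $V$, and the multiplicities transfer through \eqref{eq:chiRes} to give $\overline{\chi_q}(V) = \iota_J(\overline{\chi_q}(W))$. The main obstacle I anticipate is the initial polynomiality step: encoding the inflation hypothesis through the $[x^+,x^-] = (\phi^+ - \phi^-)/(q_i - q_i^{-1})$ relation (rather than only through Lemma \ref{lem:HighestWeightRest}) is what unlocks the rest of the argument.
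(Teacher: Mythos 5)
Your proposal is correct and follows essentially the same route as the paper's proof: Lemma \ref{lem:HighestWeightRest} plus relation \eqref{eq:Relxpxmphi} pin down the $i\notin J$ components of the highest $\ell$-weight, the identification $V=W$ as vector spaces constrains the weight support to $Q_J^+$-shifts, and Theorem \ref{thm:qCharA} together with (ii) yields the $q$-character upgrade. The only superficial difference is in the last step, where you justify injectivity of $\res_J$ on the $\ell$-weight support via freeness of the $\res_J(A_{j,a})$'s, while the paper equivalently invokes the identity $\iota_J\circ\res_J=\mathrm{id}$ on monomials in $\{A_{j,b}^{-1}\}_{j\in J}$.
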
\vspace*{-1.5mm}
\begin{proof}
Fix some highest $\ell$-weight vector $v\in W$. Then $v$ is also a highest $\ell$-weight vector~when seen inside the $J$-inflation $V$ by Lemma \ref{lem:HighestWeightRest}. Note $\Psi = (\Psi_{i}(z))_{i\in I}\in \mathfrak{r}_{\mu}$ the $\ell$-weight of $v$ in $V$. By the discussion above Theorem \ref{thm:qCharAJ}, $\res_{J}(\Psi) = \Psi^J$. Take now $i\not\in J$. Then, as $x_i^{\pm}(z)V = 0$, \eqref{eq:Relxpxmphi} gives, for all $r\in \Z$,
$$ (\phi_{i,r}^+-\phi_{i,r}^-)v= (q_i-q_i^{-1})[x_{i,r}^+,x_{i,0}^-]v = 0$$
and it follows in particular that $\phi_{i,r}^+v = \phi_{i,r}^-v = 0$ for $r>\alpha_i(\mu)$ with $\phi_{i,\alpha_i(\mu)}^+v=\phi_{i,\alpha_i(\mu)}^-v\neq 0$. The rational function $\Psi_i(z)$ is hence a polynomial of degree exactly $\alpha_i(\mu)$ (with $\alpha_i(\mu)\geq 0$% as $\phi_{i,0}^+v\neq 0$
) and (i) easily follows. For (ii), fix $\gamma\in P(V)$ and let $w\in V_{\gamma}$. Then $w=xv$ with $x\in \uqmu{\nu,-}{\g_J}$~(since $V=W$ as vector spaces with $v$ the highest $\ell$-weight vector of $W$) and it follows that
$$\gamma = \varpi(\Psi)[-\alpha]
%=\varpi(\Psi)\varpi(\iota_J)([-\alpha])
$$
for some $\alpha\in Q_J^+$. In this situation, using the properties of the map $\res_J$ and condition (i),~we conclude that $w$ has weight
$$\res_J(\gamma)=\res_J(\varpi(\Psi))\res_J([-\alpha])=\varpi(\res_J(\Psi))\res_J([-\alpha]) = \varpi(\Psi^J)\res_J([-\alpha])$$ when seen inside $W$. This easily implies (ii) (as $(\varpi(\iota_J)\circ\res_J)([-\alpha])=[-\alpha]$ because $\alpha\in Q_J^+$).\par
To finish the proof, suppose that $W$ is simple and choose $\Psi'$ a $\ell$-weight of $V$ with $v'\in V_{\Psi'}$. Then, by Theorem \ref{thm:qCharA}, $\Psi' = \Psi M$ for some monomial $M$ in the variables $\{A_{i,b}^{-1}\}_{i\in I,b\in\C^{\times}}$. By (ii) and since the simple roots are free in the root lattice $Q$, we can deduce that $M$ is actually a monomial in the variables $\{A_{j,b}^{-1}\}_{j\in J,b\in\C^{\times}}$. Hence, $\iota_J\circ \res_J(M) = M$ and the desired result follows since $v'$ has $\ell$-weight \vspace*{-0.5mm} % when seen inside $W$, $v'$ has $\ell$-weight %by definition of $\iota_J$ and condition (i),
$$ \res_J(\Psi') = \Psi^J\res_J(M) = \Psi^J M$$\vspace*{-5mm}\\
when seen inside $W$.
\end{proof}\vspace*{-1.5mm}
The proposition below gives a result in the opposite direction. In this proposition, we use the terminology $J$\textit{-trivial} for a $\ell$-weight $\Psi=(\Psi_i(z))_{i\in I}\in \mathfrak{r}$ such that $\Psi_j(z)=1$ for all $j\in J$. We also use the same initial data $(\nu,\Psi^J,W)$ as for Proposition \ref{prop:IfVInflqCar}.
\begin{prop} Fix again $\mu\in\Lambda^{\vee}$ such that $\res_J(\mu)=\nu$ with an object $V$ of $\OO^{\mu}$. Suppose that $\overline{\chi}(V)=\varpi(\iota_J)(\overline{\chi}(W))$. Then, $V$ is a $J$-inflation of $W$ if either 
\begin{equation}
\res_J^{\mu}(V)\simeq W\text{ in }\OO^{\nu}_J \tag{$\diamond$}
\end{equation}
or if
\begin{equation}\label{eq:ConditionStar}
W\text{ is simple with }{\chi}_q(V)=\iota_J(\overline{\chi_q}(W))[\Psi^J\Psi_p]\text{ for some }J\text{-trivial }\Psi_p\in \mathfrak{r}_{\mu-\nu}. \tag{$\star$}
\end{equation}
\end{prop}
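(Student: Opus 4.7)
The plan is to verify both conditions (i) and (ii) of Definition \ref{def:Infl} in each of the two scenarios. I will start by exploiting the common hypothesis $\overline{\chi}(V) = \varpi(\iota_J)(\overline{\chi}(W))$. Writing $\Psi_{\max}$ for the (unique) maximal $\ell$-weight of $V$ implicit in the normalization of $\overline{\chi}(V)$, the image of $\varpi(\iota_J)$ consists of formal sums of $[-\beta]$ for $\beta\in Q_J^+$, so every weight of $V$ lies in the set $\{\varpi(\Psi_{\max})[-\beta] : \beta \in Q_J^+\}$.

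Condition (ii) will then follow from this weight support in both cases. Indeed, if $x_{i,r}^{\pm} w \neq 0$ for some $w \in V_\gamma$ and some $i \notin J$, then $\gamma[\pm\alpha_i]$ is a weight of $V$; writing $\gamma = \varpi(\Psi_{\max})[-\beta]$ with $\beta \in Q_J^+$ forces $\beta \mp \alpha_i \in Q_J^+$, which contradicts the linear independence of the simple roots (as $\beta$ has zero $\alpha_i$-coefficient while $\alpha_i\notin Q_J$). Thus $x_i^{\pm}(z)V = 0$ for every $i \notin J$, and in case $(\diamond)$ the isomorphism $\res_J^\mu(V)\simeq W$ is already given, so the argument ends there.

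The remaining and more delicate work lies in case $(\star)$, where I must still identify $\res_J^\mu(V)$ with $W$. The $q$-character identity $\chi_q(V)=\iota_J(\overline{\chi_q}(W))[\Psi^J\Psi_p]$ forces $V_{\Psi^J\Psi_p}$ to be one-dimensional and $\varpi(\Psi^J\Psi_p)$ to be the unique maximal weight of $V$. Picking a non-zero $v \in V_{\Psi^J\Psi_p}$, maximality gives $x_{i,r}^{+}v=0$ for every $i \in I$ and $r \in \Z$, so $v$ is a highest $\ell$-weight vector for the submodule $V' = \uqmu{\mu}{\g}\cdot v$. Combining (ii) with Lemma \ref{lem:HighestWeightRest} and the $J$-triviality of $\Psi_p$, $\res_J^\mu(V')$ is a highest $\ell$-weight $\uqmu{\nu}{\g_J}$-module of highest $\ell$-weight $\res_J(\Psi^J\Psi_p) = \Psi^J$, and therefore surjects onto $L^J(\Psi^J)=W$.

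To finish the proof, I will compare $q$-characters. The surjection $\res_J^\mu(V')\twoheadrightarrow W$ gives $\chi_q(\res_J^\mu(V'))\geq \chi_q(W)$ componentwise, while $V'\subseteq V$ yields $\chi_q(\res_J^\mu(V'))\leq \chi_q(\res_J^\mu(V)) = \res_J(\chi_q(V)) = \chi_q(W)$, where the last equality uses $\res_J\circ\iota_J=\id$ and $\res_J(\Psi^J\Psi_p)=\Psi^J$. Hence all inequalities are equalities, making the surjection onto $W$ an isomorphism. Moreover, the $(\star)$-hypothesis forces each $\ell$-weight of $V$ to have the unique form $\iota_J(M)\Psi^J\Psi_p$ for a monomial $M$, so distinct $\ell$-weights of $V$ have distinct $J$-parts; equality of restricted $q$-characters then propagates to $V'_\Psi = V_\Psi$ for every $\Psi$, giving $V'=V$ and $\res_J^\mu(V)\simeq W$. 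The main obstacle is this last propagation step, where the rigidity of the $q$-character identity must be exploited to ensure that $V$ is generated by its highest $\ell$-weight vector rather than merely containing a proper submodule restricting correctly to $W$.
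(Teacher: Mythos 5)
Your proof is correct, and the opening argument for condition (ii) of Definition \ref{def:Infl} (using that every weight of $V$ differs from $\varpi(\Psi_{\max})$ by an element of $[-Q_J^+]$, so the $\alpha_i$-coefficient for $i\notin J$ stays zero, forbidding $x_{i,r}^{\pm}$ to act nontrivially) is essentially identical to the paper's. The case $(\diamond)$ is handled the same way in both.

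Where you diverge is in case $(\star)$. You and the paper both compute the key identity
$\chi_q(\res_J^{\mu}(V)) = \res_J(\chi_q(V)) = \overline{\chi_q}(W)[\Psi^J] = \chi_q(W)$
using \eqref{eq:chiRes}, $\res_J\circ\iota_J=\operatorname{id}$, and the $J$-triviality of $\Psi_p$. After that, the paper concludes in one line: since the $q$-character map on $K_0(\OO^{sh}_J)$ is injective and $W$ is simple, any object of $\OO^{\nu}_J$ with $q$-character $\chi_q(W)$ has composition series consisting only of $W$ (once), hence equals $W$. You instead take a more hands-on route: you build the cyclic submodule $V'=\uqmu{\mu}{\g}\cdot v$ on the top $\ell$-weight vector, invoke Lemma \ref{lem:HighestWeightRest} to make $\res_J^{\mu}(V')$ a highest $\ell$-weight $\uqmu{\nu}{\g_J}$-module with top $\Psi^J$, sandwich $q$-characters to get $\res_J^{\mu}(V')\simeq W$, and then prove $V'=V$ by observing that the $\ell$-weights of $V$ are of the form $\iota_J(M)\Psi^J\Psi_p$ with $M\in\mathcal{A}_J$ (freeness of the $A_{j,a}$ then forces distinct $\ell$-weights of $V$ to have distinct $J$-restrictions). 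Your extra steps are not needed, but they are sound; in fact they implicitly re-derive the injectivity phenomenon for the single module at hand instead of quoting it globally. The only small omission common to both your write-up and the paper's is a check that $\res_J^{\mu}(V)$ actually lies in $\OO^{\nu}_J$ before \eqref{eq:chiRes} is invoked; this is easily supplied from the character hypothesis (weights of $V$ lie in a single $[-Q_J^+]$-cone, so $\uqmu{\nu}{\g_J}$-weight spaces of $\res_J^{\mu}(V)$ coincide with the corresponding $\uqmu{\mu}{\g}$-weight spaces and stay finite-dimensional).
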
\vspace*{-2.9mm}
\begin{proof} Fix $i\not\in J$ with $\gamma\in P(V)$ and suppose that $x_{i,r}^{+}v'\neq 0$ or $x_{i,r}^{-}v'\neq 0$ for some $r\in \Z$ and some $v'\in V_{\gamma}$. Then, the set $\{\gamma[\alpha_i],\gamma[-\alpha_i]\}\cap P(V)$ is not empty and it follows easily~from~the equality $\overline{\chi}(V)=\varpi(\iota_J)(\overline{\chi}(W))$ that \vspace*{-0.5mm}
$$[\alpha_i] \in P(V)(P(V))^{-1} \subseteq [Q_J],$$\vspace*{-5mm}\\
contradicting the injectivity of the map $[\overline{\ }]:\Lambda_{\mathbb{Q}}\arr \mathfrak{t}^{\times}$. In particular, $x_i^{\pm}(z)V = 0$ for all $i\not\in J$ and it thus only remains to prove that $\res_J^{\mu}(V)\simeq W$ in $\OO^{\nu}_{J}$ assuming \eqref{eq:ConditionStar}. For this goal, use \eqref{eq:chiRes} and the $J$-triviality of $\Psi_p$ to deduce that
 $$\chi_q(\res_J^{\mu}(V)) = \res_J(\chi_q(V)) = (\res_J\circ\iota_J)(\overline{\chi_q}(W))[\res_J(\Psi^J)\res_J(\Psi_p)] = \overline{\chi_q}(W)[\Psi^J] =\chi_q(W)$$
so that $\res_J^{\mu}(V)\simeq W$ in $\OO^{\nu}_{J}$ by injectivity of the $q$-character map and irreducibility of $W$.
\end{proof}\vspace*{-1mm}
The above results give rise to an equivalent characterization of inflations of highest~$\ell$-weight modules. Note that we use again the terminology $J$\textit{-trivial $\ell$-weights} and the data $(\nu,\Psi^J,W)$.
\begin{thm}\label{thm:EquivalentDefInf} Fix $\mu\in\Lambda^{\vee}$ such that $\res_J(\mu)=\nu$ with $V$ in $\OO^{\mu}$. 
\begin{itemize} 
\item[\normalfont{\textbf{(1)}}] Suppose that $\res_J^{\mu}(V)\simeq W$ in $\OO^{\nu}_{J}$. Then, $V$ is a $J$-inflation of $W$ to $\g$ if and only if
\begin{center}
$\overline{\chi}(V)=\varpi(\iota_J)(\overline{\chi}(W))$.
\end{center}
\item[\normalfont{\textbf{(2)}}] Suppose that $W$ is simple. Then, $V$ is a $J$-inflation of $W$ to $\g$ if and only if
\begin{center}
$\chi_q(V)%=\overline{\chi_q}(V)[\Psi^J\Psi_p]
=\iota_J(\overline{\chi_q}(W))[\Psi^J\Psi_p]$ for some $J$-trivial $\Psi_p\in \mathfrak{r}_{\mu-\nu}$.
\end{center}
\end{itemize}
\end{thm}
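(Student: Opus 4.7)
The plan is to unpack this theorem as a direct consequence of the two preceding propositions (Proposition \ref{prop:IfVInflqCar} and the unnamed proposition immediately after it), which together already provide both directions of each equivalence after a small bookkeeping argument.

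For part \textbf{(1)}, the forward direction is immediate: if $V$ is a $J$-inflation of $W$, part (ii) of Proposition \ref{prop:IfVInflqCar} gives exactly $\overline{\chi}(V) = \varpi(\iota_J)(\overline{\chi}(W))$. For the reverse direction, the hypothesis $\res_J^{\mu}(V) \simeq W$ combined with the assumed character equality is precisely condition $(\diamond)$ of the following proposition, so $V$ is a $J$-inflation of $W$.

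For part \textbf{(2)}, I would first handle the forward direction. Assuming $V$ is a $J$-inflation and $W$ is simple, Proposition \ref{prop:IfVInflqCar}(i) yields a highest $\ell$-weight $\Psi = \Psi^J \Psi_p$ for $V$ with $\Psi_p$ a product of $\Psi_{i,a}$'s and $[a\omega_i]$'s for $i \notin J$; inspection of these generators shows each has trivial $j$-th component for $j\in J$, so $\Psi_p$ is $J$-trivial. The last sentence of Proposition \ref{prop:IfVInflqCar} then gives $\overline{\chi_q}(V) = \iota_J(\overline{\chi_q}(W))$, and multiplying by $[\Psi^J \Psi_p]$ yields the claimed $\chi_q$-equality. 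For the reverse direction, I would project the $q$-character identity to (normalized) characters via the map $\varpi$: since $\varpi \circ \iota_J = \varpi(\iota_J) \circ \varpi$ on the relevant subring (by construction of $\varpi(\iota_J)$) and since $\varpi(\Psi_p)$ cancels in the normalization (being a product of $[a\omega_i]$ for $i \notin J$, which does not affect the $J$-part), one extracts $\overline{\chi}(V) = \varpi(\iota_J)(\overline{\chi}(W))$. Combined with the hypothesis, this is exactly condition $(\star)$ of the preceding proposition, giving that $V$ is a $J$-inflation.

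The only step that is not purely formal is the reverse direction of (2), where the passage from the $q$-character identity to the normalized character identity has to be verified carefully: one must check that normalizing $\chi_q(V)$ by $[\Psi^J \Psi_p]^{-1}$ and then projecting via $\varpi$ produces the same result as normalizing $\chi_q(W)$ by $[\Psi^J]^{-1}$ and projecting, modulo the morphism $\varpi(\iota_J)$. This reduces to observing that $\varpi$ intertwines $\iota_J$ and $\varpi(\iota_J)$ on monomials in the $A_{j,a}^{-1}$'s (which is exactly how $\varpi(\iota_J)$ was defined), and that $\Psi_p$ being $J$-trivial means $\varpi(\Psi_p)$ is supported away from $J$, so it disappears upon evaluation at the appropriate target. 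Once this identity is in hand, condition $(\star)$ of the previous proposition applies and the proof is complete.
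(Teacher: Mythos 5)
Your proposal is correct and takes essentially the same approach as the paper, which simply states that the theorem "follows easily from the above propositions." You have usefully unpacked the one non-trivial step (deducing the normalized-character equality from the $q$-character equality in the reverse direction of (2) by applying the weight-evaluation map $\varpi$, so that the standing hypothesis of the second proposition is satisfied before invoking condition $(\star)$); this is exactly the bookkeeping the paper's terse proof glosses over, and your verification of it is sound.
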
\vspace*{-2mm}
\begin{proof}
This follows easily from the above propositions.
\end{proof}\vspace*{-1.5mm}
A first, unchallenging, consequence of the previous results is the following explicit characterization of inflations for the trivial representation $L^J(\mathbbm{1})$ of $\OO^{0}_J$ (with $\mathbbm{1} = (1)_{j\in J}\in\mathfrak{r}_0^J$).%~This characterization will be of crucial interest in Section \ref{sec:Unicity}. 
\begin{cor}\label{cor:InfTrivial} An object $V$ in $\OO^{sh}$ is a $J$-inflation to $\g$ of the representation $L^J(\mathbbm{1})$ of $\mathcal{O}^0_J$ if and only if $V \simeq L(\Psi)$ with $\Psi\in\mathfrak{r}$ a product of various $[a\omega_i]$ and $\Psi_{i,a}$ for $i\not\in J$ and $a\in\C^{\times}$.
\end{cor}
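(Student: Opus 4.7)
The plan is to derive the corollary as an immediate consequence of Theorem \ref{thm:EquivalentDefInf}(2) applied to $W = L^J(\mathbbm{1})$, together with the fusion-product description of $L(\Psi)$ when $\Psi$ is built from positive prefundamental and invertible $\ell$-weights. Since the trivial representation $L^J(\mathbbm{1})$ is one-dimensional with highest $\ell$-weight $\Psi^J=\mathbbm{1}$, we have $\overline{\chi_q}(L^J(\mathbbm{1})) = 1$ and hence $\iota_J(\overline{\chi_q}(L^J(\mathbbm{1}))) = 1$. The criterion of Theorem \ref{thm:EquivalentDefInf}(2) therefore specialises, in this case, to the single condition $\chi_q(V) = [\Psi_p]$ for some $J$-trivial $\Psi_p \in \mathfrak{r}_{\mu}$, where $\mu\in\Lambda^{\vee}$ satisfies $\res_J(\mu)=0$ and indexes the block of $V$ in $\OO^{sh}$.

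For the forward implication, Lemma \ref{lem:InflSimpleSimple} ensures that a $J$-inflation $V$ of the simple module $L^J(\mathbbm{1})$ is itself simple, so $V\simeq L(\Psi)$ for a unique $\Psi\in\mathfrak{r}_{\mu}$. Proposition \ref{prop:IfVInflqCar}(i) applied to the data $(\nu,\Psi^J)=(0,\mathbbm{1})$ then says that $\Psi=\mathbbm{1}\cdot\Psi_p=\Psi_p$ is a product of factors of the form $\Psi_{i,a}$ and $[a\omega_i]$ with $i\not\in J$ and $a\in\C^{\times}$, which is exactly the shape demanded by the corollary.

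For the backward implication, suppose $V\simeq L(\Psi)$ with $\Psi=\prod_k \Psi_{i_k,a_k}\cdot \prod_\ell [b_\ell\omega_{j_\ell}]$ where all indices $i_k,j_\ell$ lie outside $J$. Looking at the degrees in each component, the associated coweight is $\mu=\sum_k\omega_{i_k}^{\vee}$, so $\res_J(\mu)=0$, and $\Psi$ is manifestly $J$-trivial. Each factor $L(\Psi_{i_k,a_k})$ and $L([b_\ell\omega_{j_\ell}])$ is one-dimensional (see Examples \ref{ex:Invertibles} and \ref{ex:Prefund}). By Theorem \ref{thm:PrefundSimpFus} their fusion product is simple, so by Theorem \ref{thm:HighestWeightFus} it coincides with $L(\Psi)$ and $\chi_q(V)$ is the product of the one-monomial $q$-characters of the factors. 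Hence $\chi_q(V)=[\Psi]$, and the criterion of Theorem \ref{thm:EquivalentDefInf}(2) holds with $\Psi_p=\Psi$.

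There is essentially no genuine obstacle here: the corollary is a direct specialization of the $q$-character characterization of inflations, and the only non-trivial auxiliary verification is that the fusion product of one-dimensional positive prefundamental and invertible modules is again one-dimensional, which is immediate from the multiplicativity of $\chi_q$ under fusion product.
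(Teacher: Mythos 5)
Your proof is correct and follows essentially the same route as the paper: the forward direction comes from Proposition \ref{prop:IfVInflqCar}(i) (with the simplicity of $V$ supplied by Lemma \ref{lem:InflSimpleSimple}), and the backward direction uses Theorem \ref{thm:PrefundSimpFus} to see that $L(\Psi)$ is one-dimensional, so that $\chi_q(V)=[\Psi]$ with $\Psi$ a $J$-trivial $\ell$-weight, and then concludes via Theorem \ref{thm:EquivalentDefInf}(2). No gaps.
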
\vspace*{-2mm}
\begin{proof} Sufficiency clearly follows from Proposition \ref{prop:IfVInflqCar}. For necessity, suppose that $V\simeq L(\Psi)$ for some product $\Psi$ as in the statement of the corollary. Then, $\dim V=1$ (see Theorem \ref{thm:PrefundSimpFus} or use the discussion before Theorem \ref{thm:PrefundMonGen}) and $ \chi_q(V) = [\Psi] $ where $\Psi$ is a $J$-trivial $\ell$-weight.~This implies the desired result by Theorem \ref{thm:EquivalentDefInf}.
\end{proof}\vspace*{-1mm}
The above results can also be used to establish relations for the ring $K_0(\OO^{sh})$ by ``\textit{inflating}" relations of $K_0(\OO^{sh}_J)$. This is illustrated in the examples below. Observe however that~a~proper result characterizing which relations in $K_0(\OO_J^{sh})$ can be ``\textit{inflated}" has not yet been developed\footnote{A partial result in this direction is given by the upcoming Proposition \ref{prop:InfHighest} (see also Theorem \ref{thm:InfHighestDual}). This result includes the cases studied in Examples \ref{ex:QQ}, \ref{ex:QQ2} and \ref{ex:newT} (cf.~Example \ref{ex:QQSEC}).}.
\begin{example}\label{ex:QQ} Fix $j\in I$ and let $J=\{j\}$. Then, for $\mu\in \Lambda^{\vee}$, $\nu=\res_J(\mu) = \alpha_j(\mu) \omega_j^{\vee}$ and $$\uqmu{\nu}{\g_J}\simeq U_{q_j}^{\nu}(\mathfrak{sl}_2).$$ 
In particular, for $a\in \C^{\times}$, the quantum Wronskian relation
\begin{equation}\label{eq:QQWronsk} [L^J(\Psi_{j,a})][L^J(\Psi_{j,a}^{-1})]=1+[-2\omega_j][L^J(\Psi_{j,aq_j^2})][L^J(\Psi^{-1}_{j,aq_j^{-2}})]
\end{equation} \vspace*{-3.5mm}\\
holds in $K_0(\OO^{sh}_J)$ (see Example \ref{ex:qWronsk} for the notation used). Observe that this relation can be deduced from the explicit $q$-character formula of Example \ref{ex:qCharEX}. \par 
Consider now the $Q\widetilde{Q}$-system of \cite[Example 5.9(ii)]{hshift} (see also \cite{fh2}) for $K_0(\OO^{sh})$, i.e.
\begin{equation}\label{eq:QQ}
[L(\Psi_{j,a})][L(\widetilde{\Psi}_{j,a})]=[L(\Psi_p)]+[-\alpha_j][L(\Psi_{j,aq_j^2})][L(\widetilde{\Psi}_{j,aq_j^{-2}})]
\end{equation}\vspace*{-3.25mm}\\
with $\widetilde{\Psi}_{j,a}$ the $\ell$-weight of Example \ref{ex:InflPrefondSL} and with $\Psi_p =\Psi_{j,a}\widetilde{\Psi}_{j,a}$. This equation was introduced~by \cite{mrv1,mrv2,mv1,mv2} in the context of Bethe Ansatz equations and affine ${}^L\hat{\g}$-opers (with ${}^L\hat{\g}$ the Langlands dual of $\hat{\g}$). Remark that, by Examples \ref{ex:InvertiblePositivePrefond}--\ref{ex:InflPrefondSL},
$$L(\Psi_{j,a}),\ L(\widetilde{\Psi}_{j,a}),\ L(\Psi_{j,aq_j^2})\text{ and }L(\widetilde{\Psi}_{j,aq_j^{-2}})$$ \vspace*{-3.25mm}\\
are respectively $J$-inflations to $\g$ of 
$ L^J(\Psi_{j,a}),\ L^J(\Psi_{j,a}^{-1}),\ L^J(\Psi_{j,aq_j^2})\text{ and }L^J(\Psi_{j,aq_j^{-2}}^{-1})$. \par In addition, $L([-\alpha_j])$ is easily seen to be a $J$-inflation to $\g$ of $L^J([-2\omega_j])$ and, since
$$\textstyle \Psi_p = \left(\prod_{i\in I\, C_{j,i}=-1}\Psi_{i,aq_j}\right)\left(\prod_{i\in I\, C_{j,i}=-2}\Psi_{i,a}\Psi_{i,aq_j^2}\right)\left(\prod_{i\in I\, C_{j,i}=-3}\Psi_{i,aq_j^{-1}}\Psi_{i,aq_j}\Psi_{i,aq_{j}^3}\right),$$
the object $L(\Psi_p)$ is a $J$-inflation of the trivial representation $L^J(\mathbbm{1})$ of $\OO^0_{J}$ (cf.~Corollary \ref{cor:InfTrivial}). Relation \eqref{eq:QQ} can therefore be seen as an ``\textit{inflation}" of \eqref{eq:QQWronsk}. Moreover, this relation can be shown in an elementary way using Example \ref{ex:qCharEX}, Theorem \ref{thm:HighestWeightFus} and Theorem \ref{thm:EquivalentDefInf}. Indeed,
\begin{align*}
\chi_q(L(\Psi_{j,a})\star L(\widetilde{\Psi}_{j,a})) &=\iota_J\big(\overline{\chi_q}(L^J(\Psi_{j,a}^{-1}))\big)[\widetilde{\Psi}_{j,a}\Psi_{j,a}] \\&= \textstyle [\Psi_{p}]+[\Psi_p]A_{j,a}^{-1}(1+A_{j,aq_j^{-2}}^{-1}(1+\sum_{s\geq 1}A_{j,aq_j^{-4}}^{-1}\dots A_{j,aq_j^{-2s}} ^{-1}))
\\&= \chi_q(L(\Psi_p))+\iota_J\big(\overline{\chi_q}(L^J(\Psi_{j,aq_j^{-2}}^{-1}))\big)[-\alpha_j][\widetilde{\Psi}_{j,aq_j^{-2}}\Psi_{j,aq_j^{2}}]
\\&= \chi_q(L(\Psi_p)) + \chi_q([-\alpha_j]\star L(\Psi_{j,aq_j^2})\star L(\widetilde{\Psi}_{j,aq_j^{-2}}))
\end{align*}
which implies \eqref{eq:QQ} by injectivity of the $q$-character map $\chi_q$ (see Theorem \ref{thm:HighestWeightFus}).
\end{example}
\begin{example}\label{ex:QQ2} Let again $J=\{j\}$ for some $j\in I$. Then, for $a\in \C^{\times}$, Baxter's $QT$-relation
\begin{equation}\label{eq:BaxterQT}
[L^J(\Psi_{j,a})][L^J(Y_{j,aq_j^{-1}})] = [\omega_j][L^J(\Psi_{j,aq_j^{-2}})]+[-\omega_j][L^J(\Psi_{j,aq_j^2})]
\end{equation}\vspace*{-3.5mm}\\
holds in $K_0(\OO^{sh}_J)$ (and can be proven using the explicit $q$-character formulas of Example \ref{ex:qCharEX}). Let us compare this relation with the $QQ^*$-system of \cite{hl1}, that is
\begin{equation}\label{eq:QQ2}
[L(\Psi_{j,a})][L(\Psi^*_{j,a})]=[\omega_j][L(\Psi_{p_1})]+[\omega_j-\alpha_j][L(\Psi_{p_2})]
\end{equation}\vspace*{-3.5mm}\\
where $\Psi_{j,a}^*$ is the $\ell$-weight described in Example \ref{ex:InflPrefondSL} and where
$$\textstyle \Psi_{p_1} = [-\omega_j]\Psi_{j,a}\Psi_{j,a}^*=\prod_{i\in I,\,C_{i,j}\neq 0} \Psi_{i,aq_i^{-C_{i,j}}} \text{ with }\Psi_{p_2}=[\alpha_j]\Psi_{p_1}A_{j,a}^{-1} = \prod_{i\in I,\,C_{i,j}\neq 0} \Psi_{i,aq_i^{C_{i,j}}}.$$ 
It is straightforward to show, as in the proof of Corollary \ref{cor:InfTrivial}, that the objects $L(\Psi_{p_1})$, $L(\Psi_{p_2})$ and $L(\omega_j-\alpha_j)$ are respectively $J$-inflations of $L^J(\Psi_{j,aq_j^{-2}})$, $L^J(\Psi_{j,aq_j^2})$ and $L^J([-\omega_j])$.~Relation \eqref{eq:QQ2} can thus be seen as an ``inflation" of \eqref{eq:BaxterQT} by Examples \ref{ex:InvertiblePositivePrefond}--\ref{ex:InflPrefondSL}. A proof of this ``inflated relation" using Example \ref{ex:qCharEX}, Theorem
\ref{thm:HighestWeightFus} and Theorem 
\ref{thm:EquivalentDefInf} is given by 
\begin{align*}
\chi_q(L(\Psi_{j,a})\star L(\Psi^*_{j,a})) &=\iota_J\big(\overline{\chi_q}(L^J(Y_{j,aq_j^{-1}}))\big)[\Psi^*_{j,a}\Psi_{j,a}] \\&= [\omega_j][\Psi_{p_1}](1+A_{j,a}^{-1})=[\omega_j]\chi_q(L(\Psi_{p_1}))+[\omega_j-\alpha_j]\chi_q(L(\Psi_{p_2})).
\end{align*}
Note that this proof, which is similar to the one given in \cite[Section 6.3]{fjmm}, could also have been carried out using the the explicit $q$-character formula of \cite[Example 6.6]{hshift} for $L(\Psi_{j,a}^*)$.
\end{example}
\begin{rem} The $QQ^*$-system \eqref{eq:QQ2} describes the mutation of a remarkable cluster algebra related to the Grothendieck ring of a subcategory $\OO^{sh,+}_{2\Z}{\,\subseteq\,}\OO^{sh}$, see \cite[Section 8.4]{hshift} and \cite{hl1}.
\end{rem}
For $a\in \C^{\times}$ and $k\in\mathbb{N}$, denote by
$$m_{k,a}^{(j)} = Y_{j,a}Y_{j,aq_j^2}\dots Y_{j,aq_j^{2(k-1)}}$$\vspace*{-2.5mm}\\ the highest $\ell$-weight of the Kirillov-Reshitikhin module $W_{k,a}^{(j)}$ of $\OO^{0}$ (see Example \ref{ex:KR}).
\begin{example}[Inflated $T$-system of type A${}_1$]\label{ex:newT} Let again $J=\{j\}$ for some $j\in I$ and define
$$ T_{k,a} = L^J(m_{k,aq_j^{1-2k}}^{(j)}) \text{ with } T_{0,a} = L^J(\mathbbm{1})$$\vspace*{-2.5mm}\\
for $k\in\mathbb{N}_{>0}$ and $a\in\C^{\times}$. Then, in $K_0(\OO^{sh}_J)$, one has the $T$-system (cf.~\cite{cp1,hTsyst})
\begin{equation}\label{eq:Tsyst}
[T_{k,a}][T_{k,aq_j^{-2}}]=[T_{k+1,a}][T_{k-1,aq_j^{-2}}]+1.
\end{equation}\vspace*{-3mm}\\
Recall the $\ell$-weight $\widetilde{\Psi}_{j,a}$ of Example \ref{ex:InflPrefondSL} and let $\Psi_p(a)=\Psi_{j,a}\widetilde{\Psi}_{j,a}$. Set $V_{0,a} = L(\Psi_p(a))$~with \vspace*{-1mm} $$V_{k,a}=L([k\omega_j]\widetilde{\Psi}_{j,a}\Psi_{j,aq_j^{-2k}})=L(m_{k,aq_j^{1-2k}}^{(j)}\Psi_p(a))$$\vspace*{-3mm}\\ for $k\in\mathbb{N}_{>0}$. Then it follows easily from \cite[Example 8.3]{hshift} that $\chi_q(V_{0,a}) = [\Psi_p(a)]$ and that
$$ \chi_q(V_{k,a}) %= \iota_J\big(\overline{\chi_q}(T_{k,a})\big)[k\omega_j][\widetilde{\Psi}_{j,a}\Psi_{j,aq_j^{-2k}}] 
= \iota_J\big(\overline{\chi_q}(T_{k,a})\big)[m_{k,aq_j^{1-2k}}^{(j)}\Psi_{p}(a)]$$\vspace*{-3mm}\\
if $k\in \mathbb{N}_{>0}$. Hence, $V_{k,a}$ is a $J$-inflation of $T_{k,a}$ by Theorem \ref{thm:EquivalentDefInf} (and by $J$-triviality of $\Psi_p(a)$, see Example \ref{ex:QQ}). Also, by Example \ref{ex:qCharEX} and Theorem \ref{thm:HighestWeightFus}, fixing $k\in \mathbb{N}_{>0}$, we get that
\begin{align*}
\chi_q(V_{k,a}\star V_{k,aq_j^{-2}}) &= \iota_J\big(\overline{\chi_q}(T_{k,a})\big)\iota_J\big(\overline{\chi_q}(T_{k,aq_j^{-2}})\big)[2k\omega_j][\widetilde{\Psi}_{j,a}\Psi_{j,aq_j^{-2k}}\widetilde{\Psi}_{j,aq_j^{-2}}\Psi_{j,aq_j^{-2(k+1)}}]
\end{align*}
is equal to 
\begin{equation}\label{eq:qCarnewTsystPartial}
\textstyle\big(1+\sum_{s=0}^{k-1}A_{j,a}^{-1}\dots A_{j,aq_j^{-2s}}^{-1}\big)\big(1+\sum_{s=0}^{k-1}A_{j,aq_j^{-2}}^{-1}\dots A_{j,aq_j^{-2(s+1)}}^{-1}\big)[2k\omega_j][\Psi]
\end{equation}\vspace*{-3mm}\\
where $\Psi = \widetilde{\Psi}_{j,a}\Psi_{j,aq_j^{-2k}}\widetilde{\Psi}_{j,aq_j^{-2}}\Psi_{j,aq_j^{-2(k+1)}}$. \par However, a straightforward computation allows us to write \eqref{eq:qCarnewTsystPartial} as the sum of
{\small
\begin{align*}
\chi_q(V_{k+1,a}\star V_{k-1,aq_j^{-2}})&=\iota_J(\overline{\chi_q}(T_{k+1,a}))(\delta_{k,1}+\iota_J(\overline{\chi_q}(T_{k-1,aq_j^{-2}}))(1-\delta_{k,1}))[2k\omega_j][\Psi]
\\
&=\textstyle \big(1+\sum_{s=0}^{k}A_{j,a}^{-1}\dots A_{j,aq_j^{-2s}}^{-1}\big)\big(1+\sum_{s=0}^{k-2}A_{j,aq_j^{-2}}^{-1}\dots A_{j,aq_j^{-2(s+1)}}^{-1}\big)[2k\omega_j][\Psi]
\end{align*}}\noindent
with \vspace*{-0.25mm}
$$ [k(2\omega_j-\alpha_j)][\Psi_p(a)\Psi_p\big(aq_j^{-2(k+1)}\big)]
%= [k(2\omega_j-\alpha_j)][\Psi_{j,a}\widetilde{\Psi}_{j,a}\Psi_{j,aq_j^{-2(k+1)}}\widetilde{\Psi}_{j,aq_j^{-2(k+1)}}]
=A_{j,aq_j^{-2}}^{-1}\dots A_{j,aq_j^{-2k}}^{-1}[2k\omega_j][\Psi] $$\vspace*{-3.25mm}\\
which is the highest $\ell$-weight of a $J$-inflation of the trivial representation of $\OO^{sh}_J$ by Corollary \ref{cor:InfTrivial}. Therefore, in $K_0(\OO^{sh})$, \eqref{eq:Tsyst} gives rise by ``inflation" to the relation\vspace*{-0.5mm}
\begin{equation}\label{eq:newTsyst} 
[V_{k,a}][V_{k,aq_j^{-2}}]=[V_{k+1,a}][V_{k-1,aq_j^{-2}}]+[k(2\omega_j-\alpha_j)][L(\Psi_p(a))][L(\Psi_p(aq_j^{-2(k+1)}))] 
\end{equation}\vspace*{-3.75mm}\\
that we call ``inflated T-system of type A${}_1$".
\end{example}\vspace*{-0.5mm}
We write the results of the above example in a proper theorem.
\begin{theorem}\label{thm:newT}
Fix $j\in I$. Then, in $K_0(\OO^{sh})$, for $a\in \C^{\times}$ and $k\in \mathbb{N}_{>0}$,\vspace*{-0.5mm}
$$ [L(\widetilde{\Psi}_{j,a}\Psi_{j,aq_j^{-2k}})][L(\widetilde{\Psi}_{j,aq_j^{-2}}\Psi_{j,aq_j^{-2(k+1)}})] - [L(\widetilde{\Psi}_{j,a}\Psi_{j,aq_j^{-2(k+1)}})][L(\widetilde{\Psi}_{j,aq_j^{-2}}\Psi_{j,aq_j^{-2k}})]$$\vspace*{-3mm}\\
is equal to the isoclass $[-k\alpha_j][L(\Psi_p(a))][L(\Psi_{p}(aq_j^{-2(k+1)}))]$ where $\Psi_p(a) = \widetilde{\Psi}_{j,a}\Psi_{j,a}$.
\end{theorem}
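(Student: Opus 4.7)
My plan is to recognise Theorem \ref{thm:newT} as a direct algebraic reformulation of the inflated $T$-system relation \eqref{eq:newTsyst} already established in Example \ref{ex:newT}. The key observation is that fusing any highest $\ell$-weight simple module $L(\Psi)$ with a $1$-dimensional invertible representation $L([m\omega_j])$ multiplies the $q$-character by $[m\omega_j]$. Combined with the multiplicativity and injectivity of $\chi_q$ on $K_0(\OO^{sh})$ (Theorem \ref{thm:HighestWeightFus}), this yields, for every $m\in \mathbb{Z}$ and every $\Psi\in\mathfrak{r}$,
\begin{equation*}
[L([m\omega_j]\Psi)] = [m\omega_j]\cdot [L(\Psi)] \quad \text{in } K_0(\OO^{sh}).
\end{equation*}

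Applying this identity to $V_{k,a}=L([k\omega_j]\widetilde{\Psi}_{j,a}\Psi_{j,aq_j^{-2k}})$ from Example \ref{ex:newT} (and its shifts in $a$ and $k$), I would rewrite each of the four isoclasses in the LHS of the theorem as the product of an invertible class with some $[V_{k',a'}]$. Matching the shifts $a\mapsto aq_j^{-2}$ and $k\mapsto k\pm 1$ carefully, a short bookkeeping shows that both summands carry the same invertible prefactor $[-2k\omega_j]$, so that
\begin{equation*}
\text{LHS} = [-2k\omega_j]\bigl([V_{k,a}][V_{k,aq_j^{-2}}] - [V_{k+1,a}][V_{k-1,aq_j^{-2}}]\bigr).
\end{equation*}

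Substituting \eqref{eq:newTsyst} then turns the bracket into $[k(2\omega_j-\alpha_j)][L(\Psi_p(a))][L(\Psi_p(aq_j^{-2(k+1)}))]$, and the invertible contribution collapses to $[-2k\omega_j+k(2\omega_j-\alpha_j)] = [-k\alpha_j]$, which recovers exactly the claimed RHS.

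The genuine content of the argument therefore lies not in Theorem \ref{thm:newT} itself but in the relation \eqref{eq:newTsyst}, which is the real obstacle. Its derivation in Example \ref{ex:newT} proceeds by a direct $q$-character computation combining the explicit Kirillov--Reshetikhin $q$-character of Example \ref{ex:qCharEX}, the fact that $V_{k,a}$ is a $J$-inflation of $T_{k,a}$ (via Theorem \ref{thm:EquivalentDefInf}), and multiplicativity of $\chi_q$; the subtle step is to recognise the residual monomial $A_{j,aq_j^{-2}}^{-1}\dots A_{j,aq_j^{-2k}}^{-1}[2k\omega_j][\Psi]$ as the highest $\ell$-weight of a $J$-inflation of the trivial module, which is achieved by the explicit factorisation given by Corollary \ref{cor:InfTrivial}. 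Once this identification is in place, injectivity of $\chi_q$ (Theorem \ref{thm:HighestWeightFus}) closes \eqref{eq:newTsyst}, and the plan above delivers Theorem \ref{thm:newT}.
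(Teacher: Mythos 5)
Your proposal is correct and follows the paper's route exactly: the paper itself declares Theorem \ref{thm:newT} to be a restatement of relation \eqref{eq:newTsyst} from Example \ref{ex:newT} (``We write the results of the above example in a proper theorem''), and you supply the bookkeeping that converts one into the other by pulling the invertible factors $[k\omega_j]$, $[(k\pm1)\omega_j]$ out of the classes $[V_{k',a'}]$ via the identity $[L([m\omega_j]\Psi)]=[m\omega_j][L(\Psi)]$ and collapsing $[-2k\omega_j][k(2\omega_j-\alpha_j)]=[-k\alpha_j]$. Your identification of the real work as lying in Example \ref{ex:newT} (the $q$-character computation, the recognition via Corollary \ref{cor:InfTrivial}, and injectivity of $\chi_q$) is also an accurate reading of where the substance is.
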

\begin{rem} We will talk about a potential application of the above theorem to the study~of ``compatible cluster structures" over Grothendieck rings in Section \ref{sec:applications}. 
\end{rem}\vspace*{-0.75mm}
Let us at long last end this subsection by noting that Theorem \ref{thm:EquivalentDefInf} above paves the way for a comparison between our upcoming results about inflations of finite-dimensional modules of $\OO^{sh}_J$ (see Section \ref{sec:InflPrefund}) and \cite[Proposition 6.9]{fjmm}. Indeed, the latter proposition of \cite{fjmm} shows that, given $J\subseteq I$ and a \textit{finite-type module} $W$ over a subalgebra $U_q(\mathfrak{\hat{b}}_J)$ of $U_q(\mathfrak{\hat{b}})$ (which corresponds in some sense to the inclusion $J\subseteq I$), there is a \textit{finite-type} $U_q(\mathfrak{\hat{b}})$-module $V$ such that the \textit{essential $q$-characters} of $V$ and $W$ agree (up to an inclusion resembling our map $\iota_J$). Moreover, the proof of this proposition implies that the $U_q(\mathfrak{\hat{b}})$-module $V$ can be chosen to be simple when $W$ is itself simple. Now, using the relation between the representation theory of shifted quantum affine algebras and that of Borel quantum loop algebras (see Theorem \ref{thm:ClassSimpDimFin}, \cite[Theorem 8.1]{hshift} and \cite[Theorem 6.1]{fjmm}), one can identify \textit{finite-type} representations of Borel algebras with finite-dimensional objects inside $\OO^{sh}$ as well as \textit{essential $q$-characters} for $U_q(\mathfrak{\hat{b}})$ with normalized $q$-characters for shifted quantum affine algebras. Thus, Theorem \ref{thm:EquivalentDefInf} indicates that a method for constructing inflations for finite-dimensional objects of $\OO^{sh}_J$ could possibly be deduced from the proof of \cite[Proposition 6.9]{fjmm}. Such an deduction is in fact exactly what we will aim for in Section \ref{sec:InflPrefund}. Note nevertheless that the proofs of \cite{fjmm} do not apply directly to our case because of the following two reasons: 
\begin{itemize}
\item[(1)] The simple $U_q(\mathfrak{\hat{b}})$-module $V$ (obtained from a simple $U_q(\mathfrak{\hat{b}}_J)$-module $W$) following the proof of \cite[Proposition 6.9]{fjmm} need not satisfy the conditions imposed by Theorem \ref{thm:EquivalentDefInf} on the highest $\ell$-weight of $J$-inflations.
\item[(2)] The inclusion $U_q(\mathfrak{\hat{b}}_J)\subseteq U_q(\mathfrak{\hat{b}})$ considered by \cite{fjmm} may not be compatible with our inclusions $\uqmu{\nu}{\g_J}\subseteq \uqmu{\mu}{\g}$ (for $\mu\in \Lambda^{\vee}$ with $\res_J(\mu)=\nu$). In fact, this does not seem to be the case as one can\,deduce\,with the remarks\footnote{We note that there is probably a mistake in the definition given in \cite{fjmm} for the algebra $U_q(\mathfrak{\hat{b}}_J)$ as the subset $J\subseteq I=\{1,\dots,n\}$ considered in that paper does not contain the \textit{affine node} $0$ (and this easily implies that all simple modules over their algebra $U_q(\mathfrak{\hat{b}}_J)$ would have dimension 1).} following \cite[Proposition 4.1]{fjmm}.
\end{itemize}
The adaptation of the proof of \cite[Proposition 6.9]{fjmm} to the study of inflations must hence be done cautiously. Note that our adaptation (that we give in Section \ref{sec:InflPrefund}) uses a more~explicit approach than the one used in \cite{fjmm} even if the general underlying idea is the same. Also, our proof gives a more general result in type A--B where it shows the existence of $J$-inflations for any\footnote{In fact, we conjecture in Section \ref{sec:InflPrefund} that our proof leads to such a result in any type (i.e.~not only A--B).} (finite-dimensional or not) simple object of $\OO^{sh}_J$.
\subsection{Compatibility with fusion product}\label{sec:Existence} This subsection shows that it is possible to take $J$-inflations of highest $\ell$-weight objects in $\OO^{sh}_J$ in a manner which is compatible with the fusion product of Section \ref{sec:Fusion}. We also start our quest for general existence theorems for $J$-inflations and give a first result when $\g_J$ is isomorphic to a direct sum of (possibly many) copies of $\SL$ and $\SLt$. We begin with the result below and fix a subset $J\subseteq I$ for the rest of the subsection. Denote by $\iota_J^{\mu}$ the canonical inclusion of $\uqmu{\nu}{\g_J}$ in $\uqmu{\mu}{\g}$ for $\nu=\res_J(\mu)$ (as in Section \ref{sec:Def}).
\begin{prop}\label{prop:FusRest} Fix $\mu_1,\dots,\mu_m\in \Lambda^{\vee}$. For all $k\in\{1,\dots,m\}$, let also $\nu_k=\res_J(\mu_k)$ and fix a highest $\ell$-weight $\uqmu{\mu_k}{\g}$-module $V_k$ for which $x_i^{\pm}(z)V_k=0$ if $i\not\in J$. Then, in $\OO^{\nu_1+...+\nu_m}_{J}$,
$$ \res_J^{\mu_1+...+\mu_m}(V_1\star \dots\star V_m)\simeq \res_J^{\mu_1}(V_1)\star \dots \star\res_J^{\mu_m}(V_m). $$
\end{prop}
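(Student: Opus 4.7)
The plan is to induct on $m$, the case $m=1$ being trivial. The inductive step reduces (using the associativity convention $V_1 \star \dots \star V_m = (V_1 \star \dots \star V_{m-1}) \star V_m$ from Section \ref{sec:Fusion}) to the case $m=2$, provided one verifies that the hypothesis $x_i^{\pm}(z)V = 0$ for $i \notin J$ is preserved under the fusion product, a point addressed below. I would present the $m=2$ argument in three steps.

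First, I would observe that the Drinfeld coproduct is compatible with the inclusion $\iota_J^{\mu_1 + \mu_2}$: the subalgebra $\uqmu{\nu_1 + \nu_2}{\g_J} \subseteq \uqmu{\mu_1 + \mu_2}{\g}$ is generated by the currents $x_j^{\pm}(z), \phi_j^{\pm}(z)$ with $j \in J$, and the explicit formulas \eqref{eq:Dcop} for $\Delta_{\mu_1, \mu_2}^{(u)}$ on these currents involve only currents with the same index $j \in J$. Since these also live in $\uqmu{\nu_k}{\g_J} \subseteq \uqmu{\mu_k}{\g}$, the restriction of $\Delta_{\mu_1, \mu_2}^{(u)}$ coincides with $\Delta_{\nu_1, \nu_2}^{(u)}$ after composing with the obvious inclusion of algebras of Laurent power series.

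Second, I would use the hypothesis to show that the action of $x_i^{\pm}(z)$ (via $\Delta_{\mu_1, \mu_2}^{(u)}$) on $V_1 \otimes V_2$ vanishes for $i \notin J$. Indeed, by \eqref{eq:Dcop},
\[
\Delta_{\mu_1,\mu_2}^{(u)}(x_i^+(z)) = x_i^+(z) \otimes 1 + \phi_i^-(z) \otimes x_i^+(zu),
\]
and similarly for $x_i^-(z)$; both terms annihilate $V_1 \otimes V_2$ by hypothesis. Fix highest $\ell$-weight vectors $v_k \in V_k$ and set $\mathbf{v} = v_1 \otimes v_2$. The vector $\mathbf{v}$ is a highest $\ell$-weight vector in $V_1 \otimes V_2$ for the $\uqmu{\mu_1 + \mu_2}{\g}$-action (since $x_i^+(z) v_k = 0$ for all $i$), so by the triangular decomposition \eqref{eq:TriangularDec},
\[
\uqmu{\mu_1 + \mu_2}{\g} \cdot \mathbf{v} = \uqmu{\mu_1 + \mu_2,\,-}{\g} \cdot (\uqmu{\mu_1 + \mu_2,\,0}{\g}\cdot\mathbf{v}) = \uqmu{\mu_1 + \mu_2,\,-}{\g} \cdot \mathbf{v}.
\]
The key point is now that the action of $x_i^-(z)$ on $V_1 \otimes V_2$ vanishes whenever $i \notin J$, while for $j \in J$ the action of $x_j^-(z)$ coincides with its image under $\Delta_{\nu_1,\nu_2}^{(u)}$ by Step~1. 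Hence the $\uqmu{\mu_1 + \mu_2,\,-}{\g}$-action on $\mathbf{v}$ factors through $\uqmu{\nu_1 + \nu_2,\,-}{\g_J}$, so that $\uqmu{\mu_1+\mu_2}{\g}\cdot \mathbf{v} = \uqmu{\nu_1+\nu_2}{\g_J}\cdot \mathbf{v}$ inside $(V_1 \otimes V_2)(u)$.

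Third, I would conclude by comparing the defining submodules. Let $X_{\mathscr{A}} \subseteq (V_1 \otimes V_2)(u)$ be the $(\mathscr{A} \otimes \uqmu{\mu_1+\mu_2}{\g})$-submodule generated by $\mathbf{v}$, and let $Y_{\mathscr{A}}$ be the $(\mathscr{A} \otimes \uqmu{\nu_1+\nu_2}{\g_J})$-submodule generated by $\mathbf{v}$ inside the same space (viewed as $(\res_J^{\mu_1}(V_1) \otimes \res_J^{\mu_2}(V_2))(u)$). Steps~1--2 together give $X_{\mathscr{A}} = Y_{\mathscr{A}}$, and since the two $\uqmu{\nu_1+\nu_2}{\g_J}$-actions on this common space agree (again by Step~1), passing to the quotient by $(u-1)X_\mathscr{A} = (u-1)Y_\mathscr{A}$ yields the required isomorphism in $\mathcal{O}^{\nu_1 + \nu_2}_J$. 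Finally, because $x_i^{\pm}(z)$ acts as zero on $X_\mathscr{A}$ for $i \notin J$ (by Step~2), the same holds on the quotient $V_1 \star V_2$, which secures the inductive hypothesis and closes the argument. The main technical obstacle is the careful bookkeeping in Step~3 to ensure that the completion in which $\Delta_{\mu_1,\mu_2}^{(u)}$ a priori takes values does not introduce extra elements beyond the rational submodule $(V_1 \otimes V_2)(u)$; however, this is already handled by the well-definedness of the fusion product (cf.~\cite[Section~5.3]{hshift}) since every element of $X_\mathscr{A}$ lies in $(V_1\otimes V_2)(u)$ by construction.
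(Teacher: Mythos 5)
Your proof is correct and follows essentially the same route as the paper: reduce to $m=2$, use the compatibility of the Drinfeld coproducts with the inclusions $\iota_J^{\mu}$ (this is the paper's remark preceding the proof, giving \eqref{eq:CompDcopInc} and \eqref{eq:EqualityRLPS}), and then show that the two $\mathscr{A}$-submodules $X_{\mathscr{A}}$ and $X_{\mathscr{A}}^J$ coincide via the triangular decompositions \eqref{eq:TriangularDec}--\eqref{eq:TriangularDecJ} and the fact that $x_i^\pm(z)$ with $i\not\in J$ annihilate $V_1\otimes V_2$, concluding by exactness of $\res_J^{\mu_1+\mu_2}$. Your remark that the vanishing of $x_i^\pm(z)$ for $i\not\in J$ passes to $V_1\star V_2$ (so the induction on $m$ goes through) is a point the paper compresses into the phrase ``follows by simple induction''; making it explicit is a small improvement in exposition but does not change the argument.
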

%Let us make some remarks before proving the above proposition. 
\begin{rem} Let $\Delta_{\mu_1,\mu_2}^{(u)}$ and $\Delta_{\nu_1,\nu_2}^{(u),J}$ be the Drinfeld coproducts of $\uqmu{\mu_1+\mu_2}{\g}$ and $\uqmu{\nu_1+\nu_2}{\g_J}$ (respectively, see Section \ref{sec:Fusion}). These are applications of the form 
$$\uqmu{\mu_1+\mu_2}{\g}\arr(\uqmu{\mu_1}{\g}\otimes \uqmu{\mu_2}{\g})((u))\text{ and }\uqmu{\nu_1+\nu_2}{\g_J}\arr(\uqmu{\nu_1}{\g_J}\otimes \uqmu{\nu_2}{\g_J})((u))$$
that are easily proven to be compatible with the inclusions $\iota_J^{\mu}$ (and with their obvious analogs for the formal Laurent power series algebras) in the sense that
\begin{equation}\label{eq:CompDcopInc}
\Delta_{\mu_1,\mu_2}^{(u)}\circ \iota_J^{\mu_1+\mu_2}=(\iota_J^{\mu_1}\otimes\iota_J^{\mu_2})\circ\Delta_{\nu_1,\nu_2}^{(u),J}.
\end{equation}
We therefore have an equality of $\uqmu{\nu_1+\nu_2}{\g_J}$-modules $$\res_J^{\mu_1+\mu_2}((V_1\otimes V_2)((u)))=(\res_J^{\mu_1}(V_1)\otimes \res_J^{\mu_2}(V_2))((u))$$
that clearly gives an equality between the submodules of rational Laurent power series, i.e. \begin{equation}\label{eq:EqualityRLPS}
\res_J^{\mu_1+\mu_2}((V_1\otimes V_2)(u))=(\res_J^{\mu_1}(V_1)\otimes \res_J^{\mu_2}(V_2))(u)
\end{equation}
as $\uqmu{\nu_1+\nu_2}{\g_J}$-modules. This will be used implicitly in the proof below.
\end{rem}
\begin{rem} Suppose that $V_2$ is a $1$-dimensional $\uqmu{\mu_2}{\g}$-module and denote by $V$ the module over $\uqmu{\mu_1+\mu_2}{\g}$ defined on the space $V_1\otimes V_2$ by specializing $\Delta_{\mu_1,\mu_2}^{(u)}$ at $u=1$ (cf.~Remark \ref{rem:Dcoproduct}). Denote also by $V_J$ the $\uqmu{\nu_1+\nu_2}{\g_J}$-module defined in the same way using the coproduct $\Delta_{\nu_1,\nu_2}^{(u),J}$ and the modules $\res_J^{\mu_1}(V_1)$ and $\res_J^{\mu_2}(V_2)$. Then, \eqref{eq:CompDcopInc} directly implies that, in $\OO^{sh}_J$,
$$\res_J^{\mu_1+\mu_2}(V)\simeq V_J.$$
\end{rem}
We now proceed with the proof of Proposition \ref{prop:FusRest}.\vspace*{-2mm}
\begin{proof}
Assume that $m=2$ and fix highest $\ell$-weight vectors $v_1\in V_1$ and $v_2\in V_2$. Then,~$v_1$~and $v_2$ are also respectively highest $\ell$-weight vectors in $\res_J^{\mu_1}(V_1)$ and $\res_J^{\mu_2}(V_2)$ by Lemma \ref{lem:HighestWeightRest}. \par Write
$$ X_{\mathscr{A}} = (\mathscr{A}\otimes \uqmu{\mu_1+\mu_2}{\g})\cdot(v_1\otimes v_2)\text{ and } X_{\mathscr{A}}^J = (\mathscr{A}\otimes \uqmu{\nu_1+\nu_2}{\g_J})\cdot(v_1\otimes v_2)$$
with $\mathscr{A} = \{f(u)\in\C(u)\,|\,f(u)\text{ is regular at }u=1\}$ as in Section \ref{sec:Fusion}. Then
$$ V_1\star V_2 = X_{\mathscr{A}}/((u-1)X_{\mathscr{A}})\text{ and } \res_J^{\mu_1}(V_1)\star \res_J^{\mu_2}(V_2) = X_{\mathscr{A}}^J/((u-1)X_{\mathscr{A}}^J).$$
We claim that, as representations of $\mathscr{A}\otimes \uqmu{\nu_1+\nu_2}{\g_J}$, $$\res_J^{\mu_1+\mu_2}(X_{\mathscr{A}})=X^J_{\mathscr{A}}.$$ 
To prove this, note that \eqref{eq:TriangularDec}, \eqref{eq:TriangularDecJ} and the fact that $x_i^{\pm}(z)V_1= x_i^{\pm}(z)V_2=0$ for $i\not\in J$ imply that, as $\mathscr{A}$-modules, by definition of the Drinfeld coproduct,
$$ X_{\mathscr{A}} = (\mathscr{A}\otimes \uqmu{\mu_1+\mu_2,-}{\g})\cdot (v_1\otimes v_2) = (\mathscr{A}\otimes \uqmu{\nu_1+\nu_2,-}{\g_J})\cdot (v_1\otimes v_2) = X^J_{\mathscr{A}}.$$
Our claim thus follows easily as $X_{\mathscr{A}}^{J}$ is clearly a $(\mathscr{A}\otimes \uqmu{\nu_1+\nu_2}{\g_J})$-submodule of $\res_J^{\mu_1+\mu_2}(X_{\mathscr{A}})$. Now, by exactness of the restriction functor $\res_J^{\mu_1+\mu_2}$, we get that, as $\uqmu{\nu_1+\nu_2}{\g_J}$-modules,
\begin{align*}
\res_J^{\mu_1+\mu_2}(V_1\star V_2) &= \res_J^{\mu_1+\mu_2}({\sfrac{X_{\mathscr{A}}}{(u-1)X_{\mathscr{A}}}}) \simeq {\Large\sfrac{\res_J^{\mu_1+\mu_2}(X_{\mathscr{A}})}{(u-1)\res_J^{\mu_1+\mu_2}(X_{\mathscr{A}})}} \\&= \sfrac{X^J_{\mathscr{A}}}{(u-1)X^J_{\mathscr{A}}} = \res_J^{\mu_1}(V_1)\star \res_J^{\mu_2}(V_2).
\end{align*}
The proposition is therefore true if $m\leq 2$ and follows by simple induction for all $m\in \mathbb{N}_{>0}$.
\end{proof}\vspace*{-2mm}
\begin{cor}\label{cor:FusInfl} Fix $\nu_1,\dots,\nu_m\in \Lambda^{\vee}_J$. For all $k\in \{1,\dots,m\}$, fix also $W_k$ a highest $\ell$-weight $\uqmu{\nu_k}{\g_J}$-module and $V_k$ a $J$-inflation of $W_k$. Then $V_1\star \dots \star V_m$ is a $J$-inflation of $W_1\star \dots \star W_m$. 
\end{cor}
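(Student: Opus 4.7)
The plan is to combine Proposition \ref{prop:FusRest} with a direct verification, for $i \notin J$, that the vanishing $x_i^{\pm}(z)V_k = 0$ is preserved by fusion product. First, I will reduce to the case $m=2$ by induction, using that the fusion product of highest $\ell$-weight modules is again of highest $\ell$-weight (Theorem \ref{thm:HighestWeightFus}) and that $V_1,\dots,V_m$ are themselves highest $\ell$-weight modules in $\mathcal{O}^{sh}$ thanks to Lemma \ref{lem:HighestWeightRest} applied to each $V_k$ (whose restriction to $\uqmu{\nu_k}{\g_J}$ is the highest $\ell$-weight module $W_k$). This ensures all fusion products involved are well-defined.

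For the case $m=2$, condition (i) of Definition \ref{def:Infl} is immediate from Proposition \ref{prop:FusRest}: since $V_1,V_2$ are highest $\ell$-weight modules with $x_i^{\pm}(z)V_k = 0$ for $i \notin J$, the proposition gives $\res_J^{\mu_1+\mu_2}(V_1 \star V_2) \simeq \res_J^{\mu_1}(V_1)\star \res_J^{\mu_2}(V_2) \simeq W_1\star W_2$ in $\OO^{\nu_1+\nu_2}_J$.

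For condition (ii), fix $i \notin J$ and use the Drinfeld coproduct formula \eqref{eq:Dcop}. Inside $(V_1 \otimes V_2)((u))$, we have
$$x_i^+(z)\cdot(v_1\otimes v_2) = x_i^+(z)v_1 \otimes v_2 + \phi_i^-(z)v_1\otimes x_i^+(zu)v_2$$
for any $v_1 \in V_1$, $v_2 \in V_2$. Both summands vanish because $x_i^+(z)V_1 = 0$ and $x_i^+(zu)V_2 = 0$ by hypothesis. The argument for $x_i^-(z)$ is symmetric. Hence $x_i^{\pm}(z)$ annihilates $V_1\otimes V_2$ in $(V_1\otimes V_2)((u))$, and in particular annihilates the submodule $X_{\mathscr{A}}$ generated by $v_1\otimes v_2$. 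This vanishing then descends to the quotient $V_1\star V_2 = X_{\mathscr{A}}/((u-1)X_{\mathscr{A}})$, establishing (ii).

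The overall argument is mostly formal once Proposition \ref{prop:FusRest} is in hand; the one place requiring a small amount of care is checking that the Drinfeld coproduct formula \eqref{eq:Dcop}, which involves the potentially nontrivial factors $\phi_i^{\pm}(z)$ for $i \notin J$, still produces zero when fed into $x_i^{\pm}(z)$. This works because in each summand of the coproduct formula for $x_i^{\pm}(z)$, one of the two tensor factors is an $x_i^{\pm}$-type current applied either to $V_1$ or to $V_2$, and so each summand vanishes individually. No additional obstacle is expected; the induction step for $m>2$ just applies the $m=2$ case with $V_1\star\dots\star V_{m-1}$ (inflation of $W_1\star\dots\star W_{m-1}$ by the inductive hypothesis) and $V_m$.
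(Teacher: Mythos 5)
Your proof is correct and takes essentially the same approach as the paper, which simply cites Proposition \ref{prop:FusRest} for condition (i) and the Drinfeld coproduct formula for condition (ii); you have filled in exactly the details the paper leaves implicit, including the observation that $V_k$ is highest $\ell$-weight by Lemma \ref{lem:HighestWeightRest} so that Proposition \ref{prop:FusRest} applies.
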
\vspace*{-2mm}
\begin{proof}
This follows from Proposition \ref{prop:FusRest} and the definition of the Drinfeld coproduct. 
\end{proof}\vspace*{-2mm}
Thus, taking $J$-inflations of highest $\ell$-weight objects of $\OO^{sh}_J$ is compatible with fusion products. It is also compatible with the notion of Jacobson radical as proven below. We will need the $Q$-graduation of $\uqmu{\mu}{\g}$ given by 
$ \deg_Q(x_{i,r}^{\pm}) = \pm \alpha_i$ and $\deg_Q(\phi_{i,r}^{\pm}) = 0$
for $i\in I$ and $r\in \Z$. By \eqref{eq:xWt}, this graduation is such that, for $V$ in $\OO^{\mu}$, $\gamma\in \mathfrak{t}^{\times}$ and $\alpha\in Q$, \vspace*{-0.15mm}
\begin{equation}\label{eq:QDeg}
xV_{\gamma}^{\pm} \subseteq V_{\gamma[\alpha]}^{\pm}
\end{equation}\vspace*{-4.1mm}\\
for all $Q$-homogeneous elements $x\in \uqmu{\mu}{\g}$ of $Q$-degree $\deg_Q(x) =\alpha$. 
\begin{prop}\label{prop:InfHighest}
Take $\nu\in \Lambda_J^{\vee}$ with $W$ a highest $\ell$-weight $\uqmu{\nu}{\g_J}$-module. Take also $\mu\in \Lambda^{\vee}$ such that $\res_J(\mu)=\nu$ and suppose that $V$ is a $J$-inflation of $W$ to $\g$ with coweight $\mu$. Then, $\rad V$ and $\topp V$ are respectively $J$-inflations to $\g$ (with coweight $\mu$) of $\rad W$ and $\topp W$.
\end{prop}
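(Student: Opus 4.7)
The plan is to first show that the simple head $\topp V$ is a $J$-inflation of $\topp W$ and then to deduce the corresponding statement for $\rad V$ from the exactness of $\res_J^\mu$. Let $\Psi \in \mathfrak{r}_\mu$ denote the highest $\ell$-weight of $V$ with highest $\ell$-weight vector $v \in V$; by Lemma \ref{lem:HighestWeightRest}, $v$ is also a highest $\ell$-weight vector of $W = \res_J^\mu(V)$, whose highest $\ell$-weight is therefore $\Psi^J := \res_J(\Psi) \in \mathfrak{r}^J_\nu$. In particular $\topp V \simeq L(\Psi)$ and $\topp W \simeq L^J(\Psi^J)$.

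The key step will be the identification $\res_J^\mu L(\Psi) \simeq L^J(\Psi^J)$. To establish it, I would apply the (exact) functor $\res_J^\mu$ to the canonical surjection $\pi : V \twoheadrightarrow L(\Psi)$, producing a surjection $\res_J^\mu(\pi) : W \twoheadrightarrow \res_J^\mu L(\Psi)$ of $\uqmu{\nu}{\g_J}$-modules. Since $v$ generates $W$, its image generates $\res_J^\mu L(\Psi)$; but Lemma \ref{lem:FacRestSimple} asserts that the $\uqmu{\nu}{\g_J}$-submodule of $\res_J^\mu L(\Psi)$ generated by its highest $\ell$-weight vector is isomorphic to $L^J(\Psi^J)$, and this forces $\res_J^\mu L(\Psi) \simeq L^J(\Psi^J)$. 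Since $\topp V$ is a quotient of $V$, it satisfies $x_i^{\pm}(z)\topp V = 0$ for all $i \notin J$, and we conclude that $\topp V$ is a $J$-inflation of $\topp W$ to $\g$ with coweight $\mu$.

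For the radical, the plan is to apply $\res_J^\mu$ to the short exact sequence $0 \to \rad V \to V \to L(\Psi) \to 0$, yielding the short exact sequence of $\uqmu{\nu}{\g_J}$-modules $0 \to \res_J^\mu(\rad V) \to W \to L^J(\Psi^J) \to 0$. The displayed surjection $W \to L^J(\Psi^J)$ agrees, up to a nonzero scalar, with the canonical projection onto $\topp W$ since both send $v$ to a nonzero highest $\ell$-weight vector of the simple quotient; hence its kernel is exactly $\rad W$, giving $\res_J^\mu(\rad V) \simeq \rad W$. Combining this with the inclusion $x_i^\pm(z) \rad V \subseteq x_i^\pm(z) V = 0$ valid for $i \notin J$, we deduce that $\rad V$ is a $J$-inflation of $\rad W$ to $\g$ with coweight $\mu$.

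The only delicate point is the identification $\res_J^\mu L(\Psi) \simeq L^J(\Psi^J)$: a priori, the restriction of a simple $\uqmu{\mu}{\g}$-module need not remain a highest $\ell$-weight module with the expected highest $\ell$-weight, but the $J$-inflation hypothesis (via Lemmas \ref{lem:HighestWeightRest} and \ref{lem:FacRestSimple}) ensures this precisely. Once this identification is settled, the rest of the proof is a purely formal consequence of the exactness of $\res_J^\mu$ together with the universal property of the head and of the radical of a highest $\ell$-weight module.
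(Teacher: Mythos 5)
Your proof is correct, but it takes a genuinely different route from the paper's. The paper works directly with the radical: it identifies $\rad V$ and $\rad W$ with the sets of vectors that generate a proper submodule under $\uqmu{\mu}{\g}$ (resp.\ $\uqmu{\nu}{\g_J}$), observes that $\res_J^{\mu}(\rad V)\subseteq \rad W$ is immediate, and rules out a strict inclusion via an explicit triangular-decomposition and $Q$-grading argument (writing an element $x\in\uqmu{\mu}{\g}$ with $xv=w$ as $u_-u_0u_+$ and showing that only the $\g_J$-part can act nontrivially). It then obtains the statement for $\topp V$ by exactness. You instead treat $\topp V$ first, leaning on Lemma \ref{lem:FacRestSimple}: you note that $\res_J^\mu$ applied to $V\twoheadrightarrow L(\Psi)$ shows that $\res_J^\mu L(\Psi)$ is generated as a $\uqmu{\nu}{\g_J}$-module by the image of the highest $\ell$-weight vector, and Lemma \ref{lem:FacRestSimple} identifies that cyclic submodule with $L^J(\res_J(\Psi))$. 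The radical statement then drops out of exactness plus the uniqueness of the maximal submodule of a highest $\ell$-weight module. Your approach is shorter and reuses a pre-established lemma whose proof already contains the triangular-decomposition work that the paper redoes here; the paper's argument is slightly more self-contained but structurally parallel to the proof of Lemma \ref{lem:FacRestSimple}. Both are valid.
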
\vspace*{-2mm}
\begin{proof}
Fix a highest $\ell$-weight vector $w\in W$. Then, by Lemma \ref{lem:HighestWeightRest}, $w$ is also a highest $\ell$-weight vector in the $J$-inflation $V$ and it follows from classical highest weight theory that 
$$ \rad V = \{v\in V\,|\, \uqmu{\mu}{\g}\cdot v \neq V\} \text{ and } \rad W = \{v\in V\,|\,\uqmu{\nu}{\g_J}\cdot v\neq V\}$$
where we have identified the vector spaces $V$ and $W$ via the isomorphism $\res_J^{\mu}(V)\simeq W$.\par  We want to show that $\res_J^{\mu}(\rad V)=\rad W$ in $\OO^{\nu}_{J}$. For this, note that $\res_J^{\mu}(\rad V)$ clearly is a $\uqmu{\nu}{\g_J}$-submodule of $\rad W$. Suppose now that there exists $v \in \rad W$ with $v \not\in \res_J^{\mu}(\rad V)$ and take $x\in \uqmu{\mu}{\g}$ such that $w = xv$. Using \eqref{eq:TriangularDec} and \eqref{eq:QDeg}, we can suppose, without loss~of generality, that $v$ is a weight vector in $V$ and that\vspace*{-0.5mm} 
$$x = u_-u_0u_+$$\vspace*{-4.45mm}\\
for $Q$-homogeneous elements $u_-\in \uqmu{\mu,-}{\g}$, $u_0 \in \uqmu{\mu,0}{\g}$ and $u_+ \in \uqmu{\mu,+}{\g}$.\par 
Note that $\deg_Q u_- = 0$. Indeed, if this was not true, we would have $\deg_Q u_- < 0$ and hence $u_0u_+v\in V$ would be a weight vector with weight strictly 
greater than the highest $\ell$-weight~of $V$. This is impossible and we can thus suppose $u_-=1$ in full generality. Now, by \eqref{eq:QDeg}, $u_+v$ and $u_0u_+v = w$ live in the same weight space (which is of dimension 1). Therefore, $u_+v=\gamma w$ for some $\gamma\in\C^{\times}$ and, as $x_i^{+}(z)V = 0$ for $i\not\in J$,
 $$ w \in \uqmu{\mu,+}{\g}\cdot v = \uqmu{\nu,+}{\g_J}\cdot v \subseteq \uqmu{\nu}{\g_J}\cdot v, $$
but this implies $W=\uqmu{\nu}{\g_J}\cdot w \subseteq \uqmu{\nu}{\g_J}\cdot v$ which contradicts $v\in \rad W$. Hence, in $\OO^{\nu}_J$,
$$\res_J^{\mu}(\rad V)=\rad W$$ 
and the relation $x_i^{\pm}(z)\rad V \subseteq x_i^{\pm}(z)V =0$ for $i\not\in J$ implies that $\rad V$ is indeed a $J$-inflation of $\rad W$ to $\g$ with coweight $\mu$. It is then easy to end our proof as, by exactness of $\res_J^{\mu}$, 
$$ \res_J^{\mu}(\topp V) \simeq \res_J^{\mu} (V)/\res_J^{\mu}(\rad V) \simeq W/\rad W = \topp W $$ 
with $x_j^{\pm}(z) \topp V = 0$ trivially.
\end{proof}\newpage
\begin{example}\label{ex:QQSEC} Take $J=\{j\}$ for some $j\in I$ and fix $a\in \C^{\times}$. Then the quantum Wronskian relation \eqref{eq:QQWronsk} gives (with Theorem \ref{thm:HighestWeightFus}) that the object $W = L^J(\Psi_{j,a})\star L^J(\Psi_{j,a}^{-1})$ of $\OO^{sh}_J$ verifies
$$ \rad W = L^J(\Psi_{j,aq_j^2})\star L^J(\Psi_{j,aq_j^{-2}}^{-1})\text{ and } \topp W=L^J(\mathbbm{1}).$$\vspace*{-3mm}\\
Now, using the $Q\widetilde{Q}$-system \eqref{eq:QQ} and the object $V = L(\Psi_{j,a})\star L(\widetilde{\Psi}_{j,a})$ of $\OO^{sh}$, one easily gets
$$ \rad V = L(\Psi_{j,aq_j^2})\star L(\widetilde{\Psi}_{j,aq_j^{-2}})\text{ and }\topp V = L(\Psi_p)$$\vspace*{-3.5mm}\\
with, as in Example \ref{ex:QQ}, $ \Psi_p = \textstyle \Psi_{j,a}\widetilde{\Psi}_{j,a}$. This is compatible with Proposition \ref{prop:InfHighest} by Example \ref{ex:QQ} and Corollary \ref{cor:FusInfl}. Similarly, Proposition \ref{prop:InfHighest} agrees with Examples \ref{ex:QQ2} and \ref{ex:newT}.
\end{example}
\begin{theorem}\label{thm:InflIfPrefond} Suppose that the negative prefundamental representation $L_{j,1}^-$ of $\OO_J^{sh}$ admits a $J$-inflation to $\g$ for each $j\in J$. Then every simple module in $\OO_{J}^{sh}$ admits a $J$-inflation to~$\g$. 
\end{theorem}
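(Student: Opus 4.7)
The plan is to reduce the general case to the three families of building blocks appearing in the $\OO^{sh}_J$-analogue of Theorem \ref{thm:PrefundMonGen} (invertible, positive prefundamental, and negative prefundamental representations), and then to glue inflations of those blocks using the two compatibility results already established: Corollary \ref{cor:FusInfl} for fusion products and Proposition \ref{prop:InfHighest} for taking heads of highest $\ell$-weight modules.

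First I would assemble a complete stock of $J$-inflations for the building blocks. Invertible representations $L^J(\gamma)$ with $\gamma\in \mathfrak{t}^{\times}_J$ admit the inflations $L(\widetilde{\gamma})$ of Example \ref{ex:InvertiblePositivePrefond}, where $\widetilde{\gamma}\in \mathfrak{t}^{\times}$ is any lift of $\gamma$ (e.g.\ by setting the non-$J$ coordinates to $1$); similarly, positive prefundamentals $L^J(\Psi_{j,a})$ with $j\in J$ and $a\in \C^{\times}$ are inflated by the $L(\Psi_{j,a})$ of the same example. For the negative prefundamentals, the hypothesis supplies a $J$-inflation $V_j$ of $L^J(\Psi_{j,1}^{-1})$ for each $j\in J$, and Lemma \ref{lem:InflShift} upgrades $V_j(a)$ to a $J$-inflation of $L^J(\Psi_{j,1}^{-1})(a)\simeq L^J(\Psi_{j,a}^{-1})$ for every $a\in \C^{\times}$ (the isomorphism follows from the fact that $\tau_a(\phi_j^{\pm}(z))=\phi_j^{\pm}(az)$).

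Now fix an arbitrary simple module $L^J(\Psi)$ in $\OO^{sh}_J$. By the $\OO^{sh}_J$-version of Theorem \ref{thm:PrefundMonGen}, it is isomorphic to the head of a fusion product
\[
X = L^J(\gamma)\star L^J(\Psi_{j_1,a_1}^{\epsilon_1})\star\dots\star L^J(\Psi_{j_s,a_s}^{\epsilon_s})
\]
with $\gamma\in \mathfrak{t}^{\times}_J$, $j_k\in J$, $a_k\in \C^{\times}$, and $\epsilon_k\in\{\pm 1\}$. Replacing each factor by the inflation provided in the previous paragraph and invoking Corollary \ref{cor:FusInfl} iteratively yields a module $V$ in $\OO^{sh}$ which is a $J$-inflation of $X$; both $V$ and $X$ are of highest $\ell$-weight (by Theorem \ref{thm:HighestWeightFus} and its $J$-analogue). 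Proposition \ref{prop:InfHighest} then gives that $\topp V$ is a $J$-inflation of $\topp X\simeq L^J(\Psi)$, completing the argument.

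The main obstacle here is not located in this reduction, which is essentially organizational, but rather in the hypothesis itself: producing $J$-inflations of the negative prefundamentals $L^J(\Psi_{j,1}^{-1})$ is the substantive content, handled separately in Examples \ref{ex:InflPrefondSL}--\ref{rem:InflPrefondSS} when $\g_J$ is a sum of copies of $\SL$ and $\SLt$, and by the detailed $q$-character analysis in type A--B. The present theorem merely packages the transfer mechanism along fusion products and radicals.
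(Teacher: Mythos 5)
Your proposal is correct and follows essentially the same route as the paper: decompose $W$ as the head of a fusion product of an invertible representation with prefundamental representations via the $\OO^{sh}_J$-analogue of Theorem \ref{thm:PrefundMonGen}, inflate each factor using Example \ref{ex:InvertiblePositivePrefond}, Lemma \ref{lem:InflShift}, and the hypothesis, then apply Corollary \ref{cor:FusInfl} and Proposition \ref{prop:InfHighest} to pass the inflation through the fusion product and the head.
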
\vspace*{-2.1mm}
\begin{proof}
Fix a simple object $W$ in $\OO^{sh}_{J}$. Then, by Theorem \ref{thm:PrefundMonGen}, $W$ can be realized as the head of a fusion product $W_f$ of some invertible representation with (potentially many) prefundamental representations (positive and negative). By hypothesis, Example \ref{ex:InvertiblePositivePrefond} and Lemma \ref{lem:InflShift}, we can fix a $J$-inflation to $\g$ for all the factors appearing in this product $W_f$. Also, by Corollary \ref{cor:FusInfl} and Proposition \ref{prop:InfHighest}, the fusion product $V_f$ of the chosen $J$-inflations is a $J$-inflation of $W_f$ and the head of $V_f$ is a $J$-inflation of $W$. This ends the proof.
\end{proof}\vspace*{-1.5mm}
This leads us to our first existence result (see Theorem \ref{thm:main2}(ii)).
\begin{cor}\label{cor:ExistenceSL} Suppose that $\g_J$ is isomorphic (as a Lie algebra) to a direct sum of copies~of $\SL$ and $\SLt$. Then, every fusion product of simple objects in $\OO_{J}^{sh}$ has an inflation. In particular,  
$${\textstyle  \mathscr{R}_J:\OO^{sh} \arr \bigoplus_{\nu\in \Lambda^{\vee}_J}\uqmu{\nu}{\g_J}\Mod}$$ is essentially surjective on (fusion products of) simple objects of %the subcategory 
$\OO_{J}^{sh}$.
\end{cor}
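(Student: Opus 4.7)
The plan is to assemble this corollary from the three preparatory results already in hand: Theorem \ref{thm:InflIfPrefond}, which reduces the problem to building inflations for negative prefundamental representations $L^J(\Psi_{j,1}^{-1})$ with $j \in J$; Examples \ref{ex:InflPrefondSL} and \ref{ex:InflPrefondSLt}, which construct such inflations when $\g_J$ is simple of type $A_1$ or $A_2$; and Example \ref{rem:InflPrefondSS} (combined with Remark \ref{rem:InflJnotconnected}), which transfers inflations across disconnected components of $J$.

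First, I would use Lemma \ref{lem:UqJnonconnected} to decompose $J$ into its connected components $J_1,\dots,J_t$ with respect to the Cartan matrix of $\g_J$. Under the hypothesis that $\g_J$ is a direct sum of copies of $\SL$ and $\SLt$, each $\g_{J_k}$ is either simple of type $A_1$ or simple of type $A_2$. Fix $j \in J$ and let $J_k$ be the component containing $j$. Examples \ref{ex:InflPrefondSL} and \ref{ex:InflPrefondSLt} provide an explicit $J_k$-inflation to $\g$ of the negative prefundamental representation $L^{J_k}(\Psi_{j,1}^{-1})$. Since $J_k \subseteq J$ with $C_{i,j'}=0$ for all $i \in J \setminus J_k$ and $j' \in J_k$, the construction of Example \ref{rem:InflPrefondSS} promotes this $J_k$-inflation to a $J$-inflation to $\g$ of $L^J(\Psi_{j,1}^{-1})$, as desired.

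Having established this for every $j \in J$, Theorem \ref{thm:InflIfPrefond} immediately yields that every simple object of $\OO^{sh}_J$ admits a $J$-inflation to $\g$. For a fusion product $W_1 \star \cdots \star W_r$ of simple objects $W_k \in \OO^{sh}_J$, I would fix $J$-inflations $V_k$ of each factor $W_k$ provided by the preceding step and invoke Corollary \ref{cor:FusInfl} to conclude that $V_1 \star \cdots \star V_r$ is a $J$-inflation of $W_1 \star \cdots \star W_r$ to $\g$. Essential surjectivity of $\mathscr{R}_J$ on such fusion products is then automatic: by Definition \ref{def:Infl}(i), any $J$-inflation $V$ of a module $W$ satisfies $\mathscr{R}_J(V) \simeq W$.

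The main obstacle is essentially nil at the level of this corollary itself, since the hard work lies entirely in the cited examples and in Theorem \ref{thm:InflIfPrefond}. The only genuine care needed is the bookkeeping in passing from the ``connected'' constructions of Examples \ref{ex:InflPrefondSL}--\ref{ex:InflPrefondSLt} (where $\g_{J_k}$ is simple) to the full set $J$, which is precisely the role of Remark \ref{rem:InflJnotconnected}; one must verify that extending by $x_j^{\pm}(z) = (\phi_j^{\pm}(z) - 1) = 0$ on indices in $J \setminus J_k$ both defines a legitimate $\uqmu{\nu}{\g_J}$-module structure and remains compatible with the trivial action on indices $i \not\in J$ demanded by Definition \ref{def:Infl}(ii). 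Both compatibilities are immediate from the defining relations and the explicit formulas in the examples, so no further technical input is required.
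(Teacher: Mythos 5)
Your proposal is correct and follows essentially the same route as the paper, which simply cites Examples \ref{ex:InflPrefondSL}, \ref{ex:InflPrefondSLt} and \ref{rem:InflPrefondSS} together with Corollary \ref{cor:FusInfl} and Theorem \ref{thm:InflIfPrefond}. Your extra invocation of Lemma \ref{lem:UqJnonconnected} to set up the component decomposition is harmless but redundant, since Example \ref{rem:InflPrefondSS} already packages the passage from a connected component $J'$ to the full $J$ via Remark \ref{rem:InflJnotconnected}.
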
\vspace*{-2.1mm}
\begin{proof}
This stems from Examples \ref{ex:InflPrefondSL}, \ref{ex:InflPrefondSLt} and \ref{rem:InflPrefondSS} with Corollary \ref{cor:FusInfl} and Theorem \ref{thm:InflIfPrefond}. 
\end{proof}\vspace*{-1.5mm}
We finish this subsection with the following theorem which seems to be \textit{dual}, in some sense, to the above Proposition \ref{prop:InfHighest} (and is again compatible with Examples \ref{ex:QQ}, \ref{ex:QQ2} and \ref{ex:newT}).
\begin{theorem}\label{thm:InfHighestDual} Let $\nu\in\Lambda_J^{\vee}$ and fix objects $W$ and $W_S$ of $\OO^{\nu}_J$ related by a short exact sequence %of the form
$$ 0 \arr W_S \arr W \arr W/W_S \arr 0$$
Fix now $\mu\in \Lambda^{\vee}$ such that $\res_J(\mu)=\nu$ and let $V$ be a $J$-inflation of $W$ to $\g$ with coweight $\mu$. Assume that $W_S$ is of highest $\ell$-weight.~Then there is a short exact sequence in $\OO^{\mu}$ of the~form
$$ 0 \arr V_S \arr V \arr V/V_S \arr 0$$
where $V_S$ and $V/V_S$ are respectively $J$-inflations of $W_S$ and $W/W_S$ to $\g$ (with coweight $\mu$).
\end{theorem}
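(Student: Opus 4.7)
The plan is to construct $V_S$ as the $\uqmu{\mu}{\g}$-submodule of $V$ generated by a suitable $\ell$-weight-vector lift, inside $V$, of a highest $\ell$-weight vector of $W_S$, and to exploit the refinement of $\ell$-weight spaces of $W$ provided by $V$.

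First, fix a highest $\ell$-weight vector $w_S \in W_S$ with $\ell$-weight $\Psi^J \in \mathfrak{r}^J_\nu$, and identify $V$ with $W$ as vector spaces via $\res_J^\mu V \simeq W$. The $\ell$-weight space $W_{\Psi^J} \subseteq W$ then admits, as a subspace of $V$, the refinement $W_{\Psi^J} = \bigoplus_{\Psi \in \mathfrak{r}_\mu,\ \res_J(\Psi) = \Psi^J} V_\Psi$, so we decompose $w_S = \sum_\Psi w_{S,\Psi}$ with $w_{S,\Psi} \in V_\Psi$ (finite sum).

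The crux is to argue that each nonzero $w_{S,\Psi}$ is itself a highest $\ell$-weight vector in $V$. The requirement $x_{i,r}^+ w_{S,\Psi} = 0$ is automatic for $i \notin J$, and for $j \in J$ it will follow from a weight-separation applied to the identity $\sum_\Psi x_{j,r}^+ w_{S,\Psi} = x_{j,r}^+ w_S = 0$. Concretely, using the commutation relations \eqref{eq:PhiTX}--\eqref{eq:Relhx} together with Theorem~\ref{thm:qCharA} applied to the highest $\ell$-weight submodule $\uqmu{\mu}{\g} \cdot w_{S,\Psi}$, one sees that the non-$J$ part of an $\ell$-weight occurring in this submodule coincides with the non-$J$ part of $\Psi$; thus the images $x_{j,r}^+ V_\Psi$ for distinct $\Psi$ in the decomposition sit in pairwise disjoint collections of $\ell$-weight spaces of $V$, which forces each term of the sum to vanish.

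Pick any $\Psi_*$ with $w_{S,\Psi_*} \neq 0$ and set $V_S := \uqmu{\mu}{\g} \cdot w_{S,\Psi_*}$. By Lemma~\ref{lem:HighestWeightRest}, $V_S$ is a highest $\ell$-weight module in $\OO^\mu$. Since $x_{i,r}^\pm V = 0$ for $i \notin J$, the triangular decomposition gives $V_S = \uqmu{\nu,-}{\g_J} \cdot w_{S,\Psi_*}$ as a vector subspace of $V$, so $\res_J^\mu(V_S)$ is a highest $\ell$-weight $\uqmu{\nu}{\g_J}$-module with highest $\ell$-weight $\Psi^J$, matching that of $W_S$. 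The assignment $u \cdot w_{S,\Psi_*} \mapsto u \cdot w_S$ for $u \in \uqmu{\nu,-}{\g_J}$ is well-defined (by the same weight-separation, which forces $u \cdot w_{S,\Psi} = 0$ whenever $u \cdot w_{S,\Psi_*} = 0$), surjective by construction, and injective by comparing the resulting $q$-characters with those of $W_S$ (both being highest $\ell$-weight with controlled structure), yielding $\res_J^\mu(V_S) \simeq W_S$. Together with $x_i^\pm(z) V_S = 0$ for $i \notin J$, this shows $V_S$ is a $J$-inflation of $W_S$ to $\g$ with coweight $\mu$.

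The quotient $V/V_S$ inherits $x_i^\pm(z)(V/V_S) = 0$ for $i \notin J$, and exactness of $\res_J^\mu$ gives $\res_J^\mu(V/V_S) \simeq W/\res_J^\mu(V_S)$. The isomorphism $\res_J^\mu(V_S) \simeq W_S$ obtained above, combined with a compatibility argument matching the short exact sequence $0 \to W_S \to W \to W/W_S \to 0$ (e.g.\ by character comparison, using that $W_S$ is highest $\ell$-weight so quotients $W/M$ for submodules $M \simeq W_S$ are determined up to isomorphism in this setting), then yields $\res_J^\mu(V/V_S) \simeq W/W_S$, completing the construction. The hardest step is the weight-separation argument: ensuring that the actions of $x_{j,r}^+$ (for $j \in J$) and of $\uqmu{\nu,-}{\g_J}$ do not ``mix'' distinct $\ell$-weight sectors of $V$ associated with the decomposition of $w_S$ requires careful use of Theorem~\ref{thm:qCharA} to control the non-$J$ part of $\ell$-weights appearing in the relevant highest $\ell$-weight submodules.
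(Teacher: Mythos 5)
Your proposal follows the same broad plan as the paper's proof — produce $V_S$ as the $\uqmu{\mu}{\g}$-submodule of $V$ generated by (something built from) a highest $\ell$-weight vector $w$ of $W_S$ seen inside $V$, check it is a $J$-inflation of $W_S$, and get $V/V_S$ by exactness of $\res_J^\mu$ — but the detour through the $\ell$-weight decomposition $w = \sum_\Psi w_{S,\Psi}$ inside $V$ is not only unnecessary here, it also introduces gaps that the paper's direct construction avoids.

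The paper simply sets $V_S := \uqmu{\mu}{\g}\cdot w$ and uses the chain
$V_S = \uqmu{\mu}{\g}\cdot w \supseteq \uqmu{\mu,-}{\g}\cdot w = \uqmu{\nu,-}{\g_J}\cdot w = \uqmu{\nu}{\g_J}\cdot w = W_S,$
where the middle equalities come from $x_i^\pm(z)V = 0$ for $i\notin J$, the triangular decomposition \eqref{eq:TriangularDecJ}, and the fact that $w$ is a highest $\ell$-weight vector of $W_S$. The point is that this identifies $\res_J^\mu(V_S)$ with the given submodule $W_S\subseteq W$ \emph{as a subspace}, which is exactly what lets one conclude $\res_J^\mu(V/V_S)\simeq\res_J^\mu(V)/\res_J^\mu(V_S)\simeq W/W_S$ immediately from exactness.

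By replacing $w$ with a single component $w_{S,\Psi_*}$, you lose precisely this feature: $\uqmu{\nu,-}{\g_J}\cdot w_{S,\Psi_*}$ is a priori a \emph{different} $\uqmu{\nu}{\g_J}$-submodule of $W$ than $W_S = \uqmu{\nu,-}{\g_J}\cdot w$, and even when the two are abstractly isomorphic you no longer match the \emph{given} short exact sequence $0 \arr W_S\arr W\arr W/W_S\arr 0$. Your attempted patch — that ``quotients $W/M$ for submodules $M\simeq W_S$ are determined up to isomorphism'' — is not a valid general principle: a module may have several non-conjugate highest $\ell$-weight submodules with the same highest $\ell$-weight and non-isomorphic quotients, so a character comparison does not rescue this. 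There are two further local problems. First, the well-definedness argument for $u\,w_{S,\Psi_*}\mapsto u\,w_S$ is reversed: the weight-separation you invoke (disjointness of the $\ell$-weight supports of $\uqmu{\nu,-}{\g_J}\cdot w_{S,\Psi}$ for distinct $\Psi$, via injectivity of $\res_J$ on $\mathcal{A}_J$) shows that $u\,w_S = 0$ forces $u\,w_{S,\Psi}=0$ for \emph{each} $\Psi$, giving a well-defined map $W_S\arr V_S$ — not the map you wrote in the other direction. Second, the injectivity claim ``by comparing the resulting $q$-characters'' is unsupported: knowing both sides are highest $\ell$-weight modules with highest $\ell$-weight $\Psi^J$ does not determine their $q$-characters. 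You should instead take $V_S := \uqmu{\mu}{\g}\cdot w$ directly, establish the identification of subspaces above, and then read off the quotient by exactness, as the paper does.
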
\vspace*{-2.12mm}
\begin{proof}
Take a highest $\ell$-weight vector $w\in W_S\subseteq W$ and let $V_S$ be the $\uqmu{\mu}{\g}$-submodule of $V$ generated by $w$ (with the spaces $V$ and $W$ identified again via the isomorphism $\res_{J}^{\mu}(V)\simeq W$). Remark that $x_i^{\pm}(z)V_S \subseteq x_i^{\pm}(z)V = 0$ for $i\not\in J$. Furthermore, the usual argument shows that $\res_{J}^{\mu}(V_S)\simeq W_S$ as $\uqmu{\nu}{\g_J}$-modules. Indeed, as $\res_{J}^{\mu}(V)\simeq W$ and $V_S\subseteq V$, there is an injective $\uqmu{\nu}{\g_J}$-morphism of $\res_{J}^{\mu}(V_S)$ into $W$. Our claim then follows from\vspace*{-0.9mm}
$$ V_S = \uqmu{\mu}{\g}\cdot w \supseteq \uqmu{\mu,-}{\g} \cdot w = \uqmu{\nu,-}{\g_J}\cdot w = \uqmu{\nu}{\g_J}\cdot w = W_S$$\vspace*{-4.75mm}\\
where we have used once again \eqref{eq:TriangularDecJ} with Definition \ref{def:Infl}. Now, by exactness of $\res_J^{\mu}$, 
$$ \res_J^{\mu}(V/V_S) \simeq \res_J^{\mu}(V)/\res_J^{\mu}(V_S) \simeq W/W_S$$
as $\uqmu{\nu}{\g_J}$-modules. This ends the proof as, trivially, $x_i^{\pm}(V/V_S) = 0$ for all $i\not\in J$.  
\end{proof}\newpage
%\begin{rem} This is again compatible with Examples \ref{ex:QQ}, \ref{ex:QQ2} and \ref{ex:newT}.
%\end{rem} \vspace*{-2.35mm}
\subsection{Inflations for negative prefundamental representations}\label{sec:InflPrefund} Fix $J\subseteq I$ and denote by $\mathcal{A}_J$ (resp.~$\mathcal{Y}_J$ and $\mathcal{Y}_J^+$) the set of monomials in the variables $\{A_{j,c}^{-1}\}_{j\in J,c\in \C^{\times}}$ (resp.~ $\{Y_{j,c}^{\pm 1}\}_{j\in J,c\in \C^{\times}}$ and $\{Y_{j,c}\}_{j\in J,c\in \C^{\times}}$). We will write $\mathcal{A}_I=\mathcal{A}$, $\mathcal{Y}_I=\mathcal{Y}$ with \vspace*{-0.35mm}
$$\mathcal{Y}_{\{i\}}^+ = \mathcal{Y}_i\text{ and }\res_{\{i\}} = \res_i:\mathfrak{r} \arr \mathfrak{r}^{\{i\}}$$ for $i\in I$. We will also use the following technical definition and theorem. Note that we~always suppose that monomials of $\mathcal{Y}$ are expressed in a \textit{reduced manner} (that is without possible~simplification of variables $Y_{i,c}$ and $Y_{i,c}^{-1}$ for $i\in I$ and $c\in \C^{\times}$).
\begin{defn}[\cite{fm}] Fix a monomial $M\in \mathcal{Y}$ and let, for $a\in \C^{\times}$, $$L(a,M) = \max\{\ell\in\Z\,|\,Y_{i,aq^{\ell}}\text{ or } Y_{i,aq^{\ell}}^{-1} \text{ appear in } M \text{ for some } i\in I\}.$$
Then $M$ is \textit{right-negative} if it does not contain the variables $Y_{i,aq^{L(a,M)}}$ for $i\in I$ and $a\in \C^{\times}$ (i.e.~if the variables $Y_{i,aq^{\ell}}^{\pm 1}$ appearing in it, with $\ell$ maximal, appear only with negative~powers).
\end{defn}
\begin{lem}[{\cite[Lemma 4.4]{hTsyst}}]\label{lem:HRightNeg} 
Fix $k\in \mathbb{N}_{>0}$, $i\in I$ and let $$m_k %= m_{k,aq_j^{1-2k}}^{(j)} 
= Y_{i,q_i^{1-2k}}\dots Y_{i,q_i^{-1}}.$$ Take also a $\ell$-weight $\Psi\in \mathfrak{r}$ of the Kirillov--Reshitikhin module $W=L(m_k)$ such that~$\Psi\neq m_k$. Then, $\Psi = m_kA_{i,1}^{-1}M$ for a monomial $M\in \mathcal{A}$. Moreover, $\Psi$ is a right-negative element of~$\mathcal{Y}$.
\end{lem}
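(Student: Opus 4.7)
The plan is to proceed in three stages: first extract the general factorisation of $\Psi$, then identify the specific factor $A_{i,1}^{-1}$, and finally establish right-negativity by induction. For the first stage, Theorem \ref{thm:qCharA} applied to $W=L(m_k)$ gives $\overline{\chi_q}(W)\in 1+\mathbb{N}[[A_{j,c}^{-1}]]_{j\in I,c\in \C^{\times}}$, so every $\ell$-weight of $W$ has the form $\Psi=m_k M'$ with $M'\in \mathcal{A}$, and the hypothesis $\Psi\neq m_k$ forces $M'\neq 1$.

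For the second stage, note that $m_k$ involves only the variables $Y_{i,q_i^{2s-1}}$ for $1-k\leq s\leq 0$ (each with exponent $+1$) and no $Y_{j,\cdot}$ with $j\neq i$. The Frenkel--Mukhin algorithm computes $\chi_q(L(m_k))$ correctly for Kirillov--Reshetikhin modules (by a theorem of Nakajima), so the monomials lying just below $m_k$ in the associated reduction poset are those $m_k A_{j,c}^{-1}$ for which the factor $A_{j,c}^{-1}$ cancels a boundary $Y$-factor of $m_k$. Since $A_{j,c}^{-1}$ contains $Y_{j,cq_j^{-1}}^{-1}$, compatibility with $m_k$ requires $j=i$ and $cq_i^{-1}=q_i^{-1}$, i.e.\ $c=1$; the other admissible candidates $c=q_i^{2s}$ with $1-k\leq s<0$ would cancel a non-boundary $Y_{i,q_i^{2s-1}}$ and produce a monomial which is not an $\ell$-weight of $L(m_k)$. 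Hence $m_k A_{i,1}^{-1}$ is the unique cover of $m_k$, and by induction along reduction chains one obtains $\Psi=m_k A_{i,1}^{-1}M$ for some $M\in \mathcal{A}$.

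For the third stage I would induct on the total $A^{-1}$-exponent of $M$. The base case $\Psi_0=m_k A_{i,1}^{-1}$ follows from a direct computation: the $Y_{i,q_i^{-1}}$ of $m_k$ cancels against $Y_{i,q_i^{-1}}^{-1}$ from $A_{i,1}^{-1}$, leaving $Y_{i,q_i^{1-2k}}\cdots Y_{i,q_i^{-3}}Y_{i,q_i}^{-1}$ times the negative neighbour factors $Y_{j,\cdot}^{-1}$ arising in $A_{i,1}^{-1}$; the symmetrisability relation $d_j C_{i,j}=d_i C_{j,i}$ ensures that, for every parameter $a$, the max-$\ell$ variables appear with negative exponents. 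The inductive step uses the standard combinatorial lemma that if a monomial $N$ is right-negative and $NA_{j,c}^{-1}$ is a monomial in $\mathcal{Y}$, then $NA_{j,c}^{-1}$ remains right-negative, relying again on the symmetrisability to show that the negative contributions $Y_{j,cq_j^{\pm 1}}^{-1}$ in $A_{j,c}^{-1}$ dominate its positive neighbour contributions in the $\ell$-ordering. The main obstacle is the second stage: the cleanest route invokes Nakajima's theorem that the Frenkel--Mukhin algorithm computes $\chi_q$ correctly for KR modules, which is a deep geometric input; an elementary alternative would restrict $L(m_k)$ to the subalgebra of $\uqmu{}{\hat{\g}}$ generated by $\{x_{i,r}^{\pm},\phi_{i,r}^{\pm}\}_{r\in \Z}$ and compare with the explicit formula of Example \ref{ex:qCharEX}, but this requires care since the restriction need not decompose as a single $\SL$-KR module.
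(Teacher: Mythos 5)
The paper offers no proof of this lemma: it is cited verbatim from \cite[Lemma 4.4]{hTsyst}, so there is no internal argument to compare against. Judging your reconstruction on its own terms, the three-stage outline has the right ingredients, but stage~2 contains a genuine gap, which you partly acknowledge yourself. Invoking Nakajima's theorem (Frenkel--Mukhin correctness for KR modules) covers only the simply-laced case and, for general type, is essentially the content of \cite{hTsyst} itself, so the route is circular relative to that source; moreover, ``by induction along reduction chains'' hides the real step, since uniqueness of the cover $m_kA_{i,1}^{-1}$ does not by itself place every $\ell$-weight $\Psi\neq m_k$ beneath it without a propagation argument. The elementary route you gesture at is the right one and does not require knowing how the node-$i$ restriction decomposes: $x_{j,r}^-v=0$ for $j\neq i$ (because $\res_j(m_k)=1$, so any such $x_{j,r}^-v$ would be a non-highest weight vector annihilated by every $x^+_{s,r'}$, hence zero by simplicity), and the submodule $\langle v\rangle_{\{i\}}$ of Lemma~\ref{lem:FacRestSimple} is precisely the $\mathfrak{sl}_2$ KR module of Example~\ref{ex:qCharEX}, so $m_kA_{j,c}^{-1}$ is an $\ell$-weight of $W$ only for $(j,c)=(i,1)$. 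Then, writing $\Psi=m_k\prod A_{j,c}^{-v_{j,c}}$ via Theorem~\ref{thm:qCharA}, if some $\ell$-weight $\Psi\neq m_k$ had $v_{i,1}(\Psi)=0$, a weight-maximal such $\Psi$ would be annihilated by every $x^+_{j,r}$ by Lemma~\ref{thm:YMY}: each candidate target $\Psi A_{j,c}$ is either $m_k$ (ruled out above), or still has $v_{i,1}=0$ (excluded by maximality), or has $v_{i,1}<0$ and so lies outside $m_k\mathcal{A}$. Hence $\Psi$ would generate a proper submodule, contradicting irreducibility. This is essentially Frenkel--Mukhin's original argument in~\cite{fm}.

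Stage~3 is correct modulo a sign slip: the neighbour factors $Y_{j,\cdot}$ in $A_{i,1}^{-1}$ carry \emph{positive} exponents (it is $A_{i,1}$, not its inverse, that contains $Y_{j,\cdot}^{-1}$). Consequently $m_kA_{i,1}^{-1}$ is not right-negative ``by inspection of minus signs''; you genuinely need the symmetrisability bound $d_i\geq -C_{j,i}$, which you do invoke, to see that the unique letter at maximal $\ell$ is $Y_{i,q_i}^{-1}$ and that it has negative exponent. The same computation shows each $A_{j,c}^{-1}$ is right-negative, and the standard fact that products of right-negative monomials are right-negative then closes the induction exactly as you describe.
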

Recall that $\uqmu{0}{\g}$ is a central extension of the (untwisted) quantum loop algebra $\uqmu{}{\hat{\g}}$.~We have the following fundamental result due to Chari--Pressley (compare with Theorem \ref{thm:ClassSimpDimFin}).
%two well known results (see also \cite[Proposition 3.1]{young} % and \cite[Proposition 3.8]{MY} 
%for the second one). 
\begin{thm}[{\cite[Theorem 3.3]{CP2}}\label{thm:SimpDimFinqLoop}%, see also {\cite[Section 7]{her}}
] Fix a finite-dimensional irreducible module of~the~quantum loop algebra $\uqmu{}{\hat{\g}}$ with highest $\ell$-weight $\Psi\in\mathfrak{r}_0$. Then $\Psi\in\mathcal{Y}^+$. 
\end{thm}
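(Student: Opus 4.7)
The plan is to follow the classical rank-one reduction due to Chari--Pressley. Let $V$ be a finite-dimensional irreducible $\uqmu{}{\hat{\g}}$-module with highest $\ell$-weight $\Psi=(\Psi_i(z))_{i\in I}\in\mathfrak{r}_0$ and fix a highest $\ell$-weight vector $v\in V$. Viewing $V$ as a module over $\uqmu{0}{\g}$, which is a central extension of $\uqmu{}{\hat{\g}}$ (Remark \ref{rem:borQaff}), the vector $v$ satisfies $x_{i,r}^+v=0$ and $\phi_i^{\pm}(z)v=\Psi_i^{\pm}(z)v$ for all $i\in I$ and $r\in\Z$, where $\Psi_i^{\pm}(z)$ denote the expansions of $\Psi_i(z)\in\C(z)$ at $z=0$ and $z=\infty$ respectively. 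The goal is to show that each $\Psi_i(z)$ has the form $q_i^{N_i}P_i(zq_i^{-1})/P_i(zq_i)$ for some polynomial $P_i\in\C[z]$ of degree $N_i\geq 0$ with $P_i(0)\neq 0$, which is exactly the condition $\Psi\in\mathcal{Y}^+$.

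For the reduction to rank one, I would fix $i\in I$ and consider the subalgebra generated by $\{x_{i,r}^{\pm},\phi_{i,s}^{\pm}\mid r,s\in\Z\}$: by direct inspection of \eqref{eq:CommPhi}--\eqref{eq:RelhPhi} (using $C_{i,i}=2$) this subalgebra is a central extension of $\uqmu{}{\widehat{\mathfrak{sl}}_2}$ at parameter $q_i$. The cyclic submodule $V_i=\uqmu{}{\widehat{\mathfrak{sl}}_2}\cdot v\subseteq V$ is then finite-dimensional, of highest $\ell$-weight $\Psi_i(z)$, and after passing to its unique simple quotient may be assumed irreducible. This reduces the problem to the rank-one case $\g=\SL$.

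For the rank-one case, finite-dimensionality provides a minimal $N\in\mathbb{N}$ with $(x_0^-)^{N+1}v=0$. The key step is to exploit the vanishing identities $[x_r^+,(x_0^-)^{N+1}]v=0$ for varying $r\in\Z$: applying \eqref{eq:Relxpxmphi} to move each $x_r^+$ past the $x_0^-$'s and using \eqref{eq:xpmRelSupp} to control the reordering, one produces a system of relations among the coefficients $\Psi^{\pm}_r$ of $\phi^{\pm}(z)$ acting on $v$. Packaging this system into a generating-function equation forces $\Psi^+(z)$ and $\Psi^-(z)$ to be the expansions at $0$ and $\infty$ of a single rational function of the form $q^N P(zq^{-1})/P(zq)$ with $P(z)=\prod_{k=1}^N(1-a_kz)$ for some $a_k\in\C^{\times}$. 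Since $(Y_{1,a})_1(z)=q(1-azq^{-1})/(1-azq)$, this yields $\Psi=\prod_k Y_{1,a_k}\in\mathcal{Y}_1^+$, as desired.

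The main obstacle is the functional-equation step: extracting the Drinfeld-polynomial shape of $\Psi(z)$ from the tower of identities $[x_r^+,(x_0^-)^{N+1}]v=0$ requires a delicate bookkeeping with the $q$-binomial coefficients that appear when commuting powers of $x_0^-$ through $x_r^+$, as well as a consistency check ensuring that the two expansions $\Psi^{\pm}(z)$ glue into a single element of $\C(z)$ of the prescribed shape. Once this is settled for $\g=\SL$, the general statement follows immediately by applying the rank-one result to each of the subalgebras $\uqmu{}{\widehat{\mathfrak{sl}}_2}$ constructed above.
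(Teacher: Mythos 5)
The paper gives no proof of this theorem; it simply cites Chari--Pressley (reference [CP2] therein), so there is no internal argument to compare against. Your sketch follows the standard Chari--Pressley strategy: reduce to $\g=\SL$ via the subalgebras generated by $\{x_{i,r}^{\pm},\phi_{i,s}^{\pm}\}$, then derive the Drinfeld-polynomial form of $\Psi_i(z)$ in the rank-one case from the relations. The reduction step is correct, and in fact Lemma~\ref{lem:FacRestSimple} of the paper gives a slightly sharper version: the cyclic $\uqmu{\nu}{\g_J}$-submodule of a simple module generated by a highest $\ell$-weight vector is already simple, so no passage to a quotient is needed. The rank-one case itself --- extracting $\Psi(z)=q^{N}P(zq^{-1})/P(zq)$ from the identities $[x_r^+,(x_0^-)^{N+1}]v=0$ --- is only outlined, and you rightly flag it as the main obstacle. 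Since that computation carries essentially all the content of the theorem, your proposal is an accurate reduction together with a pointer to the Chari--Pressley argument, not a self-contained proof.
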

The following technical lemma is well known (see, e.g., \cite{fjmm} or \cite[Proposition 3.1]{young}).
\begin{lem}[{\cite[Lemma 5.5]{fjmm}}]\label{thm:YMY} Fix again a finite-dimensional simple $\uqmu{}{\hat{\g}}$-module~$V$. Fix also $i\in I$, $r\in \Z$, $\Psi\in \mathfrak{r}$ and $v\in V_{\Psi}$. Then there exist $a_1,\dots,a_R\in \C^{\times}$ such that 
$$\textstyle x_{i,r}^+v\in \bigoplus_{s=1}^R V_{\Psi A_{i,a_s}}.$$
\end{lem}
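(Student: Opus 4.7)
The plan is to use the current commutation relations in $\uqmu{}{\hat{\g}}$ between $\phi_j^{\pm}(z)$ and $x_i^+(w)$ in order to constrain which $\ell$-weight spaces can contribute to $x_i^+(w)v$. First, I would derive the required identity: starting from $[h_{j,m}, x_{i,r}^+] = \tfrac{1}{m}[m C_{j,i}]_{q_j} x_{i,m+r}^+$ (see \eqref{eq:Relhx}) and exponentiating via \eqref{eq:RelhPhi} (with $\mu=0$), one obtains, with a consistent expansion convention, the identity
$$\phi_j^{\pm}(z)\, x_i^+(w) \;=\; (A_{i,1/w})_j(z)\cdot x_i^+(w)\, \phi_j^{\pm}(z), \qquad j\in I,$$
where $(A_{i,a})_j(z)$ denotes the $j$-component of $A_{i,a}\in\mathfrak{r}_0$. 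The key bookkeeping step is recognising that the rational prefactor coming from exponentiation matches $(A_{i,1/w})_j(z)$ for every $j$: e.g.\ for $j=i$ it equals $q_i^2(1-zq_i^{-2}/w)/(1-zq_i^2/w)$, which by direct comparison with the definition of $A_{i,1/w}$ is exactly $(A_{i,1/w})_i(z)$, and the cases $C_{j,i}\in\{-1,-2,-3\}$ are handled analogously.

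Next, I would apply this identity to $v\in V_{\Psi}$. In the true-eigenvalue case $\phi_j^{\pm}(z) v = \Psi_j(z) v$, the identity rewrites as
$$\phi_j^{\pm}(z)\,[x_i^+(w) v] \;=\; (A_{i,1/w})_j(z)\, \Psi_j(z)\, [x_i^+(w) v]$$
for all $j\in I$. Decomposing $x_i^+(w)v = \sum_s u_s(w)$ with each $u_s(w)$ taking values in a fixed $\ell$-weight space $V_{\Psi^{(s)}}$ of $V$, the identity forces, for every $s$ with $u_s\not\equiv 0$, the equality $\Psi^{(s)}_j(z)\, u_s(w) = (A_{i,1/w})_j(z)\Psi_j(z)\, u_s(w)$ for all $j\in I$. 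Clearing denominators in $w$ yields a polynomial identity in $w$ with a unique solution: $u_s(w)$ must be supported at a single point $w = 1/a_s$, where $a_s\in\C^{\times}$ is characterised by $\Psi^{(s)} = \Psi\cdot A_{i,a_s}$. Extracting the coefficient of $w^r$ then gives $x_{i,r}^+ v \in \bigoplus_s V_{\Psi A_{i,a_s}}$, and the finiteness $R<\infty$ follows from $\dim V<\infty$.

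The main technical obstacles will be twofold. First, one must fix a consistent expansion convention so that the formal-series identity $\phi_j^{\pm}(z) x_i^+(w) = (A_{i,1/w})_j(z)\, x_i^+(w)\phi_j^{\pm}(z)$ makes rigorous sense; the two choices of sign $\pm$ correspond to opposite expansions of the same rational function in $z/w$, and one must track this carefully when comparing eigenvalues of $\phi_j^+(z)$ and $\phi_j^-(z)$. Second, one must upgrade the argument to the generalised-eigenvalue setting when $V_{\Psi}$ carries a nontrivial nilpotent part of the $\phi_j^{\pm}(z)$-action. The latter can be handled by filtering $V_{\Psi}$ by generalised eigenspaces and passing to the associated graded (where the action becomes semisimple), or equivalently by induction on nilpotent degree, thereby reducing to the true-eigenvalue argument sketched above.
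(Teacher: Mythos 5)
The paper gives no proof of this lemma: it is quoted verbatim from \cite[Lemma 5.5]{fjmm} (with a pointer to \cite[Proposition 3.1]{young}), together with the footnoted remark that the argument carries over to the shifted setting. So there is no ``paper proof'' to compare against; I can only evaluate your sketch on its own terms. Your overall strategy is the standard one (going back to Frenkel--Reshetikhin and Frenkel--Mukhin): exponentiate \eqref{eq:Relhx} to obtain the current relation
$\phi_j^{\pm}(z)\,x_i^+(w)\,\phi_j^{\pm}(z)^{-1} = (A_{i,1/w})_j(z)\,x_i^+(w)$
with the appropriate choice of expansion in $z/w$ versus $w/z$, and then feed $v\in V_{\Psi}$ through it. Your identification of the rational prefactor with $(A_{i,1/w})_j(z)$ is correct (I checked $j=i$ and $C_{j,i}=-1,-2$), and so is the observation that the two signs of $\phi^{\pm}$ give the two expansions of the same rational function, which is exactly what one needs.

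There are, however, two places where the argument as written has real gaps. First, the step ``clearing denominators in $w$ yields a polynomial identity with a unique solution, so $u_s(w)$ is supported at a single point'' treats $u_s(w)$ as if it were a $V$-valued function of $w$; it is only a formal Laurent series, so ``supported at a point'' has to be established, not invoked. The correct way to see it is to extract coefficients: writing $g_j(z)=\Psi_j^{(s)}(z)/\Psi_j(z)$ and expanding both sides, the relation gives $g_{j,k}\,u_{s,m}=f_{j,k}\,u_{s,m+k}$ for all $k\geq 0$ and all $m$, and the analogous relation with $\phi_j^-$. For any $j$ with $C_{j,i}\neq 0$ one has $f_{j,1}\neq 0$, so $u_{s,m+1}=(g_{j,1}/f_{j,1})\,u_{s,m}$ and hence $u_s(w)$ is a scalar multiple of the formal delta $\sum_m a_s^m w^m$ with $a_s = g_{j,1}/f_{j,1}$. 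Comparing the $k=1$ relation across different $j$ (and the $k\geq 2$ relations for a fixed $j$) then forces $g_j(z)=f_j(a_s z)=(A_{i,a_s})_j(z)$ simultaneously for all $j$, i.e.\ $\Psi^{(s)}=\Psi A_{i,a_s}$; the recursion, not a functional equation, is what does the work. Second, your reduction of the generalised-eigenvalue case is incomplete: the nilpotent correction occurs not just in the source $V_{\Psi}$ (where $\phi_j^{\pm}(z)v\neq \Psi_j(z)v$ in general) but also in the target $V_{\Psi^{(s)}}$, and the ``pass to the associated graded'' suggestion is problematic as stated, since the filtration of $V$ by powers of the radical of the Cartan--Drinfeld action is not stable under the $x_{i,r}^{\pm}$, so $\mathrm{gr}(V)$ is not a $\uqmu{}{\hat{\g}}$-module. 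The fix is to work instead with the linear maps $T(w)=\pi_{\Psi^{(s)}}\circ x_i^+(w)\circ\iota_{\Psi}\in \mathrm{Hom}(V_{\Psi},V_{\Psi^{(s)}})[[w,w^{-1}]]$; the commutation relation holds there exactly, the left and right $\phi$-actions on $\mathrm{Hom}(V_{\Psi},V_{\Psi^{(s)}})$ differ from the scalar eigenvalues by commuting nilpotents $N'_j,N_j$, and iterating the relation gives $(\Psi_j^{(s)}(z)_+ - f_j(z/w)_+\Psi_j(z)_+)^N T(w)=0$ for $N$ large, after which the coefficient-extraction argument goes through. Both gaps are fillable, but they do need to be filled for the argument to stand.
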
\vspace*{-0.25mm}
Define now, for $i\not\in J$ and $V$ a simple object of $\OO^{sh}$ with highest $\ell$-weight $\Psi\in \mathfrak{r}$,
$$ \mathcal{P}_{i,J,V} = \{b\in \C^{\times}\,|\, V_{\Psi M A_{i,b}^{-1}}\neq 0 \text{ for some monomial } M \in \mathcal{A}_J\}.$$
\begin{lem}\label{lem:FinitePPrefond} Fix $j\in J$ and let $V = L(\Psi_{j,1}^{-1})$. Fix also $i\not\in J$. Then, $\mathcal{P}_{i,J,V}$ is a finite set. 
\end{lem}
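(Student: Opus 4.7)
The plan is to combine the Kirillov--Reshetikhin approximation of $V = L(\Psi_{j,1}^{-1})$ with right-negativity of $\ell$-weights in finite-dimensional simple quantum loop modules. By Proposition \ref{prop:PrefondKR}, any $b \in \mathcal{P}_{i,J,V}$ must correspond to some $k \in \mathbb{N}_{>0}$ and some $M \in \mathcal{A}_J$ such that $m_k M A_{i,b}^{-1}$ is an $\ell$-weight of the KR module $V_k = W_{k,q_j^{1-2k}}^{(j)}$ (where $m_k = Y_{j,q_j^{1-2k}} \cdots Y_{j,q_j^{-1}}$ is its highest $\ell$-weight). Since $V_k$ is a finite-dimensional simple $\uqmu{}{\hat{\g}}$-module (Example \ref{ex:KR} and Remark \ref{rem:borQaff}), Theorem \ref{thm:SimpDimFinqLoop} ensures $m_k \in \mathcal{Y}^+$; moreover $m_k M A_{i,b}^{-1} \neq m_k$ by Remark \ref{rem:AAlgFree}, so Lemma \ref{lem:HRightNeg} forces $m_k M A_{i,b}^{-1}$ to be right-negative in $\mathcal{Y}$.

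I would then decompose the $Y_i$-factors of this $\ell$-weight: $m_k$ contributes none (as $j \neq i$ since $j \in J$ while $i \notin J$), $A_{i,b}^{-1}$ contributes the negative pair $Y_{i,bq_i^{-1}}^{-1}Y_{i,bq_i}^{-1}$, and $M$ contributes only positive $Y_{i,c}$'s from factors $A_{j',a}^{-1}$ with $j' \in J$ adjacent to $i$, the value of $c$ being a $q$-shift of $a$ dictated by $C_{i,j'}$. Projecting onto the $q^{\mathbb{Z}}$-orbit of $b$, right-negativity demands that the largest $q$-power in this orbit be attained only by factors with negative exponent, so $\log_q b + d_i$ must either dominate, or be dominated by a larger negative factor than, every positive $Y_{i,c}$-$q$-power originating from $M$ in this orbit. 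The crucial step is then to bound these $Y_{i,c}$-parameters uniformly in $k$: I would argue inductively on the Dynkin-distance in $J$ from $j$ to the relevant $j'$, using that each such $A_{j',a}^{-1}$ must be supported by a positive $Y_{j',aq_{j'}^{-1}}$-factor arising either from $m_k$ (only if $j'=j$, in which case $a$ lies in a finite $k$-independent set with top value $1$) or from a further $A_{j'',\cdot}^{-1}$ with $j'' \in J$ closer to $j$. Since the top $q$-power of $m_k$ is the $k$-independent value $-d_j$ (from $Y_{j,q_j^{-1}}$) and $J \cup \{i\}$ is finite, iterating this argument shows that all relevant $a$'s, and therefore the positive $Y_i$-spectral parameters from $M$, lie in a bounded set independent of $k$. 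Right-negativity then bounds $b$ from above, and a symmetric analysis on the negative factor $Y_{i,bq_i^{-1}}^{-1}$ bounds $b$ from below, yielding the finiteness of $\mathcal{P}_{i,J,V}$.

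The principal obstacle is formalizing the inductive right-negativity argument: the $q$-character of the simple module $V_k$ is a proper subset of that of an underlying fusion/tensor product, so one cannot na\"ively truncate $M$ to a sub-monomial and invoke right-negativity on it. A possibly cleaner alternative is to restrict $V_k$ to the sub-$U_{q_i}(\widehat{\mathfrak{sl}}_2)$ corresponding to the $i$-th node and use Lemma \ref{thm:YMY} to identify the admissible $b$'s from the spectral parameters of the $U_{q_i}(\widehat{\mathfrak{sl}}_2)$-highest-$\ell$-weight constituents of $V_k$; these parameters are in turn controlled by the fixed top $q_j^{-1}$ of $m_k$ and the Dynkin geometry, again yielding a $k$-independent bound. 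Either route reduces the problem to verifying that the ``sources'' of the $A_{i,b}^{-1}$ factor are pinned down to a finite set determined entirely by $\g$, $J$, $j$ and $i$.
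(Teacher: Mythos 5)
Your overall framework (approximate $V$ by Kirillov--Reshetikhin modules via Proposition~\ref{prop:PrefondKR}, then exploit right-negativity of $\ell$-weights via Lemma~\ref{lem:HRightNeg}) matches the paper's, but the central step---actually pinning down $b$ in a $k$-independent finite set---is where the argument has a genuine gap, which you honestly acknowledge. Right-negativity constrains only the \emph{single} largest $q$-power appearing in the $\ell$-weight $\Psi = m_k M A_{i,b}^{-1}$; it gives no information about lower $q$-powers. Your proposed induction on Dynkin distance implicitly treats right-negativity as if it constrained \emph{every} factor $A_{j',a}^{-1}$ of $M$ to be ``supported'' by a higher factor, and this is not what the lemma gives you. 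In particular your claim that the $Y_{j,c}$-parameters inherited from $m_k$ lie in a $k$-independent set is false: those parameters are $q_j^{1-2k},\dots,q_j^{-1}$, and only the top value $q_j^{-1}$ is fixed as $k\to\infty$. Your alternative route via restriction to the $i$-th copy of $U_{q_i}(\widehat{\mathfrak{sl}}_2)$ and Lemma~\ref{thm:YMY} faces the same difficulty: you would still need a concrete, $k$-uniform bound on the spectral parameters of the $U_{q_i}(\widehat{\mathfrak{sl}}_2)$-highest-$\ell$-weight constituents, and ``Dynkin geometry'' alone does not supply one.

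Two ingredients are missing, and both appear in the paper's proof. First, choosing $M$ of maximal $\varpi$-weight among admissible monomials, one needs to show that $\Psi$ is \emph{$J$-dominant} (i.e.\ $\res_J(\Psi)\in\mathcal{Y}_J^+$): this follows from Lemma~\ref{thm:YMY} (which lets you identify an $\ell$-weight vector in $W_\Psi$ as a highest $\ell$-weight vector of an irreducible subquotient of $\res_J(W)$) together with Theorem~\ref{thm:SimpDimFinqLoop}. Combined with the trivial $I'$-dominance for $I' = I\setminus(J\cup\{i\})$, this forces the top $q$-power of $\Psi$ to occur in its $i$-th component, i.e.\ $\res_i(\Psi)=Y_{i,q^L}^{-1}M_Y$ with $M_Y$ having no higher spectral parameter. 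Second, one needs an explicit bound on $L$ that is uniform in $k$; the paper quotes the bound $L\le r^\vee h^\vee-d_j$ from \cite[Corollary~6.14]{fm} (in terms of the dual Coxeter number and lacing number). Only with both pieces in hand do right-negativity and the identity $\res_i(\Psi)=\res_i(M)Y_{i,bq_i}^{-1}Y_{i,bq_i^{-1}}^{-1}$ pin $b$ to $\{q^{L+d_i},q^{L-d_i}\}$ inside a finite set independent of $k$. Your proposal gestures at both ideas but does not supply them, so as written it does not close.
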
\vspace*{-2mm}
\begin{proof}
Fix $b\in \mathcal{P}_{i,J,V}$. Then, there is a monomial $M\in \mathcal{A}_J$ satisfying $V_{\Psi_{j,1}^{-1} M A_{i,b}^{-1}}\neq 0$ and\vspace*{-1.75mm}
\begin{center} 
$V_{\Psi_{j,1}^{-1} M'A_{i,b}^{-1}} = 0$ for every monomial $M'\in \mathcal{A}_J$ such that $\varpi(M')>\varpi(M)$.
\end{center}\vspace*{-1.25mm} % The set $\mathcal{X} = \{\mu \in \mathfrak{t}^{\times}\,|\, V_{\Psi_{j,1}^{-1} M A_{i,b}^{-1}}\neq 0\text{ for some }M \in \mathcal{A}_J\text{ with } \varpi(M)=\mu\}$ is non empty by hypothesis. Fix $\mu \in \mathcal{X}$. Then, $\mathcal{X}$ contains finitely many elements lying between 1 (which is the maximal element of $\varpi(\mathcal{A}_J)$) and $\mu$ (since the order on $\mathfrak{t}^{\times}$ is discrete --- bad term, but ok). It thus contains a maximal such element. This gives rise to our monomial $M$.
Therefore, by Proposition \ref{prop:PrefondKR}, there exists $k\in\mathbb{N}_{>0}$ for which the Kirillov-Reshitikhin module $W = L(m_k)$ with $m_k %= m_{k,aq_j^{1-2k}}^{(j)} 
= Y_{j,q_j^{1-2k}}\dots Y_{j,q_j^{-1}}$ verifies $W_{m_{k} M A_{i,b}^{-1}}\neq 0$ and\vspace*{-2.1mm}
\begin{center} $W_{m_{k} M'A_{i,b}^{-1}} = 0$ for every monomial $M'\in \mathcal{A}_J$ such that $\varpi(M')>\varpi(M).$
\end{center}\vspace*{-2.75mm} % The first part follows from the fact that $\lim_{k\arr\infty}\overline{\chi_q}(W_{k,q_j^{1-2k}}^{(j)}) = \overline{\chi_q}(L_{j,1}^-)$ in $\Z[[A_{s,c}]]_{s\in I,c\in \C^{\times}}$. Thus, after some $k$, the normalized $\ell$-weights of W_{k,q_j^{1-2k}}^{(j)} with fixed weight are exactly the normalized $\ell$-weights of $L_{j,a}^-$ having that fixed weight. The second part follows from the fact that, by \cite[Lemma 5.8]{hTsyst}, the normalized $\ell$-weights of $W_{k,q_j^{1-2k}}^{(j)}$ are contained in the normalized $\ell$-weights of $W_{k',q_j^{1-2k'}}^{(j)}$ for $k'\geq k$ and are thus contained in the normalized $\ell$-weights of $L_{j,1}^-$ by the first point (change fixed weight by weights lower than fixed weight).
We claim that $\Psi = m_k MA_{i,b}^{-1}$ is \textit{$J$-dominant}, meaning that $\res_J(\Psi)\in \mathcal{Y}_J^+$. Indeed, take~$s\in J$, $r\in \Z$ and let $v\in  W_{\Psi}$ be a joint eigenvector for the action of the Cartan--Drinfeld subalgebra $\uqmu{0,0}{\g}$ of $\uqmu{0}{\g}$. % On pourrait peut-être mettre $\uqmu{}{\hat{\g}}$ ici, mais est-ce vraiment nécessaire? Peut-être pas à ce moment là, mais plus tard oui... Sinon on ajoutera quelque chose du genre 
% Indeed, take $s\in J$, $r\in \Z$ and recall that $W$ is a naturally a module over the quantum loop algebra $\uqmu{}{\hat{\g}}$. Let $v\in  W_{\Psi}$ be a joint eigenvector for the action of the Cartan--Drinfeld subalgebra of $\uqmu{}{\hat{\g}}$ (which is the quotient of the Cartan--Drinfeld subalgebra $\uqmu{0,0}{\g}$ of $\uqmu{}{\g}$ by a central element).
Then, by Theorem \ref{thm:YMY} (and as $W$ is a finite-dimensional~irreducible $\uqmu{}{\hat{\g}}$-module by Example \ref{ex:KR}), %\cite[Proposition 3.1]{young} (see also \cite[Proposition 3.8]{MY}),
\begin{equation}\label{eq:xDecPset}
\textstyle x_{s,r}^+v \in \bigoplus_{t=1}^R W_{\Psi A_{s,a_t}}
\end{equation}
for some $a_1,\dots,a_R\in \C^{\times}$. Fixing $1\leq t\leq R$ and using Remark \ref{rem:AAlgFree} as well as Theorem \ref{thm:qCharA}, we deduce that, without loss of generality, 
$$\Psi A_{s,a_t} = m_k M'A_{i,b}^{-1}$$ 
for a monomial $M'\in \mathcal{A}_J$ with $\varpi(M') = \varpi(M)[\alpha_s]>\varpi(M)$. % Actually, $M'=MA_{s,a_t}$. We could thus replace the minimality condition above, which has to do with the number of factors appearing in $M$, by something about the factors themselves. Is it really necessary?
Thus $W_{\Psi A_{s,a_t}}=0$ by the above and \eqref{eq:xDecPset} shows that $v$ is a highest $\ell$-weight vector for an irreducible subquotient~of~$\res_J(W)$. % It is killed by $\uqmu{+}{\g_J}$ and an eigenvector of the Cartan part of the whole of $\uqmu{}{\g}$.
However, this subquotient is a finite-dimensional simple module for the quantum loop algebra $\uqmu{}{\widehat{\g_J}}$ (i.e.~for the quantum loop algebra associated to the semisimple Lie algebra $\g_J\subseteq \g$)~and its highest $\ell$-weight (i.e~$\res_J(\Psi)$ by construction) must therefore belong to $\mathcal{Y}_J^+$ by Theorem~\ref{thm:SimpDimFinqLoop} (see also \cite[Theorem 2.7]{hAff} for a version of this theorem in the case of quantum loop algebras with general semisimple underlying Lie algebras). This ends the proof of our claim.
\par
Now, note that the monomial $\Psi\in\mathcal{Y}$ trivially satisfies $\res_{I'}(\Psi)\in \mathcal{Y}_{I'}^+$ for $I'= I\backslash (J\cup \{i\})$~(as the expression $\Psi=m_kMA_{i,b}^{-1}$ does not involve variables of the form $A_{s,c}^{-1}$ with $s\in I'$, $c\in \C^{\times}$). Hence, by the above results, $\res_{I'\cup J}(\Psi)\in \mathcal{Y}_{I'\cup J}^+$ and $\Psi$ is \textit{$(I'\cup J)$-dominant}. However,~Lemma \ref{lem:HRightNeg}~shows that $\Psi$ is also a right-negative element of $\mathcal{Y}$ such that $\Psi\in m_kA_{i,1}^{-1}\mathcal{A}$. Let
$$ L = \max\{\ell\in \mathbb{Z}\,|\,Y_{s,q^{\ell}} \text{ or } Y_{s,q^{\ell}}^{-1} \text{ appears in } \Psi \text{ for some } s\in I \},$$
$$ L'= \max\{\ell\in \Z\,|\, A_{s,q^{\ell}}^{-1} \text{ appears in } MA_{i,b}^{-1} \text{ for some } s\in I\}. $$
Note that $L'\geq 0$ as $\Psi\in m_kA_{i,1}^{-1}\mathcal{A}$. \par % Il y a une erreur dans le claim 
% Ancien claim 
%We claim that $L \geq L'+d_i$.
% Nouveau
We claim that $L \geq L'+ 1$. To show this, remark that $A_{s,q^{L'}}^{-1}$ appears in $MA_{i,b}^{-1}$ for some~$s\in I$ (by definition of $L'$). Also, by definition of the $\ell$-weight $A_s=A_{s,q^{L'}}$, the variable $Y_s^{-1}$ with 
$$Y_s= Y_{s,q^{L'+d_s}}$$
appears in $A_{s,q^{L'}}^{-1}$. We consider two cases.
\begin{itemize} 
\setlength{\itemsep}{1.5pt}
\item[(Case 1)] Suppose that $Y_s^{-1}$ also appears in (the \textit{reduced form} of) $\Psi$. Then, %$\res_{I'\cup J}(\Psi) \in \mathcal{Y}_{I'\cup J}^+$ implies $s=i$ and it follows that 
$L \geq L'+d_s > L'$. %as claimed.
\item[(Case 2)] Assume now that $Y_s^{-1}$ does not appear in (the \textit{reduced form} of) $\Psi$ and remark that,~as $L'\geq 0$, $Y_s^{-1}$ appears in (the \textit{reduced form} of) $m_kA_{s}^{-1}$. Hence, to be cancelled out~in~$\Psi$, $Y_{s}$ must appear in~a variable $A_{r,c}^{-1}$ of the monomial $MA_{i,b}^{-1}\in \mathcal{A}$. In this case, %by definition of $A_{r,c}$,
$$C_{s,r}<0\text{ and }c = q^{\ell} \text{ for some }\ell\in\Z \text{ satisfying } \ell \geq L'+d_s+C_{s,r}+1 $$
and it follows in particular that $d_s+1 \leq {-}C_{s,r}$ (as $L'$ must be at least $\ell$ by definition). Thus, using the well known classification of connected (finite type) Dynkin diagrams, we deduce that $-C_{s,r}\in\{2,3\}$, $d_r > 1$ and $C_{r,p}\in\{0,-1\}$ for all $p\in I$. Now, $Y_r^{-1}$~with
$$ Y_{r} = Y_{r,q^{\ell+d_r}}$$
appears in $A_{r,c}^{-1}$. Let us suppose that $Y_r^{-1}$ does not appear in (the \textit{reduced form} of)~$\Psi$. Then, by the above reasoning (and as $\ell+d_r> C_{s,r}+3 \geq 0$), we can deduce that $Y_r^{-1}$ appears in some variable $A_{p,c'}^{-1}$ of the monomial $MA_{i,b}^{-1}$ with 
$$ C_{r,p} <0 \text{ and } c' = q^{\ell'} \text{ for some } \ell'\in \Z \text{ satisfying } \ell'\geq \ell+d_r +C_{r,p}+1.$$
This however contradicts the definition of $L'$ as we would then obtain $$\ell'\geq \ell+d_r \geq \ell+2 \geq L'+d_s+C_{s,r}+3 > L'.$$
Hence, $Y_r^{-1}$ appears in (the \textit{reduced form} of) $\Psi$ and the reasoning of Case~1 (with the above results) gives as claimed %$r=i$ with 
$L\geq \ell+d_r > L'$. 
\end{itemize}
In all cases, $L> L'\geq 0$. In addition, by right-negativity and $(I'\cup J)$-dominance of~$\Psi$,
\begin{equation}\label{eq:resiPreuveP}
\res_i(\Psi) = Y_{i,q^L}^{-1}M_Y
\end{equation}
for a monomial $M_Y$ in the variables $\{Y_{i,q^L}^{-1}\}\cup\{Y_{i,c}^{\pm 1}\}_{c\in \C^{\times}\backslash\{q^{\ell}\}_{\ell\geq L}}$. Now, as $\Psi=m_kMA_{i,b}^{-1}$, the definition of $A_{i,b}$ also gives\vspace*{-0.5mm}
$$ \res_i(\Psi) = \res_i(M)Y_{i,bq_i}^{-1}Y_{i,bq_i^{-1}}^{-1}$$
with $\res_i(M)\in \mathcal{Y}_i^+$. Thus, $q^L\in\{bq_i,bq_i^{-1}\}$ and $b \in \{q^{L+d_i},q^{L-d_i}\}\subseteq \{q^{\ell}\,|\,{-}d_i< \ell \leq L+d_i\}$~as $L> 0$. On the other hand, by\footnote{In the notation of \cite[Corollary 6.14]{fm}, the monomial $m_k$ has positive lattive support with base $q_j^{1-2k}$.} \cite[Corollary 6.14]{fm}, \eqref{eq:resiPreuveP} gives
$$L\leq r^{\vee}h^{\vee}-d_j$$
with $h^{\vee}$ the \textit{dual Coxeter number} of $\g$ and where $r^{\vee}$ is the maximal number of edges connecting two vertices in the associated Dynkin diagram (i.e.~the \textit{lacing number} of $\g$). Thus, the element $b\in \mathcal{P}_{i,J,V}$ actually belongs to the finite set $\mathcal{P}=\{q^{\ell}\,|\,-d_i<\ell\leq r^{\vee}h^{\vee}+d_i-d_j\}$ (which does~not depend on $b$ and $k$) and it follows that the whole of $\mathcal{P}_{i,J,V} $ is contained in $\mathcal{P}$. 
\end{proof}\vspace*{-1mm}  % We note that the previous proof is erroneous but seem to give a correct (and more beautiful) set containing $\mathcal{P}_{i,J,V}$. The case of G2 (which is the only problematic one is ruled out with a particular analysis).
The next result follows from Proposition \ref{prop:PrefondKR} and \cite[Theorem 3.10]{hAff2} (see also \cite[Remark 3.6]{hAff2}). We also include below it a direct corollary of\footnote{The results of \cite{young,fjmm} are stated for the category $\mathcal{O}_{\hat{\g}}$ associated to the unshifted quantum loop algebra $\uqmu{}{\hat{\g}}$, but their proofs hold equally well in the shifted setting (if one keeps track of the shift for the $\phi_{i,r}^-$'s).} \cite[Proposition 3.1]{young} (and \cite[Lemma 5.5]{fjmm}). Note that the latter result can be seen as a refinement of \cite[Proposition 4.10]{hshift}. 
\begin{lem}[\cite{hAff2}]\label{lem:multPsiboundedPrefondAB} 
Fix $\g$ of type A--B and $j\in I$. Then, $\dim (L(\Psi_{j,1}^{-1}))_{\Psi}\leq 1$ for all $\Psi\in \mathfrak{r}$. 
\end{lem}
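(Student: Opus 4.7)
The plan is to combine the asymptotic realization of $L(\Psi_{j,1}^{-1})$ as a limit of Kirillov--Reshetikhin modules (Proposition \ref{prop:PrefondKR}) with the ``thinness'' of KR modules in type A--B, as established in \cite[Theorem 3.10]{hAff2}.

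First, I would fix $\Psi\in\mathfrak{r}$ with the goal of showing $\dim (L(\Psi_{j,1}^{-1}))_{\Psi}\leq 1$. Without loss of generality, one may assume $(L(\Psi_{j,1}^{-1}))_{\Psi}\neq 0$, in which case Theorem \ref{thm:qCharA} forces $\Psi = \Psi_{j,1}^{-1}M$ for some monomial $M$ in the variables $\{A_{i,a}^{-1}\}_{i\in I,\,a\in\C^{\times}}$.

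Next, I would apply Proposition \ref{prop:PrefondKR} to $V_k = W_{k,q_j^{1-2k}}^{(j)}$, whose highest $\ell$-weight is $m_k$. The proposition yields that the integer sequence $\dim(V_k)_{m_k M}$ is non-decreasing in $k$, bounded above by $\dim (L(\Psi_{j,1}^{-1}))_{\Psi}$, and that the normalized $q$-characters $\overline{\chi_q}(V_k)$ converge coefficient-wise to $\overline{\chi_q}(L(\Psi_{j,1}^{-1}))$. Combining these two observations, the sequence $\dim(V_k)_{m_k M}$ must stabilize at the value $\dim (L(\Psi_{j,1}^{-1}))_{\Psi}$ for $k$ sufficiently large.

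Finally, I would invoke \cite[Theorem 3.10]{hAff2}, which asserts that when $\g$ is of type A--B, every Kirillov--Reshetikhin module $W_{k,a}^{(j)}$ is \emph{thin}, i.e.~all of its $\ell$-weight spaces have dimension at most $1$. This forces $\dim(V_k)_{m_k M}\leq 1$ for every $k\in \mathbb{N}_{>0}$, and passing to the limit yields $\dim (L(\Psi_{j,1}^{-1}))_{\Psi}\leq 1$ as required. The main obstacle to generalizing this argument is exactly the thinness hypothesis on KR modules: outside of types A--B, KR modules can carry $\ell$-weight multiplicities greater than $1$, which is precisely why Conjecture \ref{conj:main} (and the extension of Theorem \ref{thm:main2} beyond type A--B) remains out of reach via this approach.
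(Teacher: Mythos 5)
Your proposal is correct and matches the route the paper itself indicates: the paper states that the lemma ``follows from Proposition~\ref{prop:PrefondKR} and \cite[Theorem 3.10]{hAff2}'', and you have simply made that argument explicit by combining the monotone convergence of the normalized $q$-characters of the Kirillov--Reshetikhin modules $V_k=W^{(j)}_{k,q_j^{1-2k}}$ towards $\overline{\chi_q}(L(\Psi_{j,1}^{-1}))$ with the thinness of these $V_k$ in type A--B. Nothing is missing, and your closing remark about why the argument breaks down outside type A--B is consistent with the paper's discussion around Conjecture~\ref{conj:multPsiboundedPrefond}.
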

\begin{lem}[\cite{young}]\label{lem:FJMM} Fix an object $V$ of $\OO^{sh}$ and let $\Psi\in \mathfrak{r}$ satisfy $V_{\Psi}\neq 0$. Fix $i\in I$. Consider the finite set $\mathcal{S}_{i,\Psi,V} = \{b\in \C^{\times}\,|\, V_{\Psi A_{i,b}^{-1}}\neq 0\}$. Then, 
$$ \textstyle \left(\prod_{b\in \mathcal{S}_{i,\Psi,V}} (1-bz)^{\dim V_{\Psi A_{i,b}^{-1}}+\dim V_{\Psi}-2}\right)x_{i}^{-}(z)V_{\Psi}=0. $$ 
\end{lem}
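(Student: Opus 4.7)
The approach, following \cite{young} (see also \cite{fjmm}), is to exploit the Drinfeld commutation relation between the currents $\phi_i^+(z)$ and $x_i^-(w)$. Combining \eqref{eq:Relhx}, \eqref{eq:PhiTX} and \eqref{eq:RelhPhi} and summing the exponential in the $h_{i,m}$'s, one obtains
\[
(q_i^{-2}z-w)\,\phi_i^+(z)\,x_i^-(w)\;=\;(z-q_i^{-2}w)\,x_i^-(w)\,\phi_i^+(z)
\]
as an identity in $\uqmu{\mu}{\g}[[z^{\pm 1},w^{\pm 1}]]$. The plan is to apply this to $v\in V_\Psi$, use the generalized eigenvalue structure of $\phi_i^+(z)$ on $V_\Psi$ and on each target $V_{\Psi A_{i,b}^{-1}}$, and extract sharp bounds on the pole structure of $x_i^-(w)|_{V_\Psi}$.

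First I would record that $\mathcal{S}_{i,\Psi,V}$ is finite because the weight space $V_{\varpi(\Psi)[-\alpha_i]}$ is finite-dimensional (Definition \ref{def:O}) and decomposes into $\ell$-weight spaces, and decompose $x_i^-(w)v=\sum_{b\in\mathcal{S}_{i,\Psi,V}}u_b(w)$ with $u_b(w)\in V_{\Psi A_{i,b}^{-1}}[[w^{\pm 1}]]$. Parameterizing $\phi_i^+(z)|_{V_\Psi}=\Psi_i(z)\,\mathrm{id}+N_\Psi(z)$ and $\phi_i^+(z)|_{V_{\Psi A_{i,b}^{-1}}}=\Psi_i(z)R_b(z)\,\mathrm{id}+N_b(z)$, where
\[
R_b(z)\;=\;(A_{i,b}^{-1})_i(z)\;=\;q_i^{-2}\,\frac{1-bzq_i^{2}}{1-bzq_i^{-2}}
\]
and the nilpotent parts satisfy $N_\Psi(z)^{\dim V_\Psi}=N_b(z)^{\dim V_{\Psi A_{i,b}^{-1}}}=0$, I substitute into the commutation relation. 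A short computation yields the key scalar identity
\[
(q_i^{-2}z-w)R_b(z)-(z-q_i^{-2}w)\;=\;\frac{(1-q_i^{-4})\,z\,(bw-1)}{1-bzq_i^{-2}},
\]
which shows that, modulo nilpotent corrections, each $u_b(w)$ can have singularities only at the single point $w=b^{-1}$.

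Finally, I would isolate $u_{b_0}(w)$ for each $b_0$ by multiplying through by $\prod_{b\ne b_0}(1-bzq_i^{-2})$ and extracting the residue at $z=b_0^{-1}q_i^{2}$, then iterate the resulting identity. Because the nilpotent parts $N_\Psi(z)$ and $N_{b_0}(z)$ vanish after $\dim V_\Psi$ and $\dim V_{\Psi A_{i,b_0}^{-1}}$ applications respectively, and each iteration introduces at most one extra factor of $(1-b_0w)^{-1}$, one obtains an annihilation by $(1-b_0w)^{\dim V_\Psi+\dim V_{\Psi A_{i,b_0}^{-1}}-2}$ — the saving of two coming from the factor of $(1-bzq_i^{-2})^{-1}$ already built into $R_b(z)$ together with a dual factor on the $V_\Psi$ side. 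Multiplying $x_i^-(w)v=\sum_b u_b(w)$ by $\prod_b(1-bw)^{\dim V_\Psi+\dim V_{\Psi A_{i,b}^{-1}}-2}$ then kills every summand, proving the lemma. The principal difficulty is the sharp pole-order bookkeeping: obtaining the exponent $\dim V_\Psi+\dim V_{\Psi A_{i,b}^{-1}}-2$ rather than the naive sum $\dim V_\Psi+\dim V_{\Psi A_{i,b}^{-1}}$ requires careful tracking of how the two nilpotent endomorphisms interact across iterations of the master relation.
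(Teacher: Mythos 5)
The paper does not prove this lemma at all: it is stated as an immediate corollary of \cite[Proposition 3.1]{young} (and \cite[Lemma 5.5]{fjmm}), with a footnote observing only that the arguments of those references, written for $\uqmu{}{\hat{\g}}$ and its Borel, transfer verbatim to the shifted setting once one keeps track of the shift in the $\phi_{i,r}^-$'s. So your proposal does not replicate the paper's treatment; it attempts a direct reconstruction of Young's argument. That is a reasonable thing to do, and the skeleton you set up is the correct one: the commutation relation
$(q_i^{-2}z-w)\,\phi_i^+(z)\,x_i^-(w)=(z-q_i^{-2}w)\,x_i^-(w)\,\phi_i^+(z)$
is obtained exactly as you describe from \eqref{eq:Relhx}, \eqref{eq:PhiTX} and \eqref{eq:RelhPhi}, and your scalar identity
$(q_i^{-2}z-w)R_b(z)-(z-q_i^{-2}w)=\frac{(1-q_i^{-4})\,z\,(bw-1)}{1-bzq_i^{-2}}$
with $R_b(z)=(A_{i,b}^{-1})_i(z)=q_i^{-2}\frac{1-bzq_i^2}{1-bzq_i^{-2}}$ is correct (I checked it directly). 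The decomposition $x_i^-(w)v=\sum_b u_b(w)$ into $\ell$-weight components, with finiteness of $\mathcal{S}_{i,\Psi,V}$ coming from the finite-dimensionality of $V_{\varpi(\Psi)[-\alpha_i]}$, is likewise exactly what the paper's own remark after the lemma records.

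The gap is precisely where you flag it. The content of the lemma is the sharp exponent $\dim V_{\Psi}+\dim V_{\Psi A_{i,b}^{-1}}-2$, and your argument for it is not a proof: ``each iteration introduces at most one extra factor of $(1-b_0 w)^{-1}$'' and ``the saving of two coming from the factor of $(1-bzq_i^{-2})^{-1}$ already built into $R_b(z)$ together with a dual factor on the $V_\Psi$ side'' are assertions about how the nilpotent operators $N_\Psi(z)$ and $N_{b_0}(z)$ interact across iterations of the master identity, and they are not established. In particular, it is not clear from your sketch why the nilpotency indices $\dim V_\Psi$ and $\dim V_{\Psi A_{i,b_0}^{-1}}$ combine additively with a deficit of exactly $2$ rather than $0$ or $1$; one must actually run the iteration, tracking both the powers of $N_\Psi$ accumulating on the left and the powers of $N_{b_0}$ on the right, and show that each step either raises one of these indices or produces a pole. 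Without that bookkeeping the exponent is unjustified, and a naive version of the argument gives only the weaker bound $\dim V_\Psi+\dim V_{\Psi A_{i,b}^{-1}}$. Since the whole point of citing this lemma is to feed the sharp count into $n_{i,b}$ in the proof of Theorem \ref{thm:ExistenceTech}, the weaker bound would still suffice for the application, but it would not be the statement you set out to prove. If you want a complete argument rather than a citation, the iteration has to be carried out explicitly (as in \cite[Proposition 3.1]{young}), and you should also say a word about why the shift does not affect the computation --- only $\phi_i^+(z)$ enters, so the shifted $\phi_{i,r}^-$'s never appear, which is the observation in the paper's footnote.
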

\begin{rem} The set $\mathcal{S}_{i,\Psi,V}$ of the above lemma is finite since, as $V$ belongs to $\OO^{sh}$,
$$%\textstyle  
|\mathcal{S}_{i,\Psi,V}| \leq \sum_{b\in \mathcal{S}_{i,\Psi,V}} \dim V_{\Psi A_{i,b}^{-1}} \leq \dim V_{\varpi(\Psi)[-\alpha_i]} < \infty. $$
\end{rem}\vspace*{-0.5mm}
Here is now the main result of this subsection (with a proof inspired by the proof of \cite[Proposition 6.9]{fjmm}, see the end of Section \ref{sec:Inflqchar}).
\begin{thm}\label{thm:ExistenceTech} Fix $\nu \in \Lambda_J^{\vee}$ and let $W$ be a simple $\uqmu{\nu}{\g_J}$-module of highest $\ell$-weight $\Psi\in \mathfrak{r}^{J}_{\nu}$. Consider furthermore the simple $\uqmu{\nu}{\g}$-module $V'$ of highest $\ell$-weight $\Psi$ (with $\Psi$ seen in $\mathfrak{r}_{\nu}$~via the inclusion $\mathfrak{r}_{\nu}^J\subseteq \mathfrak{r}_{\nu}$). Suppose that $\mathcal{P}_{i,J,V'}$ is a finite set for all $i\not\in J$ and that 
\begin{equation}\label{eq:MaxFin}
\max_{M\in \mathcal{A}_J} (\dim V'_{\Psi M A_{i,b}^{-1}}+\dim V'_{\Psi M})<\infty
\end{equation}
for every $b\in \mathcal{P}_{i,J,V}$. 
Then $W$ admits a $J$-inflation to $\g$. 
\end{thm}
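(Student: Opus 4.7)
The plan is to realize a $J$-inflation of $W$ as a $\uqmu{\mu}{\g}$-submodule of a fusion product $V' \star L(\Psi_p)$, for a carefully chosen $J$-trivial $\ell$-weight $\Psi_p$. Let $V' = L(\Psi)$ denote the simple $\uqmu{\nu}{\g}$-module (with $\Psi$ viewed inside $\mathfrak{r}_\nu$ via $\mathfrak{r}_\nu^J \subseteq \mathfrak{r}_\nu$), and use Lemma \ref{lem:FacRestSimple} to identify $W$ with the cyclic $\uqmu{\nu}{\g_J}$-submodule $W' \subseteq V'$ generated by a highest $\ell$-weight vector $v$. By construction, the $\ell$-weights of $V'$ meeting $W'$ are exactly those of the form $\Psi M$ with $M \in \mathcal{A}_J$. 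The idea is to equip the vector space $W'$ with a new $\uqmu{\mu}{\g}$-action, inherited from a fusion product with a one-dimensional module, that kills every $x_i^{\pm}(z)$ with $i \notin J$.

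The first step is to observe that on $V'$, for every $M \in \mathcal{A}_J$ and $i \notin J$, we have $x_i^+(z)\, V'_{\Psi M} = 0$ automatically by weight and $\ell$-weight considerations (using Theorem \ref{thm:qCharA} together with the freeness from Remark \ref{rem:AAlgFree}, since $MA_{i,a}$ would contain a positive power of $A_{i,a}$ and the corresponding $\ell$-weight space of $V'$ must vanish). The action of $x_i^-(z)$ is the only real obstruction, and here I invoke Lemma \ref{lem:FJMM}: combined with the finiteness of $\mathcal{P}_{i,J,V'}$ and the uniform bound \eqref{eq:MaxFin}, it yields an integer $N_i < \infty$ such that
\[
\prod_{b \in \mathcal{P}_{i,J,V'}} (1-bz)^{N_i}\, x_i^-(z)\, V'_{\Psi M} \;=\; 0 \qquad \text{for every } M \in \mathcal{A}_J.
\]
I then set $\Psi_p = \prod_{i \notin J}\prod_{b \in \mathcal{P}_{i,J,V'}} \Psi_{i,b}^{N_i}$, which is $J$-trivial and lies in $\mathfrak{r}_{\mu - \nu}$ for the coweight $\mu = \nu + \sum_{i \notin J} N_i\,|\mathcal{P}_{i,J,V'}|\, \omega_i^\vee \in \Lambda^\vee$ (so that $\res_J(\mu) = \nu$).

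Next I would form $V'' = V' \star L(\Psi_p)$, a highest $\ell$-weight object of $\OO^{\mu}$ with highest $\ell$-weight $\Psi \Psi_p$ by Theorem \ref{thm:HighestWeightFus}. Since $L(\Psi_p)$ is one-dimensional, $V''$ identifies with $V'$ as a vector space, and the Drinfeld coproduct \eqref{eq:Dcop} specialized at $u=1$ gives an explicit description: the $\uqmu{\mu}{\g}$-action coincides with the $V'$-action on the $\uqmu{\nu}{\g_J}$-generators, while for $i \notin J$ the operators $\phi_i^\pm(z)$ and $x_i^-(z)$ are twisted by the scalar factor $\res_i(\Psi_p)(z)$, and $x_i^+(z)$ is unchanged. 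It remains to verify that $W' \subseteq V''$ is stable under this $\uqmu{\mu}{\g}$-action: the $\uqmu{\nu}{\g_J}$ and Cartan--Drinfeld parts preserve $W'$ tautologically; $x_i^+(z)\, W' \subseteq x_i^+(z)|_{V'}\, W' = 0$ by the weight observation; and for $x_i^-(z)|_{V''}$ applied to a vector of $W' \cap V'_{\Psi M}$, the polynomial $\res_i(\Psi_p)(z)$ contains as a divisor the annihilating polynomial produced by Lemma \ref{lem:FJMM} (thanks to the uniform choice of $N_i$), so the result vanishes. Thus $W'$ becomes the desired $\uqmu{\mu}{\g}$-submodule of $V''$ realizing a $J$-inflation of $W$ to $\g$.

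The main obstacle is exactly this simultaneous vanishing: a single fixed polynomial $\res_i(\Psi_p)(z)$ must annihilate $x_i^-(z)$ on every $W' \cap V'_{\Psi M}$ at once as $M$ ranges over the a priori infinite set $\mathcal{A}_J$. This is precisely what the two hypotheses of the theorem --- finiteness of $\mathcal{P}_{i,J,V'}$ together with the uniform bound \eqref{eq:MaxFin} --- are tailored to provide through Lemma \ref{lem:FJMM}, and this is also the step that will fail in general without some control on the multiplicities of $q$-characters of negative prefundamental representations (compare with Lemma \ref{lem:multPsiboundedPrefondAB}).
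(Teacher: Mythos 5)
Your proposal matches the paper's proof in all essential respects: you identify $W$ with the cyclic $\uqmu{\nu}{\g_J}$-submodule $\langle v\rangle_J\subseteq V'$ via Lemma \ref{lem:FacRestSimple}, you build a $J$-trivial $\ell$-weight $\Psi_p$ whose polynomial factors $\res_i(\Psi_p)(z)$ (for $i\notin J$) are chosen via Lemma \ref{lem:FJMM} to annihilate $x_i^-(z)$ uniformly on every $\ell$-weight space $V'_{\Psi M}$ with $M\in\mathcal{A}_J$, and you twist by tensoring with the one-dimensional $L(\Psi_p)$ to get a $\uqmu{\mu}{\g}$-module structure on $\langle v\rangle_J$ that kills $x_i^{\pm}(z)$ for $i\notin J$. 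The hypotheses (finiteness of $\mathcal{P}_{i,J,V'}$ and the uniform bound \eqref{eq:MaxFin}) are used exactly as in the paper. One small notational point: you write $V''=V'\star L(\Psi_p)$ and claim it identifies with $V'$ as a vector space, but then describe its action via the Drinfeld coproduct specialized at $u=1$ on the plain tensor product $V'\otimes L(\Psi_p)$. The paper is careful to use the latter construction (Remark \ref{rem:Dcoproduct}, denoted $V_p = V'\otimes L(\Psi_p)$) rather than the fusion product, precisely because it needs the full tensor product space rather than the highest-$\ell$-weight cyclic quotient, and because $\star$ and $\otimes_D$ are not known to coincide in general. Since the action you actually describe and use is the $\otimes_D$ one, this is a harmless slip; the argument you give is the paper's.
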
\vspace*{-2mm}
\begin{proof}
Fix a highest $\ell$-weight vector $v\in V'$ and denote by $\langle v\rangle_J$ the $\uqmu{\nu}{\g_J}$-submodule of $V'$ generated by $v$. Then, $\langle v\rangle_J\simeq W$ as $\uqmu{\nu}{\g_J}$-modules by Lemma \ref{lem:FacRestSimple}. Consider the $\ell$-weight
$$ %\textstyle 
\Psi_p = \prod_{i\not\in J}\prod_{b\in \mathcal{P}_{i,J,V'}}\Psi_{i,b}^{n_{i,b}}$$
where $n_{i,b} = \max_{M\in \mathcal{A}_J}(\dim V'_{\Psi M A_{i,b}^{-1}}+\dim V'_{\Psi M}-2)\in \mathbb{N}$ (for all $i\not\in J$ and $b\in \mathcal{P}_{i,J,V'}$).\newpage 
By \eqref{eq:MaxFin} (and as $|\mathcal{P}_{i,J,V'}|<\infty$), $\Psi_p$ is a well-defined $\ell$-weight in $\mathfrak{r}$. Define a coweight~$\mu\in \Lambda^{\vee}$ such that $\res_J(\mu)=\nu$ by
$$\mu = \nu+\sum_{i\not\in J}\sum_{b\in \mathcal{P}_{i,J,V'}}n_{i,b}\,\omega_i^{\vee}$$
and let $ V_p = V'\otimes L(\Psi_p)$. By Remark \ref{rem:Dcoproduct}, there is a well-defined action of $\uqmu{\mu}{\g}$ on $V_p$ given by specializing the coproduct 
$\Delta_{\nu,\mu-\nu}^{(u)} : \uqmu{\mu}{\g} \arr (\uqmu{\nu}{\g} \otimes \uqmu{\mu-\nu}{\g})((u))$ at $u=1$. Let $$V = \langle v\rangle_J \otimes L(\Psi_p) \subseteq V_p.$$
We assert that $V$ is a $\uqmu{\mu}{\g}$-submodule of $V_p$. Indeed, take a non-zero vector $v_p$ inside $L(\Psi_p)$. Then, $L(\Psi_p)=\C v_p$ and $x_i^{\pm}(z)v_p = 0$ with $\phi_i^{\pm}(z)v_p = (\Psi_p)_i(z)v_p$ for all $i\in I$. Take $w\in\langle v\rangle_J$ a simultaneous eigenvector of the $\phi_{j,0}^+$'s with $j\in J$. By the reasoning of the proof of Lemma \ref{lem:FacRestSimple}, there is $\alpha\in Q_J^+$ such that $w$ lies in the weight space $V'_{\varpi(\Psi)[-\alpha]}$ of $V'$. Also, %since $w \in V'$,
\vspace*{-0.3mm}
$$\textstyle w = \sum_{r=1}^R \gamma_r w_r $$
with $\gamma_1,\dots,\gamma_R\in\C^{\times}$ and where $w_1,\dots,w_R\in V'$ are $\ell$-weight vectors (for the $\uqmu{\nu}{\g}$-action) of weight $\varpi(\Psi)[-\alpha]$. Fix $1\leq r\leq R$ and let $\Psi_r$ be the $\ell$-weight of $w_r$. Then, $\varpi(\Psi_r)=\varpi(\Psi)[-\alpha]$ and Theorem \ref{thm:qCharA} implies that $\Psi_r=\Psi M$ for some $M\in \mathcal{A}_J$. Thus, by Lemma \ref{lem:FJMM}, for $i\not\in J$, 
$$\textstyle (\Psi_p)_i(z)x_i^-(z)w_r = (\prod_{b\in \mathcal{P}_{i,J,V'}}(1-bz)^{n_{i,b}})x_i^-(z)w_r = 0$$
and it follows from \eqref{eq:Dcop} that, in $V_p$,
$$ \textstyle x_i^{-}(z)\cdot (w_r\otimes v_p) = (x_i^-(z)w_r)\otimes (\phi_i^+(z)v_p) = ((\Psi_p)_i(z)x_i^-(z)w_r)\otimes v_p = 0.$$
Hence, $x_i^-(z)\cdot (w\otimes v_p) = 0$ for all $i\not\in J$. Furthermore, for any such $i$, $P(V')\subseteq D(\varpi(\Psi))$ and $\alpha-\alpha_i\not\in Q^+$ imply $x_{i,s}^+w\in V'_{\varpi(\Psi)[\alpha_i-\alpha]}=0$ for all $s\in \Z$ with $$x_i^+(z)\cdot (w\otimes v_p) = (x_i^+(z)w)\cdot v_p = 0.$$
Fix now $j\in J$ and note that, by $J$-triviality of $\Psi_p$ (see below Proposition \ref{prop:IfVInflqCar}), 
$$ x_{j}^{\pm}(z) \cdot (w\otimes v_p) = (x_j^{\pm}(z)w)\otimes v_p \text { and } \phi_j^{\pm}(z)\cdot (w\otimes v_p) = (\phi_j^{\pm}(z)w)\otimes v_p$$
so that, for $s\in \Z$,
\begin{equation}\label{eq:RestIPreuveEx}
x_{j,s}^{\pm} \cdot (w\otimes v_p) = (x_{j,s}^{\pm}w)\otimes v_p \text { and } \phi_{j,s}^{\pm}\cdot (w\otimes v_p) = (\phi_{j,s}^{\pm}w)\otimes v_p
\end{equation}
are both contained in $V = \langle v\rangle_J \otimes L(\Psi_p)$. \par In addition, for $i\not\in J$ and $s\in \Z$, there is $x\in \uqmu{\nu,-}{\g}$ with $\phi_{i,s}^{+}w = xv$. However, by~\eqref{eq:xWt},\vspace*{-0.65mm}
$$\phi_{i,s}^{+}w\in V'_{\varpi(\Psi)[-\alpha]}$$ \vspace*{-3.5mm}\\
and $\alpha\in Q_J^+$ implies that $x$ actually belongs to $\uqmu{\nu,-}{\g_J}\subseteq \uqmu{\nu,-}{\g}$. In particular, $\phi_{i,s}^+ w \in \langle v\rangle_J$ and, analogously, $\phi_{i,s}^- w \in \langle v\rangle_J$. Therefore,
$$ \phi_i^{\pm}(z)\cdot (w\otimes v_p) = (\phi^{\pm}_i(z) w)\otimes (\phi^{\pm}_i(z) v_p) = (\Psi_p)_i(z)(\phi^{\pm}_i(z) w)\otimes v_p\in V$$
and $V$ is, as claimed, a $\uqmu{\mu}{\g}$-submodule of $V_p$. Moreover, by the above results, for $i\not\in J$, $$x_i^{\pm}(z)\cdot V = 0$$ and $\res_J^{\mu}(V) \simeq  \langle v\rangle_J\simeq W$
in $\OO_{J}^{sh}$ by \eqref{eq:RestIPreuveEx}. This ends the proof of the theorem.
\end{proof}
%Theorem \ref{thm:ExistenceTech} has the following consequence (which is the main result of the present paper).
\begin{thm}\label{cor:ABFin} Fix a simple object $W$ in $\OO^{sh}_{J}$. Then $W$ admits a $J$-inflation to $\g$ if either 
\begin{itemize}
\setlength{\itemsep}{1.5pt}
\item[(i)] $\dim W < \infty$ or if
\item[(ii)] $\g$ is of type A--B.
\end{itemize}
In particular, the functor
$$ \textstyle\mathscr{R}_J:\OO^{sh}\arr \bigoplus_{\nu\in\Lambda^{\vee}_J}\uqmu{\nu}{\g_J}\text{\normalfont--Mod}$$
is essentially surjective on finite-dimensional simple modules (and is essentially surjective on all simple modules of $\OO^{sh}_J$ if $\g$ has type A--B).
\end{thm}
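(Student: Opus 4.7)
The plan is to apply Theorem \ref{thm:ExistenceTech} in both cases, with the two parts differing only in how we verify its two hypotheses on the auxiliary $\uqmu{\nu}{\g}$-module $V'$ (namely, finiteness of $\mathcal{P}_{i,J,V'}$ for each $i\not\in J$ and boundedness of the maximum appearing in \eqref{eq:MaxFin}).

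For part (i), assume $W$ is finite-dimensional and write $W\simeq L^J(\Psi)$. By the analog of Theorem \ref{thm:ClassSimpDimFin} for $\OO^{sh}_J$, the $\ell$-weight $\Psi\in\mathfrak{r}^J$ is a monomial in the three basic types $Y_{j,a}$, $\Psi_{j,a}$, $\gamma$ with $j\in J$, $a\in\C^{\times}$, $\gamma\in\mathfrak{t}_J^{\times}$. Viewing $\Psi$ inside $\mathfrak{r}$ through the obvious inclusion $\mathfrak{r}^J\subseteq\mathfrak{r}$, it is still a monomial of the kind described in Theorem \ref{thm:ClassSimpDimFin}, so the associated simple $\uqmu{\nu}{\g}$-module $V'=L(\Psi)$ is again finite-dimensional. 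Its set of $\ell$-weights is therefore finite and, by freeness of the $A_{i,a}^{-1}$'s (Remark \ref{rem:AAlgFree}), the decomposition $\Psi'=\Psi M A_{i,b}^{-1}$ uniquely determines $b$; thus $\mathcal{P}_{i,J,V'}$ is finite. Similarly, $\max_{M\in\mathcal{A}_J}(\dim V'_{\Psi MA_{i,b}^{-1}}+\dim V'_{\Psi M})\leq 2\dim V'<\infty$, and Theorem \ref{thm:ExistenceTech} provides a $J$-inflation of $W$.

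For part (ii), assume $\g$ is of type A--B. By Theorem \ref{thm:InflIfPrefond} combined with Lemma \ref{lem:InflShift}, it suffices to produce, for each $j\in J$, a $J$-inflation to $\g$ of the negative prefundamental representation $L^J(\Psi_{j,1}^{-1})$. I apply Theorem \ref{thm:ExistenceTech} to $W=L^J(\Psi_{j,1}^{-1})$, in which case the corresponding auxiliary module is $V'=L(\Psi_{j,1}^{-1})$ in $\OO^{sh}$. Finiteness of $\mathcal{P}_{i,J,V'}$ for every $i\not\in J$ is exactly Lemma \ref{lem:FinitePPrefond}. The type A--B hypothesis enters solely through Lemma \ref{lem:multPsiboundedPrefondAB}, which gives $\dim V'_{\Psi'}\leq 1$ for all $\Psi'\in\mathfrak{r}$; consequently the max in \eqref{eq:MaxFin} is bounded by $2$ for every $b\in\mathcal{P}_{i,J,V'}$. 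Theorem \ref{thm:ExistenceTech} then yields the desired inflation.

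The real work has already been absorbed into the preparatory results: Lemma \ref{lem:FinitePPrefond} (via right-negativity/$J$-dominance arguments and \cite[Corollary 6.14]{fm}) and Lemma \ref{lem:multPsiboundedPrefondAB} (the source of the type A--B restriction, and the only missing input toward Conjecture \ref{conj:main} in other types). Given these, the present theorem is a direct application of Theorem \ref{thm:ExistenceTech}. The final ``in particular'' statement on essential surjectivity of $\mathscr{R}_J$ is immediate from Definition \ref{def:Infl}(i): any $J$-inflation $V$ of $W$ satisfies $\mathscr{R}_J(V)=\res_J^{\mu}(V)\simeq W$.
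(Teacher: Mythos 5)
Your proposal is correct and follows essentially the same route as the paper: part (ii) invokes exactly Lemma \ref{lem:FinitePPrefond}, Lemma \ref{lem:multPsiboundedPrefondAB}, Theorem \ref{thm:ExistenceTech} and Theorem \ref{thm:InflIfPrefond}, as the paper does, while part (i) spells out what the paper's terse "follows from Theorem \ref{thm:ExistenceTech}" means (namely, that the highest $\ell$-weight $\Psi$ of a finite-dimensional simple object of $\OO^{sh}_J$ still lies in the monomial set of Theorem \ref{thm:ClassSimpDimFin} when viewed in $\mathfrak{r}$, so the auxiliary module $V'=L(\Psi)$ is finite-dimensional and the hypotheses of Theorem \ref{thm:ExistenceTech} are automatic).
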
\vspace*{-2mm}
\begin{proof}
The result follows from Theorem \ref{thm:ExistenceTech} if $\dim W<\infty$. Moreover, for $\g$ of type A--B, the prefundamental representation $L(\Psi_{j,1}^{-1})$ of $\OO^{sh}_J$ admits a $J$-inflation for all $j\in J$ by Lemma \ref{lem:FinitePPrefond}, Lemma \ref{lem:multPsiboundedPrefondAB} and Theorem \ref{thm:ExistenceTech}. Thus, Theorem \ref{thm:InflIfPrefond} ends the proof in this case.
\end{proof}
\begin{rem} Theorem \ref{cor:ABFin} is the principal result of the present paper.\,Observe that it~implies that --- under condition (i) or condition (ii) --- every fusion product of simple objects of $\OO^{sh}_J$ admits a $J$-inflation to $\g$ (see Corollary \ref{cor:FusInfl}). Similarly, Proposition \ref{prop:InfHighest} (and Theorem~\ref{thm:InfHighestDual}) implies that Theorem \ref{cor:ABFin} can be extended to the Jacobson radical (and to any submodule~of highest $\ell$-weight) of fusion products of simple objects of $\OO^{sh}_J$. 
\end{rem}
We note that the only obstacle preventing us from generalizing part (ii) of Theorem \ref{cor:ABFin} to $\g$ of arbitrary type is the absence of a proven bound for the dimensions of particular $\ell$-weight spaces for %negative 
prefundamental representations in type\footnote{Note that the existence of inflations for simple objects of $\OO^{sh}_J$ follows from Corollary \ref{cor:ExistenceSL} if $\g$ has type~$G_2$.} C--D--E--F; that is, of a proof that 
$$ \max_{M\in\mathcal{A}_J}(\dim V_{\Psi_{j,1}^{-1} MA_{i,b}^{-1}}+\dim V_{\Psi_{j,1}^{-1}M})<\infty$$
whenever $i\not\in J$, $b\in \C^{\times}$ and $V = L(\Psi_{j,1}^{-1})$ (for some $j\in J$). \par 
Computations in type $D_4$ give us confidence that a finite bound as above exists in all cases (even if it will not always be equal to $1$ or $2$ as when Lemma \ref{lem:multPsiboundedPrefondAB} applies). We formulate this in a proper conjecture. We wish to return to the proof of this conjecture in a near future.
\begin{conj}\label{conj:multPsiboundedPrefond} Fix $j\in J$ and let $V = L(\Psi_{j,1}^{-1})$. Then, for all $i\not\in J$ and all $b\in \C^{\times}$,
$$ \max_{M\in\mathcal{A}_J}(\dim V_{\Psi_{j,1}^{-1} MA_{i,b}^{-1}}+\dim V_{\Psi_{j,1}^{-1}M})<\infty.$$
\end{conj}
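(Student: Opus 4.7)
The plan is to reduce the conjecture, via Proposition \ref{prop:PrefondKR}, to a uniform bound on $\ell$-weight multiplicities of the Kirillov--Reshetikhin modules $V_k = W_{k,q_j^{1-2k}}^{(j)}$. Writing $m_k = Y_{j,q_j^{1-2k}}\cdots Y_{j,q_j^{-1}}$ for their highest $\ell$-weights, the cited proposition gives monotone convergence
$$\dim(V_k)_{m_k\Psi} \leq \dim(V_{k+1})_{m_{k+1}\Psi} \leq \dim V_{\Psi_{j,1}^{-1}\Psi}$$
with equality in the direct limit. Thus it suffices to produce, for each pair $(i,b)$ with $i\not\in J$ and $b\in \C^{\times}$, a constant $C(i,b)$ independent of $k$ and $M$ such that $\dim(V_k)_{m_k M} + \dim(V_k)_{m_k M A_{i,b}^{-1}} \leq C(i,b)$ for every $M \in \mathcal{A}_J$.

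First I would exploit the structural results from the proof of Lemma \ref{lem:FinitePPrefond}: the $\ell$-weight $m_k M A_{i,b}^{-\varepsilon}$ is right-negative and $(I\setminus J)$-dominant when $\varepsilon=0$, and for $\varepsilon=1$ the parameter $b$ is forced into the finite set $\mathcal{P} = \{q^{\ell} : -d_i < \ell \leq r^{\vee}h^{\vee}+d_i - d_j\}$, which depends neither on $k$ nor on $M$. This reduces the problem to bounding multiplicities along finitely many ``transversal'' directions $(i,b)$. Next I would feed these constraints into Lemma \ref{lem:FJMM} and iterate it along a well-chosen sequence of $x_{i'}^-(z)$-actions with $i'\not\in J$: using the fact that $\chi_q(V_k)$ is in principle determined by its dominant monomials via a Frenkel--Mukhin-style propagation, one may hope to turn the polynomial-degree bound in Lemma \ref{lem:FJMM} into a genuine upper bound on $\dim(V_k)_{m_kM}$ and $\dim(V_k)_{m_kMA_{i,b}^{-1}}$, inductively on the ``depth'' of $M$ in $\mathcal{A}_J$ and uniformly in $k$.

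The hard part, and the reason this remains a conjecture, is to control what happens as $M$ ranges over the infinite set $\mathcal{A}_J$: in type A--B one has the drastic simplification $\dim \leq 1$ from Lemma \ref{lem:multPsiboundedPrefondAB}, whereas outside type A--B the multiplicities can \emph{a priori} grow with the depth of $M$, and it is precisely this potential growth along $J$-directions that the Frenkel--Mukhin algorithm does not automatically tame. A promising alternative route, suggested by the author's type $D_4$ computations, is to describe the spaces $V_{\Psi_{j,1}^{-1}M}$ (for $M \in \mathcal{A}_J$) geometrically via Nakajima quiver varieties and to show that the restriction $M \in \mathcal{A}_J$ cuts out a closed subvariety whose cohomological dimension is bounded uniformly in $M$. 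Verifying such a uniform boundedness statement in each remaining type (C--D--E--F--G), presumably case-by-case and in the spirit of the existing proof of Lemma \ref{lem:multPsiboundedPrefondAB}, appears to be the crux of any complete proof.
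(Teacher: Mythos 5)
This statement is a \emph{conjecture} in the paper, not a theorem: the author explicitly leaves it unproven and says they ``wish to return to the proof of this conjecture in a near future.'' So there is no proof in the paper for you to compare against, and you were right not to claim one. Your sketch correctly identifies the reduction the paper itself uses in type A--B: pass to Kirillov--Reshetikhin modules via Proposition \ref{prop:PrefondKR}, exploit the structural facts (right-negativity, the finite set $\mathcal{P}$) established in the proof of Lemma \ref{lem:FinitePPrefond}, and in type A--B conclude by the dimension bound $\dim V_\Psi\le 1$ from Lemma \ref{lem:multPsiboundedPrefondAB}. Outside type A--B, the missing ingredient is exactly the one you name: a uniform bound on the multiplicities $\dim V_{\Psi_{j,1}^{-1}M}$ as $M$ ranges over $\mathcal{A}_J$, which the Frenkel--Mukhin-style propagation plus Lemma \ref{lem:FJMM} does not automatically supply. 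Your suggestion of a geometric (quiver-variety) route is a reasonable direction, but it is speculative and not pursued in the paper. In short: your write-up is an accurate assessment of the state of the problem and of the obstruction, not a proof, and that is the correct status.
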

We end this section with the following result which is proven like Theorem \ref{cor:ABFin}.
\begin{cor}\label{thm:ExistenceIfConj} Suppose that Conjecture \ref{conj:multPsiboundedPrefond} holds for the Lie algebra $\g$. Then every~simple object of $\OO^{sh}_J$ admits a $J$-inflation to $\g$.
\end{cor}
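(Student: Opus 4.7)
The plan is to mimic exactly the strategy used for part (ii) of Theorem \ref{cor:ABFin}, simply replacing the role of Lemma \ref{lem:multPsiboundedPrefondAB} (which is only known in type A--B) by the conjectural bound itself. By Theorem \ref{thm:InflIfPrefond}, it suffices to exhibit, for every $j \in J$, a $J$-inflation to $\g$ of the negative prefundamental representation $L^J(\Psi_{j,1}^{-1})$ of $\OO^{sh}_J$. Once this is accomplished, Theorem \ref{thm:InflIfPrefond} automatically propagates the existence of $J$-inflations to every simple object of $\OO^{sh}_J$ via fusion products, invertible representations, positive prefundamental representations (Example \ref{ex:InvertiblePositivePrefond}), the spectral shift automorphisms $\tau_a$ (Lemma \ref{lem:InflShift}), and the radical-compatibility of inflations (Proposition \ref{prop:InfHighest}).

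To produce the $J$-inflation of $L^J(\Psi_{j,1}^{-1})$, the plan is to invoke Theorem \ref{thm:ExistenceTech} with $W = L^J(\Psi_{j,1}^{-1})$, so that the associated simple $\uqmu{-\omega_j^{\vee}}{\g}$-module $V'$ of highest $\ell$-weight $\Psi_{j,1}^{-1}$ is precisely the negative prefundamental module $L(\Psi_{j,1}^{-1})$ of $\OO^{sh}$. Theorem \ref{thm:ExistenceTech} has two hypotheses to check:
\begin{itemize}
\item finiteness of $\mathcal{P}_{i,J,V'}$ for every $i \notin J$;
\item the uniform bound $\max_{M\in\mathcal{A}_J}(\dim V'_{\Psi_{j,1}^{-1} M A_{i,b}^{-1}}+\dim V'_{\Psi_{j,1}^{-1}M})<\infty$ for every $i \notin J$ and every $b \in \mathcal{P}_{i,J,V'}$.
\end{itemize}
The first hypothesis is exactly the content of Lemma \ref{lem:FinitePPrefond}, which holds in arbitrary type. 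The second hypothesis is \emph{precisely} the statement of Conjecture \ref{conj:multPsiboundedPrefond}, which is assumed to hold for $\g$ by hypothesis of the corollary.

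Thus Theorem \ref{thm:ExistenceTech} applies and yields, for each $j \in J$, a $J$-inflation of $L^J(\Psi_{j,1}^{-1})$ to $\g$. Combining this with Theorem \ref{thm:InflIfPrefond} gives the desired conclusion: every simple object of $\OO^{sh}_J$ admits a $J$-inflation to $\g$. There is no real obstacle here beyond the conjecture itself — the proof is a direct bookkeeping assembly of Lemma \ref{lem:FinitePPrefond}, Conjecture \ref{conj:multPsiboundedPrefond}, Theorem \ref{thm:ExistenceTech} and Theorem \ref{thm:InflIfPrefond}, and the nontrivial technical work is entirely absorbed into the $q$-character analysis already carried out in Section \ref{sec:InflPrefund}.
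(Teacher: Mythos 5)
Your proof is correct and matches the paper's intended argument exactly: the paper proves this corollary ``like Theorem \ref{cor:ABFin}'', and you have accurately unpacked that to mean substituting Conjecture \ref{conj:multPsiboundedPrefond} for Lemma \ref{lem:multPsiboundedPrefondAB} in the verification of the second hypothesis of Theorem \ref{thm:ExistenceTech}, then invoking Lemma \ref{lem:FinitePPrefond} for the first hypothesis and Theorem \ref{thm:InflIfPrefond} to conclude.
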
\vfill
\section{Concluding remarks}\label{sec:Concl} In this section, we describe further possible research avenues and potential applications~for the theory of inflations. We start by studying in what sense an inflation of a given object~$W$~of $\OO^{sh}_J$ can be unique and state a general conjecture about the relation between all inflations~of~$W$ when $W$ is irreducible. We also ask some questions about a possible alternative construction~of inflations (suggested by Joel Kamnitzer) --- which comes from the (partly~conjectural)~relation between Coulomb branches and \textit{truncated shifted quantum affine algebras} --- and give results linking inflations to ``\textit{compatible cluster structures}" over Grothendieck rings and \textit{$R$-matrices}.\par For all this section, we fix $J\subseteq I$ and assume that Conjecture \ref{conj:multPsiboundedPrefond} holds for our choice~of~Lie algebra $\g$. (One can assume $\g$ of type A--B in order to not rely on unproven results.)
\vspace*{-1.5mm}
\subsection{Unicity and isomorphisms between truncations}\label{sec:Unicity} By Corollary \ref{cor:InfTrivial}, the trivial representation $L^J(\mathbbm{1})$ admits, for any dominant coweight $\mu\in\Lambda^{\vee}_+$, infinitely many $J$-inflations~to $\g$ with coweight $\mu$ (up to isomorphism of $\uqmu{\mu}{\g}$-modules).\newpage  Moreover, if $W$ is a highest $\ell$-weight module in $\OO^{sh}_J$ with
\begin{itemize}
\setlength{\itemsep}{1.5pt}
\item[(1)] $X$ a $J$-inflation of $L^J(\mathbbm{1})$ to $\g$ with coweight $\mu_1\in \Lambda^{\vee}_+$ and 
\item[(2)] $V$ a $J$-inflation of $W$ to $\g$ with coweight $\mu_2\in \Lambda$,
\end{itemize}
then $V\star X$ is also a $J$-inflation of $W	\simeq W\star L^J(\mathbbm{1})$ (with coweight $\mu_1+\mu_2$) by Corollary~\ref{cor:FusInfl}. Inflations of highest $\ell$-weight objects are hence never uniquely determined. They are however ordered with respect to two distinct strict partial orders, as explained below.
\begin{defn} Fix a highest $\ell$-weight module $W$ in $\OO^{sh}_J$. Fix also $J$-inflations $V_1$ and $V_2$ of $W$ to $\g$ with coweight $\mu_1$ and $\mu_2$ (resp.). Then, we say that
\begin{itemize}
\setlength{\itemsep}{1.5pt}
\item[(i)] $V_1$ \textit{weakly precede} $V_2$ if $0\neq \mu_2-\mu_1 = \sum_{i\not\in J}a_i\omega_i^{\vee}$ for some $\{a_i\}_{i\not\in J}\subseteq \mathbb{N}$; and that 
\item[(ii)] $V_1$ \textit{strongly precede} $V_2$ if the highest $\ell$-weights $\Psi_1,\Psi_2$ of $V_1,V_2$ are such that $L(\Psi_1^{-1}\Psi_2)$ is a $J$-inflation of the trivial representation $L^J(\mathbbm{1})$ of $\OO^{sh}_J$, but $L(\Psi_2^{-1}\Psi_1)$ is not.
\end{itemize}
We write $V_1\prec_{\text{weak}}V_2$ (resp.~$V_1\prec_{\text{str}} V_2$) if $V_1$ weakly (resp.~strongly) precede $V_2$ and call $V_1$ \textit{weakly minimal} (resp.~\textit{strongly minimal}) if it is minimal for $\prec_{\text{weak}}$ (resp.~$\prec_{\text{str}}$). %More\,precisely, $V_1$ is weakly (resp.~strongly) minimal in the inflations of $W$ if there exist no other such inflation $V_2$ with $V_2\prec_{\text{weak}}V_1$ (resp.~$V_2\prec_{\text{str}}V_1$).
\end{defn}
Note that the strict partial orders $\prec_{weak}$ and $\prec_{\text{str}}$ can only give rise to finite chains between given inflations. Hence, Corollary \ref{thm:ExistenceIfConj} (and Conjecture \ref{conj:multPsiboundedPrefond} that we suppose here true)~shows that all simple modules of $\OO^{sh}_J$ admit both weakly and strongly minimal inflations. In addition, since $V_1\prec_{\text{str}}V_2$ implies $V_1\prec_{\text{weak}}V_2$, all weakly minimal inflations are also strongly minimal. 
\begin{lem}\label{lem:StrongPrecedeImpliesJinfTriv} Take a simple module $W$ of $\OO^{sh}_J$ with $V_1,V_2$ some $J$-inflations of $W$ to $\g$.~Then $V_1 \prec_{\normalfont\text{str}} V_2$ if and only if $V_1\prec_{\text{weak}} V_2$ and $V_2\simeq V_1\star X$ in $\OO^{sh}$ for some $J$-inflation $X$ of $L^J(\mathbbm{1})$.
\end{lem}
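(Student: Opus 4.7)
The plan is to prove the biconditional by exploiting Proposition \ref{prop:IfVInflqCar} (which controls the shape of the highest $\ell$-weight of any inflation) and Corollary \ref{cor:InfTrivial} (which characterizes inflations of $L^J(\mathbbm{1})$). The observation that drives both directions is that, since $V_1$ and $V_2$ are both $J$-inflations of the same simple module $W$, Proposition \ref{prop:IfVInflqCar} guarantees that their highest $\ell$-weights $\Psi_1,\Psi_2$ admit decompositions $\Psi_i = \Psi^J \Psi_{p,i}$ with $\Psi_{p,i}$ a product of terms $[a\omega_k]$ and $\Psi_{k,a}$ for $k \notin J$ and $a \in \C^{\times}$; and that $\overline{\chi_q}(V_1) = \overline{\chi_q}(V_2) = \iota_J(\overline{\chi_q}(W))$ by the simplicity of $W$. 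In particular, the quotient $\Psi_1^{-1}\Psi_2 = \Psi_{p,1}^{-1}\Psi_{p,2}$ is a monomial exclusively in variables indexed by $k \notin J$, and one obtains the useful $q$-character identity $\chi_q(V_2) = \chi_q(V_1)\,[\Psi_1^{-1}\Psi_2]$.

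For the forward direction, assuming $V_1 \prec_{\text{str}} V_2$, I would invoke Corollary \ref{cor:InfTrivial} to see that $\Psi_1^{-1}\Psi_2$ is a product of $[a\omega_k]$ and $\Psi_{k,a}$ with only \emph{positive} powers of the positive prefundamental factors. At least one $\Psi_{k,a}$ must actually appear, since otherwise $\Psi_1^{-1}\Psi_2$ would live in the invertible subgroup generated by the $[a\omega_k]$'s and $\Psi_2^{-1}\Psi_1$ would also be of the admissible form, making $L(\Psi_2^{-1}\Psi_1)$ a $J$-inflation of $L^J(\mathbbm{1})$ and contradicting strong precedence. Reading off coweights, this automatically gives $\mu_2 - \mu_1 = \sum_{k \notin J} a_k \omega_k^{\vee}$ with $a_k \in \mathbb{N}$ not all zero, i.e.~$V_1 \prec_{\text{weak}} V_2$. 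Setting $X = L(\Psi_1^{-1}\Psi_2)$, which is a $J$-inflation of $L^J(\mathbbm{1})$ by construction, the identity $\chi_q(X) = [\Psi_1^{-1}\Psi_2]$ combined with Theorem \ref{thm:HighestWeightFus} yields $\chi_q(V_1 \star X) = \chi_q(V_1)\chi_q(X) = \chi_q(V_2)$. Since $V_1 \star X$ is a highest $\ell$-weight module of highest $\ell$-weight $\Psi_2$, its head is $L(\Psi_2) \simeq V_2$; the kernel of the canonical surjection $V_1 \star X \twoheadrightarrow V_2$ lies in $\OO^{\mu_2}$, has zero $q$-character, and therefore vanishes, giving $V_1 \star X \simeq V_2$.

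For the reverse direction, assuming $V_1 \prec_{\text{weak}} V_2$ and $V_2 \simeq V_1 \star X$ with $X$ a $J$-inflation of $L^J(\mathbbm{1})$, Theorem \ref{thm:HighestWeightFus} forces the highest $\ell$-weight of $V_1 \star X$ to equal the product of those of $V_1$ and $X$, whence $X \simeq L(\Psi_1^{-1}\Psi_2)$; this already establishes the first half of strong precedence. To rule out that $L(\Psi_2^{-1}\Psi_1)$ is also a $J$-inflation of $L^J(\mathbbm{1})$, I would invoke Corollary \ref{cor:InfTrivial}: such an inflation would force the coweight of $\Psi_2^{-1}\Psi_1$, namely $\mu_1 - \mu_2$, to be a non-negative combination of $\omega_k^{\vee}$ with $k \notin J$, which contradicts the weak precedence datum $\mu_2 - \mu_1 = \sum_{k \notin J} a_k \omega_k^{\vee}$ with $a_k \in \mathbb{N}$ not all zero.

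The main technical point requiring care is the passage from $\chi_q(V_1 \star X) = \chi_q(V_2)$ to the actual isomorphism $V_1 \star X \simeq V_2$ in the forward direction. This relies on $V_1 \star X$ belonging to $\OO^{\mu_2}$ so that the kernel of the surjection onto its head inherits an $\ell$-weight space decomposition, and on the fact that $V_2$ arises as this head (not merely as a subquotient). Everything else amounts to direct bookkeeping on the monomials $\Psi_1^{-1}\Psi_2$ and $\Psi_2^{-1}\Psi_1$ through Proposition \ref{prop:IfVInflqCar} and Corollary \ref{cor:InfTrivial}.
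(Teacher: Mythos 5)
Your proof is correct and follows the same skeleton as the paper's: take $X=L(\Psi_1^{-1}\Psi_2)$, show $V_1\star X\simeq V_2$, extract the coweight statement from Corollary \ref{cor:InfTrivial}, and reverse the argument for necessity. The one place you genuinely diverge is in establishing $V_1\star X\simeq V_2$ in the forward direction. The paper notes that $V_1\star X$ is a $J$-inflation of $W\star L^J(\mathbbm{1})\simeq W$ by Corollary \ref{cor:FusInfl}, hence simple by Lemma \ref{lem:InflSimpleSimple}, and therefore isomorphic to $V_2$ by Theorem \ref{thm:ClassSimp} since both are simple of highest $\ell$-weight $\Psi_2$. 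You instead derive $\chi_q(V_2)=\chi_q(V_1)[\Psi_1^{-1}\Psi_2]$ from Proposition \ref{prop:IfVInflqCar}, conclude $\chi_q(V_1\star X)=\chi_q(V_2)$, and then argue that the kernel of the surjection of $V_1\star X$ onto its head $L(\Psi_2)\simeq V_2$ has zero $q$-character and hence vanishes. Both work, and you are careful about the two points that could go wrong (that the kernel lives in $\OO^{\mu_2}$ so vanishing $q$-character implies vanishing module, and that $V_2$ really is $L(\Psi_2)$, which silently still uses Lemma \ref{lem:InflSimpleSimple}). The paper's route is more economical since it bypasses the $q$-character computation and gets simplicity of $V_1\star X$ for free from Corollary \ref{cor:FusInfl}, but your detour does have the side benefit of making the underlying $q$-character identity explicit.
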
\vspace*{-2mm}
\begin{proof}
Suppose that $V_1\prec_{\normalfont\text{str}} V_2$ and take $X= L(\Psi_1^{-1}\Psi_2)$ for $\Psi_1$ and $\Psi_2$ the highest $\ell$-weights of $V_1$ and $V_2$. Then, $V=V_1\star X$ is of highest $\ell$-weight $\Psi_2$ by Theorem \ref{thm:HighestWeightFus} and is a $J$-inflation of $W$ by hypothesis and Corollary \ref{cor:FusInfl}. In particular, $V$ is simple by Lemma \ref{lem:InflSimpleSimple} and satisfies $V\simeq V_2$ by Theorem \ref{thm:ClassSimp}. This shows sufficiency (since $V_1\prec_{\text{weak}} V_2$ trivially).~Necessity~follows in turn from Theorem \ref{thm:HighestWeightFus} and Corollary \ref{cor:InfTrivial}.
\end{proof}
\begin{prop}\label{prop:CharInflwithMin} Take irreducible modules $W$ in $\OO^{sh}_J$ and $V$ in $\OO^{sh}$. Then, $V$ is a $J$-inflation of $W$ if and only if $V\simeq V_{\normalfont\text{min}}\star X$ where 
\begin{itemize}
\setlength{\itemsep}{1.5pt}
\item[(1)] $V_{\normalfont{\text{min}}}$ is a strongly minimal $J$-inflation of $W$ and
\item[(2)] $X$ is a $J$-inflation of $L^J(\mathbbm{1})$.
\end{itemize}
\end{prop}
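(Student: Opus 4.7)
The plan is to prove both implications using Lemma \ref{lem:StrongPrecedeImpliesJinfTriv} together with the termination of $\prec_{\text{str}}$-descending chains of $J$-inflations of $W$. For sufficiency, assume $V \simeq V_{\min}\star X$; Corollary \ref{cor:FusInfl} then says $V$ is a $J$-inflation of $W\star L^J(\mathbbm{1})$, while Theorem \ref{thm:HighestWeightFus} shows that $W\star L^J(\mathbbm{1})$ is highest $\ell$-weight with highest $\ell$-weight $\Psi_W$ and $q$-character $\chi_q(W)$; simplicity of $W$ then forces $W\star L^J(\mathbbm{1})\simeq W$, so that $V$ is a $J$-inflation of $W$.

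For necessity, given a $J$-inflation $V$ of $W$, I would build a maximal strictly descending chain $V = V_0\succ_{\text{str}}V_1\succ_{\text{str}}V_2\succ_{\text{str}}\cdots$ of $J$-inflations of $W$. Since $\prec_{\text{str}}\subseteq\prec_{\text{weak}}$, the coweight $\mu_k$ of $V_k$ strictly decreases in the partial order used to define $\prec_{\text{weak}}$; and, by Proposition \ref{prop:IfVInflqCar}(i) together with Corollary \ref{cor:InfTrivial}, the coweight of any $J$-inflation of $W$ must lie in $\nu + \sum_{i\notin J}\mathbb{N}\,\omega_i^{\vee}$, which provides a lower bound. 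The chain therefore terminates at some $V_N$, and maximality makes $V_{\min} := V_N$ strongly minimal. If $N = 0$, I would take $X = L(\mathbbm{1})$ (a $J$-inflation of $L^J(\mathbbm{1})$ by Example \ref{ex:InvertiblePositivePrefond}); the sufficiency computation then yields $V\simeq V_{\min}\star X$. If $N > 0$, I would invoke transitivity of $\prec_{\text{str}}$ to obtain $V_{\min}\prec_{\text{str}}V$ and then apply Lemma \ref{lem:StrongPrecedeImpliesJinfTriv} directly to produce a single $J$-inflation $X$ of $L^J(\mathbbm{1})$ with $V\simeq V_{\min}\star X$.

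The main technical point — and the only one not immediate — is transitivity of $\prec_{\text{str}}$. By Corollary \ref{cor:InfTrivial}, $L(\Psi)$ is a $J$-inflation of $L^J(\mathbbm{1})$ exactly when $\Psi$ lies in the submonoid $\mathcal{M}\subseteq\mathfrak{r}$ generated by the $\Psi_{i,a}$ and the $[a\omega_i]$ with $i\notin J$ and $a\in\C^{\times}$. Given $V_1\prec_{\text{str}}V_2\prec_{\text{str}}V_3$, closure of $\mathcal{M}$ under products directly yields $\Psi_1^{-1}\Psi_3\in\mathcal{M}$. Suppose toward a contradiction that $\Psi_3^{-1}\Psi_1\in\mathcal{M}$ as well; then $\Psi_1^{-1}\Psi_3$ is invertible in $\mathcal{M}$. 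Each $\Psi_{i,a}$ has positive degree in $z$, so the invertibles of $\mathcal{M}$ are precisely the constant tuples generated by the $[a\omega_i]$ (whose inverses indeed lie in $\mathcal{M}$); thus $\Psi_3^{-1}\Psi_2 = (\Psi_1^{-1}\Psi_3)^{-1}(\Psi_1^{-1}\Psi_2)$ would lie in $\mathcal{M}$, contradicting $V_2\prec_{\text{str}}V_3$. Granted transitivity, the remainder of the proof is a routine assembly of the results recalled above.
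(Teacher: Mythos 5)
Your proof is correct and follows the same structure as the paper's (reduce to the strongly minimal case and invoke Lemma \ref{lem:StrongPrecedeImpliesJinfTriv}), but you supply two details the paper takes for granted. First, you make explicit why every inflation sits above a strongly minimal one by constructing a $\prec_{\text{str}}$-descending chain and arguing termination via Proposition \ref{prop:IfVInflwithMin}(i) --- correct, though Proposition \ref{prop:IfVInflqCar}(i) alone already shows the coweight of any $J$-inflation lies in $\nu + \sum_{i\notin J}\mathbb{N}\,\omega_i^{\vee}$, so Corollary \ref{cor:InfTrivial} is not strictly needed there. Second, and more substantively, you give an actual proof that $\prec_{\text{str}}$ is transitive, which the paper asserts implicitly by calling it a ``strict partial order''. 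Your argument is sound: by Corollary \ref{cor:InfTrivial} the $\ell$-weights arising from inflations of $L^J(\mathbbm{1})$ form a submonoid $\mathcal{M}$ of $\mathfrak{r}$, and since each $\Psi_{i,a}$ introduces a genuine polynomial factor, the only invertibles in $\mathcal{M}$ are the $J$-trivial constants, so $\Psi_1^{-1}\Psi_3$ being invertible in $\mathcal{M}$ would force $\Psi_3^{-1}\Psi_2\in\mathcal{M}$ and contradict $V_2\prec_{\text{str}}V_3$. The sufficiency direction is handled correctly via Corollary \ref{cor:FusInfl} and the observation $W\star L^J(\mathbbm{1})\simeq W$ (which deserves the brief $q$-character justification you give, since the fusion product is not a priori the identity on simples). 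Net: same route as the paper with a useful filling-in of the order-theoretic gaps.
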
\vspace*{-2.75mm}
\begin{proof}
Suppose that $V$ is a $J$-inflation of $W$. Then either $V$ is strongly minimal (and the~result follows from $V\simeq V_{\text{min}}\star X$ with $X=L(\mathbbm{1})$ and $V_{\text{min}}=V$) or there exists some strongly minimal $J$-inflation $V_{\text{min}}$ of $W$ verifying $V_{\text{min}}\prec_{\text{str}}V$. In this last case, $V\simeq V_{\text{min}}\star X$ for some~$J$-inflation $X$ of $L^J(\mathbbm{1})$ by Lemma \ref{lem:StrongPrecedeImpliesJinfTriv}. Necessity~follows directly from Corollary \ref{cor:FusInfl}.
\end{proof}\vspace*{-1.15mm}
We have the following conjecture and two easy corollaries.
\begin{conj}\label{conj:Unicity} Let $W$ be an irreducible object of $\OO_J^{sh}$ and let $V_1$ and $V_2$ be strongly minimal inflations of $W$. Then, there is a $J$-trivial $\gamma\in \mathfrak{t}^{\times}\subseteq\mathfrak{r}$ such that $V_1\simeq V_2\star L(\gamma)$. Hence, strongly minimal inflations are unique up to fusion product with $J$-trivial invertible representations.
\end{conj}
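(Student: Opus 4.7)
The plan is to reformulate the conjecture entirely in terms of highest $\ell$-weights and then exploit the poset structure of $\prec_{\text{str}}$ through a comparison argument. By Theorem \ref{thm:EquivalentDefInf}(2) together with Proposition \ref{prop:IfVInflqCar}(i), each strongly minimal $J$-inflation $V_k$ is simple with highest $\ell$-weight of the form $\Psi_k = \Psi^J \Psi_p^{(k)}$, where $\Psi^J\in\mathfrak{r}^J$ is the highest $\ell$-weight of $W$ and $\Psi_p^{(k)}$ is a $J$-trivial product of factors $\Psi_{i,a}$ (for $i\not\in J$, $a\in\C^{\times}$) and constants $[a\omega_i]$. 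I would write
$$\Psi_p^{(k)} = [\omega^{(k)}]\prod_{i\not\in J}\prod_{a\in\C^{\times}}\Psi_{i,a}^{n_{i,a}^{(k)}}$$
for some $\omega^{(k)}\in\Lambda$ and $n_{i,a}^{(k)}\in\mathbb{N}$ (nonzero for finitely many pairs $(i,a)$). The conjecture is then equivalent to the assertion that $n_{i,a}^{(1)} = n_{i,a}^{(2)}$ for every $(i,a)$: in that case $\gamma = [\omega^{(1)} - \omega^{(2)}]$ is $J$-trivial in $\mathfrak{t}^{\times}$, and Theorem \ref{thm:HighestWeightFus} combined with Theorem \ref{thm:ClassSimp} yields $V_1 \simeq V_2 \star L(\gamma)$.

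To prove equality of multiplicities, the strategy I propose is to show that every ``$\Psi_{i,a}$-content" achievable by a $J$-inflation of $W$ is bounded below by an intrinsic quantity $n_{i,a}^{\min}(W)$ depending only on $(W, i, a, J, \g)$, and that every strongly minimal inflation simultaneously saturates all these lower bounds. Concretely, define
$$n_{i,a}^{\min}(W) = \min\{n_{i,a}(V) : V \text{ is a } J\text{-inflation of } W\text{ to } \g\},$$
where $n_{i,a}(V)$ denotes the multiplicity of $\Psi_{i,a}$ in the highest $\ell$-weight of $V$. Existence of this minimum follows from Corollary \ref{thm:ExistenceIfConj} (under the running assumption that Conjecture \ref{conj:multPsiboundedPrefond} holds) combined with an analysis, analogous to the one underlying the proof of Theorem \ref{thm:ExistenceTech}, of exactly which multiplicities suffice to annihilate the action of $x_i^{\pm}(z)$ for $i\not\in J$. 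The task then breaks into two parts: (a) showing that every strongly minimal inflation saturates each $n_{i,a}^{\min}(W)$ simultaneously; and (b) showing that the free parameters remaining after saturation consist solely of $[a\omega_i]$-type constants, so that $\Psi_p^{(1)}/\Psi_p^{(2)}$ lies in $\mathfrak{t}^{\times}$.

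For (a), suppose toward a contradiction that a strongly minimal inflation $V$ satisfies $n_{i_0,a_0}(V) > n_{i_0,a_0}^{\min}(W)$ for some $(i_0,a_0)$, and pick an inflation $V^{\min}$ achieving this minimum. The goal is then to manufacture from $V$ and $V^{\min}$ a new inflation $V''$ of $W$ whose highest $\ell$-weight $\Psi_{V''}$ divides $\Psi_V$ in the sense that $\Psi_V/\Psi_{V''}$ is a nontrivial product of $\Psi_{i,a}$'s and $[a\omega_i]$'s, which by definition would give $V''\prec_{\text{str}}V$ and contradict the strong minimality of $V$. A natural candidate is the head of the fusion product $V^{\min}\star X$ for an $X$ chosen (via Corollary \ref{cor:InfTrivial}) so that, after applying Corollary \ref{cor:FusInfl} and Proposition \ref{prop:InfHighest}, the resulting highest $\ell$-weight agrees with $\Psi_V$ at every place except $(i_0, a_0)$. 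Part (b) should follow from the already established fact (Theorem \ref{thm:EquivalentDefInf}(2)) that the normalized $q$-character of any inflation of $W$ is determined by $W$ itself, leaving only the ``constant shift" $[\omega^{(k)}]$ as a degree of freedom, whose $J$-triviality is automatic since $\Psi_p^{(k)}$ is $J$-trivial.

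The main obstacle will be the construction in part (a): namely, showing that a strictly smaller $J$-inflation $V''$ of $W$ can indeed be produced whenever $V$ fails to saturate $n_{i_0,a_0}^{\min}(W)$. Removing a single $\Psi_{i_0,a_0}$ factor from $\Psi_V$ need not yield the highest $\ell$-weight of any inflation; what is truly available is only the comparison with $V^{\min}$, which may differ from $V$ at many places at once. A possible resolution is a ``local exchange" lemma asserting that any two inflations of $W$ differing in a single coordinate $\Psi_{i_0,a_0}$-multiplicity can be connected by a chain of inflations changing one factor at a time---this would reduce the problem to the one-factor case and likely require a refined analysis of the $\ell$-weight spaces constructed in the proof of Theorem \ref{thm:ExistenceTech}. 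An alternative, more structural strategy is to invoke the conjectural realization of inflations via truncated shifted quantum affine algebras and $K$-theoretic Coulomb branches alluded to at the start of Section \ref{sec:Unicity}: if such truncations provide a canonical minimal model for an inflation of $W$, the uniqueness up to $J$-trivial central twist would follow by comparing any strongly minimal inflation to this canonical model, bypassing the combinatorial exchange argument entirely.
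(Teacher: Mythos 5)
The statement you are attempting to prove is Conjecture \ref{conj:Unicity}, which the paper explicitly leaves open --- there is no proof of it in the paper to compare against. Assessing your proposal on its own merits: the reduction of the conjecture to equality of the $\Psi_{i,a}$-multiplicities $n_{i,a}^{(1)}=n_{i,a}^{(2)}$ is correct. Proposition \ref{prop:IfVInflqCar}(i) forces each $\Psi_p^{(k)}$ to be a $J$-trivial product of $\Psi_{i,a}$'s and $[a\omega_i]$'s with $i\not\in J$, and once all $\Psi_{i,a}$-multiplicities agree, the quotient $\Psi_1\Psi_2^{-1}=[\omega^{(1)}-\omega^{(2)}]$ is a $J$-trivial element of $\mathfrak{t}^{\times}$; then Corollary \ref{cor:FusInfl}, Lemma \ref{lem:InflSimpleSimple} and Theorem \ref{thm:ClassSimp} give $V_1\simeq V_2\star L(\gamma)$. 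Your part (b) is also fine.

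The genuine gap is part (a), and you acknowledge it yourself: you need every strongly minimal inflation to simultaneously saturate all the quantities $n_{i,a}^{\min}(W)$, but nothing in the paper's toolkit establishes this, and your two proposed resolutions do not close it. Strong minimality is minimality for the partial order $\prec_{\text{str}}$, and a priori two strongly minimal inflations may be incomparable --- say $V$ has smaller $n_{i_0,a_0}$ and $V'$ smaller $n_{i_1,a_1}$ --- which is precisely what the conjecture rules out and what must be proven. Your candidate construction (taking the head of $V^{\min}\star X$) produces, via Corollary \ref{cor:FusInfl} and Proposition \ref{prop:InfHighest}, an inflation of $W\star L^J(\mathbbm{1})\simeq W$, but its highest $\ell$-weight is $\Psi_{V^{\min}}\Psi_X$ by Theorem \ref{thm:HighestWeightFus}, and there is no reason this should divide $\Psi_V$ in the sense required for $\prec_{\text{str}}$: the place where $V$ exceeds the minimum and the places where $V^{\min}$ differs from $V$ are uncontrolled. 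The ``local exchange lemma'' you posit (connecting two inflations by a chain changing one $\Psi_{i,a}$-factor at a time) is an unestablished structural claim strictly stronger than anything Theorem \ref{thm:ExistenceTech} provides, since that theorem constructs one particular inflation from $q$-character data but says nothing about comparing distinct inflations. The alternative route through truncated shifted quantum affine algebras and Coulomb branches is, as you yourself note, conjectural and currently undeveloped in this setting. So your proposal correctly identifies the target but does not supply the missing idea that would actually prove the conjecture.
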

\begin{cor} Suppose that Conjecture \ref{conj:Unicity} holds and fix $W$ a simple module of $\OO^{sh}_J$ with $V$ some $J$-inflation of $W$. Then, $V$ is strongly minimal if and only if it is weakly minimal.
\end{cor}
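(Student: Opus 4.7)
The plan is to argue by contradiction: suppose $V$ is strongly minimal but \emph{not} weakly minimal, descend along $\prec_{\text{str}}$ to produce a strongly minimal $J$-inflation of $W$ whose coweight is strictly dominated by that of $V$, then invoke Conjecture \ref{conj:Unicity} to force the two coweights to agree, yielding a contradiction. The direction \emph{weakly minimal $\Rightarrow$ strongly minimal} is immediate from the remark following the definitions of $\prec_{\text{str}}$ and $\prec_{\text{weak}}$, which gives $V_1 \prec_{\text{str}} V_2 \Rightarrow V_1 \prec_{\text{weak}} V_2$.

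For the converse, I would let $\mu$ be the coweight of $V$ and, assuming $V$ fails to be weakly minimal, fix a $J$-inflation $V'$ of $W$ with coweight $\mu'$ satisfying $\mu - \mu' = \sum_{i\notin J} a_i\,\omega_i^{\vee}$ for some $\{a_i\}_{i\notin J} \subseteq \mathbb{N}$ not all zero. Using the finite-chain property of $\prec_{\text{str}}$ (recorded just before the existence statement for strongly minimal inflations), I would iteratively replace $V'$ by strict $\prec_{\text{str}}$-predecessors until reaching a strongly minimal $J$-inflation $V'_{\min}$ of $W$ with $V'_{\min} \preceq_{\text{str}} V'$ (meaning $V'_{\min} = V'$ or $V'_{\min} \prec_{\text{str}} V'$). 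Lemma \ref{lem:StrongPrecedeImpliesJinfTriv} then ensures that the coweight $\mu''$ of $V'_{\min}$ satisfies $\mu' - \mu'' = \sum_{i\notin J} b_i\,\omega_i^{\vee}$ with $b_i \in \mathbb{N}$, so that $\mu - \mu'' = \sum_{i\notin J}(a_i+b_i)\,\omega_i^{\vee} \neq 0$; in particular $\mu \neq \mu''$.

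To conclude, I would apply Conjecture \ref{conj:Unicity} to the two strongly minimal $J$-inflations $V$ and $V'_{\min}$ of the simple module $W$, obtaining a $J$-trivial $\gamma \in \mathfrak{t}^{\times} \subseteq \mathfrak{r}$ with $V \simeq V'_{\min} \star L(\gamma)$. Because $L(\gamma)$ lies in $\OO^{0}$ by Example \ref{ex:Invertibles}, Theorem \ref{thm:HighestWeightFus} forces the coweight of $V$ to coincide with that of $V'_{\min}$, i.e.~$\mu = \mu''$, directly contradicting the inequality derived above. The only step of any substance in this plan is the descent producing the strongly minimal witness $V'_{\min}$; this relies entirely on the finiteness of $\prec_{\text{str}}$-chains already noted in the paper, so I do not anticipate any real obstacle beyond carefully tracking coweights throughout.
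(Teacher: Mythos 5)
Your argument is correct and is essentially the paper's proof. The only cosmetic difference is that you take an arbitrary $V' \prec_{\text{weak}} V$ and then descend along $\prec_{\text{str}}$ to a strongly minimal witness $V'_{\min}$ before invoking Conjecture \ref{conj:Unicity}, whereas the paper descends along $\prec_{\text{weak}}$ directly to a \emph{weakly} minimal $V'$ and then observes that it is automatically strongly minimal (by the easy implication you also recorded); both routes rely on the same finite-chain observation and both reach the contradiction by noting that fusing with $L(\gamma) \in \OO^0$ cannot change the coweight.
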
\vspace*{-2mm}
\begin{proof}
Suppose $V$ strongly minimal, but not weakly minimal. There is then a weakly minimal inflation $V'$ of $W$ for which $V'\prec_{\text{weak}} V$. However, $V'$ is in this case also strongly minimal and Conjecture \ref{conj:Unicity} implies $V'\simeq V\star L(\gamma)$ for a $J$-trivial $\gamma\in \mathfrak{t}^{\times}$. This contradicts $V'\prec_{\text{weak}} V$.
\end{proof}\newpage
\begin{cor} Suppose that Conjecture \ref{conj:Unicity} holds and fix a simple object $W$ of $\OO^{sh}_J$. Fix also $V_{\normalfont\text{min}}$ a strongly minimal $J$-inflation of $W$ and $V$ an object of $\OO^{sh}$. Then, 
\begin{center}
$V$ is a $J$-inflation of $W$ if and only if $V\simeq V_{\normalfont\text{min}}\star X$ for some $J$-inflation $X$ of $L^J(\mathbbm{1})$.
\end{center}
\end{cor}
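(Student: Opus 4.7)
The plan is to leverage Proposition \ref{prop:CharInflwithMin}, which already gives a version of the desired decomposition but with some strongly minimal $J$-inflation rather than our chosen $V_{\text{min}}$, and then use Conjecture \ref{conj:Unicity} to convert between the two strongly minimal inflations. Sufficiency of the condition is immediate from Corollary \ref{cor:FusInfl}: since $V_{\text{min}}$ is a $J$-inflation of $W$ and $X$ is a $J$-inflation of $L^J(\mathbbm{1})$, the fusion product $V_{\text{min}}\star X$ is a $J$-inflation of $W\star L^J(\mathbbm{1})\simeq W$.

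For necessity, assume that $V$ is a $J$-inflation of $W$. Proposition \ref{prop:CharInflwithMin} yields an isomorphism $V\simeq V'\star X'$, where $V'$ is some strongly minimal $J$-inflation of $W$ and $X'$ is a $J$-inflation of $L^J(\mathbbm{1})$. Applying Conjecture \ref{conj:Unicity} to the two strongly minimal inflations $V_{\text{min}}$ and $V'$ then produces a $J$-trivial element $\gamma\in\mathfrak{t}^{\times}$ with $V'\simeq V_{\text{min}}\star L(\gamma)$. The invertible representation $L(\gamma)$ is one-dimensional and, since $\res_J(\gamma)=\mathbbm{1}$, is itself a $J$-inflation of $L^J(\mathbbm{1})$ (by Example \ref{ex:Invertibles} or Corollary \ref{cor:InfTrivial}). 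Setting $X:=L(\gamma)\star X'$, another application of Corollary \ref{cor:FusInfl} shows that $X$ is a $J$-inflation of $L^J(\mathbbm{1})\star L^J(\mathbbm{1})\simeq L^J(\mathbbm{1})$.

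The only subtle point is that we have obtained $V\simeq (V_{\text{min}}\star L(\gamma))\star X'$ while the desired statement involves $V_{\text{min}}\star X=V_{\text{min}}\star(L(\gamma)\star X')$, and associativity of $\star$ is not known in general. This is the step I expect to be the main (though mild) obstacle, and the way around it is to observe that both modules are simple and share the same highest $\ell$-weight. Indeed, applying Corollary \ref{cor:FusInfl} to each iterated fusion product shows that both modules are $J$-inflations of $W$, hence simple by Lemma \ref{lem:InflSimpleSimple}; and Theorem \ref{thm:HighestWeightFus} gives that each has highest $\ell$-weight $\Psi_{V_{\text{min}}}\gamma\Psi_{X'}$. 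By the classification of simple objects of $\OO^{sh}$ via highest $\ell$-weight (Theorem \ref{thm:ClassSimp}), the two modules are isomorphic, so $V\simeq V_{\text{min}}\star X$, completing the proof.
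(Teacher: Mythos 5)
Your proof is correct and follows essentially the same route as the paper's: sufficiency via Corollary \ref{cor:FusInfl}, and necessity by combining Proposition \ref{prop:CharInflwithMin} with Conjecture \ref{conj:Unicity} and then absorbing the $J$-trivial invertible factor $L(\gamma)$ into the inflation of $L^J(\mathbbm{1})$. The only cosmetic difference is at the final identification, where the paper compares $q$-characters and invokes injectivity of $\chi_q$ together with simplicity, while you directly observe that both sides are simple $J$-inflations of $W$ with the same highest $\ell$-weight and apply Theorem \ref{thm:ClassSimp} --- these are equivalent.
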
\vspace*{-2mm}
\begin{proof}
Assume that $V$ is a $J$-inflation of $W$. Then, Proposition \ref{prop:CharInflwithMin} gives us a strongly minimal $J$-inflation $V'$ of $W$ and a $J$-inflation $X$ of $L^J(\mathbbm{1})$ with $V\simeq V'\star X$. In addition, by Conjecture \ref{conj:Unicity}, $V'\simeq V_{\text{min}}\star L(\gamma)$ for some $J$-trivial $\gamma\in \mathfrak{t}^{\times}$. Take $X'=L(\gamma)\star X$. Then, $X'$ is a $J$-inflation of $L^J(\mathbbm{1})$ by Theorem \ref{thm:PrefundSimpFus}, Corollary \ref{cor:InfTrivial} and $J$-triviality of $\gamma$. Moreover,
$$\chi_q(V_{\text{min}})\chi_q(X')=\chi_q(V')\chi_q(X)=\chi_q(V)$$
so that $V_{\text{min}}\star X'\simeq V$ by injectivity of the $q$-character map and simplicity of $V$. The converse direction follows directly from Corollary \ref{cor:FusInfl}.
\end{proof}
\begin{rem} The observations made after \cite[Proposition 6.9]{fjmm} mention something similar to the partial order $\prec_{\text{str}}$, but it is unclear to us whether or not \cite{fjmm} thought about unicity of strongly minimal $J$-inflations and about Conjecture \ref{conj:Unicity}.
\end{rem}
We were recently told by Joel Kamnitzer of a potential alternative method for constructing inflations using the relation between $K$-theoretic Coulomb branches of $3d$ $\mathcal{N}=4$ SUSY quiver gauge theories and shifted quantum affine algebras. To describe this alternative method more precisely, fix $\mu\in \Lambda^{\vee}$ and consider the \textit{truncated shifted quantum affine algebras} $U_{q,\lambda}^{\mu,\mathcal{Z}}(\g)$ of~\cite[Section 8]{ft1} (see also \cite[Sections 9-10]{hshift}). These algebras, that are parametrized~by~a~dominant coweight $\lambda\in \Lambda_+^{\vee}$ and a subset $\mathcal{Z}\subseteq \C^{\times}$ (satisfying both technical conditions), can be defined as quotients of a slight extension %$\uqmu{\mu,\text{ad}}{\g}$ 
of $\uqmu{\mu}{\g}$ (called \textit{adjoint version}). Moreover, fixing~$(\lambda,\mathcal{Z}$),~there is conjecturally an isomorphism (cf.~\cite[Conjecture 8.15]{ft1})
$$ U_{q,\lambda}^{\mu,\mathcal{Z}}(\g) \simeq \mathbf{A}_{\mu,\lambda}/\mathbf{I}_{\mathcal{Z}}$$
with $\mathbf{A}_{\mu,\lambda}$ some Coulomb branch and with $\mathbf{I}_{\mathcal{Z}}$ an ideal specializing the \textit{equivariant parameters} of the algebra $\mathbf{A}_{\mu,\lambda}$ to elements of the subset $\mathcal{Z}$ (see \cite[Section 10]{hshift} for details). Let us~denote by $\mathbf{G}_{\mu,\lambda}$ the \textit{gauge group} associated to the Coulomb branch $\mathbf{A}_{\mu,\lambda}$ and by $\mathbf{N}_{\mu,\lambda}$~the~corresponding $\mathbf{G}_{\mu,\lambda}$-module. Then, the key observation of Kamnitzer is that truncated shifted quantum affine algebras corresponding to the Lie algebra $\g$ and to the Lie subalgebra $\g_J$ can give rise~to Coulomb branches having exactly the same gauge group $\mathbf{G}_{\mu,\lambda}$ and representation $\mathbf{N}_{\mu,\lambda}$.~Hence, using the (partly conjectural) ``\textit{Coulomb branch interpretation}" of truncated shifted quantum affine algebras, one could potentially construct isomorphisms of the form 
\begin{equation}\label{eq:IsoTSQAA}
U_{q,\lambda_J}^{\nu,\mathcal{Z}_J}(\g_J)\simeq U_{q,\lambda}^{\mu,\mathcal{Z}}(\g)
\end{equation}
which would then induce morphisms from $\uqmu{\mu}{\g}$ to $U_{q,\lambda_J}^{\nu,\mathcal{Z}_J}(\g_J)$. Also, as any finite-dimensional simple object in $\OO^{sh}_J$ naturally is a module over some truncated shifted quantum affine algebra \cite[Theorem 12.8]{hshift},~the above morphisms allow to view these simple objects as objects~of~$\OO^{sh}$. \par It thus seems appropriate to ask the following questions:
\begin{question} Fix a finite-dimensional simple module $W$ of $\OO^{sh}_J$ and suppose that we can define a $\uqmu{\mu}{\g}$-action on $W$ like above (for some $\mu\in \Lambda^{\vee}$). Is the resulting module a $J$-inflation~of~$W$? Is this still true for $W$ an arbitrary object of $\OO^{sh}_J$ endowed with an action of a truncated shifted quantum affine algebra? What about infinite-dimensional simple objects of $\OO^{sh}_J$?
\end{question}
\begin{question} Can all inflations (at least of finite-dimensional simple objects) be constructed~as above? Does this lead to a proof of Conjecture~\ref{conj:Unicity} (at least in some cases)?
\end{question}
A positive answer to these questions would be remarkable as it could lead to a \textit{functorial~way of defining inflations for all modules in $\OO^{sh}_J$ over which there is an action of a truncated~shifted quantum affine algebra} (whereas our actual method does not seem~to~be functorial because~of non-unicity).\,Remark however that the morphisms \eqref{eq:IsoTSQAA} defined using Coulomb branches~are\footnote{These morphisms also rely on \cite[Conjecture 8.15]{ft1} that remains, to our best knowledge, unproven.} somewhat hard to describe % explicitly
 and that it is not yet known whether or not all infinite-dimensional simple objects of $\OO^{sh}$ are endowed with an action of a truncated algebra (as~expected from~\cite[Conjecture 12.2]{hshift} and shown in \cite{hz} for the related situation of shifted Yangians).\,Following~the approach advised by Kamnitzer thus requires further research %(and may not work in the same generality as Theorem~\ref{cor:ABFin}~in~type~A--B)
 that we leave for a later project. \par
Another question for the future, that is connected to the previous discussion, is 
\begin{question} Can we generalize the concept of $J$-inflations to more general Coulomb branches? To shifted Yangians (see \cite{bfn,hz} for more details about these algebras)? 
\end{question}\vspace*{-2mm}
\subsection{Possible application to the study of cluster structures on Grothendieck rings}\label{sec:applications} 
The main utility of inflations is their ability to reduce problems in $\OO^{sh}$ to corresponding, often easier to handle, problems in $\OO^{sh}_J$. In other words, $J$-inflations to $\g$ create a framework~within $\OO^{sh}$ in which objects/quantities (i.e.~$q$-characters, fusion products, etc.) can be computed as~in $\OO^{sh}_J$. An extreme example of this is obtained when one takes $J=\{j\}$ for some $j\in I$. Indeed, in this case, as in Example \ref{ex:QQ}, for $\mu\in \Lambda^{\vee}$ with $\nu = \res_J(\mu)=\alpha_j(\mu)\omega_j^{\vee}$,%\vspace*{-0.5mm}
 $$\uqmu{\nu}{\g_J}\simeq U_{q_j}^{\nu}(\mathfrak{sl}_2)$$%\vspace*{-4.7mm}\\
and explicit $q$-character formulas for inflations of simple modules of $\OO^{sh}_J$ can often be deduced from Example \ref{ex:qCharEX} and results of \cite{cp1,hshift}. (In constrast, getting such explicit 
formulas for general simple objects of $\OO^{sh}$ --- even finite-dimensional ones --- is a notoriously difficult open problem in the representation theory of shifted/unshifted quantum affine algebras.)\par
Another possible use of inflations lies in the construction of \textit{$R$-matrices} for the category~$\OO^{sh}$. Indeed, let $J,J'\subseteq I$ satisfy $J\cap J'=\emptyset$ and fix $V$ a $J$-inflation of an object $W$ of $\OO^{sh}_J$ with $V'$ a $J'$-inflation of an object $W'$ of $\OO^{sh}_{J'}$. Assume that $V$ and $V'$ respectively belong to $\OO^{\mu}$~and~$\OO^{\mu'}$ for some $\mu,\mu'\in \Lambda^{\vee}$. Then, as $x_i^{\pm}(z)V = 0$ for all $i\not\in J$ with $x_{i'}^{\pm}(z)V' = 0$ for all $i'\not\in J'$, there is a well-defined $\uqmu{\mu+\mu'}{\g}$-action on $V\otimes V'$ given by specializing at $u=1$ the map
$$\Delta_{\mu,\mu'}^{(u)}:\uqmu{\mu+\mu'}{\g}\arr (\uqmu{\mu}{\g}\otimes\uqmu{\mu'}{\g})((u))$$ 
of Section \ref{sec:Fusion}. Let us denote $V\otimes_D V'$ the associated $\uqmu{\mu+\mu'}{\g}$-module. There is an analogous module $V'\otimes_D V$ defined on the product $V'\otimes V$ by specialization at $u=1$ of the coproduct
$$\Delta_{\mu',\mu}^{(u)}:\uqmu{\mu+\mu'}{\g}\arr (\uqmu{\mu'}{\g}\otimes\uqmu{\mu}{\g})((u)).$$
Recall that $V\otimes_D V'$ with $V'\otimes_D V$ need not to be highest $\ell$-weight modules if $V$ and $V'$ are irreducible (whereas in this case $V\star V'$ and $V'\star V$ are of highest $\ell$-weight by Theorem \ref{thm:HighestWeightFus}). There can hence be non-zero module morphisms 
$$ V\otimes_D V'\arr V'\otimes_D V$$
with non-trivial kernel. Morphisms of this form play an important role in the representation theory of Borel quantum loop algebras (see, e.g., \cite{hbourb,her,p}). In this setting, these morphisms usually come from \textit{meromorphic $R$-matrices} which are isomorphism of the form 
$$ V(a)\otimes V'\arr V'\otimes V(a)$$
with $a\in \C^{\times}$ a generic shift parameter (as in Lemma \ref{lem:InflShift}). Here is a question for the future: \newpage
\begin{question} Can we construct a meromorphic $R$-matrix in our context given an arbitrary pair $(V,V')$ with $V$ a $J$-inflation and $V'$ a $J'$-inflation as above?
\end{question}
%We provide an example related to the above question.
\begin{example} Fix $\g = \SLt$ with $V=L(\widetilde{\Psi}_{1,1})$ and $V'= L(\widetilde{\Psi}_{2,1})$. Denote by $\{v_{\ell}\}_{\ell\geq 0}\subseteq V$ and $\{v_m\}_{m\geq 0}\subseteq V'$ the $\C$-bases given in \cite[Example 5.2(iv)]{hshift}. For $a\in \C^{\times}\backslash q^{2\Z+1}$, define a map
$$\psi_a:V(a)\otimes_D V'\arr V'\otimes_D V(a)$$ by $\psi_a(v_{\ell}\otimes v'_{m}) =\gamma_{\ell,m}v'_{m}\otimes v_{\ell}$ where\vspace*{-0.6mm} $$ \gamma_{\ell,m} = a^{\ell}q^{-\ell m}\frac{\left(\prod_{k=1}^{m}(1-aq^{2k-1})\right)}{\left(\prod_{s=1}^\ell (a-q^{2(s-m)-1})\right)}.$$
Then, the $\uqmu{\omega_2^{\vee}-\omega_1^{\vee}}{\g}$-action (resp.~$\uqmu{\omega_1^{\vee}-\omega_2^{\vee}}{\g}$-action) for $V$ (resp.~$V'$) given in \cite[Example 5.2(iv)]{hshift} makes it easy to prove that $\psi_a$ is a $\uqmu{0}{\g}$-linear isomorphism. Also, for $a = q^{-(2r+1)}$ with $r\in\mathbb{N}$, the same formula for $\psi_a$ give a non-zero $\uqmu{0}{\g}$-morphism with non-trivial kernel.
\end{example}
Let us finally discuss some results connecting inflations with the study of \textit{compatible~cluster structures} over Grothendieck rings. This is connected to the use of inflations for the deduction of new relations in the Grothendieck ring $K_0(\OO^{sh})$ (see Examples \ref{ex:QQ}, \ref{ex:QQ2} and \ref{ex:newT}), which is in itself an interesting (but still not totally understood) application of our results. We refer to \cite{fz1,fz2,kel12} for more informations about cluster algebras and to \cite{ghl,hl1,hl2,kkop} for the usual notion of \textit{monoidal categorification} of cluster algebras (with some examples).~The following definitions are inspired from \cite{ghl,hl2}. All our cluster algebras are over $\Z$.
\begin{defn} A simple object $V$ of $\OO^{sh}$ is \textit{real} if $V\star V$ is irreducible and \textit{prime} if, for any factorization $V\simeq L(\Psi_1)\star L(\Psi_2)$ with $\Psi_1,\Psi_2\in\mathfrak{r}$, either $\Psi_1\in \mathfrak{t}^{\times}$ or $\Psi_2\in \mathfrak{t}^{\times}$ (i.e.~at least one of $L(\Psi_1)$ and $L(\Psi_2)$ is an invertible representation).
\end{defn}
\begin{defn}\label{def:monCat} Fix a cluster algebra $\mathsf{A}$ with coefficients and let $\varphi:\mathsf{A}\arr K_0(\OO^{sh})$ be a ring isomorphism. Then $\varphi$ is a \textit{compatible $\mathsf{A}$-cluster structure on $\OO^{sh}$} if
\begin{itemize}
\item[(i)] $\varphi$ sends cluster monomials of $\mathsf{A}$ to isoclasses of real simple objects
and 
\item[(ii)] $\varphi$ sends cluster variables and coefficients 
of $\mathsf{A}$ to isoclasses of prime real simple objects.
\end{itemize}
\end{defn}
Here is the main result of this section.
\begin{thm}\label{thm:SminPrime} Fix a simple object $W$ of $\OO^{sh}_J$.
\begin{itemize}
\item[(i)] Suppose that $W$ is real. Then, any $J$-inflation of $W$ to $\g$ is real.
\item[(ii)] Suppose that $W$ is prime. Then, any strongly minimal $J$-inflation of $W$ to $\g$ is prime.
\end{itemize}
\end{thm}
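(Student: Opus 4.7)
The plan for part (i) is short and direct: if $V$ is a $J$-inflation of the real simple module $W$, then by Corollary~\ref{cor:FusInfl} the fusion product $V\star V$ is a $J$-inflation of the simple module $W\star W$, and Lemma~\ref{lem:InflSimpleSimple} then forces $V\star V$ to be simple. Hence $V$ is real.

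For part (ii) I will argue by contradiction. Suppose $V$ is a strongly minimal $J$-inflation of the prime simple $W\in\OO_J^{sh}$ with coweight $\mu$, and that $V\simeq L(\Psi_1)\star L(\Psi_2)$ with neither $\Psi_k\in\mathfrak{t}^\times$; write $L(\Psi_k)\in\OO^{\mu_k}$ so that $\mu_1+\mu_2=\mu$ and set $\nu_k=\res_J(\mu_k)$. The central ingredient will be a \emph{decomposition lemma} asserting that each $L(\Psi_k)$ is itself a $J$-inflation of some simple module $W_k\in\OO^{\nu_k}_J$. Granting this, Proposition~\ref{prop:FusRest} yields $W\simeq W_1\star W_2$, so primality of $W$ forces one factor, say $W_1\simeq L^J(\gamma)$, to be invertible; in particular $L(\Psi_1)$ becomes one-dimensional. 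Applying \eqref{eq:Relxpxmphi} to its generating vector then shows that every $(\Psi_1)_i(z)$ is a polynomial, and combining this with $\res_J(\Psi_1)=\gamma\in\mathfrak{t}^\times_J$ yields a decomposition $\Psi_1=\gamma'\Psi_{p,1}$ with $\gamma'\in\mathfrak{t}^\times$ satisfying $\res_J(\gamma')=\gamma$ and $\Psi_{p,1}$ a product of positive prefundamentals $\Psi_{i,a}$ indexed by $i\notin J$. The assumption $\Psi_1\notin\mathfrak{t}^\times$ gives $\Psi_{p,1}\neq 1$. Setting $V'=L(\gamma')\star L(\Psi_2)$, Example~\ref{ex:InvertiblePositivePrefond} together with Corollary~\ref{cor:FusInfl} makes $V'$ a $J$-inflation of $W$; Corollary~\ref{cor:InfTrivial} identifies $L(\Psi_{p,1})$ as a $J$-inflation of $L^J(\mathbbm{1})$, while $L(\Psi_{p,1}^{-1})$ is an infinite-dimensional fusion product of negative prefundamentals (Theorem~\ref{thm:PrefundSimpFus}) and hence not a $J$-inflation of the one-dimensional $L^J(\mathbbm{1})$. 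Therefore $V'\prec_{\text{str}} V$, contradicting the strong minimality of $V$.

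The heart of the plan is the decomposition lemma. To prove it I will use $q$-characters: multiplicativity of $\chi_q$ (Theorem~\ref{thm:HighestWeightFus}) and Theorem~\ref{thm:EquivalentDefInf}(2) give $\overline{\chi_q}(L(\Psi_1))\overline{\chi_q}(L(\Psi_2))=\iota_J(\overline{\chi_q}(W))\in 1+\mathbb{N}[[A_{j,a}^{-1}]]_{j\in J,a\in\C^{\times}}$. Since Theorem~\ref{thm:qCharA} places each factor in $1+\mathbb{N}[[A_{i,a}^{-1}]]_{i\in I,a\in\C^{\times}}$, positivity of the coefficients combined with the multiplicative freeness of the $A_{i,a}$'s (Remark~\ref{rem:AAlgFree}) forces each $\overline{\chi_q}(L(\Psi_k))$ into $1+\mathbb{N}[[A_{j,a}^{-1}]]_{j\in J,a\in\C^{\times}}$. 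Thus every $\ell$-weight of $L(\Psi_k)$ is of the form $\Psi_k M$ with $M\in\mathcal{A}_J$, so the weights of $L(\Psi_k)$ all lie in $\varpi(\Psi_k)[-Q_J^+]$; a direct weight-shift argument using \eqref{eq:xWt} then gives $x_i^{\pm}(z)L(\Psi_k)=0$ for every $i\notin J$. Finally, using \eqref{eq:Relhx}--\eqref{eq:RelhPhi} I verify that the $\uqmu{\nu_k}{\g_J}$-submodule $\langle v\rangle_J$ of $L(\Psi_k)$ generated by its highest $\ell$-weight vector is also stable under the $\phi_{i,r}^\pm$ with $i\notin J$, hence is a $\uqmu{\mu_k}{\g}$-submodule of $L(\Psi_k)$; irreducibility of $L(\Psi_k)$ together with Lemma~\ref{lem:FacRestSimple} then gives $\res_J^{\mu_k}(L(\Psi_k))\simeq L^J(\res_J(\Psi_k))$, proving that $L(\Psi_k)$ is a $J$-inflation.

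The main obstacle will be the decomposition lemma itself. Establishing it requires simultaneously combining four distinct ingredients---positivity of the $A^{-1}$-expansion of $q$-characters, multiplicative freeness of the $A_{i,a}$'s, the weight-space structure of objects of $\OO^{\mu_k}$, and the commutation relations of $\uqmu{\mu_k}{\g}$---to propagate the vanishing $x_i^{\pm}(z)L(\Psi_k)=0$ and then pass to simplicity of the restriction. The remainder of the argument, while slightly fiddly in the one-dimensional analysis of $L(\Psi_1)$, is essentially a direct manipulation with fusion products.
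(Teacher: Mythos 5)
Your proposal is correct and follows essentially the same route as the paper. For (i) you invoke Corollary~\ref{cor:FusInfl} and Lemma~\ref{lem:InflSimpleSimple}, while the paper uses Proposition~\ref{prop:FusRest} and exactness of restriction; these are interchangeable. For (ii), your ``decomposition lemma'' is precisely the content of the paper's Lemma~\ref{lem:SminLem} combined with Lemma~\ref{lem:HighestWeightRest} and Lemma~\ref{lem:FacRestSimple}: the key step in both is to use multiplicativity of $\chi_q$, the normalized $q$-character positivity of Theorem~\ref{thm:qCharA}, and the freeness of the $A_{i,a}$'s to force the $\ell$-weights of each factor $L(\Psi_k)$ into $\Psi_k\mathcal{A}_J$, whence the weight-cone constraint and vanishing of $x_i^{\pm}(z)$ for $i\notin J$. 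One minor streamlining you could make: once $x_i^{\pm}(z)L(\Psi_k)=0$ for $i\notin J$, Lemma~\ref{lem:HighestWeightRest} directly shows that the highest $\ell$-weight vector generates $\res_J^{\mu_k}(L(\Psi_k))$ over $\uqmu{\nu_k}{\g_J}$, so your explicit verification that $\langle v\rangle_J$ is stable under the $\phi_{i,r}^{\pm}$ with $i\notin J$ is not strictly necessary. The remainder of your strong-minimality contradiction --- absorbing the invertible part into $\gamma'$, recognizing $L(\Psi_{p,1})$ as a $J$-inflation of $L^J(\mathbbm{1})$ via Corollary~\ref{cor:InfTrivial}, and using infinite-dimensionality of $L(\Psi_{p,1}^{-1})$ to rule out the reverse inflation --- matches the paper exactly.
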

To prove the above theorem, we will need the following result.
\begin{lem}\label{lem:SminLem} Fix a simple object $W$ in $\OO^{sh}_J$. Fix also a $J$-inflation $V$ of $W$ and suppose~that $V$ factorizes as $V\simeq V_1\star V_2$ for some simple objects $V_1$ and $V_2$ of $\OO^{sh}$. Then, for $i\not\in J$, 
$$x_i^{\pm}(z)V_1 = x_i^{\pm}(z)V_2=0.$$
\end{lem}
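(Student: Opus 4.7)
The plan is to work entirely at the level of normalized $q$-characters, then transfer the resulting information to the weight lattice to obtain the desired vanishing. Let $\Psi^{(1)},\Psi^{(2)}\in \mathfrak{r}$ be the respective highest $\ell$-weights of $V_1$ and $V_2$. By Theorem \ref{thm:HighestWeightFus}, $V = V_1\star V_2$ is a highest $\ell$-weight module with highest $\ell$-weight $\Psi = \Psi^{(1)}\Psi^{(2)}$ and
\[
\overline{\chi_q}(V)\ =\ \overline{\chi_q}(V_1)\cdot \overline{\chi_q}(V_2).
\]
Moreover, by Theorem \ref{thm:qCharA}, each factor $\overline{\chi_q}(V_k)$ lies in $1+\mathbb{N}[[A_{i,a}^{-1}]]_{i\in I,\,a\in \C^{\times}}$. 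Since $W$ is simple and $V$ is a $J$-inflation of $W$, Theorem \ref{thm:EquivalentDefInf}(2) (combined with Theorem \ref{thm:qCharAJ}) gives the crucial constraint
\[
\overline{\chi_q}(V)\ =\ \iota_J\bigl(\overline{\chi_q}(W)\bigr)\ \in\ 1+\mathbb{N}[[A_{j,a}^{-1}]]_{j\in J,\,a\in \C^{\times}}.
\]

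The first key step is then to transfer this constraint from the product to each individual factor. Suppose some monomial $M_1$ appearing with positive coefficient in $\overline{\chi_q}(V_1)$ involved a variable $A_{i_0,a_0}^{-1}$ with $i_0\not\in J$. As $\overline{\chi_q}(V_2)$ has constant term $1$ and only non-negative coefficients, $M_1$ would appear with positive coefficient in the product $\overline{\chi_q}(V_1)\,\overline{\chi_q}(V_2) = \overline{\chi_q}(V)$; but by the algebraic freeness of the $A_{i,a}$'s (Remark \ref{rem:AAlgFree}), $M_1$ cannot coincide with any monomial in $\mathcal{A}_J$, contradicting the form of $\overline{\chi_q}(V)$ above. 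The analogous argument for $V_2$ yields
\[
\overline{\chi_q}(V_k)\ \in\ 1+\mathbb{N}[[A_{j,a}^{-1}]]_{j\in J,\,a\in \C^{\times}}\qquad \text{for }k=1,2.
\]

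Applying $\varpi$ and using $\varpi(A_{j,a}^{-1}) = [-\alpha_j]$, this implies that every weight of $V_k$ has the form $\varpi(\Psi^{(k)})[-\beta]$ for some $\beta\in Q_J^+$. The final step is the weight-theoretic conclusion: suppose, for contradiction, that $x_{i,r}^{\pm}v \neq 0$ for some $v$ in a weight space $(V_k)_{\gamma}$, some $r\in\Z$ and some $i\not\in J$. Then by \eqref{eq:xWt} the space $(V_k)_{\gamma[\pm\alpha_i]}$ is non-zero, so by what precedes we may write both $\gamma = \varpi(\Psi^{(k)})[-\beta]$ and $\gamma[\pm\alpha_i] = \varpi(\Psi^{(k)})[-\beta']$ with $\beta,\beta'\in Q_J^+$. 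Injectivity of $[-]:\Lambda_{\mathbb{Q}}\arr \mathfrak{t}^{\times}$ then yields $\beta-\beta' = \pm\alpha_i$, forcing $\alpha_i\in Q_J$; this contradicts $i\not\in J$ and completes the argument.

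\noindent\textbf{Main obstacle.} There is no serious difficulty: once one observes that $V$ being a $J$-inflation of a \emph{simple} module forces $\overline{\chi_q}(V)$ to live in the subring generated by the $A_{j,a}^{-1}$ with $j\in J$ (which is exactly what Theorem \ref{thm:EquivalentDefInf}(2) was designed to provide), the rest is pure positivity together with the freeness of the $A_{i,a}$'s. The only point requiring minor care is that the weight-level conclusion must be drawn using the injectivity of $[-]$ rather than $\varpi$ directly, since weights in $\mathfrak{t}^{\times}$ carry less information than elements of $\Lambda_{\mathbb{Q}}$.
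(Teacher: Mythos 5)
Your proof is correct and follows essentially the same route as the paper's: both invoke Theorem \ref{thm:HighestWeightFus} for $\chi_q(V)=\chi_q(V_1)\chi_q(V_2)$, Theorem \ref{thm:EquivalentDefInf}(2) (with Theorem \ref{thm:qCharAJ}) to pin down $\chi_q(V)$ as $\Psi_1\Psi_2$ times a series in $\mathcal{A}_J$, positivity of $q$-characters to propagate this to each factor $V_k$, and then the weight argument via injectivity of $[-]$ to get the vanishing of $x_{i,r}^{\pm}$ for $i\notin J$. The paper phrases the positivity step by fixing an $\ell$-weight $\Psi$ of $V_1$ and noting that $\Psi\Psi_2$ must be an $\ell$-weight of $V$, whereas you argue at the level of the whole normalized $q$-character series; these are the same argument in different notation.
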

\vspace*{-2.1mm}
\begin{proof} Note $\Psi_1$ (resp.~$\Psi_2$) the highest $\ell$-weight of $V_1$ (resp.~$V_2$) and take $\Psi\in \mathfrak{r}$ with $(V_1)_{\Psi}\neq 0$. Then $V_{\Psi\Psi_2}\neq 0$ since $\chi_q(V)=\chi_q(V_1)\chi_q(V_2)$ and Theorem~\ref{thm:qCharAJ} with Theorem \ref{thm:EquivalentDefInf} show that there is %a monomial 
$M\in \mathcal{A}_J$ for which \vspace*{-0.25mm}
$$ \Psi\Psi_2 = \Psi_1\Psi_2 M$$
\vspace*{-4.5mm}\\
(where we used Theorem \ref{thm:HighestWeightFus} and Lemma \ref{lem:InflSimpleSimple} to conclude that $V$ is simple of highest $\ell$-weight $\Psi_1\Psi_2$). Hence, $\varpi(\Psi) = \varpi(\Psi_1)[-\alpha]$ for some $\alpha\in Q_J^+$ and
\begin{equation}\label{eq:chiV1}
\chi(V_1) \subseteq \varpi(\Psi_1)[-Q_J^+].
\end{equation}
Let now $i\not\in J$ and $r\in \Z$. Fix a weight vector $v\in V_1$. Then, $v$ has weight $\varpi(\Psi_1)[-\alpha]$ for~some $\alpha\in Q_J^+$ by \eqref{eq:chiV1} and $\alpha\mp\alpha_i\not\in Q_J^+$ (with \eqref{eq:chiV1} again) implies $x_{i,r}^{\pm}v = 0$. This gives the desired result for $V_1$. Similarly, $x_i^{\pm}(z)V_2= 0$ for all $i\not\in J$ as wanted.
\end{proof}\vspace*{-2.85mm}
%We now proceed with the proof of Theorem \ref{thm:SminPrime}.\vspace*{-1mm}
\begin{proof}[Proof of Theorem \ref{thm:SminPrime}.]
Let $V$ be a $J$-inflation of $W$ with coweight $\mu$. By Proposition \ref{prop:FusRest}, \vspace*{-0.25mm}
$$ \res_J^{2\mu}(V\star V)\simeq \res_J^{\mu}(V)\star \res_J^{\mu}(V) \simeq W\star W$$\vspace*{-4.5mm}\\
and $V$ is real if $W$ is real by exactness of the functor $\res_J^{2\mu}$. This ends the proof of part (i).~For part (ii), suppose that the $J$-inflation $V$ is strongly minimal, but not prime. Then,~$V\simeq V_1\star V_2$ for simple objects $V_1=L(\Psi_1)$ in $\OO^{\mu_1}$ and $V_2=L(\Psi_2)$ in $\OO^{\mu_2}$ with respective highest $\ell$-weights $\Psi_1,\Psi
_2\not\in\mathfrak{t}^{\times}$. In this case, $V_1$ (resp.~$V_2$) is a $J$-inflation of $\res_J^{\mu_1}(V_1)$ (resp.~$\res_J^{\mu_2}(V_2)$) by Lemma \ref{lem:SminLem}. In particular, by Proposition \ref{prop:FusRest},
$$ W \simeq \res_J^{\mu}(V) \simeq \res_J^{\mu_1}(V_1)\star \res_J^{\mu_2}(V_2)$$
%and it follows easily that 
with $\res_J^{\mu_1}(V_1) \simeq L^J(\res_J(\Psi_1))$ and $\res_J^{\mu_2}(V_2)\simeq L^J(\res_J(\Psi_2))$ (by Lemma \ref{lem:FacRestSimple} and irreducibility of $W$). Now, since $W$~is prime by hypothesis, 
$$\{\res_J(\Psi_1),\res_J(\Psi_2)\}\cap \mathfrak{t}^{\times}_J\neq\emptyset.$$
Suppose $\res_J(\Psi_1)=\gamma\in \mathfrak{t}_J^{\times}$ and set $V_1'=L(\gamma)$ (with $\gamma$ seen inside $\mathfrak{t}^{\times}$). Then, since $V_1=L(\Psi_1)$ is a $J$-inflation of $\res_J^{\mu}(V_1)\simeq% L^J(\res_J(\Psi_1)) = 
L^J(\gamma)$, Proposition \ref{prop:IfVInflqCar} gives $\Psi_1 = \gamma \Psi_p$ for $\Psi_p\in\mathfrak{r}$ a product of (possibly many) $\Psi_{i,a}$ and $[a\omega_i]$ with $i\not\in J$ and $a\in \C^{\times}$. Moreover, $\Psi_p\in \mathfrak{r}_{\mu'}$ for some non-zero $\mu'\in\Lambda^{\vee}_+$ as $\Psi_1\not\in \mathfrak{t}^{\times}$. Therefore, by Corollary \ref{cor:InfTrivial},
$$ L(\Psi_p) = L(\Psi_1\Psi_2(\gamma\Psi_2)^{-1}) \text{ is a }J\text{-inflation of } L^J(\mathbbm{1})\text{, but }L(\Psi_p^{-1})=L(\gamma\Psi_2(\Psi_1\Psi_2)^{-1}) \text{ is not} $$ 
and Corollary \ref{cor:FusInfl} easily implies that $V'= V_1'\star V_2% = L(\gamma\Psi_2)
$ is a $J$-inflation of 
$$L^J(\gamma)\star \res_J^{\mu_2}(V_2) \simeq \res_J^{\mu_1}(V_1) \star \res_J^{\mu_2}(V_2) \simeq \res_J^{\mu}(V) \simeq W$$
such that $V'\prec_{\text{str}}V$. This however contradicts the strong minimality of $V$ and ends the proof (as the case where $\res_J(\Psi_2)\in \mathfrak{t}^{\times}$ leads to a similar contradiction by a similar reasoning). 
\end{proof}\vspace*{-1.1mm}
Assume that $\varphi:\mathsf{A}_J\arr K_0(\OO^{sh}_J)$ is a compatible cluster structure for some cluster algebra~$\mathsf{A}_J$ with coefficients. For each cluster variable or coefficient $x$ of $\mathsf{A}_J$, fix $\Psi_x \in \mathfrak{r}^J$ such that $$\varphi(x) = [L^J(\Psi_x)]$$ and take some strongly minimal $J$-inflation $V_x$ of $L^J(\Psi_x)$ to $\g$. Then, by Definition \ref{def:monCat} and Theorem \ref{thm:SminPrime}, $L^J(\Psi_x)$ and $V_x$ are respectively prime real simple modules in $\OO^{sh}_J$ and $\OO^{sh}$. In addition, given cluster variables and coefficients $x_1,\dots,x_r$ belonging to a common cluster of $\mathsf{A}_J$, Corollary \ref{cor:FusInfl} shows that
$$ V_{x_1\dots x_r}= V_{x_1}\star \dots \star V_{x_r}$$
is a $J$-inflation of $L^J(\Psi_{x_1})\star \dots\star L^J(\Psi_{x_r})$. Hence, $V_{x_1\dots x_r}$ is a real simple object of $\OO^{sh}$ (again by Definition \ref{def:monCat} and Theorem \ref{thm:SminPrime}) and it follows that $\varphi$ induces a correspondence between cluster monomials (or cluster variables/coefficients) of $\mathsf{A}_J$ and a subset of real simple modules (resp.~prime real simple modules) of $\OO^{sh}$. However, this correspondence rarely induces a ring morphism $\mathsf{A}_J\arr K_0(\OO^{sh})$ and it is typically necessary to add coefficients to the cluster algebra $\mathsf{A}_J$ to deduce a compatible cluster structure on $\OO^{sh}$. Let us illustrate this with an example.
\begin{example}\label{ex:Cluster} Fix $\g=\SLt$ and $J=\{1\}$. Then, \cite{hl1} gives a compatible cluster structure on $\OO^{sh}_J$ (in fact a \textit{monoidal categorification} over $\OO_{\hat{\g}}$) for which the initial seed is given by
\begin{center}
\begin{tikzpicture}
\node (A) at (0,0) {$[L^J(Y_{1,1})]$};
\node (Bl) at (1.45,0) {\ };
\node (B) at (2.75,0) [rectangle,draw] {$[L^J(Y_{1,1}Y_{1,q^2})]$};
\draw[->] (A) -- (Bl);
\end{tikzpicture}
\end{center}
(where we used a box to indicate the framed node). Indeed, mutating at the only mutable node and using \eqref{eq:Tsyst} (with $k=1$ and $a=q^3$) gives the seed
\begin{center}
\begin{tikzpicture}
\node (A) at (-0.07,0) {$[L^J(Y_{1,q^2})]$};
\node (Bl) at (1.45,0) {\ };
\node (B) at (2.75,0) [rectangle,draw] {$[L^J(Y_{1,1}Y_{1,q^2})]$};
\draw[->] (Bl) -- (A);
\end{tikzpicture}
\end{center}
and this suffices to specify a cluster structure of cluster type A${}_1$. Remark now that $L(Y_{1,1}\Psi_{2,q^2})$, $L(Y_{1,q^2}\Psi_{2,q^4})$ and $L(Y_{1,1}Y_{1,q^2}\Psi_{2,q^4})$ are respectively $J$-inflations to $\g$ of $L^J(Y_{1,1})$, $L^J(Y_{1,q^2})$ and $L^J(Y_{1,1}Y_{1,q^2})$ by Example \ref{ex:newT}. These inflations are moreover easily shown to be weakly (and thus also strongly) minimal. However, by \eqref{eq:newTsyst} (with $k=1$ and $a = q^3$), 
\begin{align*}
[L(Y_{1,1}\Psi_{2,q^2})][L(Y_{1,q^2}\Psi_{2,q^4})] &= [L(Y_{1,1}Y_{1,q^2}\Psi_{2,q^4})][L(\Psi_{2,q^2})]+[\omega_2][L(\Psi_{2,1})][L(\Psi_{2,q^4})]\\
&\neq [L(Y_{1,1}Y_{1,q^2}\Psi_{2,q^4})]+1
\end{align*}
and just replacing the cluster variables and coefficients of $\mathsf{A}_J$ by their chosen strongly minimal $J$-inflations does not give here a ring morphism $\mathsf{A}_J\arr K_0(\OO^{sh})$. Nevertheless, considering~the enhanced initial seed (i.e.~with more coefficients)
\begin{center}
\begin{tikzpicture}
\node (A) at (0,0) {$[L(Y_{1,1}\Psi_{2,q^2})]$};
\node (Bl) at (2.1,0) {\ };
\node (B) at (3.75,0) [rectangle,draw] {$[L(Y_{1,1}Y_{1,q^2}\Psi_{2,q^4})]$};
\node (C) at (-3,0) [rectangle,draw] {$[L(\Psi_{2,q^2})]$};
\node (Cr) at (-2,0) {\ };
\node (D) at (0,-1.11) [rectangle,draw] {$[L(\Psi_{2,q^4})]$};
\node (Du) at (0,-0.82) {\ };
\node (E) at (0,1.11) [rectangle,draw] {$[\omega_2][L(\Psi_{2,1})]$};
\node (Ed) at (0,0.82) {\ };
\draw[->] (A) -- (Bl);
\draw[->] (Du) -- (A);
\draw[->] (A) -- (Cr);
\draw[->] (Ed) -- (A);
\end{tikzpicture} 
\end{center}
gives, by \eqref{eq:newTsyst}, a well-defined compatible cluster structure on $\OO^{sh}$ with mutated seed 
\begin{center}
\begin{tikzpicture}
\node (A) at (0,0) {$[L(Y_{1,q^2}\Psi_{2,q^4})]$};
\node (Bl) at (2.1,0) {\ };
\node (B) at (3.75,0) [rectangle,draw] {$[L(Y_{1,1}Y_{1,q^2}\Psi_{2,q^4})]$};
\node (C) at (-3,0) [rectangle,draw] {$[L(\Psi_{2,q^2})]$};
\node (Cr) at (-2,0) {\ };
\node (D) at (0,-1.11) [rectangle,draw] {$[L(\Psi_{2,q^4})]$};
\node (Du) at (0,-0.82) {\ };
\node (E) at (0,1.11) [rectangle,draw] {$[\omega_2][L(\Psi_{2,1})]$};
\node (Ed) at (0,0.82) {\ };
\draw[->] (Bl) -- (A);
\draw[->] (A) -- (Du);
\draw[->] (Cr) -- (A);
\draw[->] (A) -- (Ed);
\end{tikzpicture} 
\end{center}
Observe that the classes added above --- that is $[\omega_2][L(\Psi_{2,1})]$, $[L(\Psi_{2,q^2})]$ and $[L(\Psi_{2,q^4})]$ --- are all easily seen to be isoclasses of prime real simple modules in $\OO^{sh}$. 
\end{example}
In general, an \textit{exchange (or mutation) relation} for the cluster algebra $\mathsf{A}_J$ looks like
\begin{equation}\label{eq:ExchRelex} 
x_ix_i'= x_{\text{in}} + x_{\text{out}}
\end{equation}
where $x_i$ and $x_i'$ are cluster variables and $x_{\text{in}}$ and $x_{\text{out}}$ are cluster monomials belonging to the same cluster as $x_i$. Using the compatible cluster structure $\varphi : \mathsf{A}_J\arr K_0(\OO^{sh}_J)$, Relation~\eqref{eq:ExchRelex} can be interpreted as the decomposition of a fusion product of two prime real simple objects~of $\OO^{sh}_J$ in real simple composition factors. There is thus an underlying short exact sequence
$$ 0\arr \rad W \arr W\arr \topp W\arr 0$$
with $\{[\rad W],[\topp W]\} = \{\varphi(x_{\text{in}}),\varphi(x_{\text{out}})\}$ and $W = L^J(\Psi_{x_{i}})\star L^J(\Psi_{x_{i}'})$ (where~we~use~the~notation introduced above Example \ref{ex:Cluster}).  By Corollary \ref{cor:FusInfl} and Proposition \ref{prop:InfHighest}, there is, in $\OO^{sh}$, a corresponding short exact sequence
$$ 0\arr \rad V \arr V \arr \topp V \arr 0$$
where $V = V_{x_i}\star V_{x_i'}$ and where $\rad V$ and $\topp W$ are $J$-inflations of $\rad W$ and $\topp W$ (resp.). \newpage This gives an analog of \eqref{eq:ExchRelex} in $K_0(\OO^{sh})$. However, as seen in the example above, writing $$\topp W = L^J(\Psi_{y_1})\star \dots \star L^J(\Psi_{y_r})$$
for some cluster variables and coefficients $y_1,\dots,y_r\in \mathsf{A}_J$, then, typically,
$$ \topp V \not\simeq  V_{y_1}\star \dots\star V_{y_r}$$
and it may be needed to add coefficients to $\mathsf{A}_J$ to obtain a compatible cluster structure on~$\OO^{sh}$. This seems somewhat difficult to do in general and is left for further work. 

\let\oldaddcontentsline\addcontentsline
\renewcommand{\addcontentsline}[3]{}

 \let\addcontentsline\oldaddcontentsline

\end{document}